\newtheorem{theorem}{Theorem}[section]
\newtheorem{proposition}[theorem]{Proposition}
\newtheorem{lemma}[theorem]{Lemma}
\newtheorem{corollary}[theorem]{Corollary}
\newtheorem{question}[theorem]{Question}
\DeclareMathOperator{\CR}{CR}
\DeclareMathOperator{\Eq}{Eq}
\DeclareMathOperator{\Gr}{Gr}
\DeclareMathOperator{\Nil}{Nil} 
\DeclareMathOperator{\var}{var}
\renewcommand*\subjclass[2][2010]{\def\@subjclass{#2}\@ifundefined{subjclassname@#1}{\ClassWarning{\@classname}{Unknown edition (#1) of Mathematics Subject Classification; using '2010'.}}{\@xp\let\@xp\subjclassname\csname subjclassname@#1\endcsname}}
\renewcommand{\subjclassname}{\textup{2010} Mathematics Subject Classification}
\begin{document}

\title[Special elements of the lattice of epigroup varieties]{Special elements of the lattice\\
of epigroup varieties}

\thanks{Supported by the Ministry of Education and Science of the Russian Federation (project 2248), by a grant of the President of the Russian Federation for supporting of leading scientific schools of the Russian Federation (project 5161.2014.1) and by Russian Foundation for Basic Research (grant 14-01-00524).}

\author{V.\,Yu.\,Shaprynski\v{\i}}

\address{Ural Federal University, Institute of Mathematics and Computer Science, Lenina 51, 620000 Ekaterinburg, Russia}

\email{vshapr@yandex.ru,\,dmitry.skokov@gmail.com,\,bvernikov@gmail.com}

\author{D.\,V.\,Skokov}

\author{B.\,M.\,Vernikov}

\begin{abstract}
We study special elements of eight types (namely, neutral, standard, costandard, distributive, codistributive, modular, lower-modular and upper-modular elements) in the lattice \textbf{EPI} of all epigroup varieties. Neutral, standard, costandard, distributive and lower-modular elements are completely determined. A strong necessary condition and a sufficient condition for modular elements are found. Modular elements are completely classified within the class of commutative varieties, while codistributive and upper-modular elements are completely determined within the wider class of strongly permutative varieties. It is verified that an element of \textbf{EPI} is costandard if and only if it is neutral; is standard if and only if it is distributive; is modular whenever it is lower-modular; is neutral if and only if it is lower-modular and upper-modular simultaneously. We found also an application of results concerning neutral and lower-modular elements of \textbf{EPI} for studying of definable sets of epigroup varieties.
\end{abstract}

\keywords{Epigroup, variety, lattice, neutral element, standard element, costandard element, distributive element, codistributive element, modular element, lower-modular element, upper-modular element}

\subjclass{Primary 20M07, secondary 08B15}

\maketitle

\section{Introduction and summary}
\label{introduction}

\subsection{Semigroup pre-history}
\label{introduction semigroups}

The main object we examine in this article is the lattice of all epigroup varieties. But our considerations are motivated by some earlier investigations of the lattice of semigroup varieties and closely related with these investigations. To make clearer a context and motivations of our considerations, we start with a brief explanation of the `semigroup pre-history' of the present work.

One of the main branches of the theory of semigroup varieties is an examination of lattices of semigroup varieties (see the survey~\cite{Shevrin-Vernikov-Volkov-09}). If $\mathcal V$ is a variety then $L(\mathcal V)$ stands for the subvariety lattice of $\mathcal V$ under the natural order (the class-theor\-etical inclusion). The lattice operations in $L(\mathcal V)$ are the (class-theor\-etical) intersection denoted by $\mathcal{X\wedge Y}$ and the join $\mathcal{X\vee Y}$, i.\,e., the least subvariety of $\mathcal V$ containing both $\mathcal X$ and $\mathcal Y$.

There are a number of articles devoted to an examination of identities (first of all, the distributive, modular or Arguesian laws) and some related restrictions (such as semimodularity or semidistributivity) in lattices of semigroup varieties, and many considerable results are obtained here. In particular, semigroup varieties with modular, Arguesian or semimodular subvariety lattice were completely classified and deep results concerning semigroup varieties with distributive subvariety lattices (related to a description of such varieties modulo group ones) were obtained. An overview of all these results may be found in~\cite[Section~11]{Shevrin-Vernikov-Volkov-09}.

The results mentioned above specify, so to say, `globally' modular or distributive parts of the lattice of semigroup varieties. The following natural step is to examine varieties that guarantee modularity or distributivity, so to say, in their `neighborhood'. Saying so, we take in mind special elements in the lattice of semigroup varieties. There are many types of special elements that are considered in lattice theory. Recall definitions of some of them. An element $x$ of a lattice $\langle L;\vee,\wedge\rangle$ is called

\smallskip

\emph{neutral} if $(x\vee y)\wedge(y\vee z)\wedge(z\vee x)=(x\wedge y)\vee(y\wedge z)\vee(z\wedge x)$ for all $y,z\in L$;

\smallskip

\emph{standard} if $(x\vee y)\wedge z=(x\wedge z)\vee(y\wedge z)$ for all $y,z\in L$;

\smallskip

\emph{distributive} if $x\vee(y\wedge z)=(x\vee y)\wedge(x\vee z)$ for all $y,z\in L$;

\smallskip

\emph{modular} if $(x\vee y)\wedge z=(x\wedge z)\vee y$ for all $y,z\in L$ with $y\le z$;

\smallskip

\emph{upper-modular} if $(z\vee y)\wedge x=(z\wedge x)\vee y$ for all $y,z\in L$ with $y\le x$.

\smallskip

\noindent\emph{Costandard}, \emph{codistributive} and \emph{lower-modular} elements are defined dually to standard, distributive and upper-modular ones. There is a number of interrelations between types of elements we consider. It is evident that a neutral element is both standard and costandard; a standard or costandard element is modular; a [co]distributive element is lower-modular [upper-modular]. It is well known also that a [co]standard element is [co]distributive (see~\cite[Theorem~253]{Gratzer-11}, for instance). So, eight types of elements defined above form a partially ordered set under class-theor\-etical inclusion pictured on Fig.~\ref{classes}.

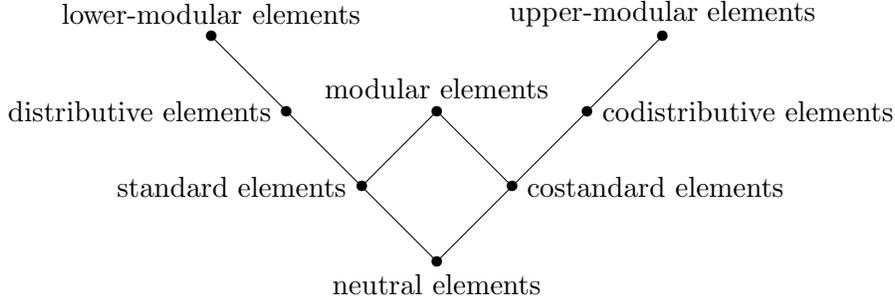
\begin{figure}[tbh]
\begin{center}
\unitlength=1mm
\special{em:linewidth .4pt}
\linethickness{.4pt}
\begin{picture}(118,40)
\gasset{AHnb=0}
\drawline(27,35)(57,5)(87,35)
\drawline(47,15)(57,25)(67,15)
\put(57,5){\circle*{1.33}}
\put(47,15){\circle*{1.33}}
\put(67,15){\circle*{1.33}}
\put(37,25){\circle*{1.33}}
\put(57,25){\circle*{1.33}}
\put(77,25){\circle*{1.33}}
\put(27,35){\circle*{1.33}}
\put(87,35){\circle*{1.33}}
\put(57,2){\makebox(0,0)[cc]{neutral elements}}
\put(45,15){\makebox(0,0)[rc]{standard elements}}
\put(69,15){\makebox(0,0)[lc]{costandard elements}}
\put(35,25){\makebox(0,0)[rc]{distributive elements}}
\put(79,25){\makebox(0,0)[lc]{codistributive elements}}
\put(57,28){\makebox(0,0)[cc]{modular elements}}
\put(27,38){\makebox(0,0)[cc]{lower-modular elements}}
\put(87,38){\makebox(0,0)[cc]{upper-modular elements}}
\end{picture}
\caption{Special elements in abstract lattices}
\label{classes}
\end{center}
\end{figure}

Note that special elements play an essential role in the abstract lattice theory (see~\cite[Section~III.2]{Gratzer-11} or~\cite[Sections~2.1 and~2.2]{Stern-99}, for instance). For instance, if an element $x$ of a lattice $L$ is neutral then $L$ is decomposed into a subdirect product of its intervals $(x]=\{y\in L\mid y\le x\}$ and $[x)=\{y\in L\mid x\le y\}$ (see~\cite[Theorem~254]{Gratzer-11}). Thus, the knowledge of what elements of a lattice are neutral, gives the important information on a structure of the lattice as a whole.

All types of elements mentioned above are intensively and successfully studied with respect to the lattice \textbf{SEM} of all semigroup varieties. For brevity, a semigroup variety that is a neutral element of the lattice \textbf{SEM} is called a \emph{neutral in} \textbf{SEM} variety. Analogous convention is applied for all other types of special elements. Results about special elements in \textbf{SEM} are overviewed in the recent survey~\cite{Vernikov-15}. In particular,
\begin{itemize}
\item neutral in \textbf{SEM} varieties were completely determined in~\cite{Volkov-05};
\item it is proved that a semigroup variety is costandard in \textbf{SEM} if and only if it is neutral in \textbf{SEM}~\cite{Vernikov-11}; thus, in view of the previous result, costandard in \textbf{SEM} varieties are completely classified;
\item distributive in \textbf{SEM} varieties were completely classified in~\cite{Vernikov-Shaprynskii-10};
\item in fact, standard in \textbf{SEM} varieties are completely described in~\cite{Vernikov-Shaprynskii-10} too because the results of this work readily imply that a semigroup variety is standard in \textbf{SEM} if and only if it is distributive in \textbf{SEM} (see comments after Theorem~3.3 in the survey~\cite{Vernikov-15});
\item a strong necessary conditions for modular in \textbf{SEM} varieties were discovered in~\cite{Jezek-McKenzie-93}\footnote{Note that paper~\cite{Jezek-McKenzie-93} deals with the lattice of equational theories of semigroups, that is, the dual of \textbf{SEM} rather than the lattice \textbf{SEM} itself. When reproducing results from~\cite{Jezek-McKenzie-93}, we adapt them to the terminology of the present article.} and~\cite{Vernikov-07a} (these results are reproved in a simpler way in~\cite{Shaprynskii-12a});
\item a sufficient condition for modular in \textbf{SEM} varieties was found in~\cite{Vernikov-Volkov-88} and rediscovered in~\cite{Jezek-McKenzie-93};
\item commutative modular in \textbf{SEM} varieties were completely determined in~\cite{Vernikov-07a};
\item lower-modular in \textbf{SEM} varieties were completely classified in~\cite{Shaprynskii-Vernikov-10};
\item commutative upper-modular in \textbf{SEM} varieties were completely classified in~\cite{Vernikov-08b}; it is noted in~\cite{Vernikov-08a} that this result may be expanded on wider class of strongly permutative varieties without any change (a definition of strongly permutative varieties see in Subsection~\ref{introduction epigroups} below);
\item strongly permutative codistributive varieties were completely described in~\cite{Vernikov-11}.
\end{itemize}
Note that the articles~\cite{Vernikov-08c,Vernikov-11} contain some other results concerning upper-modular and codistributive elements in \textbf{SEM}.

\subsection{Epigroups}
\label{introduction epigroups}

A considerable attention in the semigroup theory is devoted to semigroups equipped by an additional unary operation. Such algebras are said to be \emph{unary semigroups}. As concrete types of unary semigroups, we mention completely regular semigroups (see~\cite{Petrich-Reilly-99}), inverse semigroups (see~\cite{Petrich-84}), semigroups with involution etc.

One more natural type of unary semigroups is epigroups. A semigroup $S$ is called an \emph{epigroup} if, for any element $x$ of $S$, there is a natural $n$ such that $x^n$ is a \emph{group element} (this means that $x^n$ lies in some subgroup of $S$). Extensive information about epigroups may be found in the fundamental work~\cite{Shevrin-94} by L.\,N.\,Shevrin and the survey~\cite{Shevrin-05} by the same author. The class of epigroups is very wide. In particular, it includes all periodic semigroups (because some power of each element in such a semigroup lies in some its finite cyclic subgroup) and all completely regular semigroups (in which all elements are group ones). The unary operation on an epigroup is defined by the following way. If $S$ is an epigroup and $x\in S$ then some power of $x$ lies in a maximal subgroup of $S$. We denote this subgroup by $G_x$. The unit element of $G_x$ is denoted by $x^\omega$. It is well known (see~\cite{Shevrin-94}, for instance) that the element $x^\omega$ is well defined and $xx^\omega=x^\omega x\in G_x$. We denote the element inverse to $xx^\omega$ in $G_x$ by $\overline x$. The map $x\longmapsto\,\overline x$ is just the mentioned unary operation on an epigroup $S$. The element $\overline x$ is called \emph{pseudoinverse} to $x$. Throughout this paper, we consider epigroups as algebras with the operations of multiplication and pseudoinversion. In particular, this allows us to consider varieties of epigroups as algebras with the two mentioned operations. An idea to examine epigroups in the framework of the theory of varieties was promoted by L.\,N.\,Shevrin in~\cite{Shevrin-94} (see also~\cite{Shevrin-05}). An overview of first results obtained here may be found in~\cite[Section~2]{Shevrin-Vernikov-Volkov-09}.

If $S$ is a \emph{completely regular} semigroup (i.\,e., the union of groups) and $x\in S$ then $\overline x$ is the element inverse to $x$ in the maximal subgroup containing $x$. Thus, the operation of pseudoinversion on a completely regular semigroup coincides with the unary operation traditionally considered on completely regular semigroups. We see that varieties of completely regular semigroups (considered as unary semigroups) are varieties of epigroups in the sense defined above. Further, it is well known and may be easily checked that in every periodic epigroup the operation of pseudoinversion may be expressed in terms of multiplication (see~\cite{Shevrin-94}, for instance). This means that periodic varieties of epigroups may be identified with periodic varieties of semigroups.

It seems to be very natural to examine all restrictions on semigroup varieties mentioned in Subsection~\ref{introduction semigroups} for epigroup varieties. This is realized in~\cite{Vernikov-Skokov-16,Vernikov-Volkov-Shaprynskii-16+} for identities and related restrictions to subvariety lattice. In particular, epigroup varieties with modular, Arguesian or semimodular subvariety lattice are completely classified and epigroup analogs of results concerning semigroup varieties with distributive subvariety lattice are obtained there. In the present article, we start with an examination of special elements in the lattice \textbf{EPI} of all epigroup varieties. We consider here elements of all mentioned above eight types in \textbf{EPI}. For brevity, we call an epigroup variety \emph{neutral} if it is a neutral element of the lattice \textbf{EPI}. Analogous convention will be applied for all other types of special elements. Our main results give:
\begin{itemize}
\item a complete description of neutral, standard, costandard, distributive and lower-modular varieties;
\item a strong necessary condition and a sufficient condition for modular varieties;
\item a description of commutative modular varieties, strongly permutative codistributive varieties and strongly permutative upper-modular varieties.
\end{itemize}

One can start with formulations of results. We denote by $\mathcal T$, $\mathcal{SL}$ and $\mathcal{ZM}$ the trivial variety, the variety of all semilattices and the variety of all semigroups with zero multiplication respectively. Our first result is the following

\begin{theorem}
\label{neutral and costandard}
For an epigroup variety $\mathcal V$, the following are equivalent:
\begin{itemize}
\item[\textup{a)}] $\mathcal V$ is a neutral element of the lattice $\mathbf{EPI}$;
\item[\textup{b)}] $\mathcal V$ is a costandard element of the lattice $\mathbf{EPI}$;
\item[\textup{c)}] $\mathcal V$ is simultaneously a lower-modular and upper-modular element of the lattice $\mathbf{EPI}$;
\item[\textup{d)}] $\mathcal V$ coincides with one of the varieties $\mathcal T$, $\mathcal{SL}$, $\mathcal{ZM}$ or $\mathcal{SL\vee ZM}$.
\end{itemize}
\end{theorem}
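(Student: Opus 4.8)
The plan is to establish the equivalences by a cycle of implications, with the implication d) $\Rightarrow$ a) carrying most of the constructive content and the implications among a), b), c) being essentially formal consequences of the poset in Fig.~\ref{classes} together with known transfer results. First I would record the trivial chain of implications that hold in any lattice: a neutral element is costandard, so a) $\Rightarrow$ b); a costandard element is modular, lower-modular and upper-modular, so b) $\Rightarrow$ c). The genuinely non-formal steps are c) $\Rightarrow$ d) and d) $\Rightarrow$ a).

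For c) $\Rightarrow$ d), I would argue that an element which is simultaneously modular and lower-modular is already very constrained, invoking whatever complete classification of lower-modular varieties the paper proves (the excerpt promises such a description), and then use upper-modularity, or the necessary condition for modular elements, to cut the list of candidate varieties down to the four listed. Concretely, a lower-modular variety $\mathcal V$ should be forced to be small — contained in some combination of $\mathcal{SL}$, $\mathcal{ZM}$ and a nil-variety — and then modularity plus upper-modularity should eliminate any nil-part beyond $\mathcal{ZM}$ itself and any non-permutative behaviour. So c) $\Rightarrow$ d) reduces to quoting the lower-modular classification and checking finitely many candidate varieties against the definitions of modular and upper-modular element; I expect this to be routine once the lower-modular theorem is in hand.

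For d) $\Rightarrow$ a), I would verify directly that each of $\mathcal T$, $\mathcal{SL}$, $\mathcal{ZM}$, $\mathcal{SL\vee ZM}$ is a neutral element of $\mathbf{EPI}$. The variety $\mathcal T$ is the bottom element of the lattice and is trivially neutral. For the other three I would use the standard criterion that $x$ is neutral if and only if the map $y\mapsto(y\wedge x,\,y\vee x)$ embeds $L$ into $(x]\times[x)$, i.e.\ $x$ is neutral iff for all $y,z$: $y\wedge x=z\wedge x$ and $y\vee x=z\vee x$ imply $y=z$, together with the fact that $x$ is then distributive and codistributive. The key structural input is that each of these varieties is defined by identities in a ``separate part'' of the signature: $\mathcal{SL}$ is the variety of idempotent commutative semigroups, $\mathcal{ZM}$ is the nil part of rank two, and their join behaves well because a variety either contains $\mathcal{SL}$ or does not, and either contains $\mathcal{ZM}$ or does not, and these two dichotomies are independent and determine the ``$\wedge$ with $x$, $\vee$ with $x$'' pair. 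I would phrase this via an explicit description of $\mathcal V\wedge\mathcal X$ and $\mathcal V\vee\mathcal X$ for arbitrary $\mathcal X$ when $\mathcal V$ is one of the four varieties, showing these jointly recover $\mathcal X$.

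The main obstacle I anticipate is the verification that $\mathcal{SL\vee ZM}$ is neutral: unlike the atoms $\mathcal{SL}$ and $\mathcal{ZM}$, this is a join of two ``independent'' atoms, and one must check that meeting and joining with it does not lose information about how an arbitrary variety $\mathcal X$ sits relative to each atom separately. This should follow from a lemma of the form: for any epigroup variety $\mathcal X$, $\mathcal X\vee(\mathcal{SL\vee ZM})$ and $\mathcal X\wedge(\mathcal{SL\vee ZM})$ together determine $\mathcal X\cap\mathcal{SL}$, $\mathcal X\cap\mathcal{ZM}$ and the rest of $\mathcal X$ — but establishing the non-interaction between the semilattice identities and the zero-multiplication identities inside an arbitrary epigroup variety is where care is needed, and I would expect to need a description of the bottom of $\mathbf{EPI}$ (its atoms and the varieties immediately above them) to make this precise. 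The remaining implications and the finite candidate-checking in c) $\Rightarrow$ d) I expect to be straightforward.
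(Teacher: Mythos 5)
Your cycle of implications breaks at b)\,$\Rightarrow$\,c). In an abstract lattice a costandard element is modular and, being codistributive, upper-modular; but it is \emph{not} lower-modular in general --- lower-modularity sits on the dual (standard/distributive) side of Fig.~\ref{classes}, and there is no formal implication from costandard to lower-modular. That costandard varieties turn out to be lower-modular in $\mathbf{EPI}$ is a \emph{consequence} of the theorem, not something you may assume, so with your scheme the equivalence of b) with the remaining conditions is simply not established. This is precisely why the paper does not route b) through c): it proves b)\,$\longrightarrow$\,d) separately, using that a costandard variety is modular (so Theorem~\ref{modular nec} gives $\mathcal{V=M\vee N}$ with $\mathcal N$ a nilvariety) and codistributive, and then invoking Lemma~\ref{codistr without P and P*} to force $\deg(\mathcal N)\le2$, i.e. $\mathcal{N\subseteq ZM}$.

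Two further points. First, your c)\,$\Rightarrow$\,d) is not a ``routine finite check'': after the lower-modular classification (Theorem~\ref{lower-modular}) the candidates are $\mathcal{M\vee N}$ with $\mathcal N$ ranging over \emph{all} $0$-reduced varieties, an infinite family, and the upper-modular classification (Theorem~\ref{upper-modular}) covers only strongly permutative varieties, which $0$-reduced varieties such as $\mathcal Q$ are not; eliminating them needs a genuine argument, not a candidate check. The paper's route is different: modularity gives the decomposition via Theorem~\ref{modular nec}, and lower-modularity is then used with a construction from~\cite{Volkov-05} (a periodic group variety $\mathcal G$ with $\Nil(\mathcal{G\vee N})\supset\mathcal N$ when $\mathcal N$ is not $0$-reduced) to pin down $\mathcal N$, the reduction to $\mathcal N$ itself being made by Corollary~\ref{join with SL or ZM or SL+ZM}. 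Second, in d)\,$\Rightarrow$\,a) your plan hides the real work: the technical core of the paper consists of Propositions~\ref{SL is neutral} and~\ref{ZM is neutral}, proved by explicit manipulation of deductions of epigroup identities (the archimedean trick $u=uu^\omega$ and the identity $(x^\omega y^\omega x^\omega)^\omega=x^\omega$ for $\mathcal{SL}$, the $\overline{\overline{u}}$-argument and complete regularity for $\mathcal{ZM}$); asserting that ``meeting and joining with $\mathcal V$ recover $\mathcal X$'' does not replace these arguments, and your anticipated difficulty with $\mathcal{SL\vee ZM}$ evaporates once the two atoms are known to be neutral, since the neutral elements of any lattice form a sublattice (\cite[Theorem~259]{Gratzer-11}), which is exactly how the paper concludes.
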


Thus, there are only a few neutral elements in the lattice \textbf{EPI}. In contrast, we note that the lattice of completely regular semigroup varieties contains infinitely many neutral elements including all band varieties, the varieties of all groups, all completely simple semigroups, all orthodox semigroups and some other (this readily follows from~\cite[Corollary~2.9]{Trotter-89}).

Let $\Sigma$ be an identity system written in the language of one associative binary operation and one unary operation. The class of all epigroups that satisfy $\Sigma$ (where the unary operation is treated as pseudoinversion) is denoted by $K_\Sigma$. The class $K_\Sigma$ is not obliged to be a variety because it maybe not closed under taking of (infinite) direct product (see~\cite[Subsection~2.3]{Shevrin-05}, for instance). Note that identity systems $\Sigma$ with the property that $K_\Sigma$ is a variety are completely determined in~\cite[Proposition~2.15]{Gusev-Vernikov-15} (see Lemma~\ref{epigroup variety} below). If $K_\Sigma$ is a variety then we use for this variety the standard notation $\var\Sigma$. It is evident that if the class $K_\Sigma$ consists of periodic epigroups then it is a periodic semigroup variety and therefore, is an epigroup variety. Whence, the notation $\var\Sigma$ is correct in this case. A pair of identities $wx=xw=w$ where the letter $x$ does not occur in the word $w$ is usually written as the symbolic identity $w=0$. This notation is justified because a semigroup with such identities has a zero element and all values of the word $w$ in this semigroup are equal to zero. We will refer to the expression $w=0$ as to a single identity. Such identities and varieties given by them are called 0-\emph{reduced}. Put
\begin{align*}
\mathcal Q&=\var\,\{x^2y=xyx=yx^2=0\},\\
\mathcal Q_n&=\var\,\{x^2y=xyx=yx^2=x_1x_2\cdots x_n=0\},\\
\mathcal R&=\var\,\{x^2=xyx=0\},\\
\mathcal R_n&=\var\,\{x^2=xyx=x_1x_2 \cdots x_n=0\}
\end{align*}
where $n$ is a natural number. We note that $\mathcal Q_1=\mathcal R_1=\mathcal T$. Our second result is the following

\begin{theorem}
\label{distributive and standard}
For an epigroup variety $\mathcal V$, the following are equivalent:
\begin{itemize}
\item[\textup{a)}] $\mathcal V$ is a distributive element of the lattice $\mathbf{EPI}$;
\item[\textup{b)}] $\mathcal V$ is a standard element of the lattice $\mathbf{EPI}$;
\item[\textup{c)}] $\mathcal {V=M \vee N}$ where $\mathcal M$ is one of the varieties $\mathcal T$ or $\mathcal {SL}$, and $\mathcal N$ is one of the varieties $\mathcal Q$, $\mathcal Q_n$, $\mathcal R$ or $\mathcal R_n$.
\end{itemize}
\end{theorem}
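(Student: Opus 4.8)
The implication \textup{b)}~$\Rightarrow$~\textup{a)} is the general lattice-theoretic fact recalled in the introduction (see~\cite[Theorem~253]{Gratzer-11}): every standard element is distributive. Likewise, a distributive element is lower-modular by Fig.~\ref{classes}. So it suffices to establish the cycle \textup{a)}~$\Rightarrow$~\textup{c)}~$\Rightarrow$~\textup{b)}, the hard directions being the first two steps.

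For \textup{a)}~$\Rightarrow$~\textup{c)}, I would first use that a distributive variety is lower-modular to pin down the rough shape of $\mathcal V$. Invoking the classification of lower-modular varieties (one of the main results of the paper), $\mathcal V=\mathcal M\vee\mathcal N$ with $\mathcal M\in\{\mathcal T,\mathcal{SL}\}$ and $\mathcal N$ a $0$-reduced variety; if one prefers not to rely on that classification here, the same conclusion can be squeezed out directly by feeding the distributive identity $\mathcal V\vee(\mathcal X\wedge\mathcal Y)=(\mathcal V\vee\mathcal X)\wedge(\mathcal V\vee\mathcal Y)$ carefully chosen small triples $\mathcal X,\mathcal Y$ (atoms of $\mathbf{EPI}$, the varieties $\mathcal{SL}$ and $\mathcal{ZM}$, one-generated group varieties, and the $0$-reduced varieties defined above), which forces $\mathcal V$ to contain no nontrivial periodic group and no non-$0$-reduced nil behaviour outside $\mathcal{SL}$. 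It then remains to decide which concrete $\mathcal N$ survive: for each $0$-reduced $\mathcal N$ not among $\mathcal Q,\mathcal Q_n,\mathcal R,\mathcal R_n$ I would exhibit epigroup varieties $\mathcal X,\mathcal Y$ witnessing a failure of the distributive law. The relevant $\mathcal X,\mathcal Y$ are again $0$-reduced (or $\mathcal{SL}$-related), so the obstruction reduces to a finite word/identity computation inside the lattice of $0$-reduced epigroup varieties, which, being periodic, coincides with the corresponding sublattice of $\mathbf{SEM}$; here the semigroup analysis of~\cite{Vernikov-Shaprynskii-10} can be reused almost verbatim, together with Lemma~\ref{epigroup variety} to keep track of which identity systems define varieties.

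For \textup{c)}~$\Rightarrow$~\textup{b)}, I must verify $(\mathcal V\vee\mathcal X)\wedge\mathcal Y=(\mathcal V\wedge\mathcal Y)\vee(\mathcal X\wedge\mathcal Y)$ for \emph{all} epigroup varieties $\mathcal X,\mathcal Y$; the inclusion $\supseteq$ is automatic, so only $\subseteq$ is at stake. After the reduction $\mathcal Y\rightsquigarrow\mathcal Y\wedge(\mathcal V\vee\mathcal X)$ one may assume $\mathcal Y\subseteq\mathcal V\vee\mathcal X$ and then analyse the subvarieties of $\mathcal V\vee\mathcal X$ using the defining identities of $\mathcal N$. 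The point is that $\mathcal V=\mathcal M\vee\mathcal N$ is a nilpotent extension of a semilattice with a $0$-reduced nil part, so any epigroup $S$ in $(\mathcal V\vee\mathcal X)\wedge\mathcal Y$ satisfies the identities of $\mathcal V$ and can be split, along an ideal series dictated by those identities, into a piece governed by $\mathcal X\wedge\mathcal Y$ and a piece governed by $\mathcal V\wedge\mathcal Y$; $S$ then embeds into a product of algebras from $\mathcal X\wedge\mathcal Y$ and from $\mathcal V\wedge\mathcal Y$, which yields $\subseteq$. Periodicity of $\mathcal N$ makes pseudoinversion term-definable on the $\mathcal N$-part, so the corresponding semigroup computations of~\cite{Vernikov-Shaprynskii-10} transfer; the already established Theorem~\ref{neutral and costandard} disposes of the degenerate cases $\mathcal N\in\{\mathcal T,\mathcal{ZM}\}$, i.e.\ $\mathcal V\in\{\mathcal T,\mathcal{SL},\mathcal{ZM},\mathcal{SL\vee ZM}\}$.

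The main obstacle is \textup{c)}~$\Rightarrow$~\textup{b)}, and precisely the fact that $\mathbf{EPI}$ is strictly larger than $\mathbf{SEM}$: being a standard element of $\mathbf{EPI}$ is genuinely stronger than being one of $\mathbf{SEM}$, so the verification must cope with non-periodic $\mathcal X,\mathcal Y$ having no semigroup counterpart. Dealing with these requires showing that the non-periodic (completely regular, `group') ingredients of $\mathcal X$ and $\mathcal Y$ interact trivially with $\mathcal V$ — e.g.\ that the group part of $(\mathcal V\vee\mathcal X)\wedge\mathcal Y$ already lies in $\mathcal X\wedge\mathcal Y$ — so that the whole problem localises to the periodic, $0$-reduced situation where the known semigroup machinery can be applied.
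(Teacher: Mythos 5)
Your skeleton (b)\,$\Rightarrow$\,a) trivial, a)\,$\Rightarrow$\,c) via the lower-modular classification) starts out on the paper's track, but the load-bearing part of your plan — proving c)\,$\Rightarrow$\,b) directly — has a genuine gap. You propose to take an arbitrary epigroup $S$ in $(\mathcal V\vee\mathcal X)\wedge\mathcal Y$ and ``split it along an ideal series dictated by the identities of $\mathcal V$'' into a piece governed by $\mathcal X\wedge\mathcal Y$ and a piece governed by $\mathcal V\wedge\mathcal Y$, then embed $S$ into a product. Nothing in the hypotheses supplies such a decomposition: a member of $(\mathcal V\vee\mathcal X)\wedge\mathcal Y$ only satisfies the identities of $\mathcal V\vee\mathcal X$ and of $\mathcal Y$, and there is no reason it should have an ideal whose Rees quotient lies in one of the two meets while the ideal (or a related subepigroup) lies in the other; that kind of structural splitting is what one gets for \emph{neutral} elements, and $\mathcal Q$, $\mathcal Q_n$, $\mathcal R$, $\mathcal R_n$ are precisely not neutral by Theorem~\ref{neutral and costandard}. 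The paper never attacks standardness head-on. It proves c)\,$\Rightarrow$\,a) by a purely equational argument: one may assume $\mathcal V=\mathcal N$ by Corollary~\ref{join with SL or ZM or SL+ZM} and $\mathcal{Y,Z\supseteq SL}$, then takes a \emph{shortest deduction} of an identity of $\mathcal{V\vee(Y\wedge Z)}$ from identities of $\mathcal Y$ and $\mathcal Z$, controls the contents of the intermediate words via Lemma~\ref{word problem}(i), and uses Lemma~\ref{what =0 in Q} to reduce to intermediate words that are linear or $x^2$, at which point periodicity kicks in and the semigroup computations of~\cite{Vernikov-Shaprynskii-10} apply. Standardness then comes essentially for free: a distributive variety is lower-modular, hence modular by Corollary~\ref{lower-modular implies modular}, and a distributive and modular element is standard by Lemma~\ref{distr+mod=stand}. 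Your proposal never uses this Gr\"atzer--Schmidt reduction, which is exactly what makes b) cheap; without it, and without a workable substitute for the splitting argument, c)\,$\Rightarrow$\,b) is not established.

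The a)\,$\Rightarrow$\,c) half is also thinner than it needs to be at the decisive step. Getting $\mathcal{V=M\vee N}$ with $\mathcal N$ $0$-reduced from Theorem~\ref{lower-modular} is fine, but narrowing $\mathcal N$ down to $\mathcal Q$, $\mathcal Q_n$, $\mathcal R$, $\mathcal R_n$ by ``exhibiting a failure of distributivity for every other $0$-reduced $\mathcal N$'' is an infinite family of obstructions for which you give no uniform construction. The paper's uniform mechanism is positive, not negative: Lemma~\ref{if u=0 then v=0} shows that a distributive $0$-reduced variety satisfying $u=0$ satisfies $v=0$ for all $v$ with the same content (in two letters, of length $\ge3$), which forces $\mathcal{N\subseteq Q}$, after which Lemma~\ref{what =0 in Q} lists the possibilities. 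Moreover, the proof of that lemma is where the epigroup-specific care lives: when the auxiliary identity $u=v$ is balanced one cannot form $\var\{u=v\}$ at all (Lemma~\ref{epigroup variety}), so the witnesses must be replaced by $\var\{xu=v\}$ and $\var\{v=xw\}$; and one cannot simply say the computation ``coincides with the corresponding sublattice of $\mathbf{SEM}$,'' because distributivity in $\mathbf{EPI}$ quantifies over all epigroup varieties, including non-periodic ones and identities involving pseudoinversion (handled in the paper via Lemma~\ref{identity for nil}). So both halves of your plan need the paper's equational machinery rather than the structural or citation-based shortcuts you sketch.
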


The following result gives a complete classification of lower-modular varieties.

\begin{theorem}
\label{lower-modular}
An epigroup variety $\mathcal V$ is a lower-modular element of the lattice $\mathbf{EPI}$ if and only if $\mathcal{V=M\vee N}$ where $\mathcal M$ is one of the varieties $\mathcal T$ or $\mathcal{SL}$, while $\mathcal N$ is a $0$-reduced variety.
\end{theorem}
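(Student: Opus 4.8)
The plan is to prove both implications separately, using the already-announced description of standard/distributive varieties (Theorem~\ref{distributive and standard}) as a convenient stock of examples on the sufficiency side and as a guide on the necessity side.

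For the \emph{necessity} direction, suppose $\mathcal V$ is lower-modular in $\mathbf{EPI}$. First I would show that $\mathcal V$ is a \emph{combinatorial} variety, i.e. contains no nontrivial groups: if some nontrivial group variety $\mathcal G$ were contained in $\mathcal V$, one picks varieties $\mathcal X\le\mathcal Y$ (for instance built from the group variety and a suitable $0$-reduced or semilattice variety) witnessing failure of the lower-modular law $\mathcal{(X\vee V)\wedge Y=X\vee(V\wedge Y)}$; the key computational input here is that joins with group varieties behave badly, and this is exactly the kind of obstruction already exploited in the semigroup case in~\cite{Shaprynskii-Vernikov-10}. Next, among combinatorial epigroup varieties the operation of pseudoinversion is essentially the identity-power map, so $\mathcal V$ is a periodic semigroup variety; hence the epigroup statement reduces to a semigroup statement, and the classification of lower-modular elements of $\mathbf{SEM}$ from~\cite{Shaprynskii-Vernikov-10} applies. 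That classification says precisely that a lower-modular semigroup variety has the form $\mathcal{M\vee N}$ with $\mathcal M\in\{\mathcal T,\mathcal{SL}\}$ and $\mathcal N$ a $0$-reduced variety; one must check that a variety of this form is also lower-modular \emph{as an element of} $\mathbf{EPI}$ rather than merely of $\mathbf{SEM}$, which is a matter of verifying that the relevant meets and joins computed in $\mathbf{EPI}$ with such a small variety stay inside the periodic (hence semigroup) part of the lattice --- this uses that $\mathcal{SL}$ and $0$-reduced varieties are periodic and that joining them with an arbitrary epigroup variety and then meeting with $\mathcal V$ cannot introduce genuinely new epigroup behaviour.

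For the \emph{sufficiency} direction, let $\mathcal{V=M\vee N}$ with $\mathcal M\in\{\mathcal T,\mathcal{SL}\}$ and $\mathcal N$ a $0$-reduced variety, and let $\mathcal X\le\mathcal Y$ be arbitrary epigroup varieties; I must prove $\mathcal{(X\vee V)\wedge Y=X\vee(V\wedge Y)}$. The inclusion $\supseteq$ is automatic by modularity of the inclusion order. For $\subseteq$, the standard technique is to take an identity $u=v$ holding in the right-hand side and show it holds in $\mathcal{(X\vee V)\wedge Y}$; one uses a normal-form / linearisation argument for the identities of $\mathcal{M\vee N}$ (here the $0$-reducedness of $\mathcal N$ and the very transparent structure of $\mathcal{SL}$ make the identity basis of $\mathcal V$ explicit), together with the description of identities of joins, reducing everything to checking identities in $\mathcal X$, in $\mathcal V$ and in $\mathcal Y$ separately. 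Rather than redo this, the cleanest route is to invoke Theorem~\ref{distributive and standard}: the varieties $\mathcal T$, $\mathcal{SL}$ and the $0$-reduced varieties $\mathcal Q,\mathcal Q_n,\mathcal R,\mathcal R_n$ are distributive, hence lower-modular; the general $0$-reduced case is then handled by the same identity-manipulation used to prove Theorem~\ref{distributive and standard}, now only needing the (weaker) lower-modular inequality, so the restriction that $\mathcal N$ be one of the four listed families can be dropped.

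The main obstacle I anticipate is the sufficiency direction for a \emph{general} $0$-reduced $\mathcal N$: one needs a workable description of the identities of $\mathcal{M\vee N}$ and of the joins $\mathcal{X\vee V}$, and must argue that an identity surviving in $\mathcal X\vee(\mathcal V\wedge\mathcal Y)$ already forces itself on $(\mathcal X\vee\mathcal V)\wedge\mathcal Y$. The delicate point is controlling what happens with the pseudoinversion operation when $\mathcal X$ or $\mathcal Y$ is a genuinely non-periodic epigroup variety --- the identities of $\mathcal V$ being $0$-reduced (plus possibly the semilattice identity) means every ``substantial'' word of $\mathcal V$ collapses, and the crux is to show this collapse is compatible with whatever unary-operation identities $\mathcal Y$ imposes. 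I expect this to be manageable precisely because $0$-reduced identities are so destructive, but it is where the real work lies; the combinatoriality reduction on the necessity side, by contrast, should be routine given the semigroup precedent.
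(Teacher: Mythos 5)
Your plan has genuine gaps in both directions. On the necessity side, the reduction to the classification of lower-modular elements of \textbf{SEM} is not legitimate as stated: lower-modularity in \textbf{EPI} quantifies only over epigroup varieties, so it gives no information about the lower-modular law tested against non-periodic semigroup varieties, and therefore does not supply the hypothesis of the theorem of~\cite{Shaprynskii-Vernikov-10} (that $\mathcal V$ is lower-modular \emph{in} \textbf{SEM}). The equivalence of the two properties for periodic varieties (Corollary~\ref{lower-modular in SEM and EPI}) is a consequence of the theorem being proved, not an available tool. What is actually required, and what the paper does, is first to prove that a lower-modular element of \textbf{EPI} is periodic (Proposition~\ref{lower-modular is periodic}); this is an epigroup-specific argument built on the identity $\overline{\overline x}\,=\,\overline{\overline x}\cdot\overline x\,x$ and the varieties $\var\{x^ny^3x=\,\overline{\overline{x^n}}\,y^3x\}$ and $\var\{x^ny^2x=x^ny^3x\}$, and nothing in your sketch produces it. One then re-runs the semigroup argument of~\cite{Shaprynskii-12a} inside \textbf{EPI}, replacing its ingredients by the epigroup analogues (Proposition~\ref{decomposition} and Lemma~\ref{identities for lower-modular}). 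Your alternative first step --- proving $\mathcal V$ combinatorial because ``joins with group varieties behave badly'' --- is not backed by any concrete witness configuration, and in both the semigroup and epigroup proofs the absence of nontrivial groups emerges only at the end from the structural decomposition, not as an easy opening move.

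On the sufficiency side, invoking Theorem~\ref{distributive and standard} covers only $\mathcal Q$, $\mathcal Q_n$, $\mathcal R$, $\mathcal R_n$; for a general $0$-reduced $\mathcal N$ you merely expect the identity manipulations to survive with the weaker lower-modular inequality. That is exactly where the work lies, and the expectation is not self-evident: those manipulations depend on Lemma~\ref{what =0 in Q}-type control of the nonzero words of $\mathcal N$, which is special to the four families (a general $0$-reduced variety is \emph{not} distributive, by Theorem~\ref{distributive and standard} itself), and you do not identify where lower-modularity is cheaper. The paper bypasses all of this with a short lattice-theoretic argument (Proposition~\ref{modular and lower-modular suf in USEM}): a $0$-reduced variety corresponds to a fully invariant congruence on $F$ with exactly one non-singleton class; such an equivalence relation is a modular and upper-modular element of $\Eq(F)$, hence of the lattice of fully invariant congruences, and passing through the anti-isomorphism with \textbf{USEM} yields lower-modularity (and modularity) in \textbf{USEM}, hence in its sublattice \textbf{EPI}; the join with $\mathcal{SL}$ is then absorbed by Corollary~\ref{join with SL or ZM or SL+ZM}. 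Without either that argument or a fully worked identity argument, your sufficiency direction is incomplete.
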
 

We follow the agreement that an adjective indicating a property shared by all semigroups of a given variety is applied to the variety itself; the expressions like `completely regular variety', `periodic variety', `nilvariety', etc.\ are understood in this sense. Recall that an identity of the form
$$x_1x_2\cdots x_n=x_{1\pi}x_{2\pi}\cdots x_{n\pi}$$
where $\pi$ is a non-trivial permutation on the set $\{1,2,\dots,n\}$ is called \emph{permutative}; if $1\pi\ne1$ and $n\pi\ne n$ then this identity is said to be \emph{strongly permutative}. A semigroup or a variety that satisfies [strongly] permutative identity also is called [\emph{strongly}] \emph{permutative}. The following two results describe codistributive and upper-modular varieties in the strongly permutative case.

\begin{theorem}
\label{codistributive}
A strongly permutative epigroup variety $\mathcal V$ is a codistributive element of the lattice $\mathbf{EPI}$ if and only if $\mathcal{V=G\vee X}$ where $\mathcal G$ is an Abelian group variety, while $\mathcal X$ is one of the varieties $\mathcal T$, $\mathcal{SL}$, $\mathcal{ZM}$ or $\mathcal{SL\vee ZM}$.
\end{theorem}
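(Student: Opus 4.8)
The plan is to prove the two implications of the equivalence separately, using throughout the standard reformulation: $\mathcal V$ is a codistributive element of $\mathbf{EPI}$ if and only if the map $\Phi_{\mathcal V}\colon\mathcal Y\longmapsto\mathcal V\wedge\mathcal Y=\mathcal V\cap\mathcal Y$ preserves finite joins (it automatically preserves meets and the order). Two facts will be used repeatedly. First, a group satisfying a non-trivial permutation identity is commutative (set all but two of its variables equal to the identity element of the group); hence the group part $\mathcal G=\mathcal V\wedge\mathcal{GR}$ of a strongly permutative variety $\mathcal V$ --- here $\mathcal{GR}$ denotes the variety of all groups --- is an Abelian group variety. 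Second, no left- or right-zero semigroup satisfies a strongly permutative identity, so the completely regular part of a strongly permutative variety is a variety of semilattices of Abelian groups, and therefore coincides with $\mathcal G$ or with $\mathcal G\vee\mathcal{SL}$. Also, by Fig.~\ref{classes} a codistributive element is upper-modular, which provides a useful guide.

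For the sufficiency, let $\mathcal V=\mathcal G\vee\mathcal X$ with $\mathcal G$ an Abelian group variety and $\mathcal X\in\{\mathcal T,\mathcal{SL},\mathcal{ZM},\mathcal{SL\vee ZM}\}$. By Theorem~\ref{neutral and costandard} the variety $\mathcal X$ is neutral, hence both standard and codistributive. Standardness of $\mathcal X$ gives, for every epigroup variety $\mathcal Y$,
\[
\mathcal V\wedge\mathcal Y=(\mathcal X\vee\mathcal G)\wedge\mathcal Y=(\mathcal X\wedge\mathcal Y)\vee(\mathcal G\wedge\mathcal Y),
\]
so $\Phi_{\mathcal V}=\Phi_{\mathcal X}\vee\Phi_{\mathcal G}$ pointwise. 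A pointwise join of two join-preserving self-maps of a lattice is again join-preserving, so it suffices to show that $\mathcal X$ and $\mathcal G$ are codistributive. For $\mathcal X$ this was just noted. For $\mathcal G$ it suffices to check that every Abelian group variety is a codistributive element of $\mathbf{EPI}$; I would do this by first verifying that the group members of a join $\mathcal Y\vee\mathcal Z$ are precisely the groups lying in $(\mathcal Y\wedge\mathcal{GR})\vee(\mathcal Z\wedge\mathcal{GR})$ --- so that, as far as groups are concerned, meeting with $\mathcal{GR}$ distributes over joins --- and then using that joins of group varieties agree in $\mathbf{EPI}$ and in the lattice of group varieties, together with the fact that the lattice of Abelian group varieties is distributive. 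Combining these, $\mathcal V$ is codistributive.

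For the necessity, let $\mathcal V$ be strongly permutative and codistributive, with group part the Abelian group variety $\mathcal G$. Using the structure theory of strongly permutative varieties (which, in particular, expresses such a variety as the join of its completely regular part and its largest nil-subvariety --- establishing this is part of the work) we write $\mathcal V=\mathcal K\vee\mathcal N$ with $\mathcal K\in\{\mathcal G,\mathcal G\vee\mathcal{SL}\}$ and $\mathcal N$ a strongly permutative nilvariety; it then remains exactly to show $\mathcal N\subseteq\mathcal{ZM}$, for then $\mathcal N\in\{\mathcal T,\mathcal{ZM}\}$ and $\mathcal V=\mathcal G\vee\mathcal X$ with $\mathcal X\in\{\mathcal T,\mathcal{SL},\mathcal{ZM},\mathcal{SL\vee ZM}\}$, as required. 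Assume $\mathcal N\not\subseteq\mathcal{ZM}$. Then $\mathcal N$ fails to satisfy $x^2=0$, and hence contains one of a short explicit list of minimal ``bad'' strongly permutative nilvarieties --- for instance the variety generated by the three-element monogenic nilsemigroup with $a^3=0$, or a strongly permutative nilvariety of nilpotency class $3$. For each bad variety $\mathcal W$ on the list I would produce epigroup varieties $\mathcal Y$ and $\mathcal Z$ with $\mathcal W\subseteq\mathcal Y\vee\mathcal Z$ while neither $\mathcal Y$ nor $\mathcal Z$ contains $\mathcal W$, arranged so that a generating epigroup of $\mathcal W$ lies in $\mathcal V\wedge(\mathcal Y\vee\mathcal Z)$ but not in $(\mathcal V\wedge\mathcal Y)\vee(\mathcal V\wedge\mathcal Z)$; this is possible because nilvarieties are far from congruence-distributive, so subdirectly irreducible members of $\mathcal Y\vee\mathcal Z$ need not lie in $\mathcal Y\cup\mathcal Z$, and a strongly permutative $\mathcal V$ cannot recover the identities lost when the join is split. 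This contradicts codistributivity, so $\mathcal N\subseteq\mathcal{ZM}$.

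The main obstacle is the necessity part, and within it the analysis of the nilpotent part $\mathcal N$: one must pin down a complete list of the minimal strongly permutative nilvarieties strictly above $\mathcal{ZM}$ and, for each, construct an explicit pair $\mathcal Y,\mathcal Z$ witnessing the failure of codistributivity inside $\mathbf{EPI}$ --- not merely inside $\mathbf{SEM}$, which requires care with the pseudoinversion operation; matching this against the semigroup analogue in~\cite{Vernikov-11} should largely dictate both the list and the witnesses. A secondary non-routine point, used in the sufficiency proof, is that the variety of all groups --- and hence every Abelian group variety --- is a codistributive element of $\mathbf{EPI}$, which is where the interplay between arbitrary epigroups and group members of a join has to be controlled.
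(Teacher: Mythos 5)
Your proposal has genuine gaps in both directions. In the necessity part, the structural claim you start from --- that a strongly permutative variety is the join of its completely regular part and its largest nil-subvariety, so that $\mathcal V=\mathcal K\vee\mathcal N$ with $\mathcal K\in\{\mathcal G,\mathcal{G\vee SL}\}$ --- is false: the commutative variety $\mathcal C_2=\var\{x^2=x^3,\,xy=yx\}$ is a counterexample, since it strictly contains $\mathcal{SL}\vee\Nil(\mathcal C_2)$ (the identity $x^2y=x^2y^2$ holds in the latter join but fails in $\mathcal C_2$). The correct decomposition (Corollary~\ref{str permut decomposition}) is $\mathcal{V=G\vee C}_m\vee\mathcal N$ with an extra factor $\mathcal C_m$ generated by a combinatorial cyclic monoid, and eliminating $m\ge2$ is a central piece of the necessity proof that your plan never addresses. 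The paper gets it, together with $\mathcal{N\subseteq ZM}$, in one stroke: codistributivity implies upper-modularity, so by Theorem~\ref{upper-modular} (already proved) $\mathcal V$ is commutative, hence $\mathcal P,\overleftarrow{\mathcal P}\nsubseteq\mathcal V$, and Lemma~\ref{codistr without P and P*} then forces $\deg(\mathcal V)\le2$, which kills both $\mathcal C_m$ ($m\ge2$) and any nilpart outside $\mathcal{ZM}$. Your alternative plan for $\mathcal{N\subseteq ZM}$ --- a ``short explicit list of minimal bad strongly permutative nilvarieties'' plus ad hoc witnesses $\mathcal Y,\mathcal Z$ --- is not executed at all (no list, no witnesses), and its first reduction is already wrong: $\mathcal{N\nsubseteq ZM}$ means $\mathcal N$ fails $xy=0$, not $x^2=0$ (e.g.\ $\var\{x^2=0,\,xy=yx\}\nsubseteq\mathcal{ZM}$).

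In the sufficiency part, your reduction to the codistributivity of an Abelian group variety is sound (standardness of the neutral varieties $\mathcal{SL}$, $\mathcal{ZM}$, $\mathcal{SL\vee ZM}$ plays the role of the paper's Corollary~\ref{join with SL or ZM or SL+ZM}), but the way you propose to prove that reduction's target is where the gap lies. Everything hinges on the assertion that $\Gr(\mathcal{Y\vee Z})=\Gr(\mathcal Y)\vee\Gr(\mathcal Z)$ for arbitrary epigroup varieties $\mathcal Y,\mathcal Z$, i.e.\ that $\mathcal{GR}$ itself is a codistributive element of $\mathbf{EPI}$. This is a substantial unproved claim --- it is exactly the kind of statement the paper takes pains to verify even in special cases (see the computation of $\Gr(\mathcal{G\vee U})$ inside the proof of Proposition~\ref{direct product}) --- and nothing in your sketch establishes it; moreover, even granting it, ``the lattice of Abelian group varieties is distributive'' is not enough, since $\Gr(\mathcal Y)$ and $\Gr(\mathcal Z)$ need not be Abelian, so you would still have to show that an Abelian group variety is codistributive in the lattice of all group varieties (this can be patched by an exponent argument, but it is an extra step you do not supply). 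The paper sidesteps all of this: if $\mathcal Y$ or $\mathcal Z$ contains $\mathcal{AG}$ the required equality is immediate, and otherwise $\mathcal Y,\mathcal Z$ are periodic and the verification is delegated to the semigroup-lattice argument of Theorem~1.2 in~\cite{Vernikov-11}. So, as it stands, neither direction of your proof is complete.
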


Put $\mathcal C_m=\var\{x^m=x^{m+1},\,xy=yx\}$ for arbitrary natural $m$. In particular, $\mathcal C_1=\mathcal{SL}$. It will be convenient for us also to assume that $\mathcal C_0=\mathcal T$.

\begin{theorem}
\label{upper-modular}
A strongly permutative epigroup variety $\mathcal V$ is an upper-modular element of the lattice $\mathbf{EPI}$ if and only if one of the following holds:
\begin{itemize}
\item[\textup{(i)}] $\mathcal{V=M\vee N}$ where $\mathcal M$ is one of the varieties $\mathcal T$ or $\mathcal{SL}$, and $\mathcal N$ is a nilvariety satisfying the commutative law and the identity
\begin{equation}
\label{xxy=xyy}
x^2y=xy^2;
\end{equation}
\item[\textup{(ii)}] $\mathcal{V=G\vee C}_m\vee\mathcal N$ where $\mathcal G$ is an Abelian group variety, $0\le m\le2$ and $\mathcal N$ satisfies the commutative law and the identity
\begin{equation}
\label{xxy=0}
x^2y=0\ldotp
\end{equation}
\end{itemize}
\end{theorem}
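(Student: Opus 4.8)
The plan is to prove the two implications of the equivalence separately and, in each, to treat the ``group part'' and the ``group-free part'' of $\mathcal V$ apart, leaning on the known classification of strongly permutative \emph{semigroup} varieties that are upper-modular~\cite{Vernikov-08b,Vernikov-08a}. It is convenient to note at the outset that, since a codistributive element is upper-modular, Theorem~\ref{codistributive} already places every variety $\mathcal{G\vee X}$ with $\mathcal G$ Abelian and $\mathcal X\in\{\mathcal T,\mathcal{SL},\mathcal{ZM},\mathcal{SL\vee ZM}\}$ among the varieties of types~(i)--(ii); so for sufficiency only the ``extra'' varieties — type~(i) with a commutative nilvariety satisfying $x^2y=xy^2$ but not $x^2y=0$, and type~(ii) with $m=2$ or with a commutative nil-tail satisfying $x^2y=0$ but larger than $\mathcal{ZM}$ — require a fresh verification of upper-modularity.

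\emph{Sufficiency.} Suppose $\mathcal V$ has the form~(i) or~(ii); in either case $\mathcal V$ is commutative. Fix $\mathcal Y,\mathcal Z\in\mathbf{EPI}$ with $\mathcal Y\le\mathcal V$; since $(\mathcal{Z\wedge V})\vee\mathcal Y\le(\mathcal{Z\vee Y})\wedge\mathcal V$ always, it suffices to establish the reverse inclusion. I would first observe that every subvariety of $\mathcal V$ is again of type~(i) or~(ii), with a ``smaller'' Abelian group part, a smaller $m$ and a smaller nil-tail, which makes the relevant meets with $\mathcal V$ transparent. Then, taking an identity $u=v$ that holds in $(\mathcal{Z\vee Y})\wedge\mathcal V$ and analysing it via a normal form for words in the free objects of $\mathcal V$ (the letters that occur versus those that do not; the group block handled modulo the laws of $\mathcal G$, the $\mathcal C_m$-block modulo $x^m=x^{m+1}$, the nil-tail modulo $x^2y=xy^2$ resp.\ $x^2y=0$, and pseudoinversion expressed through these), I expect the argument to reduce to the nil-tail computation already performed in the semigroup case, together with an elementary check for Abelian group varieties and for $\mathcal C_m$ with $m\le2$.

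\emph{Necessity.} Let $\mathcal V$ be strongly permutative and upper-modular. Step~(1): $\mathcal V$ is commutative — for otherwise $\mathcal V$ contains a non-commutative semigroup, and then suitable small test varieties $\mathcal Y\le\mathcal V$ and $\mathcal Z$ (built from $\mathcal{ZM}$ and from a witness of the failure of commutativity) violate the upper-modular equality. Step~(2): if $\mathcal V$ is periodic it is a commutative semigroup variety, and the conclusion follows from~\cite{Vernikov-08b} after checking that the hypotheses there translate verbatim (a nilvariety with $x^2y=xy^2$, resp.\ with $x^2y=0$ and completely regular part $\mathcal C_m$, $m\le2$). Step~(3): if $\mathcal V$ is non-periodic, write $\mathcal V=\mathcal{G\vee W}$ with $\mathcal G$ the largest group subvariety of $\mathcal V$ and $\mathcal W$ its group-free part; then feeding into the upper-modular law the choice $\mathcal Y=\mathcal G$ together with a well-chosen $\mathcal Z$ forces $\mathcal G$ to be Abelian, while choices of $\mathcal Y$ inside $\mathcal W$ force the completely regular part of $\mathcal W$ to be $\mathcal C_m$ with $m\le2$ and its nil-tail to satisfy $x^2y=0$ — that is, $\mathcal V$ is of type~(ii).

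The step I expect to be the main obstacle is step~(3) of necessity: controlling the nil-tail when a non-trivial group is present. One has to rule out a nil-tail satisfying only $x^2y=xy^2$ (admissible in type~(i)) as soon as $\mathcal G\ne\mathcal T$, and this seems to require a delicate choice of $\mathcal Z$ together with a precise computation of joins of the form $\mathcal{G\vee C}_m\vee\mathcal N$ in $\mathbf{EPI}$, in particular of how pseudoinversion acts on their free objects. Step~(1) is close to its semigroup analog, and the sufficiency computations, though lengthy, should be essentially routine once a normal form is set up and the semigroup precedents are invoked.
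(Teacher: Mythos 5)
Your text is a plan rather than a proof, and the decisive steps are exactly the ones left open. The most serious gap is in necessity. In Step~(2) you propose, for periodic $\mathcal V$, to ``cite'' the classification of upper-modular elements of $\mathbf{SEM}$ from~\cite{Vernikov-08b}; but upper-modularity in $\mathbf{EPI}$ does not imply upper-modularity in $\mathbf{SEM}$ (the test variety $\mathcal Z$ ranges over different collections of varieties, with different joins), so that theorem is simply not applicable to your hypothesis. The equivalence of the two properties for periodic strongly permutative varieties (Corollary~\ref{upper-modular str permut in SEM and EPI}) is a \emph{consequence} of Theorem~\ref{upper-modular}, not an available input. Nor can the $\mathbf{SEM}$ arguments be rerun ``verbatim'': they rely on periodicity of proper upper-modular varieties, which fails in $\mathbf{EPI}$ (type~(ii) admits $\mathcal{G=AG}$), and deductions in $\mathbf{EPI}$ may pass through words involving pseudoinversion. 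Concretely, the paper must replace the periodic group variety used in the semigroup proof by $\mathcal{LZM\vee RZM}$ to prove that \emph{every} nil-semigroup in $\mathcal V$ satisfies~\eqref{xxy=xyy} (Proposition~\ref{umod nec}), and it must first establish the decomposition $\mathcal{V=G\vee C}_m\vee\mathcal N$ in the epigroup setting (Proposition~\ref{decomposition}, Lemma~\ref{monoid decomposition}, Corollary~\ref{str permut decomposition}), which you assume implicitly. The obstacle you yourself flag in Step~(3) --- excluding a nil-tail that satisfies only~\eqref{xxy=xyy} once $\mathcal{G\ne T}$ or $m=2$ --- is precisely where the paper does the work: if~\eqref{xxy=0} fails in $\mathcal N$ then $\mathcal{J\subseteq N}$ by a lemma of Volkov, and joining $\mathcal J$ with a nontrivial abelian group variety or with $\mathcal C_2$ (each generated by an epigroup with unit, in which $x^2=x$ fails) produces a nil-semigroup violating~\eqref{xxy=xyy}, contradicting Proposition~\ref{umod nec}. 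You give no mechanism replacing this.

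Sufficiency is also not reduced as you claim. Invoking Theorem~\ref{codistributive} is legitimate (its sufficiency half is independent of Theorem~\ref{upper-modular}) but covers only degree-$\le2$ nil-tails, i.e.\ almost nothing; for the remaining varieties the assertion that the check ``reduces to the nil-tail computation already performed in the semigroup case'' is unsupported, because $\mathcal Z$ is an arbitrary, possibly non-periodic epigroup variety and the deductions connecting identities of $\mathcal Z$, $\mathcal Y$ and $\mathcal V$ may involve non-semigroup words, which have to be eliminated via Lemma~\ref{identity for nil} at each step. The paper's actual verification is substantial: for case~(i) it determines the whole lattice $L(\mathcal I)$ and checks the two invariants (degree, and which of $x^3$, $x^2y$, $x^2$ equal $0$) on both sides of the upper-modular law; for case~(ii) it proves the equality of the group, $\mathcal C_m$- and nil-components separately, using Corollaries~\ref{degree of meet}--\ref{degree of join with cr} and epigroup-specific identities such as $x^2y=\,\overline{\overline x}\,^2y$. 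Your ``normal form'' paragraph names none of these ingredients, so as written the proposal does not constitute a proof of either implication.
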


The remaining three results devoted to modular varieties. A word written in the language of multiplication and pseudoinversion is called a \emph{semigroup word} if it does not include the operation of pseudoinversion. An identity is called a \emph{semigroup identity} if both its parts are semigroup words. A semigroup identity $u=v$ is said to be \emph{substitutive} if $u$ and $v$ depend on the same letters and $v$ may be obtained from $u$ by renaming of letters. The following result gives a strong necessary condition for modular varieties. 

\begin{theorem}
\label{modular nec}
If an epigroup variety $\mathcal V$ is a modular element of the lattice $\mathbf{EPI}$ then $\mathcal{V=M\vee N}$ where $\mathcal M$ is one of the varieties $\mathcal T$ or $\mathcal{SL}$, and $\mathcal N$ is a nilvariety given by $0$-reduced and substitutive identities only.
\end{theorem}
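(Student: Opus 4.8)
The plan is to prove the two necessary conditions on $\mathcal V$ separately: first that $\mathcal V$ decomposes as $\mathcal{M\vee N}$ with $\mathcal M\in\{\mathcal T,\mathcal{SL}\}$ and $\mathcal N$ a nilvariety, and second that such an $\mathcal N$ must be $0$-reduced and substitutive. For the first part I would exploit that a modular element of $\mathbf{EPI}$ is in particular ``modular in its neighbourhood'' and run arguments parallel to the semigroup case (as in \cite{Jezek-McKenzie-93,Vernikov-07a,Shaprynskii-12a}). The key idea: if $\mathcal V$ contains a nontrivial group, or a non-nil completely regular subsemigroup beyond $\mathcal{SL}$, one builds a counterexample to the modular law $(\mathcal X\vee\mathcal Y)\wedge\mathcal V=(\mathcal X\wedge\mathcal V)\vee\mathcal Y$ with $\mathcal Y\le\mathcal V$, using small, well-understood varieties (the variety of groups of a fixed prime exponent, $\mathcal{SL}$, $\mathcal{ZM}$, and the like) as $\mathcal X,\mathcal Y$. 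Since $\mathcal V$ is also modular, Theorem~\ref{modular nec}'s weaker cousins force $\mathcal V$ to be the join of a subvariety of $\mathcal{SL}$ with a periodic nilvariety; combining with the fact that in a periodic epigroup pseudoinversion is term-definable, $\mathcal N$ is genuinely a semigroup nilvariety, and $\mathcal M\in\{\mathcal T,\mathcal{SL}\}$.

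For the second part, suppose $\mathcal N$ fails to be defined by $0$-reduced and substitutive identities. I would first reduce to the case $\mathcal V=\mathcal N$ (i.e.\ handle the $\mathcal{SL}$-summand by a standard ``cancellation'' lemma for the join with $\mathcal{SL}$, already implicit in the proofs of Theorems~\ref{distributive and standard} and~\ref{lower-modular}). A nilvariety that is not given by $0$-reduced and substitutive identities satisfies some identity $u=v$ where either some letter occurs on one side but not the other with positive multiplicity-balanced content — impossible in a nilvariety unless it is a consequence of a shorter $0=\cdots$ relation — or, more to the point, $u$ and $v$ have the same letters but $v$ is \emph{not} obtained from $u$ by renaming, meaning some letter genuinely changes its multiplicity or the identity is a ``mixed'' one. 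In each such case I would locate a two- or three-element nilsemigroup $S\notin\mathcal N$ and varieties $\mathcal X\supseteq\mathcal N$, $\mathcal Y\le\mathcal N$ witnessing failure of modularity: the standard trick is to take $\mathcal Y$ generated by $S$ intersected appropriately, $\mathcal X=\mathcal N\vee\var\{S\}$, so that $(\mathcal X\vee\mathcal Y)\wedge\mathcal N\ne(\mathcal X\wedge\mathcal N)\vee\mathcal Y$. Here the fine structure of free nilsemigroups and the description of the lattice of nilvarieties by their ``forbidden words'' is what makes the counterexamples explicit.

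The main obstacle I anticipate is the second part: ruling out \emph{every} non-substitutive, non-$0$-reduced identity. Substitutivity is a delicate syntactic condition, and a nilvariety can be defined by many superficially different identity systems, so one must argue at the level of the fully invariant congruence (equivalently, the set of words equal to $0$ and the set of pairs of nonzero words that are identified). The technical heart will be: if two nonzero words $u,v$ of the same length are identified in the free object of $\mathcal N$ but $v$ is not a bijective renaming of $u$, then by cancellativity-type considerations in nil semigroups their ``letter-content vectors'' may still differ, and this discrepancy can be amplified using the variety $\mathcal{ZM}$ or a $0$-reduced variety as a ``probe'' to break modularity. I would isolate this as a lemma about pairs of words and then feed it into the modular-law counterexample machinery. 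The group-free reduction (first part) I expect to be comparatively routine given the cited semigroup analogues and the periodicity remark for epigroups, whereas verifying that the probe varieties actually violate modularity requires careful but finite computation in free semigroups of small nilpotency class.
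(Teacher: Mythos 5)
There is a genuine gap, and it sits exactly where the epigroup setting differs from the semigroup one: you never prove that a modular element of $\mathbf{EPI}$ is periodic. Your whole plan (transferring \cite{Jezek-McKenzie-93,Vernikov-07a,Shaprynskii-12a}, and the remark that ``in a periodic epigroup pseudoinversion is term-definable'') only becomes available \emph{after} periodicity of $\mathcal V$ is known; a priori $\mathcal V$ could contain, say, the variety of all Abelian groups, and then none of the cited semigroup machinery applies, since the identities involve the unary operation essentially. The paper's proof makes this its first and pivotal step (Proposition~\ref{modular is periodic}): from the epigroup identity $x^n=x^nx^\omega$ it builds an explicit pentagon $N_5$ inside $\mathbf{EPI}$ using the nilvarieties $\var\{x_1\cdots x_{n+3}=0\}$ and $\var\{x_1\cdots x_{n+3}=0,\ x^{n+1}y=x^ny^2\}$, showing a non-periodic variety cannot be modular. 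Only then does it observe that a periodic modular element of $\mathbf{EPI}$ is a modular element of the lattice of periodic varieties, which licenses the literal transfer of the arguments of~\cite{Shaprynskii-12a} (via Proposition~\ref{decomposition}, Lemma~\ref{monoid decomposition} and Lemma~\ref{identities for modular}) to get $\mathcal{V=M\vee N}$. Your sketch of the first part presupposes this reduction rather than supplying it.

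The second part of your plan is also not a proof but an acknowledged programme. The paper does not redo the combinatorics with ``probe'' nilsemigroups at all: once periodicity is in hand, a non-substitutive identity $u=v$ of $\mathcal N$ with a non-semigroup side is killed by Lemma~\ref{identity for nil}, and a non-substitutive \emph{semigroup} identity is handled by citing \cite[Proposition~2.2]{Vernikov-07a}, after checking that all varieties occurring in that proof are periodic, so the argument survives inside the lattice of periodic varieties. If you intend to cite the same semigroup result, the missing bridge is again periodicity plus this transfer remark; if you intend to reprove it, your key device (``letter-content vectors'' amplified by $\mathcal{ZM}$ or a $0$-reduced probe) does not address the hardest case, namely non-substitutive identities between words with the same content and the same length, such as $x^2y=xyx$, which is precisely where the proofs in \cite{Vernikov-07a,Shaprynskii-12a} do their delicate work. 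Your reduction of the $\mathcal{SL}$-summand is fine in spirit (it is the paper's Corollary~\ref{join with SL or ZM or SL+ZM}, resting on the neutrality of $\mathcal{SL}$ and $\mathcal{ZM}$), but that too is a fact that must be, and in the paper is, proved for $\mathbf{EPI}$ rather than imported from $\mathbf{SEM}$.
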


It is easy to see that this theorem completely reduces the problem of description of modular varieties to nilvarieties defined by 0-reduced and substitutive identities only (see Corollary~\ref{join with SL or ZM or SL+ZM} below).

We provide the following sufficient condition for modular varieties.

\begin{theorem}
\label{modular suf}
A $0$-reduced semigroup variety is a modular element of the lattice $\mathbf{EPI}$.
\end{theorem}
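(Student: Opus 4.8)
The plan is to verify the modularity equation for $\mathcal V=\mathcal N$ directly at the level of varieties: we must show
$(\mathcal N\vee\mathcal Y)\cap\mathcal Z=(\mathcal N\cap\mathcal Z)\vee\mathcal Y$
for all epigroup varieties $\mathcal Y\subseteq\mathcal Z$. The inclusion $(\mathcal N\cap\mathcal Z)\vee\mathcal Y\subseteq(\mathcal N\vee\mathcal Y)\cap\mathcal Z$ holds in every lattice, so only the reverse inclusion needs proof; equivalently, one has to show that every identity $u=v$ holding in both $\mathcal N\cap\mathcal Z$ and $\mathcal Y$ already holds in $(\mathcal N\vee\mathcal Y)\cap\mathcal Z$. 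Write $\mathcal N=\var\{w_i=0\mid i\in I\}$. We may assume $\mathcal N\ne\mathcal T$ (otherwise the equation is trivial), so each $w_i$ has length $\ge2$, and substituting a single letter into the identities $w_i=0$ shows that $\mathcal N$ satisfies $x^n=0$ with $n=\min_i|w_i|$. Hence $\mathcal N$ — and, being subvarieties of $\mathcal N$, each of $\mathcal N\cap\mathcal Z$ and $\mathcal N\cap\mathcal Y$ — is a periodic nilvariety in which the pseudoinversion is trivial: $\overline x=x^n=0$.

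The first step is preparatory, and this is where the hypothesis is really used (and where the epigroup setting requires a little extra care over the semigroup one). Because the defining identities of $\mathcal N$ only force words to equal $0$ and never merge two nonzero words, the relatively free epigroup $F_{\mathcal N\cap\mathcal Z}(X)$ is the Rees quotient of $F_{\mathcal Z}(X)$ by the ideal $K$ generated by all substitution instances of the $w_i$ together with all pseudoinverses $\overline t$ of elements of $F_{\mathcal Z}(X)$; the pseudoinverses have to be included because $\overline x=0$ holds in $\mathcal N\cap\mathcal Z$, and their presence is exactly what makes the Rees congruence of $K$ compatible with pseudoinversion. I expect this (and the periodicity/nilpotency facts above) to be available from the general theory of $0$-reduced varieties and of $K_\Sigma$ developed earlier in the paper; cf.\ Lemma~\ref{epigroup variety}. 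The consequence I need is: if $u=v$ holds in $\mathcal N\cap\mathcal Z$ but not in $\mathcal Z$, then the images of $u$ and $v$ in $F_{\mathcal Z}(X)$ both lie in $K$, which means there are words $u',v'$ with $\mathcal Z\models u=u'$, $\mathcal Z\models v=v'$, each of $u',v'$ either containing a substitution instance of some $w_i$ or containing an occurrence of the pseudoinversion, so that $\mathcal N\models u'=0$ and $\mathcal N\models v'=0$.

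Granting this, the deduction is short. Let $u=v$ hold in $\mathcal N\cap\mathcal Z$ and in $\mathcal Y$. If $\mathcal Z\models u=v$ we are done, since $(\mathcal N\vee\mathcal Y)\cap\mathcal Z\subseteq\mathcal Z$. Otherwise take $u',v'$ as above. Since $\mathcal Y\subseteq\mathcal Z$, the identities $u=u'$ and $v=v'$ hold in $\mathcal Y$, and together with $\mathcal Y\models u=v$ this gives $\mathcal Y\models u'=v'$. On the other hand $\mathcal N\models u'=0=v'$, so $\mathcal N\models u'=v'$, and therefore $\mathcal N\vee\mathcal Y\models u'=v'$. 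Now inside $(\mathcal N\vee\mathcal Y)\cap\mathcal Z$ we have the chain $u=u'$ (from $\mathcal Z$), $u'=v'$ (from $\mathcal N\vee\mathcal Y$) and $v'=v$ (from $\mathcal Z$), whence $(\mathcal N\vee\mathcal Y)\cap\mathcal Z\models u=v$, as required.

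The lattice‑theoretic reduction to one inclusion and the final three‑step chase are routine; the main obstacle is the preparatory step, namely establishing that a $0$-reduced epigroup variety $\mathcal N$ is a periodic nilvariety with $\overline x=0$ and that $F_{\mathcal N\cap\mathcal Z}$ is the Rees quotient of $F_{\mathcal Z}$ by the ideal generated by the substitution instances of the $w_i$ and by all pseudoinverses. Once that description is in place, nothing further is needed — in particular the argument never refers to groups or to any finer structure of $\mathcal Y$ or $\mathcal Z$ — and the same scheme simultaneously takes care of the degenerate case $\mathcal N=\mathcal T$.
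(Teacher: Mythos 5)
Your proof is correct, but it takes a genuinely different route from the paper's. The paper deduces Theorem~\ref{modular suf} from Proposition~\ref{modular and lower-modular suf in USEM}: the fully invariant congruence $\nu$ on $F$ corresponding to a $0$-reduced variety has exactly one non-singleton class; an equivalence relation with this property is a modular (and upper-modular) element of $\Eq(F)$ by cited results of Je\v{z}ek and Vernikov--Volkov; this restricts to the sublattice of fully invariant congruences, passes through the anti-isomorphism to give modularity (and lower-modularity) in $\mathbf{USEM}$, and finally descends to the sublattice $\mathbf{EPI}$. You instead verify the modular law inside $\mathbf{EPI}$ directly: every identity $u=v$ of $\mathcal{N\wedge Z}$ either holds in $\mathcal Z$ or has both sides $\mathcal Z$-equivalent to words that equal $0$ in $\mathcal N$, and then the three identities $u=u'$ (from $\mathcal Z$), $u'=v'$ (from $\mathcal{N\vee Y}$, using $\mathcal{Y\subseteq Z}$) and $v'=v$ (from $\mathcal Z$) give $u=v$ in $\mathcal{(N\vee Y)\wedge Z}$. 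Your version is more self-contained --- it needs neither the embedding into $\Eq(F)$ and the external lattice-theoretic results, nor the facts that $\mathbf{EPI}$ is a sublattice of $\mathbf{USEM}$ and that the latter is anti-isomorphic to the lattice of fully invariant congruences --- but it yields only modularity, whereas the paper's proposition simultaneously gives lower-modularity in $\mathbf{USEM}$, which is reused in Section~\ref{lower-modular proof}. One caveat: your preparatory step is not actually ``available from the general theory developed earlier in the paper'' (Lemma~\ref{epigroup variety} concerns only when $K_\Sigma$ is a variety); it is the exact counterpart of the assertion, likewise stated without proof in the paper, that $\nu$ has a single non-singleton class. It is true and quickly checked: a non-trivial $0$-reduced variety is a nilvariety, so $\overline x\,=0$ holds in $\mathcal N$ by Lemma~\ref{identity for nil} and every non-semigroup word equals $0$ there, while a semigroup word equals $0$ in $\mathcal N$ exactly when it contains a substitution instance of some $w_i$, because the Rees quotient of the free semigroup by the ideal generated by all such instances is a nil-semigroup lying in $\mathcal N$; with the zero class so identified, your Rees-quotient description of $F_{\mathcal{N\wedge Z}}(X)$, and hence the consequence you actually use, follows. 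Making that verification explicit would close the only gap in the write-up.
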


Theorems~\ref{modular nec} and~\ref{modular suf} show that in order to describe modular varieties we need to examine nil-varieties satisfying substitutive identities. A natural partial case of substitutive identities are permutative ones, while the strongest permutative identity is the commutative law. Modular varieties satisfying this law possess a complete description.

\begin{theorem}
\label{modular commut}
A commutative epigroup variety $\mathcal V$ is a modular element of the lattice $\mathbf{EPI}$ if and only if $\mathcal{V=M\vee N}$ where $\mathcal M$ is one of the varieties $\mathcal T$ or $\mathcal{SL}$ and $\mathcal N$ is a nilvariety that satisfies the commutative law and the identity~\eqref{xxy=0}.
\end{theorem}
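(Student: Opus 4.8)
The plan is to treat the two implications of the equivalence separately, using Theorems~\ref{modular nec} and~\ref{modular suf} together with Corollary~\ref{join with SL or ZM or SL+ZM} to reduce both to a statement about commutative nilvarieties.

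\emph{Necessity.} Suppose $\mathcal V$ is commutative and a modular element of $\mathbf{EPI}$. By Theorem~\ref{modular nec}, $\mathcal{V=M\vee N}$ where $\mathcal M\in\{\mathcal T,\mathcal{SL}\}$ and $\mathcal N$ is a nilvariety with a basis consisting of $0$-reduced and substitutive identities; as a subvariety of $\mathcal V$ it is commutative, and by Corollary~\ref{join with SL or ZM or SL+ZM} it is again a modular element. So the plan is to reduce to the claim that a commutative modular nilvariety $\mathcal N$ satisfies~\eqref{xxy=0}, and to prove this by contradiction. Assume $\mathcal N\not\models x^2y=0$. The first step is to show that $\mathcal N$ then necessarily contains the variety $\mathcal E_3$ of all commutative semigroups satisfying $x_1x_2x_3=0$. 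Indeed, since $\mathcal N$ is commutative and $\mathcal N\not\models x^2y=0$, one has $\mathcal N\not\models w=0$ for each of $w=x$, $w=x^2$, $w=xy$ (otherwise $\mathcal N$ would satisfy~\eqref{xxy=0}); hence every $0$-reduced identity in a basis of $\mathcal N$ has degree at least $3$ and so holds in $\mathcal E_3$, while every substitutive identity holds in $\mathcal E_3$ as well (in the commutative setting a substitutive identity of degree $\le2$ is just the commutative law, and one of degree $\ge3$ equates two words that both vanish in $\mathcal E_3$); so every identity of $\mathcal N$ holds in $\mathcal E_3$, i.e.\ $\mathcal E_3\subseteq\mathcal N$. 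The second, decisive step is to produce varieties $\mathcal Y\subsetneq\mathcal X$ for which $\mathcal{N\vee Y=N\vee X}$ and $\mathcal{N\wedge Y=N\wedge X}$; then $(\mathcal{N\vee Y})\wedge\mathcal X=\mathcal X\ne\mathcal Y=(\mathcal{N\wedge X})\vee\mathcal Y$, contradicting modularity of $\mathcal N$. The pair $\mathcal Y\subsetneq\mathcal X$ would be assembled from $0$-reduced varieties (such as $\mathcal{ZM}$, $\mathcal Q_n$ or $\mathcal R_n$) and Abelian group varieties and their joins, chosen so that the commutative variety $\mathcal N$ fails to distinguish $\mathcal X$ from $\mathcal Y$ in both the join and the meet; here the fact that $\mathcal N$ is defined by $0$-reduced and substitutive identities only is used to keep $\mathcal{N\vee X}$ under control.

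\emph{Sufficiency.} Now let $\mathcal{V=M\vee N}$ with $\mathcal M\in\{\mathcal T,\mathcal{SL}\}$ and $\mathcal N$ a commutative nilvariety satisfying~\eqref{xxy=0}. By Corollary~\ref{join with SL or ZM or SL+ZM} it suffices to prove that $\mathcal N$ is a modular element. Such varieties $\mathcal N$ are exactly the subvarieties of $\mathcal E:=\var\{x^2y=0,\,xy=yx\}$ (note $\mathcal E\models x^3=0$), and on account of~\eqref{xxy=0} the free object of any such $\mathcal N$ is extremely rigid: outside degrees $1$ and $2$ it consists of linear words only. When $\mathcal N$ is $0$-reduced (for instance $\mathcal N=\mathcal{ZM}$) modularity follows at once from Theorem~\ref{modular suf}; in general $\mathcal N$ need not be $0$-reduced (e.g.\ $\mathcal E_3$ is not, the commutative law being essential in its basis), so I would verify the modular law by hand. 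For $\mathcal Y\subseteq\mathcal X$ the inclusion $(\mathcal{N\wedge X})\vee\mathcal Y\subseteq(\mathcal{N\vee Y})\wedge\mathcal X$ holds in any lattice, so only the reverse inclusion has to be checked: taking an identity that fails in $(\mathcal{N\wedge X})\vee\mathcal Y$, and hence in $\mathcal{N\wedge X}$ or in $\mathcal Y$, one traces it into $(\mathcal{N\vee Y})\wedge\mathcal X$ by means of the description of the free objects just mentioned. Equivalently, one verifies that the modularity criterion underlying the proof of Theorem~\ref{modular suf} extends, with only cosmetic changes, from $0$-reduced varieties to all subvarieties of $\mathcal E$.

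\emph{Where the work is.} The principal obstacle is the necessity part, namely the construction of the pentagon $\mathcal N\wedge\mathcal Y<\mathcal Y<\mathcal X<\mathcal N\vee\mathcal X$ uniformly in $\mathcal N$. This demands a precise understanding of which small subvariety a commutative nilvariety violating~\eqref{xxy=0} must contain, as well as a careful computation of the joins $\mathcal{N\vee X}$ for the chosen $0$-reduced and group varieties $\mathcal X$ — delicate precisely because $\mathcal N$ is allowed to satisfy non-trivial permutative identities while still failing~\eqref{xxy=0}.
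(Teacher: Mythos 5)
Your reduction skeleton (Theorem~\ref{modular nec} plus Corollary~\ref{join with SL or ZM or SL+ZM} to pass to the nilvariety $\mathcal N$, then a statement about commutative nilvarieties) is exactly the paper's, but at both decisive points your proposal stops at a plan rather than a proof. For necessity, the pentagon $\mathcal{N\wedge Y}<\mathcal Y<\mathcal X<\mathcal{N\vee X}$ is never constructed — you only say it ``would be assembled'' from $0$-reduced and Abelian group varieties, and you yourself flag this as the principal obstacle; moreover your intermediate step $\mathcal E_3\subseteq\mathcal N$ carries no contradiction by itself, since $\mathcal E_3=\var\{xyz=0,\,xy=yx\}$ satisfies~\eqref{xxy=0} and is therefore itself modular by the theorem being proved. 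The paper avoids all of this with one observation you did not use: every commutative variety satisfies $x^2y=yx^2$, which is a \emph{non-substitutive} identity ($yx^2$ is not obtained from $x^2y$ by renaming letters), so Lemma~\ref{non-substitutive} — itself resting on Proposition~\ref{modular is periodic} and the known semigroup result behind it — immediately forces $x^2y=0$ in the modular nilvariety $\mathcal N$. That lemma is the missing idea.

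The sufficiency half also has a genuine gap. The paper reduces, via Corollary~\ref{join with SL or ZM or SL+ZM}, to showing that a commutative epigroup variety satisfying~\eqref{xxy=0} is modular, and then invokes the deduction-tracing argument of the `if' part of Theorem~1 of Vernikov--Volkov (2006). Your suggestion that ``the modularity criterion underlying the proof of Theorem~\ref{modular suf} extends, with only cosmetic changes'' is not tenable: that proof (Proposition~\ref{modular and lower-modular suf in USEM}) works because a $0$-reduced variety corresponds to a fully invariant congruence on $F$ with exactly one non-singleton class, and this fails as soon as the commutative law is present — precisely the case you must handle, as you note for $\mathcal E_3$. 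In addition, your syntactic scheme is oriented the wrong way: tracing an identity that \emph{fails} in $(\mathcal{N\wedge X})\vee\mathcal Y$ into $(\mathcal{N\vee Y})\wedge\mathcal X$ only re-establishes the trivial inclusion $(\mathcal{N\wedge X})\vee\mathcal Y\subseteq(\mathcal{N\vee Y})\wedge\mathcal X$; what must be shown is that every identity \emph{holding} in $(\mathcal{N\wedge X})\vee\mathcal Y$ admits a deduction from identities of $\mathcal{N\vee Y}$ and $\mathcal X$, which is exactly the nontrivial analysis the cited argument carries out.
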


Theorems~\ref{modular nec} and~\ref{modular suf} provide a necessary and a sufficient condition for an epigroup variety to be modular respectively. The gap between these conditions seems to be not very large. But the necessary condition is not a sufficient one, while the sufficient condition is not a necessary one. This follows from Theorem~\ref{modular commut}. Indeed, this theorem shows that the variety $\var\{x^3=0,\,xy=yx\}$ is not modular although it is given by 0-reduced and substitutive identities only, while the variety $\var\{x^2y=0,\,xy=yx\}$ is modular although it is not 0-reduced.

The article is structured as follows. It consists of eleven sections. In Section~\ref{preliminaries} we collect definitions, notation and auxiliary results used in what follows. In Section~\ref{SL and ZM are neutral} we verify two special cases of Theorem~\ref{neutral and costandard}, namely the claims that the varieties $\mathcal{SL}$ and $\mathcal{ZM}$ are neutral. These facts are used in each of Sections~\ref{upper-modular proof}--\ref{distributive and standard proof}. After that we prove Theorem~\ref{upper-modular} in Section~\ref{upper-modular proof}, Theorem~\ref{codistributive} in Section~\ref{codistributive proof}, Theorems~\ref{modular nec}--\ref{modular commut} in Section~\ref{modular proof} and Theorem~\ref{lower-modular} in Section~\ref{lower-modular proof}. In Section~\ref{definafle} we discuss an interesting application of Theorems~\ref{neutral and costandard} and~\ref{lower-modular} concerning so-called definable sets of varieties (a corresponding definition see in Section~\ref{definafle}). In Sections~\ref{neutral and costandard proof} and~\ref{distributive and standard proof} we prove Theorems~\ref{neutral and costandard} and~\ref{distributive and standard} respectively. Sections~\ref{upper-modular proof}--\ref{lower-modular proof},~\ref{neutral and costandard proof} and~\ref{distributive and standard proof} contain also a number of corollaries of main results. In particular, we note that an epigroup variety is modular whenever it is lower-modular (Corollary \ref{lower-modular implies modular}). Finally, in Section~\ref{questions} we formulate several open questions.

\section{Preliminaries}
\label{preliminaries}

\subsection{Some properties of the operation of pseudoinversion}
\label{preliminaries pseudoinversion}

The following three lemmas are well known and may be easily checked.

\begin{lemma}
\label{identity for cr}
The identity
\begin{equation}
\label{x=x**}
x=\,\overline{\overline x}
\end{equation}
holds in an epigroup $S$ if and only if $S$ is completely regular.\qed
\end{lemma}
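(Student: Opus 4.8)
The plan is to argue directly from the definition of pseudoinversion recalled in Subsection~\ref{introduction epigroups}. The only fact I need to isolate first is this: for \emph{every} element $y$ of an epigroup $S$, the element $\overline y$ is by construction the inverse of $yy^\omega$ inside the maximal subgroup $G_y$, so $\overline y\in G_y$; in particular $\overline y$ is always a group element. This elementary remark is the pivot for both implications.

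For the ``if'' part, assume $S$ is completely regular. Then each $x\in S$ is a group element, hence $x\in G_x$ and $x^\omega$ is the identity of $G_x$, so $xx^\omega=x$. Consequently $\overline x$ is simply the inverse of $x$ in the group $G_x$, and therefore $\overline x\in G_x$; since $G_x$ is a maximal subgroup, $G_{\overline x}=G_x$ and $\overline x^{\,\omega}=x^\omega$. Running the same computation with $\overline x$ in the role of $x$ gives that $\overline{\overline x}$ is the inverse of $\overline x$ in $G_x$, that is, $\overline{\overline x}=x$. Thus the identity~\eqref{x=x**} holds in $S$.

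For the ``only if'' part, suppose~\eqref{x=x**} holds in $S$ and take an arbitrary $x\in S$. By the remark above $\overline x$ is a group element, and, applying the remark once more to $\overline x$ in place of $y$, so is $\overline{\overline x}$. Since $x=\overline{\overline x}$ by hypothesis, $x$ is a group element; as $x$ was arbitrary, $S$ is completely regular. There is essentially no obstacle in this argument: the only point requiring a little care is the (immediate) observation that $\overline y$ always lies in $G_y$ and is hence a group element, together with the bookkeeping that $\overline x^{\,\omega}=x^\omega$ when $x$ is already a group element.
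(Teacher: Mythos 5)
Your argument is correct, and it is exactly the kind of direct verification the paper has in mind when it states Lemma~\ref{identity for cr} without proof as ``well known and may be easily checked'': the key observation that $\overline y$ always lies in $G_y$ (hence is a group element) gives the ``only if'' direction at once, and the computation $\overline{\overline x}=x$ for a group element $x$ gives the ``if'' direction. Both implications are sound, including the bookkeeping that $G_{\overline x}=G_x$ and $\overline x^{\,\omega}=x^\omega$ when $x$ is completely regular, so there is nothing to add.
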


It is well known (see~\cite{Shevrin-94,Shevrin-05}, for instance) that if $S$ is an epigroup and $x\in S$ then $x\,\overline x\,=\,\overline x\,x=x^\omega$. This permits to write in epigroup identities expressions of the form $u^\omega$ rather than $u\,\overline u$, for brevity.

\begin{lemma}
\label{identity for comb}
If an epigroup variety $\mathcal V$ satisfies the identity $x^m=x^{m+1}$ then the identities $x^\omega=\,\overline x=\,\overline{\overline x}\,=x^m$ hold in $\mathcal V$.\qed
\end{lemma}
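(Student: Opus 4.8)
The plan is to prove that all four expressions coincide with $x^m$, working directly from the description of the operation of pseudoinversion recalled before Lemma~\ref{identity for cr}. First I would observe that the identity $x^m=x^{m+1}$ forces $x^k=x^m$ for every $k\ge m$ (a one-line induction on $k$), so that $x^m$ is an idempotent and, for an arbitrary $x$, the monogenic subsemigroup generated by $x$ is contained in $\{x,x^2,\dots,x^m\}$ with $x^m$ as its unique idempotent. In particular every epigroup in $\mathcal V$ is periodic, the maximal subgroup $G_x$ containing a power of $x$ is the trivial group $\{x^m\}$, and its identity element is $x^m$; hence $x^\omega=x^m$ in $\mathcal V$.

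Next I would compute $\overline x$ straight from its definition as the element inverse to $xx^\omega$ in $G_x$. Since $xx^\omega=x\cdot x^m=x^{m+1}=x^m=x^\omega$ is exactly the identity of the trivial group $G_x=\{x^m\}$, it is its own inverse there, so $\overline x=x^m$. Finally, $\overline{\overline x}=\overline{x^m}$, and the element $x^m$ satisfies $(x^m)^2=x^{2m}=x^m$, i.e. the hypothesis of the lemma with $m$ replaced by $1$; applying the two preceding steps to $x^m$ gives $\overline{x^m}=x^m$. Putting these together yields $x^\omega=\overline x=\overline{\overline x}=x^m$ in $\mathcal V$.

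As the statement is routine, I do not anticipate a genuine obstacle; the only point that is not a direct computation is the identification $x^\omega=x^m$, which relies on the well-known structure of (finite) monogenic epigroups, namely that $x^\omega$ is the unique idempotent among the powers of $x$.
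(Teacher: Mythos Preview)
Your argument is correct and is exactly the routine verification one has in mind; the paper itself omits the proof entirely (the lemma is listed among facts that ``are well known and may be easily checked'' and is marked with \qed). One small point you leave implicit is why $G_x$ is the trivial group $\{x^m\}$ rather than possibly a larger subgroup of the ambient epigroup: this follows because any $g\in G_x$ satisfies $g^m=g^{m+1}$, and cancelling in the group gives $g=e_{G_x}$, so $G_x$ is indeed trivial and hence equals $\{x^m\}$.
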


\begin{lemma}
\label{identity for nil}
The identity
\begin{equation}
\label{x*=0}
\overline x\,=0
\end{equation}
holds in an epigroup $S$ if and only if $S$ is a nil-semi\-group.\qed
\end{lemma}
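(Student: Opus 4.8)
The plan is to prove both implications directly from the definition of the pseudoinversion operation on an epigroup. Recall that for an element $x$ of an epigroup $S$, some power $x^n$ lies in the maximal subgroup $G_x$, and $\overline x$ is the inverse of $xx^\omega$ inside $G_x$, where $x^\omega$ is the identity element of $G_x$. The key observation to exploit is that $\overline x\,$ is itself a group element (it lies in $G_x$), so the hypothesis $\overline x=0$ forces a strong collapse of every monogenic subsemigroup.

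First I would prove the ``only if'' direction. Assume $S$ satisfies $\overline x=0$; in particular $S$ has a zero element, which we denote $0$. Fix $x\in S$ and let $n$ be least with $x^n\in G_x$. I claim $x^n=0$, whence $x$ is a nilpotent element and $S$ is a nil-semigroup. Indeed, $x^n=x^nx^\omega\in G_x$, and this element has an inverse in $G_x$, namely $\overline x$ by definition (using $xx^\omega=x^nx^\omega\cdot x^{?}$... more carefully: $xx^\omega\in G_x$ and $x^n = x^{n-1}\cdot xx^\omega \in G_x$ for $n\ge 1$). Since $\overline x=0$ and $0$ acts as zero, the product $(xx^\omega)\cdot\overline x$ equals both $x^\omega$ (because $\overline x$ is the inverse of $xx^\omega$ in $G_x$) and $0$ (because $\overline x=0$). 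Hence $x^\omega=0$, so the subgroup $G_x$ is trivial, and therefore $x^n=x^nx^\omega=x^n\cdot 0=0$. Thus every element of $S$ has a power equal to $0$, i.e.\ $S$ is a nil-semigroup.

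Conversely, suppose $S$ is a nil-semigroup, so it has a zero $0$ and every element $x$ satisfies $x^n=0$ for some $n$. Then $x^n=0\in G_x$ shows $0$ is the group element reached from $x$, so $G_x=\{0\}$ and $x^\omega=0$. Consequently $xx^\omega=x\cdot 0=0$, and the inverse of $0$ in the trivial group $\{0\}$ is $0$ itself; that is, $\overline x=0$. Since $x$ was arbitrary, $S\models\overline x=0$.

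I do not expect a serious obstacle here: the whole argument is just unwinding the definition of $x^\omega$ and $\overline x$, together with the trivial fact that $0$ absorbs products. The only point requiring a little care is the bookkeeping in the forward direction — checking that $xx^\omega$ really is the group element whose $G_x$-inverse is $\overline x$, and deducing $x^\omega=0$ from $\overline x=0$ — but this is immediate from the standard identities $x\,\overline x=\overline x\,x=x^\omega$ already recalled in the text (indeed, $x^\omega = x\,\overline x = x\cdot 0 = 0$ reproves it in one line). This is why the lemma is stated as ``well known and may be easily checked''.
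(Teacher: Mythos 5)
Your proof is correct: the paper states this lemma without proof as ``well known and may be easily checked,'' and your direct unwinding of the definitions --- from $\overline x=0$ deducing $x^\omega=(xx^\omega)\,\overline x=0$, hence $G_x=\{0\}$ and $x^n=x^nx^\omega=0$, and conversely in a nil-semigroup $G_x=\{0\}$, $x^\omega=0$, $\overline x=0$ --- is exactly the intended easy check, including the observation that the symbolic identity $\overline x=0$ forces all values of $\overline x$ to be a common absorbing zero. The only blemish is the garbled parenthetical in the forward direction (the fragment ending in ``$x^{?}$''), which you can simply delete, since the clean computation that follows it carries the whole argument.
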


\subsection{When $K_\Sigma$ is a variety?}

An identity is called \emph{mixed} if one of its parts is a semigroup word but another one is not. Further, a semigroup identity is called \emph{balanced} if each letter occurs in both its parts the same number of times. 

\begin{lemma}[\!\!\,{\mdseries\cite[Proposition~2.15]{Gusev-Vernikov-15}}]
\label{epigroup variety}
The class $K_\Sigma$ is an epigroup variety if and only if $\Sigma$ contains either a semigroup non-balanced identity or a mixed identity.\qed
\end{lemma}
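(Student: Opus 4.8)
The plan is to reduce the statement to the question of whether the index of elements of members of $K_\Sigma$ is bounded, and then to analyse separately the two admissible shapes of identities that $\Sigma$ may contain. So the first step is to verify the following reduction: \emph{$K_\Sigma$ is an epigroup variety if and only if there is a natural number $n$ such that the power $x^n$ is a group element in every epigroup satisfying $\Sigma$}. The class of all epigroups is closed under homomorphic images, subalgebras and finite direct products, so the only obstacle for $K_\Sigma$ to be a variety is closure under arbitrary direct products, which reduces to the requirement that a direct product of members of $K_\Sigma$ is again an epigroup. If the index is bounded by $n$, then for a product $\prod_{i\in I}S_i$ of members of $K_\Sigma$ and an element $b=(b_i)_{i\in I}$, the power $b^n=(b_i^n)_{i\in I}$ lies in the subgroup $\prod_{i\in I}G_{b_i}$; hence the product is an epigroup and, being a product, satisfies $\Sigma$. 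Conversely, if the index is unbounded, then for every $k$ there is $S_k\in K_\Sigma$ with an element $a_k$ none of whose powers $a_k,a_k^2,\dots,a_k^{k-1}$ is a group element; in the product $\prod_{k\ge1}S_k$ the element $(a_k)_{k\ge1}$ has no power that is a group element (projecting such a power to a suitable coordinate would contradict the choice of $a_k$), so this product is not an epigroup.

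In view of this reduction, the ``if'' part of the lemma comes down to checking that each of the two kinds of identities forces a bound on the index. Suppose first that $\Sigma$ contains a non-balanced semigroup identity $u=v$. Choose a letter $z$ occurring a different number of times in $u$ and in $v$, and substitute a power $z^K$ for every letter distinct from $z$; a short computation with the numbers of occurrences shows that for an appropriate $K$ this yields an identity $z^a=z^b$ with $a\ne b$ valid in $K_\Sigma$. Then every epigroup in $K_\Sigma$ is periodic and its index is bounded by $\min\{a,b\}$, so $K_\Sigma$ is a variety by the reduction.

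Suppose now that $\Sigma$ contains a mixed identity $u=v$, say with $u$ a semigroup word of length $\ell$. Substituting a single variable $x$ for every letter turns $u$ into $x^\ell$ and $v$ into a one-variable word $w(x)$ which still contains an occurrence of the pseudoinversion symbol. The crucial auxiliary fact is that such a word always takes values in $G_x$: using the identities $x\overline x=\overline x\,x=x^\omega\in G_x$, the fact that $x^\omega$ is the identity element of $G_x$, the fact that $\overline t$ is a group element for every $t$, and the fact that a monogenic epigroup has a unique idempotent (so that every subterm of the form $\overline{(\cdot)}$ evaluates into one and the same maximal subgroup $G_x$), one proves by induction on the structure of $w$ that $w(x)\in G_x$ in every epigroup. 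Hence $x^\ell$ is a group element in $K_\Sigma$, the index is bounded by $\ell$, and $K_\Sigma$ is a variety by the reduction.

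For the ``only if'' part I would argue by contraposition: assume $\Sigma$ contains neither a non-balanced semigroup identity nor a mixed identity, so that each identity in $\Sigma$ is either a balanced semigroup identity or an identity in whose both parts the pseudoinversion symbol occurs. I claim that then the monogenic nilsemigroup $N_k$ with $a^k=0$ belongs to $K_\Sigma$ for every $k$. Indeed, a monogenic semigroup satisfies every balanced semigroup identity, because on substituting powers of the generator both parts become the same power of the generator (the exponent depends only on the numbers of occurrences of the letters, which agree by balancedness); and in a nilsemigroup $\overline x=0$ by Lemma~\ref{identity for nil}, so any word containing the pseudoinversion symbol --- in particular each part of an identity of the second kind --- evaluates to the zero. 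Since $N_k$ has index exactly $k$, the class $K_\Sigma$ has unbounded index, and by the reduction it is not a variety. The step I expect to be the main obstacle is the auxiliary fact used in the mixed case --- that a one-variable unary word containing the pseudoinversion symbol always takes values in $G_x$ --- because its proof, though elementary, requires a careful structural induction relying on several easily mishandled properties of the operation $\overline{\phantom{x}}$ together with the structure of monogenic epigroups; the remaining ingredients (the two product arguments and the substitution-and-count arguments) are routine.
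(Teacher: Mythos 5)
The paper contains no proof of this lemma at all: it is imported verbatim from \cite[Proposition~2.15]{Gusev-Vernikov-15} (hence the tombstone right after the statement), so there is no internal argument to compare yours against; judged on its own, your proof is correct. Your reduction is the right pivot: $K_\Sigma$ is always closed under unary homomorphic images, subalgebras and finite products, so being a variety amounts to closure under infinite direct products, which your two product arguments correctly show is equivalent to a uniform bound on the index of elements of members of $K_\Sigma$. The substitution-and-count argument in the non-balanced case is sound, and so is the ``only if'' part via the monogenic nilsemigroups with $a^k=0$: balanced semigroup identities hold there because every element is a power of the generator and the exponents depend only on multiplicities, while identities whose both sides involve pseudoinversion hold because $\overline x\,=0$ in a nilsemigroup (Lemma~\ref{identity for nil}) and $0$ is absorbing. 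The auxiliary fact you flag in the mixed case is true and is indeed the only step needing care; the cleanest way to run your induction is to note that the unary subsemigroup generated by $x$ is contained in $\{x,x^2,\dots\}\cup G_x$, since $\overline{x^k}\in G_{x^k}=G_x$, $xx^\omega=x^\omega x\in G_x$, and $x^kg=(xx^\omega)^kg\in G_x$, $gx^k\in G_x$ for every $g\in G_x$; with this closure observation every one-variable word involving pseudoinversion evaluates into $G_x$, so a mixed identity bounds the index by the length of its semigroup side, exactly as you claim.
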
 

\subsection{Identities of certain varieties}
\label{preliminaries identities}

We denote by $F$ the free unary semigroup over a countably infinite alphabet (with the operations $\cdot$ and $\overline{\phantom x}$\,). Elements of $F$ are called \emph{words}. If $w\in F$ then we denote by $c(w)$ the set of all letters occurring in $w$ and by $t(w)$ the last letter of $w$. A letter is called \emph{simple} [\emph{multiple}] \emph{in a word} $w$ if it occurs in $w$ ones [at least twice]. Put
$$\mathcal P=\var\{xy=x^2y,\,x^2y^2=y^2x^2\}\quad\text{and}\quad\overleftarrow{\mathcal P}=\var\{xy=xy^2,\,x^2y^2=y^2x^2\}\ldotp$$
The symbol $\equiv$ stands for the equality relation on the unary semigroup $F$. The first two claims of the following lemma are well-known and may be easily verified, the third one was proved in~\cite[Lemma~7]{Golubov-Sapir-82}.

\begin{lemma}
\label{word problem}
A non-trivial semigroup identity $v=w$ holds:
\begin{itemize}
\item[\textup{(i)}] in the variety $\mathcal{SL}$ if and only if $c(v)=c(w)$;
\item[\textup{(ii)}] in the variety $\mathcal C_2$ if and only if $c(v)=c(w)$ and every letter from $c(v)$ is either simple both in $v$ and $w$ or multiple both in $v$ and $w$;
\item[\textup{(iii)}] in the variety $\mathcal P$ if and only if $c(v)=c(w)$ and either the letters $t(v)$ and $t(w)$ are multiple in $v$ and $w$ respectively or $t(v)\equiv t(w)$ and the letter $t(v)$ is simple both in $v$ and $w$.\qed
\end{itemize}
\end{lemma}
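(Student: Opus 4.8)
The plan is, for each of the three varieties, to identify the relatively free semigroup over a countably infinite alphabet and to extract from it a normal form for semigroup words; a non-trivial semigroup identity $v=w$ then holds in the variety precisely when $v$ and $w$ reduce to the same normal form, and rephrasing this in terms of $c(v),c(w),t(v),t(w)$ yields the stated criteria. In each case I would prove \emph{necessity} of the condition by checking that the relevant combinatorial data of a word is preserved by every substitution instance of the defining identities (hence by all of their consequences), and \emph{sufficiency} by showing that any word can be rewritten, using those identities, into the canonical word attached to its data.

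For part~(i) the relatively free semilattice is the semigroup of finite non-empty subsets of the alphabet under union, in which a semigroup word $w$ evaluates to $c(w)$; hence $v=w$ holds in $\mathcal{SL}$ if and only if $c(v)=c(w)$, both implications being immediate. For part~(ii), since $\mathcal C_2=\var\{x^2=x^3,\,xy=yx\}$, commutativity together with the consequence $x^m=x^2$ ($m\ge2$) of $x^2=x^3$ shows that $w$ equals in $\mathcal C_2$ the product, taken in a fixed order, of $x^{e}$ over $x\in c(w)$, where $e=1$ if $x$ is simple in $w$ and $e=2$ if $x$ is multiple in $w$; the relatively free object is thus the free commutative semigroup modulo $x^2=x^3$, and $v=w$ holds in $\mathcal C_2$ exactly when these exponent patterns for $v$ and $w$ coincide, which is the condition in~(ii).

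Part~(iii) is~\cite[Lemma~7]{Golubov-Sapir-82}, but I would argue as follows. The data of a word $w$ to be tracked is the pair consisting of $c(w)$ and of $t(w)$ together with its status (simple or multiple) in $w$, a multiple last letter being recorded as a single symbol $*$ regardless of which letter it actually is. \emph{Necessity}: this data is unchanged under substitution into $xy=x^2y$ and into $x^2y^2=y^2x^2$. Indeed, for $p\cdot q=p^2\cdot q$ the content and the last letter $t(q)$ are visibly the same on both sides, and $t(q)$ is multiple in $pq$ exactly when $t(q)\in c(p)$ or $t(q)$ is multiple in $q$, which is also the condition for $t(q)$ to be multiple in $p^2q$; for $p^2q^2=q^2p^2$ the content is again the same and on both sides the last letter, namely $t(q)$ respectively $t(p)$, is necessarily multiple, so the data reads $*$ in both cases. \emph{Sufficiency}: fix a linear order on the alphabet and call a word a normal form if it is $\bigl(\prod_{x\in c(w)\setminus\{t(w)\}}x^2\bigr)\,t(w)$ when $t(w)$ is simple in $w$ (a single letter when $|c(w)|=1$) and $\prod_{x\in c(w)}x^2$ when $t(w)$ is multiple, all products taken in the fixed order. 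One checks that the product of two normal forms reduces, modulo the identities of $\mathcal P$, to a normal form: using $xy=x^2y$ together with its consequence $x^my=x^2y$ (obtained via $x^2=x^3$, which is itself the case $y:=x$ of $xy=x^2y$) one pads every non-final letter to an exact square; then $x^2y^2=y^2x^2$ permutes the squared blocks into the fixed order and merges repeated blocks, the resulting powers being again reduced to squares by $x^m=x^2$; what remains is at most a simple final letter. Since the generators are already normal forms and every word is a product of generators, every word reduces to a normal form, and distinct normal forms are inequivalent in $\mathcal P$ by the necessity part (they carry different data), or by the obvious model on the set of pairs $(C,\ell)$.

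The main obstacle is the sufficiency argument in part~(iii): the two defining identities pull in opposite directions --- $x^2y^2=y^2x^2$ moves only squared blocks, while $xy=x^2y$ acts only immediately in front of a trailing letter --- so the rewriting must be carefully sequenced (pad, then normalise powers, then permute), and one has to keep explicit track of the single possibly-simple final letter and of the degenerate one-letter case. Parts~(i) and~(ii) are routine once the relatively free semigroup has been written down.
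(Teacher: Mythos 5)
Your proposal is correct, but it is worth noting that the paper does not actually prove this lemma: it treats (i) and (ii) as folklore and quotes (iii) from Golubov and Sapir (Lemma~7 of that paper), so what you give is a self-contained substitute rather than a variant of an argument in the text. Your route for (i) and (ii) — identifying the relatively free objects (finite subsets under union; free commutative semigroup with exponents capped at $2$) — is the standard one and is fine. For (iii), your invariant $\bigl(c(w),\,t(w)$ recorded as $\ast$ when multiple$\bigr)$ is exactly the right data, and the necessity half is airtight once one of two routine points is made explicit: either check preservation of the invariant for an application of a defining identity \emph{inside a context} $A\,u\,B$ (your $\ast$-convention is precisely what makes the case $B$ empty of $x^2y^2=y^2x^2$ harmless, and occurrence counts of the last letter of $B$ can only move within the multiple range under $pq\leftrightarrow p^2q$), or simply verify that the set of pairs $(C,\ell)$ with the multiplication induced by concatenation is a semigroup satisfying both identities and evaluate letters as $(\{x\},x)$ — the model you gesture at, which is the cleaner option. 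In the sufficiency half your toolkit ($pq=p^2q$ for padding any proper prefix, $x^2=x^3$ as the instance $y:=x$, block transposition and power reduction) does reach the stated normal forms, but one step should be written out: when $t(w)$ is multiple in $w$, after padding and sorting you are left with sorted squares followed by a stray final letter $t$, and you must move the block $t^2$ adjacent to it and absorb it via $t^2\,t=t^3=t^2$; your phrase ``what remains is at most a simple final letter'' presupposes this absorption rather than performing it. With those two small verifications spelled out, your argument is a complete and elementary proof of the lemma, which the paper itself only cites.
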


If $w$ is a semigroup word then $\ell(w)$ stands for the length of $w$; otherwise, we put $\ell(w)=\infty$. We need the following three remarks about identities of nil-semi\-groups. 

\begin{lemma}
\label{splitting}
Let $\mathcal V$ be a nilvariety.
\begin{itemize}
\item[\textup{(i)}] If the variety $\mathcal V$ satisfies an identity $u=v$ with $c(u)\ne c(v)$ then $\mathcal V$ satisfies also the identity $u=0$.
\item[\textup{(ii)}] If the variety $\mathcal V$ satisfies an identity of the form $u=vuw$ where the word $vw$ is non-empty then $\mathcal V$ satisfies also the identity $u=0$.
\item[\textup{(iii)}] If the variety $\mathcal V$ satisfies an identity of the form $x_1x_2\cdots x_n=v$ and $\ell(v)\ne n$ then $\mathcal V$ satisfies also the identity $x_1x_2\cdots x_n=0$.
\end{itemize}
\end{lemma}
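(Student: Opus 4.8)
The plan is to treat all three items as consequences of a single basic fact about nilvarieties: in a nilvariety, any word that "lacks" some feature present in a companion word of an identity must itself be equal to $0$. Concretely, I would first recall that a nilvariety satisfies $x^n=0$ for some $n$ (this is just Lemma~\ref{identity for nil} together with the definition of a nilvariety, since a nil-semigroup in a variety is necessarily nilpotent of bounded degree). I would then use the standard observation that if $\mathcal V$ is a nilvariety and $u=v$ holds in $\mathcal V$, then substituting any letter of $c(u)\setminus c(v)$ by a sufficiently high power of itself (or by a product of new letters, depending on which item we are proving) turns $v$ into a word containing a forbidden high power, hence into $0$, while leaving $u$ unchanged up to the identity $u=v$; so $u=0$ in $\mathcal V$.

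For item~(i): assume $c(u)\neq c(v)$. Without loss of generality a letter $x$ occurs in $u$ but not in $v$ (the case of a letter occurring in $v$ but not $u$ is symmetric, replacing the roles of $u$ and $v$ — but actually we must be careful: the statement only concludes $u=0$, so I would split into the two sub-cases and in each derive first $u=0$ or first $v=0$, and then note that $u=v$ forces the other to be $0$ as well). Substituting $x\mapsto x^n$ in the identity $u=v$: the right side is unaffected as $x\notin c(v)$, while the left side $u$ becomes a word in which $x$ occurs with multiplicity $\geq n$, hence contains a subword $x^n$, which equals $0$ in $\mathcal V$; so $\mathcal V$ satisfies $u(x\mapsto x^n)=0$, but since $\mathcal V$ also satisfies $u=v=v(x\mapsto x^n)=u(x\mapsto x^n)$... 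I need to chase this more carefully: the cleanest route is to observe that $u$ and $u(x\mapsto x^n)$ need not be equal in $\mathcal V$, so instead I substitute $x\mapsto yz$ with $y,z$ fresh letters. Then $v$ is unchanged, $u$ becomes $u'$ with $c(u')=(c(u)\setminus\{x\})\cup\{y,z\}$, still $\neq c(v')=c(v)$, so this does not immediately help either. The right substitution is: in the identity $u=v$ substitute every letter of $c(v)$ (if $c(v)\subseteq c(u)$, the typical case) by itself and the extra letter(s) of $c(u)$ by $x^n$ where $x$ is some letter of $c(v)$ (or, if $c(v)=\varnothing$, by anything of length $\geq n$); after this substitution $u$ acquires the subword $x^n=0$ while $v$ is literally unchanged, giving $v=0$ in $\mathcal V$, hence also $u=0$. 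When instead $c(v)\not\subseteq c(u)$, apply the symmetric argument to conclude $v=0$, hence $u=0$. So item~(i) follows.

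For item~(ii): given $u=vuw$ with $vw$ non-empty, iterate the identity to get $u=v^k u w^k$ for every $k$; taking $k$ large enough so that $\ell(v^k w^k)\geq n$ (possible since $vw$ is non-empty, so at least one of $v,w$ is non-empty), the right-hand side $v^k u w^k$ contains a subword of length $\geq n$ consisting of letters, hence equals $0$ in $\mathcal V$; thus $u=0$. For item~(iii): suppose $x_1\cdots x_n=v$ holds with $\ell(v)\neq n$. If $\ell(v)<n$, then some letter $x_i$ with $1\le i\le n$ is missing from $c(v)$ (since $v$ uses at most $\ell(v)<n$ distinct letters among $x_1,\dots,x_n$, unless $v$ is not a semigroup word, but $\ell(v)\ne n$ with $\ell$ possibly $\infty$ means $v$ could be non-semigroup — then $c(v)$ could still contain all $x_i$; here I would instead substitute $x_i\mapsto 1$-free arguments... actually the clean statement is: if $\ell(v)\ne n$ then $c(v)\ne\{x_1,\dots,x_n\}$ or $v$ is not linear, and in either case item~(i) or item~(ii) applies). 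More directly: if $\ell(v)>n$ (including $\ell(v)=\infty$), then some letter is repeated in $v$ or $v$ involves pseudoinversion; substituting each $x_i$ by a fresh letter and using that $v$ then has a repeated letter or a $\overline{(\ \ )}$, one shows $v=0$ via item~(ii) applied after rewriting, or directly since a repeated letter plus high powers gives a $0$ subword; if $\ell(v)<n$ then as noted some $x_i\notin c(v)$ and item~(i) applies. Either way $x_1\cdots x_n=0$ in $\mathcal V$.

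The main obstacle I anticipate is item~(iii) when $v$ is not a semigroup word (so $\ell(v)=\infty\neq n$): one must argue that even an expression built with pseudoinversion, when set equal to a linear semigroup word $x_1\cdots x_n$ in a nilvariety, forces $x_1\cdots x_n=0$. The key point is that in a nilvariety $\overline x=0$ (Lemma~\ref{identity for nil}), so any word containing an application of the unary operation to a non-empty subword already equals $0$ there; hence $v=0$ in $\mathcal V$ and therefore $x_1\cdots x_n=0$. Assembling these observations gives all three items; the arguments for (i) and (ii) are the genuinely load-bearing ones and (iii) is then a short case analysis reducing to them together with $\overline x=0$.
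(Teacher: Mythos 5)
Your overall scheme for (i) and (ii) is the standard one, but it rests on a recurring confusion between \emph{nil} and \emph{nilpotent}. Your parenthetical claim that ``a nil-semigroup in a variety is necessarily nilpotent of bounded degree'' is false: $\var\{x^2=0,\,xy=yx\}$ is a nilvariety satisfying no identity $x_1x_2\cdots x_n=0$. What is true, and all you need for (i) and (ii), is that a nilvariety satisfies $x^n=0$ for some $n$ (in the free object on $\{x,y\}$, which is a nil-semigroup, some power of the generator $x$ equals the zero). Because of this confusion, the step in your proof of (ii) --- ``the right-hand side $v^kuw^k$ contains a subword of length $\ge n$ consisting of letters, hence equals $0$'' --- is a step that fails: long products are not zero in a non-nilpotent nilvariety. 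The iteration $u=v^kuw^k$ is the right move, but the correct conclusion is that for $k\ge n$ the factor $v^k$ (or $w^k$) is the $k$-th power of the value of a single word and hence evaluates to $0$ by $x^n=0$; with that one-line repair (ii) is fine, and your final version of (i) (replace the letters of $c(u)\setminus c(v)$ by $x^n$ with $x\in c(v)$, so that one side becomes identically $0$ while the other is untouched) is correct.

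The genuine gap is item (iii) in the case where $v$ is a semigroup word with $\ell(v)>n$. Your argument --- ``some letter is repeated in $v$ \dots\ one shows $v=0$ via item (ii) applied after rewriting, or directly since a repeated letter plus high powers gives a $0$ subword'' --- does not work: a word with a repeated letter need not equal $0$ in a nilvariety (for instance $x^2\ne0$ in $\mathcal Q$, and arbitrarily long words with repeated letters are nonzero in $\var\{x^2=0\}$), and item (ii) is not applicable because the identity $x_1x_2\cdots x_n=v$ need not have the form $u=vuw$. What this case requires is the genuinely nontrivial fact that, in a nilvariety, an identity $x_1x_2\cdots x_n=v$ with $\ell(v)>n$ forces $x_1x_2\cdots x_n=0$; the paper does not reprove it but cites Lemma~1 of \cite{Sapir-Sukhanov-81} (in essence, such an identity implies a block-pumping identity $x_1\cdots x_n=x_1\cdots x_{i-1}(x_i\cdots x_j)^tx_{j+1}\cdots x_n$ with $t>1$, whose iteration produces an arbitrarily high power of a fixed element and hence $0$). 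Your proposal neither proves nor invokes anything of this strength, so this case remains unproved. The other two cases of (iii) --- $v$ a non-semigroup word, handled by $\overline x=0$ (Lemma~\ref{identity for nil}), and $\ell(v)<n$, handled by (i) --- coincide with the paper's treatment.
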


\begin{proof}
The claims~(i) and~(ii) are well known and easily verified.

\smallskip

(iii) If $v$ is a non-semi\-group word, it suffices to refer to Lemma~\ref{identity for nil}. Let now $v$ be a semigroup word. If $\ell(v)<n$ then $c(v)\ne\{x_1,x_2,\dots,x_n\}$, and the desired conclusion follows from the claim~(i). Finally, if $\ell(v)>n$ then the claim we prove readily follows from~\cite[Lemma~1]{Sapir-Sukhanov-81}.
\end{proof}

\subsection{Decomposition of some varieties into the join of subvarieties}
\label{preliminaries decomposition}

As usual, we denote by $\Gr S$ the set of all group elements of an epigroup $S$. For an arbitrary epigroup variety $\mathcal X$, we put $\mathcal{\Gr(X)=X\wedge GR}$ where $\mathcal{GR}$ is the variety of all groups. The variety generated by an epigroup $S$ is denoted by $\var\,S$. Put
$$\mathcal{LZ}=\var\{xy=x\}\quad\text{and}\quad\mathcal{RZ}=\var\{xy=y\}\ldotp$$
The following two facts play an important role in the proof of Theorem~\ref{upper-modular}. `Semigroup prototypes' of Proposition~\ref{decomposition} and Lemma~\ref{monoid decomposition} were given in~\cite[Proposition~1]{Volkov-89} and~\cite{Head-68} respectively.

\begin{proposition}
\label{decomposition}
If $\mathcal V$ is an epigroup variety and $\mathcal V$ does not contain the varieties $\mathcal{LZ}$, $\mathcal{RZ}$, $\mathcal P$ and $\overleftarrow{\mathcal P}$ then $\mathcal{V=M\vee N}$ where $\mathcal M$ is a variety generated by a monoid, and $\mathcal N$ is a nilvariety.
\end{proposition}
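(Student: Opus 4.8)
\textbf{Proof proposal for Proposition~\ref{decomposition}.}

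The plan is to mimic the classical semigroup argument from~\cite{Volkov-89}, adapting each step to the unary setting. First I would observe that the hypothesis forces a strong structural restriction: since $\mathcal V$ omits $\mathcal{LZ}$, $\mathcal{RZ}$, $\mathcal P$ and $\overleftarrow{\mathcal P}$, one can produce nontrivial identities that $\mathcal V$ must satisfy. Concretely, omitting $\mathcal{LZ}$ and $\mathcal P$ (respectively $\mathcal{RZ}$ and $\overleftarrow{\mathcal P}$) should yield, via a Zorn/compactness-type argument on the relevant free object, identities of the form $x^{n}y=x^{n+1}y$ and $yx^{n}=yx^{n+1}$ (or closely related ones controlling the behaviour of powers at the left and right ends of a word) holding in $\mathcal V$ for some $n$. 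The point of these identities is that they guarantee each element of each $S\in\mathcal V$ has a well-behaved idempotent power, so that the unary operation is "tame" and $S$ decomposes along its group elements.

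Next I would set $\mathcal M=\var\{\,S\in\mathcal V\mid S\text{ is a monoid}\,\}$ (equivalently the join of all monoid-generated subvarieties of $\mathcal V$) and $\mathcal N$ the largest nilsubvariety of $\mathcal V$; these are well defined, and clearly $\mathcal{M\vee N}\subseteq\mathcal V$. The substance of the proof is the reverse inclusion, which I would get by showing that every $S\in\mathcal V$ embeds into a product $M\times N$ with $M$ a monoid in $\mathcal V$ and $N$ a nilsemigroup in $\mathcal V$. For this I would use the idempotent $e$ supplied by the identities above together with $\Gr S$: set $M$ to be (the image of $S$ in) the localisation/monoid part built around $e$ and the group elements, and $N$ the nil part. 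One must check that both the multiplication and the pseudoinversion are respected by this decomposition — here Lemma~\ref{identity for comb} and the basic facts $xx^\omega=x^\omega x\in G_x$, $x\overline x=\overline x x=x^\omega$ recalled in Subsection~\ref{preliminaries pseudoinversion} are exactly what is needed to see that $\overline x$ is computed coordinatewise once the idempotents are pinned down. Finally I would note that $M$, being a monoid and a member of $\mathcal V$, generates a subvariety of $\mathcal M$, and $N$ generates a subvariety of $\mathcal N$, so $S\in\mathcal{M\vee N}$; since $S$ was arbitrary, $\mathcal V\subseteq\mathcal{M\vee N}$.

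The main obstacle is the first step: extracting the right "power-stabilising" identities from the four omitted varieties and verifying that they genuinely hold in all of $\mathcal V$ (not merely in the finitely generated members). In the semigroup case this is the heart of~\cite[Proposition~1]{Volkov-89}; in the epigroup case one must additionally be careful that the identities chosen are of a form to which Lemma~\ref{epigroup variety} applies, i.e. that the relevant subclasses are actually varieties, and that the presence of the unary operation does not create new obstructions to the coordinatewise description of $\overline{x}$. A secondary, more routine point is checking that the monoid-generated part is closed appropriately (so that $\mathcal M$ is a genuine subvariety and the decomposition $\mathcal{M\vee N}$ makes sense inside $\mathbf{EPI}$); this follows from standard variety-theoretic arguments once the embedding $S\hookrightarrow M\times N$ is in hand.
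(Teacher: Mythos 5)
The first step is where your proposal breaks down. From the exclusion of $\mathcal{LZ}$, $\mathcal{RZ}$, $\mathcal P$ and $\overleftarrow{\mathcal P}$ you cannot extract identities of the form $x^ny=x^{n+1}y$ or $yx^n=yx^{n+1}$: the variety of all Abelian groups satisfies the hypothesis of Proposition~\ref{decomposition} (none of the four excluded varieties consists of groups), yet it satisfies no such identity, since cancellation would force $x=1$; the same objection applies to any nontrivial group variety, and no ``power-stabilising'' identity can hold throughout $\mathcal V$ because the hypothesis does not force periodicity. What the hypothesis actually yields --- and what the proof here rests on, transferring \cite[Lemma~2]{Volkov-89} verbatim to epigroups --- is the quasiidentity $e^2=e\longrightarrow ex=xe$, i.e.\ that idempotents are central in every member of $\mathcal V$. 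This is a quasiidentity, not an identity, and it has nothing to do with stabilisation of powers (every epigroup has idempotent powers by definition, so no identities are needed for your ``well-behaved idempotent power''). Centrality of idempotents is the engine of the whole decomposition: for an epigroup $S$ generating $\mathcal V$ with idempotent set $E$, it makes $ES$ an ideal, so the Rees quotient $S/ES$ is a nil-epigroup; it makes each $eS$ a subepigroup; and it makes the map $\varepsilon(x)=(\dots,ex,\dots)_{e\in E}$ a homomorphism into the monoid $S^\ast=\prod_{e\in E}eS$. None of this is available from your step one, so the later steps have no foundation.

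The second half of your sketch is also too thin to stand on its own. The argument does not embed an arbitrary member of $\mathcal V$ into a product of a monoid and a nil-semigroup of $\mathcal V$; it fixes one generating epigroup $S$ and exhibits $S$ as a subdirect product of the monoid $S^\ast$ and the nil quotient $S/ES$, the nontrivial point being that $\varepsilon$ separates the elements of $ES$ (a short computation using \eqref{center idemp}) while the Rees homomorphism separates the elements outside $ES$. Your proposal contains no construction of the monoid part (there is no single distinguished idempotent to ``localise at''; one needs the product over all idempotents), no construction of the nil part, and no separation argument. Finally, taking $\mathcal M$ to be the join of all monoid-generated subvarieties of $\mathcal V$ does not match the statement, which requires $\mathcal M$ to be generated by a single monoid; in the epigroup setting one cannot simply form the product of all monoids in $\mathcal V$, since infinite direct products of epigroups need not be epigroups, whereas the construction of $S^\ast$ produces the required single monoid directly.
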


\begin{proof}
It is verified in~\cite[Lemma~2]{Volkov-89} that if a semigroup variety does not contain the varieties $\mathcal{LZ}$, $\mathcal{RZ}$, $\mathcal P$ and $\overleftarrow{\mathcal P}$ then $\mathcal V$ satisfies the quasiidentity
\begin{equation}
\label{center idemp}
e^2=e\longrightarrow ex=xe\ldotp
\end{equation}
Repeating literally the proof of this claim (with using a term `subepigroup' rather than `subsemigroup'), one can establish that the similar claim is true for epigroup varieties. Thus, $\mathcal V$ satisfies the quasiidentity~\eqref{center idemp}. The rest of the proof is quite similar to the proof of Proposition~1 in~\cite{Volkov-89}.

Let $S$ be an epigroup that generates the variety $\mathcal V$, $x\in S$ and $E$ the set of all idempotents from $S$. In view of~\eqref{center idemp}, $ES$ is an ideal in $S$. By the definition of an epigroup, there is a natural $n$ such that $x^n\in\Gr\,S$. Then $x^n=x^\omega x^n$ and $x^\omega\in E$. We see that $x^n\in ES$. Therefore, the Rees quotient semigroup $S/ES$ is a nil-semi\-group and therefore, is an epigroup. The natural homomorphism $\rho$ from $S$ onto $S/ES$ separates elements from $S\setminus ES$.

Let now $e\in E$. In view of~\eqref{center idemp}, we have that $eS$ is a subsemigroup in $S$. It is well known that every epigroup satisfies the identity $\overline x\,=x\,\overline x\,^2$ (see~\cite{Shevrin-94,Shevrin-05}, for instance). Hence the equality $\overline{ex}\,=ex\bigl(\,\overline{ex}\,\bigr)^2$ holds. We have verified that, for any $e\in E$, the set $eS$ is a subepigroup in $S$. Put $S^\ast=\prod\limits_{e\in E}eS$. Then $S^\ast$ is an epigroup with unit $(\dots,e,\dots)_{e\in E}$. It follows from~\eqref{center idemp} that the map $\varepsilon$ from $S$ into $S^\ast$ given by the rule $\varepsilon(x)=(\dots,ex,\dots)_{e\in E}$ is a semigroup homomorphism. As is well known (see~\cite{Shevrin-94,Shevrin-05}, for instance), an arbitrary semigroup homomorphism $\xi$ from an epigroup $S_1$ into an epigroup $S_2$ is also an epigroup homomorphism (i.\,e., $\xi\bigl(\,\overline a\,\bigr)=\,\overline{\xi(a)}$ for any $a\in S_1$). Therefore, $\varepsilon$ is an epigroup homomorphism from $S$ into $S^\ast$. One can verify that $\varepsilon$ separates elements of $ES$. Let $e,f\in E$, $x,y\in S$ and $\varepsilon(ex)=\varepsilon(fy)$. Then $e\cdot ex=e\cdot fy$ and $f\cdot ex=f\cdot fy$. Since $e,f\in E$, we have
\begin{equation}
\label{ex=efy,fex=fy}
ex=efy\quad\text{and}\quad fex=fy\ldotp
\end{equation}
Therefore,
$$ex\stackrel{\eqref{ex=efy,fex=fy}}=efy\stackrel{\eqref{ex=efy,fex=fy}}=efex\stackrel{\eqref{center idemp}}=feex\stackrel{e\in E}=fex\stackrel{\eqref{ex=efy,fex=fy}}=fy\ldotp$$
We see that $ex=fy$ whenever $\varepsilon(ex)=\varepsilon(fy)$. This means that $\varepsilon$ separates elements of $ES$.

Thus, $\varepsilon$ and $\rho$ are homomorphisms from $S$ into $S^\ast$ and $S/ES$ respectively, and the intersection of kernels of these homomorphisms is trivial. Therefore, the epigroup $S$ is decomposable into a subdirect product of the epigroups $S^\ast$ and $S/ES$, whence $\mathcal{V\subseteq M\vee N}$ where $\mathcal M=\var\,S^\ast$ is a variety generated by a monoid and $\mathcal N=\var(S/ES)$ is a nilvariety. On the other hand $S^\ast,S/ES\in\mathcal V$, whence $\mathcal{M\vee N\subseteq V}$. We have proved that $\mathcal{V=M\vee N}$.
\end{proof}

Recall that an epigroup is called \emph{combinatorial} if all its subgroups are trivial. 

\begin{lemma}
\label{monoid decomposition}
If an epigroup variety $\mathcal M$ is generated by a commutative epigroup with unit then $\mathcal{M=G\vee C}_m$ for some Abelian group variety $\mathcal G$ and some $m\ge0$.
\end{lemma}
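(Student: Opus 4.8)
The plan is to transfer to the epigroup setting the semigroup argument of~\cite{Head-68}; the only genuinely new ingredient is the use of the hypothesis that $\mathcal M$ is an \emph{epigroup} variety to bound indices. Let $S$ be a commutative epigroup with unit generating $\mathcal M$. If $S$ is a group, then $\mathcal M$ is an Abelian group variety and $\mathcal{M=M\vee C}_0$, so there is nothing to prove; assume henceforth that $S$ is not a group. The first step will be to show that $\mathcal M$ satisfies the identity $x^m=x^mx^\omega$ (``$x^m$ is a group element'') for some $m\ge1$: indeed, otherwise for every $n$ there would be a member of $\mathcal M$ possessing a monogenic subepigroup in which $x^n$ is not a group element, and the direct product of these members — which lies in $\mathcal M$ because $\mathcal M$ is closed under direct products — would fail to be an epigroup, a contradiction. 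Let $m$ be the least such number and put $\mathcal{G=\Gr(M)=M\wedge GR}$; this is an Abelian group variety since $\mathcal M$ is commutative.

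The second step is the inclusion $\mathcal{G\vee C}_m\subseteq\mathcal M$. Here $\mathcal{G\subseteq M}$ is obvious, so one has to verify $\mathcal C_m\subseteq\mathcal M$. Since the identity $x^{m-1}=x^{m-1}x^\omega$ fails in $\mathcal M$, it already fails in the generator $S$; hence $S$ contains an element $a$ of index exactly $m$ (using Lemma~\ref{identity for comb} and the fact that the set of $k$ with $a^k\in\Gr S$ is upward closed). The submonoid $\{1,a,a^2,\dots\}$ of $S$ is a subepigroup (it is closed under pseudoinversion because the cyclic maximal subgroup generated by $a^m$ is an ideal of it), and collapsing this cyclic subgroup to a point gives the monoid $\{1,a,\dots,a^{m-1},a^m\}$ with $a^m=a^{m+1}$, which therefore belongs to $\mathcal M$. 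As this monoid generates $\mathcal C_m$ — one checks that the free $\mathcal C_m$-monoid on $k$ letters is the $k$-th direct power of the free one on a single letter, so that $\mathcal C_m$ is generated by its free monogenic monoid — we obtain $\mathcal C_m\subseteq\mathcal M$.

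For the reverse inclusion it suffices to prove $S\in\mathcal{G\vee C}_m$. The key fact is that the map $r\colon x\mapsto x^2\overline x\,(=xx^\omega)$ is a retraction of $S$ onto the subsemigroup $\Gr S$ of its group elements: it is a homomorphism because pseudoinversion is an endomorphism of a commutative epigroup, it fixes $\Gr S$ pointwise, and the identity found in the first step guarantees $r(S)\subseteq\Gr S$. Since $\Gr S$ is a commutative Clifford semigroup, $\var(\Gr S)\subseteq\mathcal{G\vee SL}\subseteq\mathcal{G\vee C}_m$ (recall $\mathcal{SL=C}_1$). Let $\rho$ be the least congruence on $S$ with $S/\rho\in\mathcal C_m$, that is, the congruence generated by all pairs $(x^m,x^{m+1})$; then $S/\rho\in\mathcal C_m$ by construction. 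One then shows that the homomorphism $S\to\Gr S\times S/\rho$, $x\mapsto(r(x),x\rho)$, is injective, i.e.\ $\ker r\wedge\rho$ is trivial; granting this, $S$ embeds into a member of $(\mathcal{G\vee SL})\vee\mathcal C_m=\mathcal{G\vee C}_m$, whence $\mathcal{M=G\vee C}_m$.

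The main obstacle is the transversality $\ker r\wedge\rho=\Delta$. The intuition is that $r(x)$ records the ``group component'' of $x$ while $\rho$ records precisely the complementary ``$\mathcal C_m$-component''. To make this precise one analyses $\rho$: the generating pairs $(x^m,x^{m+1})$ have equal value under the homomorphism $x\mapsto x^\omega$, so $\rho$ respects this homomorphism; moreover $\rho$ never identifies a unit of $S$ with a non-unit (a non-unit $x$ has $x^\omega\ne1$, so $x^m$ and $x^{m+1}$ lie in $G_{x^\omega}$ and are themselves non-units). These two facts reduce the claim to showing that $r$ separates the $\rho$-classes inside each subsemigroup $\{x\in S\mid x^\omega=e\}$, where one invokes the index bound $m$ exactly as in the computation carried out in~\cite{Head-68} for the semigroup case.
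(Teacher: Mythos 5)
The first half of your argument (the existence of the least $m$ with $x^m=x^mx^\omega$ via the direct-product/epigroup argument, the inclusion $\mathcal C_m\subseteq\mathcal M$, and $\mathcal{G\subseteq M}$) is sound and runs parallel to the paper. The fatal problem is the step you defer: the transversality $\ker r\wedge\rho=\Delta$ is not only unproved, it is false, so the embedding $S\hookrightarrow\Gr S\times S/\rho$ need not exist. Take $S=\{1,g,a,ag,z\}$, the commutative monoid in which $\{1,g\}\cong\mathbb Z_2$ is the group of units, $z$ is a zero and the product of any two elements of $\{a,ag\}$ equals $z$; equivalently, $S$ is the Rees quotient of $\mathbb Z_2\times C_2$ modulo the ideal $\{(1,0),(g,0)\}$, where $C_2=\{e,a,0\}$ is the three-element monoid generating $\mathcal C_2$. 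This $S$ is a finite commutative epigroup with unit, the least $m$ is $2$, and $a\ne ag$. On the one hand $a^\omega=(ag)^\omega=z$, so $r(a)=az=z=r(ag)$ and $(a,ag)\in\ker r$. On the other hand every member of $\mathcal C_2$ has only trivial subgroups, so any homomorphism of $S$ onto a member of $\mathcal C_2$ sends the unit $g$ (which satisfies $g^2=1$) to the image of $1$ and therefore identifies $a$ with $ag$; hence $(a,ag)\in\rho$. Thus $\ker r\wedge\rho\ne\Delta$ and your map $x\mapsto\bigl(r(x),x\rho\bigr)$ is not injective.

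Moreover the defect is in the strategy, not in the particular choice of $r$ and $\rho$: in this $S$ every homomorphism onto a completely regular semigroup also identifies $a$ with $ag$, because the image $t$ of $a$ is a group element satisfying $t^3=t^2$, hence an idempotent equal to the image of $z$. So $S$ admits no subdirect decomposition into a completely regular semigroup and a member of $\mathcal C_m$ at all, even though $S$ does lie in $\mathcal{G\vee C}_2$ (it is a homomorphic image of $\mathbb Z_2\times C_2$). Membership of the generator in the join $\mathcal{G\vee C}_m$ simply cannot, in general, be witnessed by a subdirect decomposition of the generator itself; this is exactly why the paper, after extracting (as you do) a largest cyclic group $G$ and a largest combinatorial cyclic epigroup $D_n$ from $\mathcal M$ by the direct-product argument, does not decompose $S$ but instead repeats the arguments of Head's Theorem~1 to show that every epigroup of $\mathcal M$ lies in $\var(G\times D_n)$, whence $\mathcal{M}=\var G\vee\var D_n=\mathcal{G\vee C}_m$. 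To repair your proof you would have to replace the transversality claim by an argument of that identity-theoretic kind (showing that all identities of $G\times C_m$ hold in $S$), which is essentially Head's computation rather than a routine verification.
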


\begin{proof}
It is well known that the variety of all Abelian groups is the least non-periodic epigroup variety. This variety evidently contains the infinite cyclic group. Further, for each natural $m$, let $G_m$ denote the cyclic group of order $m$. It is evident that if $\mathcal M$ is periodic then the set $\{m\in\mathbb N\mid G_m\in\mathcal M\}$ has the greatest element. We denote by $G$ the infinite cyclic group whenever the variety $\mathcal M$ is non-periodic, and the finite cyclic group of the greatest order among all cyclic groups in $\mathcal M$ otherwise. In both the cases $G\in\mathcal M$. Further, let $D_m$ be the finite cyclic combinatorial epigroup of order $m$ and $d_m$ is a generator of $D_m$. Put $X=\{m\in\mathbb N\mid D_m\in\mathcal M\}$. If the set $X$ has not the greatest element then the semigroup $\prod\limits_{m\in X}D_m$ is not an epigroup since, for example, no power of the element $(\dots,d_m,\dots)_{m\in X}$ belongs to a subgroup. Therefore, the set of numbers $X$ contains the greatest number. We denote this number by $n$. Repeating literally arguments from the proof of Theorem~1 in~\cite{Head-68}, we have that every epigroup from $\mathcal M$ is a homomorphic image of some subepigroup of the epigroup $G\times D_n$. Therefore, $\mathcal{M=G\vee D}$ where $\mathcal G=\var\,G$ and $\mathcal D=\var\,D_n$. Clearly, $\mathcal G$ is a variety of Abelian groups. The variety $\mathcal D$ is generated by a finite epigroup, whence it may be considered as a semigroup variety. It is well known and may be easily verified that $(m+1)$-element combinatorial cyclic monoid generates the variety $\mathcal C_m$. Therefore, $\mathcal{D=C}_m$ for some $m\ge0$.
\end{proof}

It is evident that a strongly permutative variety does not contain the varieties $\mathcal{LZ}$, $\mathcal{RZ}$, $\mathcal P$ and $\overleftarrow{\mathcal P}$. Besides that, every monoid satisfying a permutative identity is commutative. Thus, we have the following corollary of Proposition~\ref{decomposition} and Lemma~\ref{monoid decomposition}.

\begin{corollary}
\label{str permut decomposition}
If $\mathcal V$ is a strongly permutative epigroup variety then $\mathcal{V=\mathcal G\vee C}_m\vee\mathcal N$ where $\mathcal G$ is an Abelian group variety, $m\ge0$ and $\mathcal N$ is a nilvariety.\qed{\sloppy

}
\end{corollary}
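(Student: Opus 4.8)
The plan is to obtain the decomposition directly from Proposition~\ref{decomposition} and Lemma~\ref{monoid decomposition}; the only things that need checking are that the hypotheses of Proposition~\ref{decomposition} are met and that the monoid it supplies is commutative. Both checks are routine, so the corollary is essentially immediate.

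First I would fix a strongly permutative identity $x_1x_2\cdots x_n=x_{1\pi}x_{2\pi}\cdots x_{n\pi}$ holding in $\mathcal V$, with $1\pi\ne1$ and $n\pi\ne n$, and verify that $\mathcal V$ contains none of $\mathcal{LZ}$, $\mathcal{RZ}$, $\mathcal P$, $\overleftarrow{\mathcal P}$. In $\mathcal{LZ}$ the identity collapses to $x_1=x_{1\pi}$ and in $\mathcal{RZ}$ to $x_n=x_{n\pi}$, both impossible. In $\mathcal P$ every letter of both sides is simple while the last letters $x_n$ and $x_{n\pi}$ are distinct, contradicting Lemma~\ref{word problem}(iii); the case of $\overleftarrow{\mathcal P}$ is handled dually, using the first letter instead of the last. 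Hence Proposition~\ref{decomposition} applies and $\mathcal{V=M\vee N}$, where $\mathcal M=\var\,S$ for a monoid (epigroup with unit) $S$ and $\mathcal N$ is a nilvariety.

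Next I would note that $S\in\mathcal M\subseteq\mathcal V$, so $S$ satisfies the permutative identity above. Since $\pi$ is non-trivial it has an inversion, i.e.\ there are positions $p<q$ with $p\pi>q\pi$; putting $i=q\pi$ and $j=p\pi$ (so that $i<j$) and substituting the unit of $S$ for every variable other than $x_i$ and $x_j$ turns the identity into $x_ix_j=x_jx_i$. Thus $S$ is a commutative epigroup with unit, and Lemma~\ref{monoid decomposition} gives $\mathcal{M=G\vee C}_m$ for an Abelian group variety $\mathcal G$ and some $m\ge0$. Combining this with $\mathcal{V=M\vee N}$ yields $\mathcal{V=G\vee C}_m\vee\mathcal N$, as claimed.

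I do not expect any real obstacle here: everything of substance is already packaged in Proposition~\ref{decomposition} and Lemma~\ref{monoid decomposition}, and the two genuinely new steps — excluding the four ``bad'' varieties and deducing commutativity of a permutative monoid — are entirely standard. The only mildly delicate point is invoking Lemma~\ref{word problem}(iii) correctly for $\mathcal P$ and $\overleftarrow{\mathcal P}$, where what matters is the behaviour of the last (respectively first) letter rather than mere equality of content.
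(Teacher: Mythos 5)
Your proposal is correct and follows exactly the route the paper intends: it verifies that a strongly permutative variety excludes $\mathcal{LZ}$, $\mathcal{RZ}$, $\mathcal P$ and $\overleftarrow{\mathcal P}$ (the latter two via Lemma~\ref{word problem}(iii) and its dual, which is where the hypotheses $1\pi\ne1$ and $n\pi\ne n$ are genuinely used), applies Proposition~\ref{decomposition}, shows the generating monoid is commutative by substituting the unit, and then invokes Lemma~\ref{monoid decomposition}. The paper merely declares these two verifications evident, so your write-up just fills in the same details.
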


\subsection{A direct decomposition of one varietal lattice}
\label{preliminaries direct}

We denote by $\mathcal{AG}$ the variety of all Abelian groups. The aim of this subsection is to prove the following

\begin{proposition}
\label{direct product}
The lattice $L(\mathcal{AG\vee C}_2\vee\mathcal Q)$ is isomorphic to the direct product of the lattices $L(\mathcal{AG})$ and $L(\mathcal C_2\vee\mathcal Q)$.
\end{proposition}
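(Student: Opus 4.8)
The plan is to decompose the variety $\mathcal{AG\vee C}_2\vee\mathcal Q$ as a direct product of two ``coordinate'' subvarieties whose subvariety lattices are the two factors, and then invoke the standard fact that if $\mathcal W=\mathcal U\times\mathcal V$ is a varietal direct decomposition with $\mathcal U\wedge\mathcal V=\mathcal T$ and $\mathcal U\vee\mathcal V=\mathcal W$, and moreover every subvariety $\mathcal X\le\mathcal W$ splits as $\mathcal X=(\mathcal X\wedge\mathcal U)\vee(\mathcal X\wedge\mathcal V)$, then $L(\mathcal W)\cong L(\mathcal U)\times L(\mathcal V)$ via $\mathcal X\mapsto(\mathcal X\wedge\mathcal U,\mathcal X\wedge\mathcal V)$. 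Here the two coordinates are $\mathcal U=\mathcal{AG}$ and $\mathcal V=\mathcal C_2\vee\mathcal Q$. Note $\mathcal{AG}$ is a variety of groups (all elements group elements, pseudoinversion = group inverse), while $\mathcal C_2\vee\mathcal Q$ is combinatorial (it satisfies $x^2=x^3$, since both $\mathcal C_2$ and $\mathcal Q$ do); hence $\mathcal{AG}\wedge(\mathcal C_2\vee\mathcal Q)=\mathcal T$, and the relevant varietal direct product exists.

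The substantive work is to show that every subvariety $\mathcal X$ of $\mathcal{AG\vee C}_2\vee\mathcal Q$ satisfies $\mathcal X=\Gr(\mathcal X)\vee(\mathcal X\wedge(\mathcal C_2\vee\mathcal Q))$, where $\Gr(\mathcal X)=\mathcal X\wedge\mathcal{GR}$ as in Subsection~\ref{preliminaries decomposition}. First I would observe that $\mathcal{AG\vee C}_2\vee\mathcal Q$ is strongly permutative — indeed it is commutative, since $\mathcal{AG}$, $\mathcal C_2$ and $\mathcal Q$ all satisfy $xy=yx$ — so by Corollary~\ref{str permut decomposition} (or rather by the monoid-decomposition machinery of Proposition~\ref{decomposition} and Lemma~\ref{monoid decomposition}) any $\mathcal X$ inside it decomposes as $\mathcal X=\mathcal G\vee\mathcal N$ with $\mathcal G$ an Abelian group variety and $\mathcal N$ a commutative nilvariety. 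Clearly $\mathcal G\le\Gr(\mathcal X)$; and since $\mathcal X\le\mathcal{AG\vee C}_2\vee\mathcal Q$ one checks $\Gr(\mathcal X)\le\mathcal G$ (a group variety inside $\mathcal{AG\vee C}_2\vee\mathcal Q$ must lie in $\mathcal{AG}$, and since $\Gr(\mathcal X)\le\mathcal X=\mathcal G\vee\mathcal N$ with $\mathcal N$ a nilvariety, the group part is forced into $\mathcal G$), so $\Gr(\mathcal X)=\mathcal G$. Thus it remains to identify $\mathcal N$ with $\mathcal X\wedge(\mathcal C_2\vee\mathcal Q)$: since $\mathcal N\le\mathcal X$ and $\mathcal N$, being a combinatorial nilvariety, lies in $\mathcal C_2\vee\mathcal Q$ (here one uses that $\mathcal{AG\vee C}_2\vee\mathcal Q$ contains no larger combinatorial nil part than $\mathcal C_2\vee\mathcal Q$ — concretely, the commutative nil subvarieties of $\mathcal{AG\vee C}_2\vee\mathcal Q$ are exactly those of $\mathcal C_2\vee\mathcal Q$, which can be verified by an identity computation using Lemma~\ref{word problem}), we get $\mathcal N\le\mathcal X\wedge(\mathcal C_2\vee\mathcal Q)$; the reverse inclusion needs that $\mathcal X\wedge(\mathcal C_2\vee\mathcal Q)$ is a nilvariety, which holds because $\mathcal C_2\vee\mathcal Q$ contains no nontrivial group and by the decomposition $\mathcal X\wedge(\mathcal C_2\vee\mathcal Q)=(\text{group part})\vee(\text{nil part})$ the group part is trivial, so $\mathcal X\wedge(\mathcal C_2\vee\mathcal Q)=\mathcal N'\le\mathcal N$.

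With the splitting in hand, the map $\Phi\colon\mathcal X\mapsto(\Gr(\mathcal X),\mathcal X\wedge(\mathcal C_2\vee\mathcal Q))$ is an order-preserving map $L(\mathcal{AG\vee C}_2\vee\mathcal Q)\to L(\mathcal{AG})\times L(\mathcal C_2\vee\mathcal Q)$; the splitting $\mathcal X=\Gr(\mathcal X)\vee(\mathcal X\wedge(\mathcal C_2\vee\mathcal Q))$ gives injectivity, and surjectivity follows by sending $(\mathcal G,\mathcal V)$ to $\mathcal{G\vee V}$ and checking $\Gr(\mathcal{G\vee V})=\mathcal G$ and $(\mathcal{G\vee V})\wedge(\mathcal C_2\vee\mathcal Q)=\mathcal V$ — again using that $\mathcal G$ is a group variety, $\mathcal V$ a combinatorial nil-or-$\mathcal C_2$-type variety, and that modular-law-type cancellation applies since the two sit in ``coordinate'' position. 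Finally, that $\Phi^{-1}$ is order-preserving (equivalently, that $\Phi$ preserves both $\wedge$ and $\vee$) reduces to the identities $(\mathcal G_1\vee\mathcal V_1)\wedge(\mathcal G_2\vee\mathcal V_2)=(\mathcal G_1\wedge\mathcal G_2)\vee(\mathcal V_1\wedge\mathcal V_2)$, whose nontrivial direction is the inclusion $\subseteq$ and follows once more from the unique decomposition of a subvariety into its group part and its nil part. The main obstacle I anticipate is precisely this last point — verifying that joins and meets of subvarieties respect the coordinate decomposition, i.e.\ that intersecting a join $\mathcal G\vee\mathcal V$ with $\mathcal C_2\vee\mathcal Q$ really returns $\mathcal V$ exactly and not something smaller; this needs a genuine identity-bases argument (that a group variety contributes nothing to the combinatorial nil ``slot''), for which I would compute with the defining identities $x^2=x^3$, $xy=yx$ of $\mathcal C_2$ and the $0$-reduced identities of $\mathcal Q$, combined with Lemma~\ref{word problem}(ii) for $\mathcal C_2$.
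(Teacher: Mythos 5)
Your overall skeleton is the same as the paper's (decompose each subvariety of $\mathcal{AG}\vee\mathcal C_2\vee\mathcal Q$ into an $\mathcal{AG}$-coordinate and a $\mathcal C_2\vee\mathcal Q$-coordinate and show the pair determines the variety and vice versa), but the proposal has a genuine gap: the step you yourself call ``the main obstacle'' -- that $(\mathcal G\vee\mathcal U)\wedge(\mathcal C_2\vee\mathcal Q)=\mathcal U$, equivalently that the decomposition $\mathcal V=\mathcal G\vee\mathcal U$ with $\mathcal{G\subseteq AG}$, $\mathcal U\subseteq\mathcal C_2\vee\mathcal Q$ is unique -- is exactly where all the work lies, and you do not carry it out. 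The easy splitting $\mathcal X=\Gr(\mathcal X)\vee\bigl(\mathcal X\wedge(\mathcal C_2\vee\mathcal Q)\bigr)$ by itself does not yield $L(\mathcal{AG}\vee\mathcal C_2\vee\mathcal Q)\cong L(\mathcal{AG})\times L(\mathcal C_2\vee\mathcal Q)$; one must show the join map is injective and that both coordinates are recovered from a join. In the paper this occupies the whole second half of the proof: recovering the group coordinate by a cancellation trick (from $u=v$ in $\mathcal G$ and $x^3=x^4$ in $\mathcal U$ deduce $u^4v^3=u^3v^4$ and cancel inside groups), then proving $m=k$ and $\Nil(\mathcal U)=\Nil(\mathcal U')$ by a case analysis on an identity distinguishing the two nil parts ($0$-reduced with a linear word or with $x^2$, or else a permutative identity), and crucially this uses identities involving pseudoinversion, such as $x_1x_2\cdots x_n=\overline{\overline{x_1}}\,x_2\cdots x_n$, $x^2=\overline{\overline x}\,^2$ and $x^2y=\overline{\overline x}\,^2y$, because $\mathcal G$ may be the whole (non-periodic) variety $\mathcal{AG}$ and no purely semigroup-word criterion like Lemma~\ref{word problem}(ii), nor the defining identities $x^2=x^3$, $xy=yx$ of $\mathcal C_2$, can separate the group slot from the combinatorial slot. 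So your closing sentence points at the right difficulty but offers no argument that would actually close it.

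There are also two incorrect intermediate claims. Corollary~\ref{str permut decomposition} (and Lemma~\ref{within AG+C_2+Q}) give $\mathcal X=\mathcal G\vee\mathcal C_m\vee\mathcal N$, not $\mathcal X=\mathcal G\vee\mathcal N$ with $\mathcal N$ nil; the $\mathcal C_m$-part cannot be dropped (e.g.\ $\mathcal C_2$ itself is not a join of a group variety and a nilvariety: the identity $x^2y=x^2y^2$ holds in $\mathcal{SL}\vee\Nil(\mathcal C_2)$ but fails in $\mathcal C_2$). Consequently your assertion that $\mathcal X\wedge(\mathcal C_2\vee\mathcal Q)$ is a nilvariety because $\mathcal C_2\vee\mathcal Q$ has only trivial groups is false: being combinatorial is weaker than being nil, and for $\mathcal X=\mathcal C_2$ (or $\mathcal{AG}\vee\mathcal C_2$) the meet contains $\mathcal{SL}$ and $\mathcal C_2$. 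The correct choice of the second coordinate, as in the paper, is $\mathcal U=\mathcal C_m\vee\mathcal N\subseteq\mathcal C_2\vee\mathcal Q$; with that correction the splitting step goes through easily, but the uniqueness/recovery argument described above still has to be supplied.
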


\begin{proof}
We need the following auxiliary statement.

\begin{lemma}
\label{within AG+C_2+Q}
If $\mathcal{X\subseteq AG\vee C}_2\vee\mathcal Q$ then $\mathcal{X=G\vee C}_m\vee\mathcal N$ where $\mathcal G$ is some Abelian group variety, $0\le m\le2$ and $\mathcal{N\subseteq Q}$.
\end{lemma}

\begin{proof}
Being a subvariety of the variety $\mathcal{AG\vee C}_2\vee\mathcal Q$, the variety $\mathcal X$ satisfies the identity $x^2y=yx^2$. It is evident that this identity fails in the varieties $\mathcal{LZ}$ and $\mathcal{RZ}$. Further, Lemma~\ref{word problem}(iii) and the dual statement imply that this identity is false in the varieties $\mathcal P$ and $\overleftarrow{\mathcal P}$ as well. Therefore, none of the four mentioned varieties is contained in $\mathcal X$. Besides that, the variety $\mathcal{AG\vee C}_2\vee\mathcal Q$ (and therefore, $\mathcal X$) satisfies the identity $x^2yz=x^2zy$. Substituting~1 for $x$, we have that all monoids in $\mathcal X$ are commutative. Proposition~\ref{decomposition} and Lemma~\ref{monoid decomposition} imply now that $\mathcal{X=G\vee C}_m\vee\mathcal N$ for some Abelian group variety $\mathcal G$, some $m\ge0$ and some nilvariety $\mathcal N$. It is evident that $\mathcal{G\subseteq AG}$. Lemmas~\ref{identity for cr},~\ref{identity for comb} and~\ref{identity for nil} imply that $\mathcal{AG}$, $\mathcal C_m$ and $\mathcal Q$ satisfy the identities~\eqref{x=x**},~$\overline{\overline x}\,=x^m$ and~\eqref{x*=0} respectively. Therefore, the identity $x^2y=\,\overline{\overline x}\,^2y$ holds in the variety $\mathcal{AG\vee C}_2\vee\mathcal Q$. But this identity is false in the variety $\mathcal C_m$ with $m>2$. Hence $m\le2$. This implies that the variety $\mathcal{AG\vee C}_m\vee\mathcal Q$ satisfies the identities $x^2y=yx^2=\,\overline{\overline x}\,^2y$ and $xyx=xy\,\overline{\overline x}$. Since $\mathcal{N\subseteq X\subseteq AG\vee C}_2\vee\mathcal Q$, Lemma~\ref{identity for nil} implies that the variety $\mathcal N$ satisfies the identities $x^2y=xyx=yx^2=0$, whence $\mathcal{N\subseteq Q}$.
\end{proof}

A semigroup word $w$ is called \emph{linear} if every letter from $c(w)$ is simple in $w$. For convenience of references, we formulate the following observation that will be helpful also in Section~\ref{distributive and standard proof}.

\begin{lemma}
\label{what =0 in Q}
All non-semi\-group words and all non-linear semigroup words except $x^2$ equal to~$0$ in the variety $\mathcal Q$.
\end{lemma}

\begin{proof}
If $u$ is a non-semi\-group word then $u=0$ in $\mathcal Q$ by Lemma~\ref{identity for nil}. The claim that a non-linear semigroup word differ from $x^2$ equal to~0 in $\mathcal Q$ is evident.
\end{proof}

Now we start with the direct proof of Proposition~\ref{direct product}. Let $\mathcal{V\subseteq AG\vee C}_2\vee\mathcal Q$. In view of Lemma~\ref{within AG+C_2+Q}, $\mathcal{V=G\vee C}_m\vee\mathcal N$ for some Abelian group variety $\mathcal G$, some $0\le m\le2$ and some variety $\mathcal N$ with $\mathcal{N\subseteq Q}$. Put $\mathcal{U=C}_m\vee\mathcal N$. We have that $\mathcal{V=G\vee U}$ where $\mathcal{G\subseteq AG}$ and $\mathcal{U\subseteq C}_2\vee\mathcal Q$. It remains to establish that this decomposition of the variety $\mathcal V$ into the join of some subvariety of the variety $\mathcal{AG}$ and some subvariety of the variety $\mathcal C_2\vee\mathcal Q$ is unique.

Let $\mathcal{V=G'\vee U'}$ where $\mathcal{G'\subseteq AG}$ and $\mathcal{U'\subseteq C}_2\vee\mathcal Q$. We need to verify that $\mathcal{G=G'}$ and $\mathcal{U=U'}$. Let $u=v$ be an arbitrary identity satisfied by $\mathcal G$. The variety $\mathcal U$ satisfies the identity $x^3=x^4$. Then the identity $u^4v^3=u^3v^4$ holds in the variety $\mathcal{G\vee U}$. Let us cancel this identity on $u^3$ from the left and on $v^3$ from the right, thus concluding that $u=v$ holds in $\mathcal{\Gr\,(G\vee U)}$. Therefore, $\mathcal{\Gr\,(G\vee U)\subseteq G}$. The opposite inclusion is evident. Thus, $\mathcal{\Gr\,(G\vee U)=G}$. Analogously, $\mathcal{\Gr\,(G'\vee U')=G'}$. We see that
$$\mathcal{G=\Gr\,(G\vee U)=\Gr\,(V)=\Gr\,(G'\vee U')=G'}\ldotp$$
It remains to check that $\mathcal{U=U'}$. Recall that $\mathcal{U=C}_m\vee\mathcal N$ where $0\le m\le2$ and $\mathcal{N\subseteq Q}$, while $\mathcal{U'\subseteq C}_2\vee\mathcal Q$. It is evident that the variety $\mathcal C_2\vee\mathcal Q$ (and therefore, $\mathcal U'$) is combinatorial. Therefore, Lemma~\ref{within AG+C_2+Q} implies that $\mathcal{U'=C}_k\vee\mathcal N'$ for some $0\le k\le2$ and some variety $\mathcal N'$ with $\mathcal{N'\subseteq Q}$.

Suppose that $m\ne k$. We may assume without any loss that $m<k$, i.\,e., either $m=0$, $1\le k\le2$ or $m=1$, $k=2$. Suppose at first that $m=0$ and $1\le k\le2$. It is evident that any group satisfies the identity $x^\omega=y^\omega$. Lemma~\ref{identity for nil} implies that this identity holds in $\mathcal N$ and therefore, in $\mathcal{V=G\vee T\vee N=G\vee N}$. But Lemma~\ref{word problem}(ii) implies that this identity fails in the variety $\mathcal{SL}$. However, this is impossible because $\mathcal{SL\subseteq G'\vee C}_k\vee\mathcal{N'=V}$. Suppose now that $m=1$ and $k=2$. Then Lemmas~\ref{identity for cr},~\ref{identity for comb} and~\ref{identity for nil} imply that the identity $x^2y=x^2\,\overline{\overline y}$ holds in the variety $\mathcal{V=G\vee C}_m\vee\mathcal{N=G\vee SL\vee N}$. But this identity is false in the variety $\mathcal C_2$ (and therefore, in $\mathcal{G'\vee C}_k\vee\mathcal{N'=V}$) by Lemmas~\ref{identity for comb} and~\ref{word problem}(ii). A contradiction shows that $m=k$.

Note that if $\mathcal X$ is an arbitrary epigroup variety then the class of all epigroups in $\mathcal X$ satisfying the identity~\eqref{x*=0} is the greatest nilsubvariety of $\mathcal X$. We denote this subvariety by $\Nil(\mathcal X)$. Put $\overline{\mathcal N}=\Nil\,(\mathcal U)$ and $\overline{\mathcal N'}=\Nil\,(\mathcal U')$. It suffices to verify that $\overline{\mathcal N}=\overline{\mathcal N'}$ because
$$\mathcal{U=C}_m\vee\mathcal{N=C}_m\vee\overline{\mathcal N}=\mathcal C_k\vee\overline{\mathcal N'}=\mathcal C_k\vee\mathcal{N'=U'}$$
in this case. Suppose that $\overline{\mathcal N}\ne\overline{\mathcal N'}$. It suffices to verify that
\begin{equation}
\label{inequality}
\mathcal{G\vee C}_m\vee\overline{\mathcal N}\ne\mathcal{G'\vee C}_k\vee\overline{\mathcal N'}
\end{equation}
because this contradicts the equalities
$$\mathcal{G\vee C}_m\vee\overline{\mathcal N}=\mathcal{G\vee C}_m\vee\mathcal{N=V=G'\vee C}_k\vee\mathcal{N'=G'\vee C}_k\vee\overline{\mathcal N'}\ldotp$$

We will say that varieties $\mathcal X_1$ and $\mathcal X_2$ \emph{differ with an identity} $u=v$ if this identity holds in one of the varieties $\mathcal X_1$ or $\mathcal X_2$ but fails in another one. Since $\overline{\mathcal N}\ne\overline{\mathcal N'}$, the varieties $\overline{\mathcal N}$ and $\overline{\mathcal N'}$ differ with some identity. We may assume without loss of generality that this identity holds in $\overline{\mathcal N}$ but fails in $\overline{\mathcal N'}$. Suppose at first that the identity we mention is a 0-reduced identity $u=0$. Since $\overline{\mathcal N'}\subseteq\mathcal Q$, this identity fails in $\mathcal Q$. In view of Lemma~\ref{what =0 in Q}, either $u$ is a linear word or $u\equiv x^2$. Suppose that $u\equiv x_1x_2\cdots x_n$ for some $n$. Then the identity $x_1x_2\dots x_n=0$ holds in $\overline{\mathcal N}$ but fails in $\overline{\mathcal N'}$. If $m=2$ then the variety $\overline{\mathcal N}$ contains the variety $\Nil\,(\mathcal C_2)=\var\{x^2=0,\,xy=yx\}$. The latest variety does not satisfy the identity $x_1x_2\dots x_n=0$. Therefore, $m\le1$. Then the variety $\mathcal{G\vee C}_m$ is completely regular. Lemmas~\ref{identity for cr} and~\ref{identity for nil} imply now that the variety $\mathcal{G\vee C}_m\vee\overline{\mathcal N}$ satisfies the identity $x_1x_2\cdots x_n=\,\overline{\overline{x_1}}\,x_2\cdots x_n$. But Lemma~\ref{identity for nil} implies that this identity is false in $\overline{\mathcal N'}$ and therefore, in $\mathcal{G'\vee C}_k\vee\overline{\mathcal N'}$. Thus,~\eqref{inequality} holds. Let now $u\equiv x^2$. Then Lemmas~\ref{identity for cr},~\ref{identity for comb} and~\ref{identity for nil} imply that the identity $x^2=\,\overline{\overline x}\,^2$ holds in the variety $\mathcal{G\vee C}_m\vee\overline{\mathcal N}$ but is false in the variety $\mathcal{G'\vee C}_k\vee\overline{\mathcal N'}$. We see again that the inequality~\eqref{inequality} holds.

It remains to consider the case when $\overline{\mathcal N}$ and $\overline{\mathcal N'}$ differ with some non-0-reduced identity $u=v$. Suppose that $c(u)\ne c(v)$. Lemma~\ref{splitting}(i) then implies that the variety $\overline{\mathcal N}$ satisfies both the identities $u=0$ and $v=0$. Then the variety $\overline{\mathcal N'}$ does not satisfy at least one of them because $\overline{\mathcal N}$ and $\overline{\mathcal N'}$ do not differ with $u=v$ otherwise. We see that $\overline{\mathcal N}$ and $\overline{\mathcal N'}$ differ with some 0-reduced identity, and we go to the situation considered in the previous paragraph. Let now $c(u)=c(v)$. Suppose that the identity $u=0$ holds in $\mathcal Q$. Since $\overline{\mathcal N},\overline{\mathcal N'}\subseteq\mathcal Q$, we have that the identity $u=0$ holds in both the varieties $\overline{\mathcal N}$ and $\overline{\mathcal N'}$. Then $\overline{\mathcal N}$ satisfies also the identity $v=0$. But $v=0$ fails in $\overline{\mathcal N'}$ because $u=v$ holds in $\overline{\mathcal N'}$ otherwise. We see that the varieties $\overline{\mathcal N}$ and $\overline{\mathcal N'}$ differ with some 0-reduced identity. This case has been already considered in the previous paragraph. Thus the identity $u=0$ fails in $\mathcal Q$. Analogously, $v=0$ fails in $\mathcal Q$. Since the identity $u=v$ is non-trivial and $c(u)=c(v)$, Lemma~\ref{what =0 in Q} implies that the words $u$ and $v$ are linear. Using the fact that $c(u)=c(v)$ again, we have that the identity $u=v$ is permutative. Then it is evident that this identity holds in $\mathcal{G\vee C}_m\vee\overline{\mathcal N}$ but fails in $\mathcal{G'\vee C}_k\vee\overline{\mathcal N'}$. We prove that the inequality~\eqref{inequality} holds.

Thus,~\eqref{inequality} fulfills always, whence we are done.
\end{proof}

Analog of Proposition~\ref{direct product} for semigroup varieties was proved in~\cite[Proposition~2a]{Vernikov-88} (namely it was checked there that $L(\mathcal{G\vee C}_2\vee\mathcal Q)\cong L(\mathcal G)\times L(\mathcal C_2\vee\mathcal Q)$ where $\mathcal G$ is an Abelian periodic group variety). The proof of Proposition~\ref{direct product} given above is quite similar to the proof of the mentioned result from~\cite{Vernikov-88}. But results of~\cite{Vernikov-88} was not used directly above. Therefore, the mentioned result from~\cite{Vernikov-88} may be considered now as a consequence of Proposition~\ref{direct product}.

\subsection{Varieties of finite degree}
\label{preliminaries degree}

If $n$ is a natural number then a variety $\mathcal X$ is called a \emph{variety of degree} $n$ if all nil-semi\-groups in $\mathcal X$ are nilpotent of degree $\le n$ and $n$ is the least number with such a property. If $\mathcal X$ is not a variety of degree $\le n$, we will say that $\mathcal X$ is a variety of \emph{degree} $>n$. A variety is said to be a \emph{variety of finite degree} if it is a variety of degree $n$ for some $n$. If $\mathcal V$ is a variety of finite degree, we denote the degree of $\mathcal V$ by $\deg(\mathcal V)$; otherwise we put $\deg(\mathcal V)=\infty$. We need the following

\begin{proposition}[\!\!{\mdseries\cite[Corollary~1.3]{Gusev-Vernikov-15}}]
\label{degree}
Let $n$ be an arbitrary natural number. For an epigroup variety $\mathcal V$, the following are equivalent:
\begin{itemize}
\item[$1)$] $\deg(\mathcal V)\le n$;
\item[$2)$] $\mathcal V\nsupseteq\var\{x^2=x_1x_2\cdots x_{n+1}=0,\,xy=yx\}$;
\item[$3)$] $\mathcal V$ satisfies an identity of the form
\begin{equation}
\label{equ-fin-deg}
x_1\cdots x_n=x_1\cdots x_{i-1}\cdot\overline{\overline{x_i\cdots x_j}}\cdot x_{j+1}\cdots x_n
\end{equation}
for some $i$ and $j$ with $1\le i\le j\le n$.\qed
\end{itemize}
\end{proposition}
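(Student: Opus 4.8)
The plan is to establish the cycle $3)\Rightarrow1)\Rightarrow2)\Rightarrow1)\Rightarrow3)$; in effect this means verifying the two easy implications $3)\Rightarrow1)$ and $1)\Rightarrow2)$, the ``nilpotency-theoretic'' equivalence $1)\Leftrightarrow2)$, and the substantive implication $1)\Rightarrow3)$. For $3)\Rightarrow1)$ I would observe that in any nil-semigroup $S\in\mathcal V$ one has $\overline x=0$ by Lemma~\ref{identity for nil}, so $\overline{\overline{x_i\cdots x_j}}=\overline 0=0$ and the right-hand side of~\eqref{equ-fin-deg} evaluates to $0$; hence $S$ satisfies $x_1\cdots x_n=0$ and is nilpotent of degree $\le n$, so $\deg(\mathcal V)\le n$. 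For $1)\Rightarrow2)$ I would note that $\mathcal N:=\var\{x^2=x_1\cdots x_{n+1}=0,\ xy=yx\}$ is a nilvariety whose free member on $n$ letters is a nil-semigroup in which $x_1\cdots x_n\ne0$ (it involves no repeated letter and has length $\le n$); thus $\deg(\mathcal N)=n+1>n$, so $\mathcal N$ cannot be contained in a variety of degree $\le n$.

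For $2)\Rightarrow1)$ I would argue contrapositively. If $\deg(\mathcal V)>n$, fix a nil-semigroup $S\in\mathcal V$ that is not nilpotent of degree $\le n$, so $S$ does not satisfy $x_1\cdots x_n=0$; it then suffices to show that every identity $u=v$ of $S$ holds in $\mathcal N$, for then $\mathcal N\subseteq\var S\subseteq\mathcal V$. Suppose some identity of $S$ fails in $\mathcal N$. Inspecting the very simple word problem of $\mathcal N$, one of $u,v$ — say $u$ — must then be a linear semigroup word $x_1\cdots x_k$ with $k\le n$, while $v$ is a non-semigroup word, or has $c(v)\ne c(u)$, or has length greater than $k$. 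In each of these cases the arguments behind Lemma~\ref{splitting}(i),(iii) (which go through verbatim for a single nil-semigroup in place of a nilvariety) together with Lemma~\ref{identity for nil} force $S$ to satisfy $x_1\cdots x_k=0$, and hence $x_1\cdots x_n=0$ — a contradiction.

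The implication $1)\Rightarrow3)$ is the heart of the matter. Here I would work in the relatively free epigroup $F=F_{\mathcal V}$ on $x_1,\dots,x_n$ and put $w=x_1x_2\cdots x_n$. Since $x^\omega$ is an idempotent for every $x\in F$, the Rees quotient of $F$ by the ideal $J$ generated by all idempotents is a nil-semigroup belonging to $\mathcal V$ (one checks first that $J$ is closed under pseudoinversion, because $\overline x=\overline x\,x^\omega\in J$ for $x\in J$), so by $1)$ it satisfies $x_1\cdots x_n=0$, which says precisely that $w\in J$. As $J$ also coincides with the ideal generated by $\{\overline{\overline t}\mid t\in F\}$ — because $\overline{\overline t}=t\,t^\omega$ and every idempotent is fixed by $t\mapsto\overline{\overline t}$ — this yields an identity $x_1x_2\cdots x_n=a\cdot\overline{\overline t}\cdot b$ valid in $\mathcal V$ for some words $a,b,t$ over $x_1,\dots,x_n$.

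The main obstacle is then to upgrade this ``unaligned'' identity to the precise shape~\eqref{equ-fin-deg}, i.e.\ to arrange $a=x_1\cdots x_{i-1}$, $t=x_i\cdots x_j$, $b=x_{j+1}\cdots x_n$; I expect this to be the only genuinely delicate point. Identity~\eqref{equ-fin-deg} holds automatically in every completely regular epigroup (where $\overline{\overline t}=t$) and in every nil-semigroup of degree $\le n$ (where both sides are $0$), so the only constraints on the admissible block $x_i\cdots x_j$ come from the varieties $\mathcal P$ and $\overleftarrow{\mathcal P}$ when they lie in $\mathcal V$: using $xy=x^2y$ one sees that $\mathcal P\subseteq\mathcal V$ requires the suffix $x_{j+1}\cdots x_n$ to be non-empty, and dually $\overleftarrow{\mathcal P}\subseteq\mathcal V$ requires the prefix $x_1\cdots x_{i-1}$ to be non-empty. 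I would therefore split into cases according to which of $\mathcal{LZ},\mathcal{RZ},\mathcal P,\overleftarrow{\mathcal P}$ are contained in $\mathcal V$: when none of them is, Proposition~\ref{decomposition} writes $\mathcal V$ as a join of a monoid-generated variety and a nilvariety of degree $\le n$, on each of which~\eqref{equ-fin-deg} is immediate; in the remaining cases one picks a block meeting the constraints above and verifies~\eqref{equ-fin-deg} by substituting into $x_1\cdots x_n=a\,\overline{\overline t}\,b$ and controlling the nilpotent residue with Lemma~\ref{splitting} (with a small extra argument ruling out $n\le2$ when $\mathcal V$ contains both $\mathcal P$ and $\overleftarrow{\mathcal P}$, so that a block with both end-segments non-empty is available).
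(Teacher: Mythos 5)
A preliminary remark: the paper itself gives no proof of this proposition --- it is quoted with attribution to Corollary~1.3 of Gusev and Vernikov --- so your argument has to stand on its own. Your treatment of 3)\,$\Rightarrow$\,1), 1)\,$\Rightarrow$\,2) and 2)\,$\Rightarrow$\,1) is essentially sound; for 2)\,$\Rightarrow$\,1) it would be cleaner to note that the nil members of $\mathcal V$ are exactly the members satisfying $\overline x=0$ and hence form the nilvariety $\Nil(\mathcal V)$, so that Lemma~\ref{splitting} applies as stated, rather than asserting that its proof ``goes through verbatim'' for a single nil-semigroup: for item~(iii) with $\ell(v)>k$ that assertion leans on the Sapir--Sukhanov lemma, whose variety formulation you can in any case apply to $\var S$.

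The genuine gap is in 1)\,$\Rightarrow$\,3). Your reduction, via the ideal generated by the idempotents of the $\mathcal V$-free epigroup, to an identity $x_1\cdots x_n=a\,\overline{\overline t}\,b$ is correct, but upgrading this unaligned identity to the exact shape~\eqref{equ-fin-deg} is the entire difficulty, and you give no actual argument for it. When none of $\mathcal{LZ}$, $\mathcal{RZ}$, $\mathcal P$, $\overleftarrow{\mathcal P}$ lies in $\mathcal V$ your route through Proposition~\ref{decomposition} can be made to work, but even there the claim that~\eqref{equ-fin-deg} is ``immediate'' on a monoid-generated variety is false as stated: $\mathcal C_5$ satisfies no instance of~\eqref{equ-fin-deg} for $n=3$, since $\overline{\overline u}=u^5$ there; you must invoke $\deg(\mathcal V)\le n$ to force the monoid part to be completely regular (otherwise a Rees-quotient of a monogenic subepigroup yields $\mathcal C_2\subseteq\mathcal M$ and hence infinite degree). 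In the remaining cases, where some of the four varieties are contained in $\mathcal V$ and Proposition~\ref{decomposition} is unavailable, ``pick a block meeting the constraints and verify~\eqref{equ-fin-deg} by substituting into $x_1\cdots x_n=a\,\overline{\overline t}\,b$, controlling the nilpotent residue with Lemma~\ref{splitting}'' is not a proof: the words $a$, $t$, $b$ are completely unknown; the constraints coming from $\mathcal P$ and $\overleftarrow{\mathcal P}$ only restrict which target identities could possibly hold in $\mathcal V$, they do not produce a derivation of one of them from the identities you actually have; and Lemma~\ref{splitting} applies only to nilvarieties, which $\mathcal V$ is not in these cases. So the heart of the implication --- and hence of the proposition --- remains unproved; it is precisely the content of the cited result of Gusev and Vernikov.
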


Proposition~\ref{degree} readily implies

\begin{corollary}
\label{degree of meet}
$\mathcal{\deg(X\wedge Y)=\min\bigl\{\deg(X),\deg(Y)\bigr\}}$ for arbitrary epigroup varieties $\mathcal X$ and $\mathcal Y$.\qed
\end{corollary}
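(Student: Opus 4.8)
The plan is to derive the corollary directly from the equivalence of conditions $1)$ and $2)$ in Proposition~\ref{degree}. For a natural number $m$, write $\mathcal N_m=\var\{x^2=x_1x_2\cdots x_{m+1}=0,\,xy=yx\}$; then that equivalence says precisely that, for any epigroup variety $\mathcal V$,
$$\deg(\mathcal V)>m\iff\mathcal N_m\subseteq\mathcal V,$$
where $\deg(\mathcal V)>m$ is understood to include the case $\deg(\mathcal V)=\infty$.

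The key step is the trivial remark that $\mathcal N_m\subseteq\mathcal{X\wedge Y}$ holds if and only if $\mathcal N_m\subseteq\mathcal X$ and $\mathcal N_m\subseteq\mathcal Y$ — this is just the fact that the meet in $\mathbf{EPI}$ is the class-theoretical intersection. Combining this with the reformulation above, we obtain, for every $m$,
$$\deg(\mathcal{X\wedge Y})>m\iff\bigl(\deg(\mathcal X)>m\text{ and }\deg(\mathcal Y)>m\bigr)\iff\min\bigl\{\deg(\mathcal X),\deg(\mathcal Y)\bigr\}>m.$$

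Finally, I would observe that two elements of $\mathbb N\cup\{\infty\}$ that exceed exactly the same natural numbers $m$ must coincide; applying this to $\deg(\mathcal{X\wedge Y})$ and $\min\{\deg(\mathcal X),\deg(\mathcal Y)\}$ gives the claim. (If one prefers to avoid the $\infty$ bookkeeping, one may argue the two inequalities separately: $\deg(\mathcal{X\wedge Y})\le\min\{\deg(\mathcal X),\deg(\mathcal Y)\}$ because $\deg$ is monotone under inclusion — any nil-semigroup of $\mathcal{X\wedge Y}$ lies in both $\mathcal X$ and $\mathcal Y$ — and $\deg(\mathcal{X\wedge Y})\ge\min\{\deg(\mathcal X),\deg(\mathcal Y)\}$ because, setting $n=\min\{\deg(\mathcal X),\deg(\mathcal Y)\}$ and assuming $n$ finite, both $\mathcal X$ and $\mathcal Y$ contain $\mathcal N_{n-1}$, hence so does $\mathcal{X\wedge Y}$.) There is no genuine obstacle here: the statement is immediate once Proposition~\ref{degree}($2$) is available, and the only point needing a moment's care is the correct treatment of varieties of infinite degree.
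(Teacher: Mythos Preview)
Your argument is correct and is exactly the intended one: the paper gives no proof beyond the remark that Proposition~\ref{degree} readily implies the corollary, and your use of the equivalence $1)\Leftrightarrow2)$ together with the observation that $\mathcal N_m\subseteq\mathcal{X\wedge Y}$ iff $\mathcal N_m\subseteq\mathcal X$ and $\mathcal N_m\subseteq\mathcal Y$ is precisely what ``readily implies'' means here.
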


The following corollary may be proved quite analogously to Corollary~2.13 of~\cite{Vernikov-08b} with referring to Proposition~\ref{degree} rather than Proposition~2.11 of~\cite{Vernikov-08b}.

\begin{corollary}
\label{degree of join with nil}
If $\mathcal V$ is an arbitrary epigroup variety and $\mathcal N$ is a nilvariety then $\mathcal{\deg(V\vee N)=\max\bigl\{\deg(V),\deg(N)\bigr\}}$.\qed
\end{corollary}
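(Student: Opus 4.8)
The plan is to prove the two inequalities $\deg(\mathcal V\vee\mathcal N)\ge\max\bigl\{\deg(\mathcal V),\deg(\mathcal N)\bigr\}$ and $\deg(\mathcal V\vee\mathcal N)\le\max\bigl\{\deg(\mathcal V),\deg(\mathcal N)\bigr\}$ separately. For the first one I would simply invoke the evident monotonicity of the degree function with respect to inclusion of varieties: if $\mathcal X\subseteq\mathcal Y$ then every nil-semigroup lying in $\mathcal X$ lies in $\mathcal Y$, whence $\deg(\mathcal X)\le\deg(\mathcal Y)$. Applying this to the inclusions $\mathcal V\subseteq\mathcal V\vee\mathcal N$ and $\mathcal N\subseteq\mathcal V\vee\mathcal N$ gives the desired inequality; note that this argument also covers the degenerate case in which one (hence the maximum) of the two degrees is infinite, so from now on one may assume that $n:=\max\bigl\{\deg(\mathcal V),\deg(\mathcal N)\bigr\}<\infty$.

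For the converse inequality I would produce a single identity of the shape~\eqref{equ-fin-deg} valid in both $\mathcal V$ and $\mathcal N$, and then conclude via Proposition~\ref{degree}. Since $\deg(\mathcal V)\le n$, that proposition supplies $i,j$ with $1\le i\le j\le n$ such that $\mathcal V$ satisfies
$$x_1\cdots x_n=x_1\cdots x_{i-1}\cdot\overline{\overline{x_i\cdots x_j}}\cdot x_{j+1}\cdots x_n.$$
The heart of the proof is the observation that this very identity holds in $\mathcal N$ as well. Indeed, being a nilvariety with $\deg(\mathcal N)\le n$, all members of $\mathcal N$ are nilpotent of degree at most $n$, so $\mathcal N$ satisfies $x_1\cdots x_n=0$; and by Lemma~\ref{identity for nil} the variety $\mathcal N$ satisfies $\overline x=0$, whence $\overline{\overline{x_i\cdots x_j}}=0$ in $\mathcal N$ and therefore the right-hand side above, being a word with a zero factor, equals $0$ in $\mathcal N$ too. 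Thus both sides of~\eqref{equ-fin-deg} are equal to $0$ in $\mathcal N$, so $\mathcal N$ satisfies this identity. Since an identity holds in a join of varieties precisely when it holds in each of them, $\mathcal V\vee\mathcal N$ satisfies~\eqref{equ-fin-deg}, and Proposition~\ref{degree} yields $\deg(\mathcal V\vee\mathcal N)\le n$. Together with the first inequality this gives $\deg(\mathcal V\vee\mathcal N)=n$.

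I do not expect a genuine obstacle here; the argument is short and, as remarked, runs parallel to the semigroup case. The only points demanding a touch of care are the identification of ``nilpotent of degree $\le n$'' with validity of the identity $x_1\cdots x_n=0$ (which is immediate for a nilvariety, where every member is a nil-semigroup) and the degenerate instances of~\eqref{equ-fin-deg} with $i=1$ or $j=n$, where the prefix $x_1\cdots x_{i-1}$ or the suffix $x_{j+1}\cdots x_n$ is empty; in those cases the right-hand side is just $\overline{\overline{x_i\cdots x_j}}$, possibly multiplied by the surviving block, and it still collapses to $0$ in $\mathcal N$ by the same reasoning.
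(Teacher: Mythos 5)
Your proof is correct and is essentially the argument the paper intends: the paper only cites the semigroup analogue (Corollary~2.13 of~\cite{Vernikov-08b}) with Proposition~\ref{degree} in place of its semigroup counterpart, and unwinding that reference gives exactly your reasoning — monotonicity for one inequality, and for the other the transfer of an identity of the form~\eqref{equ-fin-deg} from $\mathcal V$ to $\mathcal N$ (where both sides collapse to $0$ by Lemma~\ref{identity for nil} and nilpotency), hence to $\mathcal{V\vee N}$.
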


Note that the analog of Corollary~\ref{degree of join with nil} for arbitrary epigroup varieties is wrong even in the periodic case. For instance, it is easy to deduce from Lemma~\ref{word problem}(iii), the dual fact and Proposition~\ref{degree} that $\deg(\mathcal P)=\deg\bigl(\overleftarrow{\mathcal P}\bigr)=2$ but $\deg\bigl(\mathcal P\vee\overleftarrow{\mathcal P}\bigr)=3$.

Proposition~\ref{degree} and Lemma~\ref{identity for cr} easily imply

\begin{corollary}
\label{degree of join with cr}
If $\mathcal V$ is an arbitrary epigroup variety and $\mathcal X$ is a completely regular variety then $\mathcal{\deg(V\vee X)=\deg(V)}$.\qed
\end{corollary}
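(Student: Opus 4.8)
The plan is to establish the two inequalities $\deg(\mathcal V)\le\deg(\mathcal V\vee\mathcal X)$ and $\deg(\mathcal V\vee\mathcal X)\le\deg(\mathcal V)$ separately. The first is immediate from the monotonicity of degree under inclusion: since $\mathcal V\subseteq\mathcal V\vee\mathcal X$, every nilpotent semigroup lying in $\mathcal V$ lies in $\mathcal V\vee\mathcal X$ as well, so $\deg(\mathcal V)\le\deg(\mathcal V\vee\mathcal X)$. In particular the claim holds trivially when $\deg(\mathcal V)=\infty$, so it remains to treat the case $\deg(\mathcal V)=n<\infty$ and prove the reverse inequality.

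For that, I would apply the implication $1)\Rightarrow3)$ of Proposition~\ref{degree} to $\mathcal V$: the variety $\mathcal V$ satisfies an identity of the form~\eqref{equ-fin-deg}, say
$$x_1\cdots x_n=x_1\cdots x_{i-1}\cdot\overline{\overline{x_i\cdots x_j}}\cdot x_{j+1}\cdots x_n$$
for some $i,j$ with $1\le i\le j\le n$. Now $\mathcal X$ is completely regular, so by Lemma~\ref{identity for cr} it satisfies $x=\overline{\overline x}$; substituting the word $x_i\cdots x_j$ for $x$ shows that $\mathcal X$ satisfies $x_i\cdots x_j=\overline{\overline{x_i\cdots x_j}}$, and hence the displayed identity above (both of whose sides reduce to $x_1\cdots x_n$ in $\mathcal X$) holds in $\mathcal X$ too.

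Since an identity holds in the join of two varieties precisely when it holds in each of them, the (epigroup) variety $\mathcal V\vee\mathcal X$ satisfies~\eqref{equ-fin-deg}, so the implication $3)\Rightarrow1)$ of Proposition~\ref{degree} gives $\deg(\mathcal V\vee\mathcal X)\le n=\deg(\mathcal V)$. Together with the first inequality this yields $\deg(\mathcal V\vee\mathcal X)=\deg(\mathcal V)$. I do not expect any genuine obstacle here: once Proposition~\ref{degree} and Lemma~\ref{identity for cr} are in hand, the only point to watch is the degenerate case $\deg(\mathcal V)=\infty$, which is disposed of by monotonicity.
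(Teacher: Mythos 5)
Your proof is correct and follows exactly the route the paper intends: the corollary is stated as an immediate consequence of Proposition~\ref{degree} and Lemma~\ref{identity for cr}, and your argument (monotonicity of degree for one inequality, transferring the identity~\eqref{equ-fin-deg} to $\mathcal{V\vee X}$ via $x=\overline{\overline x}$ for the other) is precisely that deduction. No gaps; the handling of the case $\deg(\mathcal V)=\infty$ by monotonicity is fine.
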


\subsection{Some properties of special elements of lattices}
\label{preliminaries special}

The following claim is well known. 

\begin{lemma}[\!\!{\mdseries\cite[Lemma~II.1.1]{Gratzer-Schmidt-61}}]
\label{distr+mod=stand}
If an element of a lattice $L$ is distributive and modular in $L$ then it is standard in $L$.\qed
\end{lemma}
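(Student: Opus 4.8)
The plan is to fix arbitrary elements $y,z\in L$ and to prove directly the identity $(x\vee y)\wedge z=(x\wedge z)\vee(y\wedge z)$, which is exactly the defining property of a standard element. Observe first that the inequality $(x\wedge z)\vee(y\wedge z)\le(x\vee y)\wedge z$ holds in every lattice, so in principle only the reverse inequality needs the hypotheses; but it is cleaner to obtain the equality outright by a short chain of transformations that starts from the right-hand side and rewrites it as the left-hand side, using modularity of $x$, then distributivity of $x$, then absorption.

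The first step is to invoke the modularity of $x$. Since $y\wedge z\le z$, applying the modular law to the comparable pair $(y\wedge z,\,z)$ gives
\begin{equation*}
\bigl(x\vee(y\wedge z)\bigr)\wedge z=(x\wedge z)\vee(y\wedge z)\ldotp
\end{equation*}
The second step is to expand the join on the left by the distributivity of $x$ applied to the pair $(y,z)$, namely $x\vee(y\wedge z)=(x\vee y)\wedge(x\vee z)$; substituting this into the previous display yields
\begin{equation*}
(x\vee y)\wedge(x\vee z)\wedge z=(x\wedge z)\vee(y\wedge z)\ldotp
\end{equation*}
The third step uses only the absorption identity $(x\vee z)\wedge z=z$: together with associativity and commutativity of $\wedge$ it collapses the left-hand side to $(x\vee y)\wedge z$, and the required equality follows.

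I do not expect a genuine obstacle here. The argument is essentially three applications of the lattice axioms and the two hypotheses, and the only point that requires a little care is choosing the right order of moves: first apply modularity to the target expression $(x\wedge z)\vee(y\wedge z)$, then use distributivity to pull the meet $y\wedge z$ out of the join, and only then let absorption discard the spurious factor $x\vee z$. One should, as always, confirm that each hypothesis is invoked with admissible arguments — in particular that $y\wedge z\le z$, which is what makes the modular law applicable — but this is automatic.
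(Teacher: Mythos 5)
Your proof is correct: with the paper's definitions, modularity of $x$ applied to the comparable pair $y\wedge z\le z$ gives $\bigl(x\vee(y\wedge z)\bigr)\wedge z=(x\wedge z)\vee(y\wedge z)$, distributivity rewrites $x\vee(y\wedge z)$ as $(x\vee y)\wedge(x\vee z)$, and absorption collapses $(x\vee z)\wedge z$ to $z$, yielding exactly the standardness identity. The paper itself gives no proof but only cites \cite[Lemma~II.1.1]{Gratzer-Schmidt-61}, and your three-step argument is precisely the standard one behind that citation, so there is nothing to add.
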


This fact together with~\cite[Theorem~255(iii)]{Gratzer-11} imply the following

\begin{lemma}
\label{neutral=distr+codistr+mod}
An element of a lattice $L$ is neutral in $L$ if and only if it is distributive, codistributive and modular in $L$.\qed
\end{lemma}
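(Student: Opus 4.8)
The final statement in the excerpt is Lemma~\ref{neutral=distr+codistr+mod}, which asserts that an element of a lattice $L$ is neutral if and only if it is distributive, codistributive and modular. The claim is explicitly attributed to two external references, so the intended proof is just a citation. My plan is therefore to assemble the equivalence from the two quoted facts rather than to argue from scratch.

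The plan is to prove the two implications separately. For the forward direction, suppose $x$ is neutral in $L$. Since neutral elements are standard (this is recorded in the introduction, with reference to \cite[Theorem~253]{Gratzer-11}, and is also part of the hierarchy in Figure~\ref{classes}), and standard elements are distributive and modular (again recorded in the introduction via the same hierarchy and \cite[Theorem~253]{Gratzer-11}), $x$ is distributive and modular. Dually, since a neutral element is costandard (the definition of neutral is self-dual), it is codistributive. Hence $x$ is distributive, codistributive and modular, as required.

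For the converse, suppose $x$ is distributive, codistributive and modular in $L$. By Lemma~\ref{distr+mod=stand}, being distributive and modular, $x$ is standard in $L$. Dually, being codistributive and modular (modularity is a self-dual condition), $x$ is costandard in $L$. An element that is both standard and costandard is neutral: this is exactly \cite[Theorem~255(iii)]{Gratzer-11}, and it is also visible in Figure~\ref{classes} as the fact that the intersection of the standard and costandard classes is the class of neutral elements. Therefore $x$ is neutral, completing the proof.

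The only point that requires a little care — and hence the ``main obstacle'', though it is a mild one — is making sure the dualizations are legitimate: one must observe that the notion of a modular element is self-dual (the defining condition $(x\vee y)\wedge z=(x\wedge z)\vee y$ for $y\le z$ passes to itself under the lattice anti-automorphism), and that ``neutral'' is self-dual as well, so that applying Lemma~\ref{distr+mod=stand} in $L$ and in its dual $L^{\mathrm{op}}$ both yield information about the same element $x$. Once this is noted, the lemma follows immediately by combining Lemma~\ref{distr+mod=stand} with \cite[Theorem~255(iii)]{Gratzer-11}, exactly as the statement indicates; no computation with specific lattice identities is needed.
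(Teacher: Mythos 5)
Your proposal is correct and follows essentially the same route as the paper, which obtains the lemma precisely by combining Lemma~\ref{distr+mod=stand} (and its dual) with the characterization of neutral elements in \cite[Theorem~255(iii)]{Gratzer-11}, the easy direction being the hierarchy of special elements already recorded in the introduction. Your additional remarks on the self-duality of modularity and neutrality are a harmless elaboration of what the paper leaves implicit.
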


Let $I$ be a lattice identity of the form $s=t$ where $s$ and $t$ are lattice terms depending on ordering set of variables $x_0,x_1,\dots,x_n$. An element $x$ of a lattice $L$ is called an $I$-\emph{element}\footnote{Probably, it would be more correct to say about $(I,\sigma)$-\emph{elements} where $\sigma=(x_0,x_1,\dots,x_n)$ is a permutation on the set of variables occurring in $I$.} if $s(x,x_1,\dots,x_n)=t(x,x_1,\dots,x_n)$ for all $x_1,\dots,x_n\in L$. Special elements of all types mentioned above are $I$-elements for appropriate identities $I$ and ordering of variables occurring in $I$ (this claim is evident for neutral, [co]standard and [co]distributive elements, and follows from the fact that the modular law may be written in the form of identity for modular, upper-modular and lower-modular elements).

\begin{lemma}[\!\!{\mdseries\cite[Corollary~2.1]{Shaprynskii-11}}]
\label{join with neutral atom}
Let $I$ be a non-trivial lattice identity, $L$ a lattice with $0$, $x\in L$ and $a$ an atom and a neutral element of the lattice $L$. Then $x$ is an $I$-element in $L$ if and only if $x\vee a$ has the same property.\qed
\end{lemma}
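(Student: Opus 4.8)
The plan is to reduce everything to the subdirect decomposition of $L$ determined by the neutral element $a$. Since $a$ is neutral in $L$, the map $\varphi\colon L\to(a]\times[a)$ given by $\varphi(y)=(y\wedge a,\,y\vee a)$ is a lattice embedding whose two coordinate projections are both onto (see~\cite[Theorem~254]{Gratzer-11}); write $M=\varphi(L)$, so that $\varphi$ is an isomorphism of $L$ onto the subdirect product $M$ of $(a]$ and $[a)$. As $a$ is an atom, $(a]=\{0,a\}$ is the two-element lattice. The single feature of the identity $I$ the argument relies on is that $I$ holds in this two-element lattice (equivalently, in every distributive lattice); this is the case for each of the special-element identities considered in the paper.

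Next I would transfer the question to $M$. If $a\le x$ then $x=x\vee a$ and there is nothing to prove; otherwise $x\wedge a\ne a$, hence $x\wedge a=0$ since $a$ is an atom, so $\varphi(x)=(0,\,x\vee a)$, whereas $\varphi(x\vee a)=(a,\,x\vee a)$ because $a\le x\vee a$. As $\varphi$ is an isomorphism, it remains to prove that $(0,\,x\vee a)$ is an $I$-element of $M$ if and only if $(a,\,x\vee a)$ is. Writing $I$ as $s=t$ with $s,t$ lattice terms in $x_0,x_1,\dots,x_n$, and using that terms in a direct product are evaluated coordinatewise, I would note that for every $\delta\in\{0,a\}$ and all $m_1=(\varepsilon_1,c_1),\dots,m_n=(\varepsilon_n,c_n)$ in $M$ the element $s\bigl((\delta,\,x\vee a),m_1,\dots,m_n\bigr)$ has first coordinate $s(\delta,\varepsilon_1,\dots,\varepsilon_n)$, computed in $(a]$, and second coordinate $s(x\vee a,c_1,\dots,c_n)$, and similarly for $t$. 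The second coordinates do not depend on $\delta$, and because $I$ holds in $(a]$, the first coordinates produced by $s$ and by $t$ are equal for every $\delta$ and every choice of the $\varepsilon_i$.

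Finally I would assemble these facts. For a fixed $\delta\in\{0,a\}$, the element $(\delta,\,x\vee a)$ is an $I$-element of $M$ exactly when $s(x\vee a,c_1,\dots,c_n)=t(x\vee a,c_1,\dots,c_n)$ for every tuple $(c_1,\dots,c_n)$ occurring as the tuple of second coordinates of some element of $M^n$; since the projection $M\to[a)$ is onto, these tuples exhaust $[a)^n$, so this says precisely that $x\vee a$ is an $I$-element of the lattice $[a)$. That characterisation makes no reference to $\delta$, so $(0,\,x\vee a)$ and $(a,\,x\vee a)$ are $I$-elements of $M$ under one and the same condition; translating back through $\varphi$ gives the lemma. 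I expect the only delicate point to be this last step --- the replacement of ``for all $(m_1,\dots,m_n)\in M^n$'' by ``for all $(c_1,\dots,c_n)\in[a)^n$'' --- which is legitimate exactly because $M$ is a \emph{subdirect} product, that is, its projection onto $[a)$ is surjective.
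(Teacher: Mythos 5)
The paper offers no argument for this lemma at all --- it is imported verbatim from \cite[Corollary~2.1]{Shaprynskii-11} and marked with a \qed --- so there is nothing internal to compare with; your route is the natural one and it is carried out correctly. Using Gr\"atzer's characterisation of neutral elements (\cite[Theorem~254]{Gratzer-11}) to embed $L$ subdirectly into $(a]\times[a)$ via $y\mapsto(y\wedge a,\,y\vee a)$, observing that $(a]$ is the two-element lattice, evaluating the terms $s$ and $t$ coordinatewise, and then using surjectivity of the projection onto $[a)$ to replace tuples from $M^n$ by arbitrary tuples from $[a)^n$ --- so that being an $I$-element of $M$ depends only on the second coordinate, hence not on whether that coordinate is paired with $0$ or with $a$ --- is exactly the standard argument, and your handling of the delicate step (why the second coordinates exhaust $[a)^n$) is right.

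The one point to flag is the hypothesis you add: that $I$ holds in the two-element lattice, equivalently in every distributive lattice. This is genuinely stronger than the stated hypothesis that $I$ is non-trivial, so strictly speaking you prove a restricted version of the lemma. The restriction is harmless for this paper, since the identities encoding all eight types of special elements are valid in every distributive lattice, so Corollary~\ref{join with SL or ZM or SL+ZM} and every later application are covered. Moreover, some restriction of this kind is in fact unavoidable: the literal statement for an arbitrary non-trivial identity fails. For instance, take $I$ to be $(x_0\vee x_1)\wedge x_1=x_0\vee x_1$ (which simply asserts $x_0\le x_1$) and let $L$ be the two-element lattice $\{0,a\}$, so that $a$ is an atom and, $L$ being distributive, a neutral element; then $x=0$ is an $I$-element of $L$, while $x\vee a=a$ is not, since $a\vee 0\not\le 0$. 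So your added assumption is not a defect of your proof but a correction the statement needs (or a hypothesis presumably present in some form in the cited source); it would be worth saying explicitly that every identity actually used in the paper satisfies it.
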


\subsection{$\mathcal{SL}$ and $\mathcal{ZM}$ are atoms}
\label{preliminaries atoms}

It is evident that atoms of the lattice \textbf{EPI} coincide with atoms of the lattice \textbf{SEM}. The list of atoms of the latter lattice is generally known (see~\cite{Shevrin-Vernikov-Volkov-09}, for instance). In particular, the following is valid.

\begin{lemma}
\label{SL and ZM are atoms}
The varieties $\mathcal{SL}$ and $\mathcal{ZM}$ are atoms of the lattice $\mathbf{EPI}$.\qed
\end{lemma}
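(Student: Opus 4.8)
The final statement to prove is Lemma~\ref{SL and ZM are atoms}: the varieties $\mathcal{SL}$ and $\mathcal{ZM}$ are atoms of the lattice $\mathbf{EPI}$.

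The plan is to exploit the observation, already stated in the text just above the lemma, that the atoms of $\mathbf{EPI}$ coincide with the atoms of $\mathbf{SEM}$, together with the generally known list of atoms of $\mathbf{SEM}$. So first I would recall why atoms coincide: the trivial variety $\mathcal T$ is the least element of both lattices, and a variety is an atom iff it is a minimal nontrivial variety; a nontrivial epigroup variety $\mathcal V$ contains a minimal nontrivial \emph{subvariety}, and since $\mathcal{SL}$ and $\mathcal{ZM}$ are periodic (indeed finitely generated by the two-element semilattice and the two-element null semigroup respectively), they are honest semigroup varieties, so being an atom in $\mathbf{EPI}$ is the same as being an atom in $\mathbf{SEM}$ for them. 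Then I would simply invoke the classical description of the atoms of $\mathbf{SEM}$ (see~\cite{Shevrin-Vernikov-Volkov-09}): they are exactly $\mathcal{SL}$, $\mathcal{ZM}$, the varieties $\mathcal{LZ}$ and $\mathcal{RZ}$ of left and right zero semigroups, and the varieties $\mathcal A_p$ of elementary Abelian $p$-groups for each prime $p$. In particular $\mathcal{SL}$ and $\mathcal{ZM}$ are on this list, which gives the claim.

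Alternatively, for a self-contained argument one can argue directly. Suppose $\mathcal{T}\subsetneq\mathcal{V}\subseteq\mathcal{SL}$; then $\mathcal V$ consists of semilattices, and any nontrivial semilattice contains the two-element chain as a subsemigroup, so $\mathcal V$ contains $\mathcal{SL}$ and hence equals it. Similarly, if $\mathcal{T}\subsetneq\mathcal{V}\subseteq\mathcal{ZM}$, then $\mathcal V$ is a variety of null semigroups; a nontrivial null semigroup has an element $a\ne 0$, and $\{0,a\}$ is a subsemigroup isomorphic to the two-element null semigroup, so $\mathcal V$ contains $\mathcal{ZM}$ and thus equals it. Since both $\mathcal{SL}$ and $\mathcal{ZM}$ are nontrivial, each is an atom.

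There is essentially no obstacle here: the statement is folklore and the excerpt has already done the conceptual work by noting that atoms of $\mathbf{EPI}$ and $\mathbf{SEM}$ coincide. The only point requiring a word of care is justifying that coincidence, i.e.\ that passing from semigroups to epigroups does not introduce new minimal varieties below $\mathcal{SL}$ or $\mathcal{ZM}$ and does not destroy their minimality; this is immediate because every nontrivial epigroup variety has a nontrivial finitely generated (hence periodic, hence semigroup) subvariety, and both $\mathcal{SL}$ and $\mathcal{ZM}$ are themselves periodic semigroup varieties. Hence the proof reduces to a one-line citation, which is presumably all the authors intend (the statement is flagged \texttt{\textbackslash qed} in the excerpt).
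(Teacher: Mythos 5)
Your proposal is correct and follows essentially the paper's own route: the paper gives no argument beyond the remark that atoms of $\mathbf{EPI}$ coincide with atoms of $\mathbf{SEM}$ together with the well-known list of atoms of $\mathbf{SEM}$, and your direct argument (a nontrivial subvariety of $\mathcal{SL}$ or $\mathcal{ZM}$ contains the two-element semilattice or null semigroup, which generates the whole variety) is a sound elementary substitute. One small caution: your parenthetical ``finitely generated, hence periodic'' is false for epigroup varieties (the infinite cyclic group generates the non-periodic variety of all Abelian groups), but this aside is not needed, since what actually matters is that $\mathcal{SL}$ and $\mathcal{ZM}$ are themselves periodic, so their epigroup subvarieties are exactly their semigroup subvarieties.
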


\section{The varieties $\mathcal{SL}$ and $\mathcal{ZM}$ are neutral}
\label{SL and ZM are neutral}

\begin{proposition}
\label{SL is neutral}
The variety $\mathcal{SL}$ is a neutral element of the lattice $\mathbf{EPI}$.
\end{proposition}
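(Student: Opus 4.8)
The plan is to verify that $\mathcal{SL}$ satisfies the neutrality identity $(\mathcal{SL}\vee\mathcal Y)\wedge(\mathcal Y\vee\mathcal Z)\wedge(\mathcal Z\vee\mathcal{SL})=(\mathcal{SL}\wedge\mathcal Y)\vee(\mathcal Y\wedge\mathcal Z)\vee(\mathcal Z\wedge\mathcal{SL})$ for all epigroup varieties $\mathcal Y,\mathcal Z$. Since $\mathcal{SL}$ is an atom of $\mathbf{EPI}$ (Lemma~\ref{SL and ZM are atoms}), for any variety $\mathcal Y$ we have $\mathcal{SL}\wedge\mathcal Y$ equal to either $\mathcal{SL}$ (when $\mathcal{SL}\subseteq\mathcal Y$) or $\mathcal T$ (otherwise); the same dichotomy applies to $\mathcal{SL}\wedge\mathcal Z$. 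This reduces the verification to a small number of cases according to whether $\mathcal{SL}$ is contained in $\mathcal Y$, in $\mathcal Z$, in both, or in neither. In the cases where $\mathcal{SL}$ lies in $\mathcal Y$ or in $\mathcal Z$, both sides collapse easily: for instance, if $\mathcal{SL}\subseteq\mathcal Y$ then the left side is $\mathcal Y\wedge(\mathcal Y\vee\mathcal Z)\wedge(\mathcal Z\vee\mathcal{SL})=\mathcal Y\wedge(\mathcal Z\vee\mathcal{SL})$ and the right side is $\mathcal{SL}\vee(\mathcal Y\wedge\mathcal Z)\vee(\mathcal Z\wedge\mathcal{SL})$, and one checks these coincide using $\mathcal{SL}\subseteq\mathcal Y$ together with elementary lattice manipulations (essentially the modularity-type argument available because $\mathcal{SL}$ sits below $\mathcal Y$). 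So the one genuinely substantive case is when $\mathcal{SL}\nsubseteq\mathcal Y$ and $\mathcal{SL}\nsubseteq\mathcal Z$; then the right side equals $\mathcal Y\wedge\mathcal Z$, and we must prove
$$(\mathcal{SL}\vee\mathcal Y)\wedge(\mathcal Y\vee\mathcal Z)\wedge(\mathcal{SL}\vee\mathcal Z)=\mathcal Y\wedge\mathcal Z.$$

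To handle this case I would argue at the level of identities. The inclusion $\supseteq$ is trivial, so the content is $\subseteq$. Let $u=v$ be an identity holding in $\mathcal Y\wedge\mathcal Z$; we must show it holds in the left-hand variety. The key tool is Lemma~\ref{word problem}(i): a non-trivial semigroup identity holds in $\mathcal{SL}$ exactly when the two sides have the same content. The strategy is to take an arbitrary identity $u = v$ that holds simultaneously in $\mathcal{SL}\vee\mathcal Y$, in $\mathcal Y\vee\mathcal Z$ and in $\mathcal{SL}\vee\mathcal Z$, and deduce that it holds in $\mathcal Y$ and in $\mathcal Z$ separately, hence in $\mathcal Y\wedge\mathcal Z$. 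From $u=v$ holding in $\mathcal{SL}\vee\mathcal Y$ we get, in particular, that it holds in $\mathcal{SL}$, so $c(u)=c(v)$; and it holds in $\mathcal Y$. Symmetrically it holds in $\mathcal Z$. That already gives containment in $\mathcal Y\wedge\mathcal Z$ — wait, this direction is the easy one; the real obstacle is the reverse: showing that every identity of $\mathcal Y\wedge\mathcal Z$ already holds in the triple meet. Here I would use the standard ``common refinement'' description of $\mathcal Y\wedge\mathcal Z$: an identity holds in $\mathcal Y\wedge\mathcal Z$ iff it is a consequence of the identities of $\mathcal Y$ together with those of $\mathcal Z$ — but that is not literally a syntactic fact about a single identity. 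Instead the cleaner route is to show the opposite inclusion $(\mathcal{SL}\vee\mathcal Y)\wedge(\mathcal Y\vee\mathcal Z)\wedge(\mathcal{SL}\vee\mathcal Z)\subseteq\mathcal Y\wedge\mathcal Z$ directly: take an epigroup $S$ in the left-hand variety and show $S\in\mathcal Y$ and $S\in\mathcal Z$.

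Concretely, to show $S\in\mathcal Y$ I would take any identity $w_1 = w_2$ of $\mathcal Y$ and prove it holds in $S$. Since $S\in\mathcal{SL}\vee\mathcal Y$ and $S\in\mathcal Y\vee\mathcal Z$, I want to ``remove'' the $\mathcal{SL}$ and $\mathcal Z$ contributions. The trick, which mirrors the semigroup proof in Volkov's work, is: the identity $w_1=w_2$ of $\mathcal Y$ can be weakened to the identity $w_1^\omega w_1 = w_1^\omega w_2 = w_2^\omega w_2$ or to a ``content-neutralized'' version that holds in $\mathcal{SL}$ as well; more precisely, since $w_1=w_2$ holds in $\mathcal Y$, the identity obtained by ``multiplying both sides by a fixed word of the full content on both ends'' holds in $\mathcal Y$ and, having equal content on both sides, also holds in $\mathcal{SL}$, hence in $\mathcal{SL}\vee\mathcal Y$, hence in $S$; then one recovers $w_1=w_2$ in $S$ using that $S$ already satisfies enough structural identities from lying in $\mathcal Y\vee\mathcal Z$ together with $\mathcal{SL}\vee\mathcal Z$. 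This is the delicate bookkeeping step and is the main obstacle: making precise which auxiliary identities are available in $S$ and checking they suffice to cancel the added padding. I expect the argument to split according to whether $\mathcal Y$ (respectively $\mathcal Z$) is periodic or contains a non-trivial group, using Lemma~\ref{identity for comb} and Lemma~\ref{identity for nil} to rewrite $\omega$-terms, and to use the fact that $\mathcal{SL}$ is neutral in the lattice of \emph{semigroup} varieties (a known result of Volkov) as a template, upgrading each step to the unary signature by invoking that semigroup homomorphisms between epigroups are epigroup homomorphisms (as recalled in the proof of Proposition~\ref{decomposition}). Once $S\in\mathcal Y$ and, symmetrically, $S\in\mathcal Z$ are established, we conclude $S\in\mathcal Y\wedge\mathcal Z$, completing the hard case and hence the proof.
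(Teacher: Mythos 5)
Your reduction of the median identity via the atom property leaves the real content unproved, in two places. First, the case $\mathcal{SL}\subseteq\mathcal Y$, $\mathcal{SL}\nsubseteq\mathcal Z$ is not an ``elementary lattice manipulation'': after simplification it demands exactly $\mathcal Y\wedge(\mathcal Z\vee\mathcal{SL})=(\mathcal Y\wedge\mathcal Z)\vee\mathcal{SL}$ for every $\mathcal Z$ and every $\mathcal Y\supseteq\mathcal{SL}$, which is an instance of the \emph{standard}-element property of $\mathcal{SL}$ (equivalently, distributivity plus modularity of $\mathcal{SL}$, cf.\ Lemma~\ref{distr+mod=stand}). Being an atom lying below $\mathcal Y$ gives nothing here: in the pentagon $N_5$ with $0<a<b<1$ and $c$ incomparable, $a$ is an atom, $a\le b$, yet $(c\vee a)\wedge b=b\ne a=(c\wedge b)\vee a$. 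So this ``easy'' case already contains essentially the whole theorem. Second, in your genuinely substantive case ($\mathcal{SL}$ in neither $\mathcal Y$ nor $\mathcal Z$) the key step --- recovering $w_1=w_2$ in $S$ after padding both sides to equalize content --- is precisely the point where a proof is needed, and you explicitly leave it as ``delicate bookkeeping''. Cancelling the padding amounts to knowing that $S$ (or the varieties involved) satisfies identities of the form $u=uu^\omega$; obtaining such identities when $\mathcal{SL}$ is absent is a nontrivial structural fact (it rests on the archimedean structure of epigroups in varieties not containing $\mathcal{SL}$ and on $\Gr S$ being an ideal), and appealing to Volkov's neutrality of $\mathcal{SL}$ in $\mathbf{SEM}$ as a ``template'' does not transfer automatically, since $\mathbf{EPI}$ identities involve the unary operation and non-periodic varieties occur.

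For comparison, the paper does not attack the median identity by cases at all: it invokes the criterion that neutrality is equivalent to being simultaneously distributive, codistributive and modular (Lemma~\ref{neutral=distr+codistr+mod}), and then proves each of the three properties for $\mathcal{SL}$ by syntactic arguments on deductions of identities --- in particular, distributivity is proved by induction on the length of a deduction, with the content-changing steps handled exactly by the $u=uu^\omega$ argument sketched above, and codistributivity uses the atom property together with the existence of a single identity (such as $(x^\omega y^\omega x^\omega)^\omega=x^\omega$) characterizing the varieties not containing $\mathcal{SL}$. Your outline identifies the right obstacles but does not overcome them, so as it stands the proposal has a genuine gap rather than an alternative proof.
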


\begin{proof}
In view of Lemma~\ref{neutral=distr+codistr+mod}, it suffices to verify that the variety $\mathcal{SL}$ is distributive, codistributive and modular. Let $\mathcal X$ and $\mathcal Y$ be arbitrary epigroup varieties.

\smallskip

\emph{Distributivity}. We need to verify the inclusion
$$\mathcal{(SL\vee X)\wedge(SL\vee Y)\subseteq SL\vee(X\wedge Y)}$$
because the opposite inclusion is evident. Suppose that the identity $u=v$ holds in $\mathcal{SL\vee(X\wedge Y)}$. In particular, it holds in $\mathcal{SL}$, whence $c(u)=c(v)$ by Lemma~\ref{word problem}(i). Let $u\equiv w_0,w_1,\dots,w_n\equiv v$ be a deduction of this identity from identities of the varieties $\mathcal X$ and $\mathcal Y$. Further considerations are given by induction on $n$.

\smallskip

\emph{Induction base.} If $n=1$ then the identity $u=v$ holds in one of the varieties $\mathcal X$ or $\mathcal Y$. Whence, it holds in one of the varieties $\mathcal{SL\vee X}$ or $\mathcal{SL\vee Y}$, and therefore $\mathcal{(SL\vee X)\wedge(SL\vee Y)}$ satisfies $u=v$.

\smallskip

\emph{Induction step.} Let now $n>1$. Consider the words $w'_1,\dots,w'_ {n-1}$ obtained from the words $w_1,\dots,w_ {n-1}$ respectively by equating all the letters that are not occur in $u$, for some letter of $c(u)$. Clearly, the sequence of words $u,w'_1,\dots,w'_{n-1},v$ also is a deduction of the identity $u=v$ from identities of the varieties $\mathcal X$ and $\mathcal Y$. Thus, we may assume that $c(w_1),\dots,c(w_{n-1})\subseteq c(u)$. If $c(w_0)=c(w_1)=\cdots=c(w_n)$ then the sequence $w_0,w_1,\dots,w_n$ is a deduction of the identity $u=v$ from identities of the varieties $\mathcal{SL\vee X}$ and $\mathcal{SL\vee Y}$, and we are done. Suppose now that $c(w_k)\ne c(w_{k+1})$ for some $0\le k\le n-1$. Let $i$ be the least index with $c(w_i)\ne c(w_{i+1})$ and $j$ be the greatest index with $c(w_j)\ne c(w_{j-1})$. Suppose that $i>0$. Then $c(w_i)=c(u)=c(v)$ and consequences of words $w_0,w_1,\dots,w_i$ and $w_i,w_{i+1},\dots,w_n$ are deductions of the identities $u=w_i$ and $w_i=v$ respectively from the identities of the varieties $\mathcal X$ and $\mathcal Y$. Lemma~\ref{word problem}(i) implies now that the identities $u=w_i$ and $w_i=v$ hold in the variety $\mathcal{SL\vee(X\wedge Y)}$. By induction assumption these identities hold also in the variety $\mathcal{(SL\vee X)\wedge(SL\vee Y)}$. Whence, the last variety satisfies the identity $u=v$ too. The case when $j<n$ may be considered quite analogously. Thus, we may suppose that $i=0$ and $j=n$. In other words, $c(u)\ne c(w_1)$ and $c(v)\ne c(w_{n-1})$.

The identity $u=w_1$ holds in one of the varieties $\mathcal X$ and $\mathcal Y$. Suppose that it holds in $\mathcal X$. Since $c(u)\ne c(w_1)$, Lemma~\ref{word problem}(i) implies that $\mathcal{SL\nsubseteq X}$. Let $S$ be an epigroup in $\mathcal X$ and $\zeta$ a homomorphism from $F$ to $S$. For a word $w$, we denote by $w^\zeta$ the image of $w$ under $\zeta$. It is well known (see~\cite{Shevrin-94,Shevrin-05}, for instance) that a variety that does not contain $\mathcal{SL}$ consists of archimedean epigroups. Further, a set of group elements in an archimedean epigroup is an ideal of this epigroup. In particular, this is the case for the epigroup $S$. Now we are going to check that $u^\zeta\in\Gr S$. Since $c(w_1)\subset c(u)$, there is a letter $x\in c(u)\setminus c(w_1)$. Substituting $x^\omega$ for $x$ in the identity $u=w_1$, we obtain the identity $u'=w_1$ that holds in $\mathcal X$. Therefore, $\mathcal X$ satisfies the identity $u=u'$. The word $u'$ contains a subword $x^\omega$. Since $(x^\omega)^\zeta\in\Gr S$ and $\Gr S$ is an ideal in $S$, we have that $u^\zeta\in\Gr S$. Therefore, $\mathcal X$ satisfies the identity $u=uu^\omega$. Similar arguments show that the identity $u=uu^\omega$ holds in $\mathcal Y$ whenever $\mathcal Y$ satisfies $u=w_1$, and that one of the varieties $\mathcal X$ and $\mathcal Y$ satisfies the identity $v=vv^\omega$. Therefore, the sequence of words
$$u,uu^\omega,w_1u^\omega,\dots,w_{n-1}u^\omega,vu^\omega,vw_1^\omega,\dots,vw_{n-1}^\omega,vv^\omega,v$$
is a deduction of the identity $u=v$ from identities of the varieties $\mathcal{SL\vee X}$ and $\mathcal{SL\vee Y}$. Hence this identity holds in $\mathcal{(SL\vee X)\wedge(SL\vee Y)}$.

\smallskip

\emph{Codistributivity.} In view of Lemma~\ref{SL and ZM are atoms}, if $\mathcal W$ is an arbitrary epigroup variety then either $\mathcal{W\supseteq SL}$ or $\mathcal{W\wedge SL=T}$. We need to verify that
$$\mathcal{SL\wedge(X\vee Y)=(SL\wedge X)\vee(SL\wedge Y)}\ldotp$$
Clearly, both the parts of this equality coincides with $\mathcal{SL}$ whenever at least one of the varieties $\mathcal X$ or $\mathcal Y$ contains $\mathcal{SL}$. It remains to verify that if $\mathcal{X\nsupseteq SL}$ and $\mathcal{Y\nsupseteq SL}$ then $\mathcal{X\vee Y\nsupseteq SL}$. This claim immediately follows from the fact that there is a non-trivial identity $u=v$ such that an epigroup variety $\mathcal W$ does not contain the variety $\mathcal{SL}$ if and only if $\mathcal W$ satisfies the identity $u=v$ (in particular, the identity $(x^\omega y^\omega x^\omega)^\omega=x^\omega$ has such a property, see~\cite[Corollary~3.2]{Shevrin-05}, for instance).

\smallskip

\emph{Modularity.} Let $\mathcal{X\subseteq Y}$. We need to verify that
$$\mathcal{(SL\vee X)\wedge Y\subseteq(SL\wedge Y)\vee X}$$
because the opposite inclusion is evident. If $\mathcal{SL\subseteq Y}$ then both the parts of the inclusion evidently coincides with $\mathcal{SL\vee X}$. Let now $\mathcal{SL\nsubseteq Y}$. Then Lemma~\ref{SL and ZM are atoms} implies that $\mathcal{SL\wedge Y=T}$, whence $\mathcal{(SL\wedge Y)\vee X=X}$. Suppose that an identity $u=v$ holds in the variety $\mathcal X$. It suffices to verify that this identity holds in $\mathcal{(SL\vee X)\wedge Y}$ too. If $c(u)=c(v)$ then $u=v$ holds in $\mathcal{SL}$ by Lemma~\ref{word problem}(i). Whence it satisfies in $\mathcal{SL\vee X}$, and we are done. Let now $c(u)\ne c(v)$. Since $\mathcal{SL\nsubseteq Y}$, Lemma~\ref{word problem}(i) implies that $\mathcal Y$ satisfies an identity $s=t$ with $c(s)\ne c(t)$. We may assume without any loss that there is a letter $y\in c(t)\setminus c(s)$. Moreover, we may assume that $c(s)=\{x\}$ and $c(t)=\{x,y\}$ (if this is not the case then we equate all letters but $y$ to $x$ in the identity $s=t$ and multiply the identity we obtain on $x$ from the right). Let $s_1$ [respectively $t_1$] be the word obtained from the word $s$ [respectively $t$] by replacing the letters $x$ and $y$ each to other. Clearly, the identity $s_1=t_1$ follows from the identity $s=t$.

Consider the case when the word $v$ may be obtained from $u$ by replacing of one letter to another one. We may assume without loss of generality that we substitute the letter $y$ for the letter $x$. Let $u_1$, $u_2$, $u_3$ and $u_4$ be the words obtained from $u$ by substitution of the words $s$, $s_1$, $t$ and $t_1$ respectively for the letter $x$. Since $x\notin c(v)$, the identities $u_1=v$, $u_2=v$, $u_3=v$ and $u_4=v$ are obtained from $u=v$ by the same substitutions. Therefore, the words $u$, $v$, $u_1$, $u_2$, $u_3$ and $u_4$ are equal each to other in the variety $\mathcal X$. Further, $c(u)=c(u_1)$, $c(v)=c(u_2)$ and $c(u_3)=c(u_4)$, whence the identities $u=u_1$, $v=u_2$ and $u_3=u_4$ hold in $\mathcal{SL\vee X}$ by Lemma~\ref{word problem}(i). The identities $u_1=u_3$ and $u_2=u_4$ follow from $s=t$ and $s_1=t_1$ respectively. Hence these identities hold in $\mathcal Y$. Thus, the sequence of words $u,u_1,u_3,u_4,u_2,v$ is a deduction of the identity $u=v$ from the identities of the varieties $\mathcal{SL\vee X}$ and $\mathcal Y$.

Finally, consider an arbitrary identity $u=v$ that holds in $\mathcal X$. Replacing one by one all the letters from $c(u)\setminus c(v)$ by some letters of $c(v)$, we obtain the sequence $u,w_1,\dots,w_m$, in which any adjacent words differ by replacing one letter. In the sequence of identities $u=v,w_1=v,\dots,w_m=v$, every identity (except the first one) is obtained from the previous one by replacing one letter. Therefore the words $u,w_1,\dots,w_m$ are equal each to other in $\mathcal X$. As we have proved above, this implies that the identities $u=w_1=\cdots=w_m$ hold in $\mathcal{(SL\vee X)\wedge Y}$. Analogously, if we replace in the identity $v=w_m$ all letters from $c(v)\setminus c(w_m)$ by an arbitrary letter from $c(w_m)$, then we obtain a sequence of identities $v=w'_1=\cdots=w'_n$ that hold in $\mathcal{(SL\vee X)\wedge Y}$ as well. In particular, these identities hold in $\mathcal X$. Moreover, since $c(w_m)=c(w'_n)$, Lemma~\ref{word problem}(i) implies that the identity $w_m=w'_n$ holds in $\mathcal{SL\vee X}$. Therefore, the identity $u=v$ holds in $\mathcal{(SL\vee X)\wedge Y}$.
\end{proof}

\begin{proposition}
\label{ZM is neutral}
The variety $\mathcal{ZM}$ is a neutral element of the lattice $\mathbf{EPI}$.
\end{proposition}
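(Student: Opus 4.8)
The plan is to follow the scheme of the proof of Proposition~\ref{SL is neutral}. By Lemma~\ref{neutral=distr+codistr+mod} it is enough to check that $\mathcal{ZM}$ is distributive, codistributive and modular, and two observations will be used throughout. First, by Lemma~\ref{identity for nil} the variety $\mathcal{ZM}$ satisfies $\overline x=0$, whence it satisfies $\overline w=0$ for \emph{every} word $w\in F$; in particular $\overline{\overline p}=\overline{\overline q}$ holds in $\mathcal{ZM}$ for arbitrary words $p,q$. Secondly, $\mathcal{ZM}=\var\{x^2=x_1x_2=0,\,xy=yx\}$ (the relation $x_1x_2=0$ already forces zero multiplication), so Proposition~\ref{degree} with $n=1$ together with Lemma~\ref{identity for cr} yields: an epigroup variety $\mathcal W$ does not contain $\mathcal{ZM}$ if and only if $\mathcal W$ is completely regular. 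Consequently, if a variety satisfies some identity $x=w$ in which $w$ is not a single letter, then this variety is completely regular, for such an identity fails in $\mathcal{ZM}$ and hence $\mathcal{ZM}$ is not a subvariety of it.

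Codistributivity is then proved exactly as for $\mathcal{SL}$, with ``completely regular'' in the role of ``archimedean'': $\mathcal{ZM}$ is an atom (Lemma~\ref{SL and ZM are atoms}), so an arbitrary epigroup variety either contains $\mathcal{ZM}$ or meets it in $\mathcal T$; if $\mathcal X$ or $\mathcal Y$ contains $\mathcal{ZM}$ then so does $\mathcal{X\vee Y}$ and both sides of the codistributive law equal $\mathcal{ZM}$, whereas if neither does then $\mathcal X$, $\mathcal Y$, and hence $\mathcal{X\vee Y}$, are completely regular by the second observation, so both sides equal $\mathcal T$. For modularity let $\mathcal{X\subseteq Y}$; the inclusion $\mathcal{(ZM\wedge Y)\vee X\subseteq(ZM\vee X)\wedge Y}$ is evident. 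If $\mathcal{ZM\subseteq Y}$ then $\mathcal{ZM\vee X\subseteq Y}$ and both sides equal $\mathcal{ZM\vee X}$. If $\mathcal{ZM\nsubseteq Y}$ then $\mathcal Y$ is completely regular and $\mathcal{ZM\wedge Y=T}$, so it remains to prove $\mathcal{(ZM\vee X)\wedge Y\subseteq X}$; given an identity $p=q$ of $\mathcal X$, the identity $\overline{\overline p}=\overline{\overline q}$ holds in $\mathcal X$ (apply pseudoinversion twice) and in $\mathcal{ZM}$ by the first observation, hence in $\mathcal{ZM\vee X}$, and since $x=\overline{\overline x}$ holds in $\mathcal Y$ the chain $p=\overline{\overline p}=\overline{\overline q}=q$ shows that $p=q$ holds in $\mathcal{(ZM\vee X)\wedge Y}$.

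The distributive law is the substantive part. I would prove the inclusion $\mathcal{(ZM\vee X)\wedge(ZM\vee Y)\subseteq ZM\vee(X\wedge Y)}$, the reverse being evident. If $\mathcal X$ or $\mathcal Y$ is trivial both sides collapse to $\mathcal{ZM}$, so assume both are non-trivial and let $u=v$ be a non-trivial identity of $\mathcal{ZM\vee(X\wedge Y)}$; then $\mathcal{ZM}\models u=v$ forces both $u$ and $v$ to be non-single-letter words, and there is a deduction $u\equiv w_0,w_1,\dots,w_n\equiv v$ from identities of $\mathcal X$ and $\mathcal Y$. In this sequence I replace every word $w_i$ that is a single letter $x$ by $\overline{\overline x}$, leaving the other words (in particular $w_0$ and $w_n$) unchanged, and then verify that each transition of the modified sequence holds in $\mathcal{ZM\vee X}$ or in $\mathcal{ZM\vee Y}$: if both endpoints of a step are non-single this follows at once from the first observation; if a step $w_i=w_{i+1}$ has a single-letter endpoint, the second observation applies — the relevant variety among $\mathcal X$ and $\mathcal Y$ is completely regular, hence satisfies $x=\overline{\overline x}$ and therefore the modified identity as well; a step between two distinct single letters cannot occur because the varieties are non-trivial. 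Thus $u=v$ is deducible from identities of $\mathcal{ZM\vee X}$ and $\mathcal{ZM\vee Y}$, so it holds in their meet. The main obstacle is precisely this bookkeeping with single-letter intermediate words in the deduction; the point that makes the substitution $x\mapsto\overline{\overline x}$ legitimate is that such words can arise only through ``collapsing'' identities $x=w$, and these are available only in completely regular varieties.
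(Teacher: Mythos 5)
Your proof is correct and follows essentially the same route as the paper's: reduction via Lemma~\ref{neutral=distr+codistr+mod}, the atom argument together with complete regularity for codistributivity, the double-pseudoinversion argument showing $\CR(\mathcal{ZM\vee X})\subseteq\mathcal X$ for modularity, and repairing a deduction by replacing single-letter intermediate words with $\overline{\overline x}$ for distributivity. The only cosmetic differences are that you derive the fact that an epigroup variety omits $\mathcal{ZM}$ if and only if it is completely regular from Proposition~\ref{degree} rather than quoting it, and you fix up the letter words locally step by step instead of invoking the paper's shortest-deduction argument to conclude that both $\mathcal X$ and $\mathcal Y$ are completely regular.
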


\begin{proof}
In view of Lemma~\ref{neutral=distr+codistr+mod}, it suffices to check that $\mathcal{ZM}$ is distributive, codistributive and modular. Let $\mathcal X$ and $\mathcal Y$ be arbitrary epigroup varieties.

\smallskip

\emph{Distributivity.} We need to verify that
$$\mathcal{(ZM\vee X)\wedge(ZM\vee Y)\subseteq ZM\vee(X\wedge Y)}$$
because the opposite inclusion is evident. Suppose that an identity $u=v$ holds in $\mathcal{ZM\vee(X\wedge Y)}$. We aim to check that this identity is satisfied by the variety $\mathcal{(ZM\vee X)\wedge(ZM\vee Y)}$. The identity $u=v$ holds in $\mathcal{ZM}$ and there is a deduction of this identity from identities of the varieties $\mathcal X$ and $\mathcal Y$. In other words, there are words $u_0,u_1,\dots,u_n$ such that $u_0\equiv u$, $u_n\equiv v$ and, for each $i=0,1,\dots,n-1$, the identity $u_i=u_{i+1}$ holds in one of the varieties $\mathcal X$ and $\mathcal Y$. Let $u_0,u_1,\dots,u_n$ be the shortest sequence of words with such properties. If all the words $u_0,u_1,\dots,u_n$ are not letters then $u_0=u_1=\cdots=u_n$ holds in $\mathcal{ZM}$. This means that the sequence of words $u_0,u_1,\dots,u_n$ is a deduction of the identity $u=v$ from identities of the varieties $\mathcal{ZM\vee X}$ and $\mathcal{ZM\vee Y}$, whence $u=v$ holds in $\mathcal{(ZM\vee X)\wedge(ZM\vee Y)}$. Let now $i$ be an index such that $u_i\equiv x$ for some letter $x$. Clearly, $0<i<n$ because the variety $\mathcal{ZM}$ satisfies the identity $u_0=u_n$ but does not satisfy any identity of the kind $x=w$. The identity $u_{i-1}=x$ holds in one of the varieties $\mathcal X$ and $\mathcal Y$, say in $\mathcal X$. Then $\mathcal Y$ satisfies the identity $x=u_{i+1}$. Since both the identities $u_{i-1}=x$ and $x=u_{i+1}$ fail in $\mathcal{ZM}$, we have that $\mathcal{ZM}$ is contained neither in $\mathcal X$ nor in $\mathcal Y$. Therefore, the varieties $\mathcal X$ and $\mathcal Y$ are completely regular. By Lemma~\ref{identity for cr} each of the identities $u_0=\,\overline{\overline{u_0}}$ and $\overline{\overline{u_n}}\,=u_n$ holds in one of the varieties $\mathcal X$ and $\mathcal Y$. Further, for each $i=0,1,\dots,n-1$ one of the varieties $\mathcal X$ and $\mathcal Y$ satisfies the identity $\overline{\overline{u_i}}\,=\,\overline{\overline{u_{i+1}}}$. The words $u_0$ and $u_n$ are not letters, whence the variety $\mathcal{ZM}$ satisfies the identities $u_0=\,\overline{\overline{u_0}}\,=\,\overline{\overline{u_1}}\,=\cdots=\,\overline{\overline{u_n}}\,=u_n$. Summarizing all we say, we obtain that the sequence of words $u_0,\overline{\overline{u_0}},\overline{\overline{u_1}},\dots,\overline{\overline{u_n}},u_n$ is a deduction of the identity $u=v$ from the identities of the varieties $\mathcal{ZM\vee X}$ and $\mathcal{ZM\vee Y}$. Therefore, this identity holds in $\mathcal{(ZM\vee X)\wedge(ZM\vee Y)}$.

\smallskip

\emph{Codistributivity.} In view of Lemma~\ref{SL and ZM are atoms}, if $\mathcal W$ is an arbitrary epigroup variety then either $\mathcal{W\supseteq ZM}$ or $\mathcal{W\wedge ZM=T}$. We need to verify that
$$\mathcal{ZM\wedge(X\vee Y)=(ZM\wedge X)\vee(ZM\wedge Y)}\ldotp$$
Clearly, both the parts of this equality equals $\mathcal{ZM}$ whenever at least one of the varieties $\mathcal X$ or $\mathcal Y$ contains $\mathcal{ZM}$. It remains to verify that if $\mathcal{X\nsupseteq ZM}$ and $\mathcal{Y\nsupseteq ZM}$ then $\mathcal{X\vee Y\nsupseteq ZM}$. This claim immediately follows from the well-known fact that an epigroup variety $\mathcal W$ does not contain the variety $\mathcal{ZM}$ if and only if $\mathcal W$ is completely regular.

\smallskip

\emph{Modularity.} For any epigroup variety $\mathcal X$, we put $\mathcal{\CR(X)=CR\wedge X}$ where $\mathcal{CR}$ is the variety of all completely regular epigroups. Suppose that $\mathcal{X\subseteq Y}$. We need to prove that
\begin{equation}
\label{inclusion for modularity}
\mathcal{(ZM\vee X)\wedge Y\subseteq(ZM\wedge Y)\vee X}
\end{equation}
because the opposite inclusion is evident. If $\mathcal{ZM\subseteq X}$ [respectively $\mathcal{ZM\subseteq Y}$] then both the parts of the inclusion~\eqref{inclusion for modularity} coincide with $\mathcal X$ [with $\mathcal{ZM\vee X}$]. Let now $\mathcal{ZM\nsubseteq X}$ and $\mathcal{ZM\nsubseteq Y}$. Then the varieties $\mathcal X$ and $\mathcal Y$ are completely regular. Therefore,
$$\mathcal{(ZM\vee X)\wedge Y\subseteq\CR(ZM\vee X)}\ldotp$$
Further, $\mathcal{ZM\wedge Y=T}$, whence the right part of the inclusion~\eqref{inclusion for modularity} coincides with $\mathcal X$. Let $u=v$ be an identity that holds in $\mathcal X$. Lemmas~\ref{identity for cr} and~\ref{identity for nil} imply that $\mathcal{ZM\vee X}$ satisfies the identity $\overline{\overline u}\,=\,\overline{\overline v}$. Therefore, $u=v$ in $\CR(\mathcal{ZM\vee X})$ by Lemma~\ref{identity for cr}. We have proved that $\mathcal{\CR(ZM\vee X)\subseteq X}$, whence
$$\mathcal{(ZM\vee X)\wedge Y\subseteq\CR(ZM\vee X)\subseteq X=T\vee X=(ZM\wedge Y)\vee X}\ldotp$$
Proposition is proved.
\end{proof}

For convenience of references, we formulate the following fact that immediately follows from Lemmas~\ref{join with neutral atom} and~\ref{SL and ZM are atoms} and Propositions~\ref{SL is neutral} and~\ref{ZM is neutral}.

\begin{corollary}
\label{join with SL or ZM or SL+ZM}
Let $I$ be a non-trivial lattice identity and $\mathcal W$ is one of the varieties $\mathcal{SL}$, $\mathcal{ZM}$ or $\mathcal{SL\vee ZM}$. An epigroup variety $\mathcal X$ is an $I$-element of the lattice $\mathbf{EPI}$ if and only if the variety $\mathcal{X\vee W}$ has the same property.\qed
\end{corollary}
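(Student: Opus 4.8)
The plan is to obtain the corollary directly from the machinery already assembled, since no new combinatorics of epigroup identities is needed. The key observation is that the lattice $\mathbf{EPI}$ has a least element, namely the trivial variety $\mathcal T$, so it is a lattice with $0$ and Lemma~\ref{join with neutral atom} applies to it. By Lemma~\ref{SL and ZM are atoms} both $\mathcal{SL}$ and $\mathcal{ZM}$ are atoms of $\mathbf{EPI}$, and by Propositions~\ref{SL is neutral} and~\ref{ZM is neutral} they are neutral elements of $\mathbf{EPI}$. Hence each of $\mathcal{SL}$ and $\mathcal{ZM}$ is simultaneously an atom and a neutral element of $\mathbf{EPI}$, which is exactly the hypothesis imposed on the element $a$ in Lemma~\ref{join with neutral atom}.

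First I would dispose of the cases $\mathcal W=\mathcal{SL}$ and $\mathcal W=\mathcal{ZM}$. Fixing a non-trivial lattice identity $I$ and an epigroup variety $\mathcal X$, I apply Lemma~\ref{join with neutral atom} with $L=\mathbf{EPI}$, $x=\mathcal X$ and $a=\mathcal W$, obtaining that $\mathcal X$ is an $I$-element of $\mathbf{EPI}$ if and only if $\mathcal{X\vee W}$ is.

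For the remaining case $\mathcal W=\mathcal{SL\vee ZM}$ I would iterate the previous step. Applying Lemma~\ref{join with neutral atom} with $a=\mathcal{SL}$ shows that $\mathcal X$ is an $I$-element if and only if $\mathcal{X\vee SL}$ is; applying it once more with $a=\mathcal{ZM}$ and with the element $\mathcal{X\vee SL}$ playing the role of $x$ shows that $\mathcal{X\vee SL}$ is an $I$-element if and only if $\mathcal{(X\vee SL)\vee ZM}=\mathcal{X\vee(SL\vee ZM)}$ is. Chaining the two equivalences yields the assertion for $\mathcal W=\mathcal{SL\vee ZM}$.

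There is essentially no obstacle: the whole content of the corollary is already packaged in the cited lemmas and propositions, and the only thing to check is that the hypotheses of Lemma~\ref{join with neutral atom} (a lattice with $0$, together with an element that is both an atom and neutral) are satisfied, which they are. The one mild point of care, in the $\mathcal{SL\vee ZM}$ case, is that the lemma must be applied to $\mathbf{EPI}$ itself rather than to an interval, so that $\mathcal{ZM}$ remains an atom at the second step; commutativity and associativity of $\vee$ then permit rewriting the resulting join as $\mathcal{X\vee(SL\vee ZM)}$.
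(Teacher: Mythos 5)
Your proposal is correct and is exactly the argument the paper intends: the corollary is stated there as an immediate consequence of Lemma~\ref{join with neutral atom}, Lemma~\ref{SL and ZM are atoms} and Propositions~\ref{SL is neutral} and~\ref{ZM is neutral}, with the $\mathcal{SL\vee ZM}$ case handled by the same two-step chaining you describe. Nothing to add.
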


\section{Upper-modular varieties}
\label{upper-modular proof}

Here we verify Theorem~\ref{upper-modular}. To do this, we need several auxiliary statements.

\begin{lemma}
\label{umod str permut is commut}
If a strongly permutative epigroup variety $\mathcal V$ is an upper-modular element of the lattice $\mathbf{EPI}$ then $\mathcal V$ is commutative.
\end{lemma}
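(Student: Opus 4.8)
The plan is to show that any strongly permutative upper-modular variety $\mathcal V$ must satisfy the commutative law by testing the upper-modularity condition $(\mathcal Z\vee\mathcal Y)\wedge\mathcal V=(\mathcal Z\wedge\mathcal V)\vee\mathcal Y$ against carefully chosen small varieties $\mathcal Y\subseteq\mathcal V$ and $\mathcal Z$. First I would invoke Corollary~\ref{str permut decomposition} to write $\mathcal V=\mathcal G\vee\mathcal C_m\vee\mathcal N$ with $\mathcal G$ an Abelian group variety, $m\ge0$ and $\mathcal N$ a nilvariety; since $\mathcal G$ and $\mathcal C_m$ are already commutative, the whole problem reduces to proving that the nil-part of $\mathcal V$ is commutative. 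So the real target is: if the strongly permutative identity $x_1\cdots x_n=x_{1\pi}\cdots x_{n\pi}$ (with $1\pi\ne1$, $n\pi\ne n$) holds in $\mathcal V$, then the nilvariety piece satisfies $xy=yx$.

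The key device will be a non-commutative nilpotent test epigroup (a semigroup word algebra on two generators modulo an appropriate 0-reduced congruence) that sits inside a reasonably chosen overvariety, so that one can exploit the strongly permutative identity to collapse it. Concretely, I expect to take $\mathcal Y=\mathcal{ZM}$ (or a slightly larger 0-reduced nilvariety inside $\mathcal V$) and $\mathcal Z$ to be a variety generated by a small non-commutative semigroup such as the one defined by $x^2=0$, $xyx=0$, $x_1x_2x_3=0$ but \emph{not} $xy=yx$ — i.e.\ essentially $\mathcal R_3$ in the notation of the paper. The point is that $\mathcal Z\vee\mathcal Y$ will contain a word where the strongly permutative identity forces an unexpected reordering, while $(\mathcal Z\wedge\mathcal V)\vee\mathcal Y$ will be visibly commutative-in-that-degree; comparing the two via the upper-modular equation yields a contradiction unless $\mathcal V$ itself already satisfied $xy=yx$. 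I would verify that $\mathcal Y\subseteq\mathcal V$ (using that $\mathcal{ZM}$ is contained in every non-completely-regular variety, together with $\mathcal V$ being strongly permutative hence non-trivial nil part) and that the chosen $\mathcal Z$ is genuinely non-commutative, so the equation becomes a meaningful constraint.

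The main obstacle, as I see it, is choosing $\mathcal Z$ and $\mathcal Y$ so that two competing requirements are met simultaneously: on one hand $\mathcal Z\wedge\mathcal V$ must be small enough (ideally trivial or $\mathcal{ZM}$) that the right-hand side $(\mathcal Z\wedge\mathcal V)\vee\mathcal Y$ is commutative in the relevant degree, which forces $\mathcal Z$ to avoid whatever identities $\mathcal V$ imposes; on the other hand $\mathcal Z\vee\mathcal Y$ must be large enough that, intersected with $\mathcal V$ and using the strongly permutative identity, it produces a word that is non-zero in $\mathcal Z$ but has been reordered — witnessing non-commutativity on the left-hand side. Balancing these is delicate because a strongly permutative variety can impose various identities on short words; I would likely argue by first reducing to the case where $\mathcal V$ is a nilvariety of finite degree (handling the infinite-degree case separately, perhaps by a direct argument that an infinite-degree strongly permutative upper-modular nilvariety must collapse $xy$ and $yx$ on long enough words and then propagate down), and within finite degree use $\mathcal R_n$ for $n=\deg(\mathcal V)+1$ or so as the auxiliary variety $\mathcal Z$.

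A secondary technical point, which I would address before the main computation, is to make sure all the joins and meets I write down actually give \emph{epigroup} varieties — but since $\mathcal{ZM}$, $\mathcal R_n$, $\mathcal C_m$ and $\mathcal G$ are all either periodic semigroup varieties or Abelian group varieties, each is automatically an epigroup variety, and finite joins and meets stay within \textbf{EPI}, so no extra care beyond citing Lemma~\ref{epigroup variety} (or simply periodicity) is needed. With the test varieties in hand, the final step is purely a word-problem computation inside $\mathcal Z\vee\mathcal Y$ using the identity basis of $\mathcal Z$ and the 0-reduced identities of $\mathcal Y$, plus the strongly permutative identity of $\mathcal V$, to extract the commutativity relation; I expect this to be short once the varieties are correctly chosen.
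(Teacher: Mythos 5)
Your reduction via Corollary~\ref{str permut decomposition} to the commutativity of the nil part is fine, but the test varieties you propose cannot produce the required contradiction. First, a concrete defect: with $\mathcal Y=\mathcal{ZM}$ and $\mathcal Z=\mathcal R_3$ you have $\mathcal Y\subseteq\mathcal Z$, so $(\mathcal Z\vee\mathcal Y)\wedge\mathcal V=\mathcal Z\wedge\mathcal V=(\mathcal Z\wedge\mathcal V)\vee\mathcal Y$ for every $\mathcal V$ containing $\mathcal{ZM}$; the upper-modular law is vacuously true for this pair and yields no constraint at all. Second, and more fundamentally, your plan needs the right-hand side $(\mathcal Z\wedge\mathcal V)\vee\mathcal Y$ to be commutative in the relevant degree, hence needs $\mathcal Z\wedge\mathcal V$ to be commutative. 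But when $\mathcal Z$ is a nilvariety (any $\mathcal R_n$), a strongly permutative identity of length $\ge3$ says nothing about the low-nilpotency-degree members of $\mathcal V$, so under the contradiction hypothesis $\mathcal Z\wedge\mathcal V$ may itself be non-commutative --- controlling it is exactly the statement you are trying to prove. This is why the paper takes for $\mathcal Z$ a non-abelian \emph{group} variety $\mathcal G'$: strong permutativity forces every group in $\mathcal V$ to be abelian, so $\mathcal G'\wedge\mathcal V$ is automatically abelian, and with $\mathcal Y=\mathcal X=\var\{x^2=xyz=0,\,xy=yx\}$ (which lies in $\mathcal V$ by Proposition~\ref{degree} once $\deg(\mathcal V)>2$; the case $\deg(\mathcal V)\le2$ is immediate) the right-hand side $(\mathcal G'\wedge\mathcal V)\vee\mathcal X$ is commutative.

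Moreover, the contradiction is not obtained by exhibiting non-commutativity of the left-hand side directly, as you envisage. One concludes that $(\mathcal G'\vee\mathcal X)\wedge\mathcal V$ is commutative and then analyses a deduction of $xy=yx$ from the identities of $\mathcal G'\vee\mathcal X$ and $\mathcal V$: its first step gives a non-trivial identity $xy=v$ holding in one of these two varieties, and Lemma~\ref{splitting}(i),(iii) shows that a variety satisfying such an identity is either commutative or of degree $\le2$; since $\mathcal G'\vee\mathcal X$ is neither (because $\mathcal G'$ is non-abelian and $\deg(\mathcal X)>2$), the identity must hold in $\mathcal V$, and $\deg(\mathcal V)>2$ then forces $\mathcal V$ to be commutative, contradicting the hypothesis. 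Your proposal defers precisely this word-level step (``I expect this to be short once the varieties are correctly chosen''), and with the varieties as you chose them it cannot be carried out; the missing ideas are the use of a non-abelian group variety as the outer test variety and the deduction analysis in place of a direct commutative-versus-non-commutative comparison of the two sides.
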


\begin{proof}
In view of Corollary~\ref{str permut decomposition}, $\mathcal{V=\mathcal G\vee C}_m\vee\mathcal N$ where $\mathcal G$ is an Abelian group variety, $m\ge0$ and $\mathcal N$ is a nilvariety. If $\deg(\mathcal V)\le2$ then $\mathcal{N\subseteq ZM}$, and we are done. Let now $\deg(\mathcal V)>2$. By Proposition~\ref{degree} $\mathcal V$ contains the variety $\mathcal X=\var\{x^2=xyz=0,\,xy=yx\}$. Suppose that $\mathcal V$ is not commutative. Let $\mathcal G'$ be a non-abelian group variety. Since $\mathcal V$ is strongly permutative, every group in $\mathcal V$ is abelian. Therefore, the variety $\mathcal{(G'\wedge V)\vee X}$ is commutative. Since $\mathcal{X\subseteq V}$ and the variety $\mathcal V$ is upper-modular, we have that $\mathcal{(G'\wedge V)\vee X=(G'\vee X)\wedge V}$. We see that the variety $\mathcal{(G'\vee X)\wedge V}$ is commutative. Hence there is a deduction of the identity $xy=yx$ from the identities of the varieties $\mathcal{G'\vee X}$ and $\mathcal V$. In particular, there is a word $v$ such that $v\not\equiv xy$ and the identity $xy=v$ holds either in $\mathcal{G'\vee X}$ or in $\mathcal V$. The claims~(i) and~(iii) of Lemma~\ref{splitting} imply that a variety with the identity $xy=v$ is either commutative or a variety of degree $\le2$. The variety $\mathcal{G'\vee X}$ is neither commutative (because $\mathcal G'$ is non-abelian) nor a variety of degree $\le2$ (because $\deg(\mathcal X)>2$). Since $\deg(\mathcal V)>2$, we have that $\mathcal V$ is commutative.
\end{proof}

A semigroup variety is called \emph{proper} if it differs from the variety of all semigroups. It is proved in~\cite[Theorem~1.1]{Vernikov-08b} that if $\mathcal V$ is a proper upper-modular in \textbf{SEM} variety then, first, $\mathcal V$ is periodic\footnote{Note that this is a very special case of the following result obtained in~\cite[Theorem~1]{Shaprynskii-12b}: if $I$ is a non-trivial lattice identity then a proper semigroup variety is periodic whenever it is an $I$-element of the lattice \textbf{SEM}.}, and, second, every nilsubvariety of $\mathcal V$ is commutative and satisfies the identity~\eqref{xxy=xyy}. As we have already mentioned in Subsection~\ref{introduction epigroups}, the epigroup analog of the first claim is not true. Our next step is the following partial epigroup analog of the second claim.

\begin{proposition}
\label{umod nec}
If a strongly permutative epigroup variety $\mathcal V$ is an upper-modular element of the lattice $\mathbf{EPI}$ then every nil-semi\-group in $\mathcal V$ satisfies the identity~\eqref{xxy=xyy}.
\end{proposition}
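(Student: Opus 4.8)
The proof goes by contradiction. Suppose $\mathcal V$ is strongly permutative and upper-modular but some nil-semigroup of $\mathcal V$ — equivalently, the largest nilsubvariety $\mathcal N:=\Nil(\mathcal V)$ — fails~\eqref{xxy=xyy}. By Lemma~\ref{umod str permut is commut} the variety $\mathcal V$, and hence $\mathcal N$, is commutative. The words $x^2y$ and $xy^2$ have length~$3$, so a commutative nilvariety in which they differ cannot be of degree $\le3$; thus $\deg(\mathcal N)\ge4$ and $\deg(\mathcal V)\ge4$. Put $\mathcal M:=\mathcal N\wedge\var\{x_1x_2x_3x_4=0\}$, a commutative nilvariety of degree $\le4$ with $\mathcal M\subseteq\mathcal V$; since the inequality $x^2y\ne xy^2$ in the free algebra of $\mathcal N$ already occurs among words of length~$3$, it survives the truncation and $\mathcal M$ still fails~\eqref{xxy=xyy}. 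By Corollary~\ref{join with SL or ZM or SL+ZM} we may replace $\mathcal V$ by $\mathcal V\vee\mathcal{SL}$, which is again strongly permutative, commutative and upper-modular and has the same largest nilsubvariety; so we assume in addition that $\mathcal{SL}\subseteq\mathcal V$, and then $\mathcal{ZM}\subseteq\mathcal V$ as well since $\deg(\mathcal V)\ge2$.

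Next put $\mathcal Y:=\mathcal M\wedge\var\{x^2y=xy^2\}$, a nilvariety with $\mathcal Y\subsetneq\mathcal M\subseteq\mathcal V$. Because $\mathcal M$ has degree $\le4$, every substitution instance $s^2t=st^2$ except those with $\ell(s)=\ell(t)=1$ has both sides of length $\ge4$ and hence already holds in $\mathcal M$; using this together with Lemma~\ref{splitting} one checks that $\mathcal M$ differs from $\mathcal Y$ exactly by identities $w_1=w_2$ in which the occurrences of letters in $w_1$ and in $w_2$ form the multisets $\{a,a,b\}$ and $\{a,b,b\}$ for two distinct letters $a,b$ (all of which follow from~\eqref{xxy=xyy} modulo the commutative law). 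The aim is to produce an epigroup variety $\mathcal Z\notin[\mathcal T,\mathcal V]$ with the following two properties:
\begin{itemize}
\item[\textup{(I)}] $\mathcal Z\wedge\mathcal V$ satisfies~\eqref{xxy=xyy} — so that $(\mathcal Z\wedge\mathcal V)\vee\mathcal Y$ satisfies~\eqref{xxy=xyy};
\item[\textup{(II)}] $\mathcal Z$ fails every identity $w_1=w_2$ of the shape described above — equivalently, $\mathcal Z$ fails every identity that holds in $\mathcal Y$ but not in $\mathcal M$, which forces $\mathcal M\subseteq\mathcal Z\vee\mathcal Y$ (any identity true in both $\mathcal Z$ and $\mathcal Y$ but false in $\mathcal M$ would belong to the forbidden family).
\end{itemize}
Granting such a $\mathcal Z$, we get $(\mathcal Z\vee\mathcal Y)\wedge\mathcal V\supseteq\mathcal M\vee\mathcal Y=\mathcal M$, so the left-hand side does not satisfy~\eqref{xxy=xyy}, while by~(I) the right-hand side $(\mathcal Z\wedge\mathcal V)\vee\mathcal Y$ does; this contradicts upper-modularity of $\mathcal V$ and finishes the proof.

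The choice of $\mathcal Z$ is the crux and, I expect, the step requiring the most work; it proceeds by cases according to the decomposition $\mathcal V=\mathcal G\vee\mathcal C_m\vee\mathcal N'$ of Corollary~\ref{str permut decomposition}, with $\mathcal G$ an Abelian group variety. Note that~(II) forces $\deg(\mathcal Z)\ge4$ (a variety of degree $\le3$ kills every word of length~$3$), so $\mathcal Z$ cannot be a nilvariety of small degree; and~(II) also rules out $\mathcal P$ and $\overleftarrow{\mathcal P}$ themselves, since $\mathcal P\models xyx=yxy$ by Lemma~\ref{word problem}(iii). A workable choice in the principal case is $\mathcal Z=\mathcal C_n$ for a suitable large $n$: by Lemma~\ref{word problem}(ii) and its analogue for $\mathcal C_n$ the variety $\mathcal C_n$ fails every identity of the shape in~(II), while~(I) follows from the observation, using Corollary~\ref{degree of meet} and the fact that $\deg(\mathcal V)<\infty$ whereas the nilsubvarieties $\var\{x^k=0,\,xy=yx\}$ of $\mathcal C_n$ have infinite degree, that no nilvariety violating~\eqref{xxy=xyy} can sit inside $\mathcal C_n\wedge\mathcal V$; the value of $n$ (large relative to $\deg(\mathcal V)$ and with $\mathcal C_n\nsubseteq\mathcal V$) is dictated by whether $\mathcal G$ is trivial. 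The residual configurations — in particular when $\mathcal V$ already contains all the bounded-degree $\mathcal C_k$-nilvarieties available to it — are the delicate ones and have to be settled separately by word combinatorics over $\mathcal M$, $\mathcal Y$ and $\mathcal Z$, possibly invoking $\mathcal P$ and $\overleftarrow{\mathcal P}$ jointly. Once $\mathcal Z$ satisfying~(I) and~(II) has been exhibited in every case, the contradiction above applies verbatim.
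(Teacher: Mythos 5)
Your skeleton is in fact the paper's skeleton (inherited from the semigroup case in the cited work of Vernikov): shrink to a small commutative nilsubvariety, apply the upper-modular law with a subvariety $\mathcal Y\subseteq\mathcal V$, and look for an auxiliary variety $\mathcal Z$ whose meet with $\mathcal V$ satisfies~\eqref{xxy=xyy} while $\mathcal Z$ fails all nine identities $v=w$ with $v\in\{x^2y,xyx,yx^2\}$, $w\in\{xy^2,yxy,y^2x\}$ --- your (I) and (II) are exactly the two conditions the paper imposes on its auxiliary variety. The genuine gap is that you never produce such a $\mathcal Z$, and the candidate you do offer fails. For $\mathcal Z=\mathcal C_n$ with $n\ge4$ condition (I) is false outright: $\mathcal M$ is commutative and satisfies $x_1x_2x_3x_4=0$, hence $x^n=x^{n+1}$, so $\mathcal M\subseteq\mathcal C_n$; since also $\mathcal M\subseteq\mathcal V$, we get $\mathcal M\subseteq\mathcal C_n\wedge\mathcal V$, and $\mathcal M$ fails~\eqref{xxy=xyy} by hypothesis --- so ``no nilvariety violating~\eqref{xxy=xyy} can sit inside $\mathcal C_n\wedge\mathcal V$'' is refuted by $\mathcal M$ itself. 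The degree argument cannot rescue this: Corollary~\ref{degree of meet} only gives $\deg(\mathcal C_n\wedge\mathcal V)=\deg(\mathcal V)$ (note $\deg(\mathcal C_n)=\infty$ for $n\ge2$), which is perfectly compatible with $\mathcal M$ lying in the meet, and the premise $\deg(\mathcal V)<\infty$ is itself unjustified, since a commutative upper-modular variety may contain $\var\{x^2=0,\,xy=yx\}$ and thus have infinite degree. The small values $n=2,3$ fail as soon as $\mathcal C_2\subseteq\mathcal V$, which Corollary~\ref{str permut decomposition} does not allow you to exclude at this stage of a proof by contradiction. So the ``residual configurations'' you defer are not a peripheral case: constructing $\mathcal Z$ is the whole content of the step, and your fallback does not help either, since, as you note, $\mathcal P$, $\overleftarrow{\mathcal P}$ and hence $\mathcal P\vee\overleftarrow{\mathcal P}$ satisfy $xyx=yxy$, violating (II).

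The missing idea is to leave the commutative (indeed, the ``monoid-like'') world when choosing $\mathcal Z$. The paper takes $\mathcal Z=\mathcal{LZM}\vee\mathcal{RZM}$, where $\mathcal{LZM}=\var\{xyz=xy\}$ and $\mathcal{RZM}=\var\{xyz=yz\}$: every semigroup identity of this variety has both sides with the same prefix of length~$2$ and the same suffix of length~$2$, so all nine forbidden identities fail (your (II)); and since this variety has only trivial subgroups, contains no semilattices, and its nil-semigroups lie in $\mathcal{ZM}$ (it satisfies $xy=(xy)^2$), its meet with the commutative variety $\mathcal V$ is contained in $\mathcal{ZM}$, whence $(\mathcal Z\wedge\mathcal V)\vee\mathcal Y\subseteq\mathcal{ZM}\vee\mathcal Y$ satisfies~\eqref{xxy=xyy}, which is your (I). With that choice your concluding contradiction goes through. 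A smaller point: your claim that $\mathcal M=\mathcal N\wedge\var\{x_1x_2x_3x_4=0\}$ still fails~\eqref{xxy=xyy} is true, but ``the inequality survives the truncation'' is not a proof; the only danger is that $x^2y$ (and $xy^2$) could be $\mathcal N$-equal to words of length at least~$4$, and one has to check, using commutativity and Lemma~\ref{splitting}, that any such identity already forces $x^2y=0$ in $\mathcal N$ and hence~\eqref{xxy=xyy} in $\mathcal N$, a contradiction; this verification should be written out.
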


\begin{proof}
According to Lemma~\ref{umod str permut is commut} the variety $\mathcal V$ is commutative. If all nil-semi\-groups in $\mathcal V$ are singleton then the desirable conclusion is evident. Suppose now that $\mathcal V$ contains a non-single\-ton nil-semi\-group $N$. Let $\mathcal N$ be the variety generated by $N$. Clearly, this variety is commutative. It is evident that $\mathcal{ZM\subseteq N}$. We need to verify that $\mathcal N$ satisfies the identity~\eqref{xxy=xyy}. Put
$$\mathcal I=\var\{x^2y=xy^2,\,xy=yx,\,x^2yz=0\}$$
and $\mathcal{N'=N\wedge I}$. It is clear that $\mathcal{ZM\subseteq I}$, whence $\mathcal{ZM\subseteq N'}$. 

A semigroup analog of the proposition we verify is proved in~\cite{Vernikov-08b} (see the last paragraph of Section~3 in that article). The arguments used there are based on the fact that there is a variety $\mathcal X$ such that the following two claims are valid:
\begin{itemize}
\item[(i)] $\mathcal{(X\vee N')\wedge V\subseteq I}$;
\item[(ii)] if $v\in\{x^2y,xyx,yx^2\}$ and $w\in\{xy^2,yxy,y^2x\}$ then the identity $v=w$ fails in $\mathcal X$.
\end{itemize}
In~\cite{Vernikov-08b} some periodic group variety plays the role of $\mathcal X$. Here we should take another $\mathcal X$. Namely, put $\mathcal{X=LZM\vee RZM}$ where
$$\mathcal{LZM}=\var\{xyz=xy\}\quad\text{and}\quad\mathcal{RZM}=\var\{xyz=yz\}\ldotp$$
The variety $\mathcal X$ satisfies the identity $xyzxy=xy$. Therefore, Lemma~\ref{word problem}(i) implies that $\mathcal{SL\nsubseteq X}$. Further, substituting~1 for $x$ and $y$ in the identity $xyzxy=xy$, we obtain that all groups in $\mathcal X$ are singleton. Hence every commutative semigroup in $\mathcal X$ is a nil-semi\-group. Further, $\mathcal X$ satisfies the identity $xy=(xy)^2$, whence all nil-semi\-groups in $\mathcal X$ lie in $\mathcal{ZM}$ by Lemma~\ref{splitting}(ii). Since the variety $\mathcal{X\wedge V}$ is commutative, $\mathcal{X\wedge V\subseteq ZM}$. The variety $\mathcal V$ is upper-modular and $\mathcal{N'\subseteq V}$. Therefore,
$$\mathcal{(X\vee N')\wedge V=(X\wedge V)\vee N'\subseteq ZM\vee N'=N'\subseteq I}\ldotp$$
We have proved the claim~(i). To verify the claim~(ii), we note that if $\mathcal X$ satisfies a semigroup identity $v=w$ then the words $v$ and $w$ have the same prefix of length~2 and the same suffix of length~2. Clearly, this is not the case whenever $v\in\{x^2y,xyx,yx^2\}$ and $w\in\{xy^2,yxy,y^2x\}$. Now we can complete the proof by the same arguments as in the last paragraph of~\cite[Section~3]{Vernikov-08b}.
\end{proof}

The proof of the following statement repeats almost literally the `only if' part of the proof of Theorem~2 in~\cite{Vernikov-Volkov-06}.

\begin{proposition}
\label{umod suf}
If a nilvariety of epigroups $\mathcal X$ satisfies the identities~\eqref{xxy=xyy} and $xy=yx$ then $\mathcal X$ is an upper-modular element of the lattice $\mathbf{EPI}$.
\end{proposition}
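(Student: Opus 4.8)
The plan is to verify the upper-modularity inequality directly. Fix epigroup varieties $\mathcal Y$ and $\mathcal Z$ with $\mathcal{Y\subseteq X}$. The inclusion $\mathcal{(Z\wedge X)\vee Y\subseteq(Z\vee Y)\wedge X}$ holds in every lattice once $\mathcal{Y\subseteq X}$, so it suffices to prove the reverse inclusion $\mathcal{(Z\vee Y)\wedge X\subseteq(Z\wedge X)\vee Y}$. Both sides are subvarieties of $\mathcal X$, hence are nilvarieties satisfying $xy=yx$ and~\eqref{xxy=xyy}; in particular they lie in $\var\{xy=yx,\,x^2y=xy^2\}$. In the latter variety a semigroup word is determined up to equality by its content and its length, while non-semigroup words equal $0$ in any nilsubvariety by Lemma~\ref{identity for nil}. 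Using this together with Lemma~\ref{splitting} and the fact that every nilvariety satisfies $x^n=0$ for some $n$, one checks that inside any nilsubvariety of $\var\{xy=yx,\,x^2y=xy^2\}$ every identity is a consequence of $xy=yx$, of~\eqref{xxy=xyy} and of some set of $0$-reduced identities. Since $\mathcal{(Z\vee Y)\wedge X}$ satisfies $xy=yx$ and~\eqref{xxy=xyy} automatically, it therefore suffices to check that every $0$-reduced identity $w=0$ which holds in $\mathcal{Z\wedge X}$ and in $\mathcal Y$ (equivalently, in $\mathcal{(Z\wedge X)\vee Y}$) also holds in $\mathcal{(Z\vee Y)\wedge X}$.

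So fix such an identity $w=0$. If it holds in $\mathcal X$ we are done at once, because $\mathcal{(Z\vee Y)\wedge X\subseteq X}$. Assume then that $w=0$ fails in $\mathcal X$ but holds in $\mathcal Y$, and pick a deduction of $w=0$ from identities of $\mathcal Z$ and identities of $\mathcal X$ (available since $w=0$ holds in $\mathcal{Z\wedge X}$). The heart of the argument --- which is the ``only if'' part of the proof of Theorem~2 of~\cite{Vernikov-Volkov-06}, transcribed word for word with ``epigroup'' replacing ``semigroup'' --- is to normalise every word in this deduction modulo the identities of $\mathcal X$: using $xy=yx$ and~\eqref{xxy=xyy} each word is brought to its canonical content-and-length form, and the assumptions that $w\ne0$ in $\mathcal X$ and that $\mathcal X$ is a nilvariety (hence satisfies $x^n=0$ for some $n$) bound the lengths that can appear without the word collapsing to $0$. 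After this normalisation the steps that used identities of $\mathcal X$ are retained, while each step that used an identity of $\mathcal Z$ becomes either trivial modulo $\mathcal X$ (and can be dropped) or an application of an identity that now holds simultaneously in $\mathcal Z$ and in $\mathcal Y$ --- the latter because $w=0$ already holds in $\mathcal Y$ and $\mathcal Y$, being a nilsubvariety of $\mathcal X$, is closed under exactly the same normalisations. The resulting sequence is a deduction of $w=0$ from identities of $\mathcal X$ together with identities of $\mathcal{Z\wedge Y}$, and hence from identities of $\mathcal{(Z\vee Y)\wedge X}$; thus $w=0$ holds there, as required.

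The main obstacle is the bookkeeping in the normalisation step: one has to show that rewriting the two sides of a $\mathcal Z$-identity into their $\mathcal X$-canonical forms can never create a collapse of words that is not already available inside $\mathcal Y$. This is exactly where the hypotheses ``$\mathcal X$ is a commutative nilvariety satisfying~\eqref{xxy=xyy}'' and ``$w=0$ holds in $\mathcal Y$'' must be used together, and it is the part of the argument that needs to be copied carefully from~\cite{Vernikov-Volkov-06}. The only feature absent from the semigroup setting is the pseudoinversion operation, but by Lemma~\ref{identity for nil} every word involving it is absorbed by $0$ in each nilvariety in sight, so it introduces no additional case.
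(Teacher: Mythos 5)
Your overall frame is reasonable: both sides of the upper-modularity law lie in $\mathcal X$, hence are commutative nilvarieties satisfying~\eqref{xxy=xyy}, non-semigroup words are killed by Lemma~\ref{identity for nil}, and one can indeed reduce the problem to transferring $0$-reduced identities from $\mathcal{(Z\wedge X)\vee Y}$ to $\mathcal{(Z\vee Y)\wedge X}$ (this is essentially what the paper does, after first noting $x^2yz=0$ holds in $\mathcal X$ and computing the lattice $L(\mathcal I)$ so that only $x^2=0$, $x^3=0$, $x^2y=0$ and the nilpotency identities matter). The gap is in the step you call the heart of the argument. Replacing every word of a deduction of $w=0$ from the identities of $\mathcal Z$ and $\mathcal X$ by its ``canonical content-and-length form'' destroys the deduction: a step that held in $\mathcal Z$ connects, after normalisation, two words that $\mathcal Z$ has no reason to identify (normalisation uses identities of $\mathcal X$, which $\mathcal Z$ need not satisfy), and your only justification that the new step ``holds simultaneously in $\mathcal Z$ and in $\mathcal Y$'' is the remark that $w=0$ holds in $\mathcal Y$ --- which does not bear on an arbitrary $\mathcal Z$-step. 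The actual argument in the paper (and in~\cite{Vernikov-Volkov-06}) does not normalise the whole deduction; it locates the first word that leaves the finite set of words $\mathcal X$-equivalent to $w$ and closes off the deduction at that single step by a case analysis, and this works only because $w$ is one of the specific short words $x^2$, $x^3$, $x^2y$ (for a non-semigroup intermediate word one uses that both sides are $0$ in $\mathcal Y$, for a semigroup one the tailored lemmas of~\cite{Vernikov-Volkov-06}).

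The failure is most visible exactly where your plan needs more than the paper's deduction analysis: the linear identities $x_1x_2\cdots x_n=0$, which your reduction must also transfer. There a $\mathcal Z$-step can collapse a linear word to a short word --- for instance $\mathcal Z$ may satisfy $x_1x_2\cdots x_n=x_1$ --- and this identity holds in $\mathcal Z$ but certainly not in $\mathcal Y$, so no local repair turns your sequence into a deduction from the identities of $\mathcal{Z\vee Y}$ and $\mathcal X$; the transfer is still true, but its proof goes through the structure theory of varieties of finite degree (Proposition~\ref{degree} together with Corollaries~\ref{degree of meet}, \ref{degree of join with nil} and~\ref{degree of join with cr}), which is precisely how the paper handles the equality of degrees, reserving the deduction argument for the three non-linear identities above. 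So ``copying the proof of Theorem~2 of~\cite{Vernikov-Volkov-06} word for word'' does not cover the generality you need, and as written the normalisation step is not a proof; you would have to either prove the transfer for linear words by the degree machinery (as the paper does) or supply a genuinely new argument for that case.
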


\begin{proof}
It is easy to prove (see~\cite[Lemma~2.7]{Vernikov-Volkov-06}, for instance) that $\mathcal X$ satisfies the identity $x^2yz=0$. Thus, $\mathcal{X\subseteq I}$. Put
$$U=\{x^2,x^3,x^2y,x_1x_2\cdots x_n\mid n\in\mathbb N\}\ldotp$$
It is evident that any subvariety of $\mathcal I$ may be given in $\mathcal I$ only by identities of the type $u=v$ or $u=0$ where $u,v\in U$. Lemma~\ref{splitting} implies that if $u,v\in U$ and $u\not\equiv v$ then $u=v$ implies in $\mathcal I$ the identity $u=0$. Now it is very easy to check that the lattice $L(\mathcal I)$ has the form shown on Fig.~\ref{L(I)} where
\begin{align*}
\mathcal I_n&=\var\{x^2yz=x_1x_2\cdots x_n=0,\,x^2y=xy^2,\,xy=yx\}\enskip\text{where }n\ge4,\\
\mathcal J&=\var\{x^2yz=x^3=0,\,x^2y=xy^2,\,xy=yx\},\\
\mathcal J_n&=\var\{x^2yz=x^3=x_1x_2\cdots x_n=0,\,x^2y=xy^2,\,xy=yx\}\enskip\text{where }n\ge4,\\
\mathcal K&=\var\{x^2y=0,\,xy=yx\},\\
\mathcal K_n&=\var\{x^2y=x_1x_2\cdots x_n=0,\,xy=yx\}\enskip\text{where }n\ge3,\\
\mathcal L&=\var\{x^2=0,\,xy=yx\},\\
\mathcal L_n&=\var\{x^2=x_1x_2\cdots x_n=0,\,xy=yx\}\enskip\text{where }n\in\mathbb N\ldotp
\end{align*}
Note that $\mathcal L_1=\mathcal T$ and $\mathcal L_2=\mathcal{ZM}$.

\begin{figure}[tbh]
\begin{center}
\unitlength=1mm
\linethickness{.4pt}
\begin{picture}(51,96)
\put(5,5){\line(0,1){32}}
\put(5,25){\line(3,2){15}}
\put(5,35){\line(3,2){45}}
\put(5,43){\line(0,1){14}}
\put(5,45){\line(3,2){45}}
\put(5,55){\line(3,2){45}}
\put(5,65){\line(3,2){45}}
\put(20,35){\line(0,1){12}}
\put(20,53){\line(0,1){14}}
\put(35,55){\line(0,1){2}}
\put(35,63){\line(0,1){14}}
\put(50,65){\line(0,1){2}}
\put(50,73){\line(0,1){14}}
\put(5,5){\circle*{1.33}}
\put(5,15){\circle*{1.33}}
\put(5,25){\circle*{1.33}}
\put(5,35){\circle*{1.33}}
\put(5,45){\circle*{1.33}}
\put(5,55){\circle*{1.33}}
\put(5,65){\circle*{1.33}}
\put(20,35){\circle*{1.33}}
\put(20,45){\circle*{1.33}}
\put(20,55){\circle*{1.33}}
\put(20,65){\circle*{1.33}}
\put(20,75){\circle*{1.33}}
\put(35,55){\circle*{1.33}}
\put(35,65){\circle*{1.33}}
\put(35,75){\circle*{1.33}}
\put(35,85){\circle*{1.33}}
\put(50,65){\circle*{1.33}}
\put(50,75){\circle*{1.33}}
\put(50,85){\circle*{1.33}}
\put(50,95){\circle*{1.33}}
\put(5,38.5){\circle*{.52}}
\put(5,40){\circle*{.52}}
\put(5,41.5){\circle*{.52}}
\put(5,58.5){\circle*{.52}}
\put(5,60){\circle*{.52}}
\put(5,61.5){\circle*{.52}}
\put(5,63.5){\circle*{.52}}
\put(20,48.5){\circle*{.52}}
\put(20,50){\circle*{.52}}
\put(20,51.5){\circle*{.52}}
\put(20,68.5){\circle*{.52}}
\put(20,70){\circle*{.52}}
\put(20,71.5){\circle*{.52}}
\put(20,73){\circle*{.52}}
\put(35,58.5){\circle*{.52}}
\put(35,60){\circle*{.52}}
\put(35,61.5){\circle*{.52}}
\put(35,78.5){\circle*{.52}}
\put(35,80){\circle*{.52}}
\put(35,81.5){\circle*{.52}}
\put(35,83){\circle*{.52}}
\put(50,68.5){\circle*{.52}}
\put(50,70){\circle*{.52}}
\put(50,71.5){\circle*{.52}}
\put(50,88.5){\circle*{.52}}
\put(50,90){\circle*{.52}}
\put(50,91.5){\circle*{.52}}
\put(50,93){\circle*{.52}}
\put(51,96){\makebox(0,0)[lc]{$\mathcal I$}}
\put(51,64){\makebox(0,0)[lc]{$\mathcal I_4$}}
\put(51,74){\makebox(0,0)[lc]{$\mathcal I_n$}}
\put(51,84){\makebox(0,0)[lc]{$\mathcal I_{n+1}$}}
\put(34,87){\makebox(0,0)[rc]{$\mathcal J$}}
\put(36,54){\makebox(0,0)[lc]{$\mathcal J_4$}}
\put(36,64){\makebox(0,0)[lc]{$\mathcal J_n$}}
\put(36,74){\makebox(0,0)[lc]{$\mathcal J_{n+1}$}}
\put(19,77){\makebox(0,0)[rc]{$\mathcal K$}}
\put(21,33){\makebox(0,0)[lc]{$\mathcal K_3$}}
\put(21,43){\makebox(0,0)[lc]{$\mathcal K_4$}}
\put(21,53){\makebox(0,0)[lc]{$\mathcal K_n$}}
\put(21,63){\makebox(0,0)[lc]{$\mathcal K_{n+1}$}}
\put(4,67){\makebox(0,0)[rc]{$\mathcal L$}}
\put(4,4){\makebox(0,0)[rc]{$\mathcal L_1$}}
\put(4,14){\makebox(0,0)[rc]{$\mathcal L_2$}}
\put(4,24){\makebox(0,0)[rc]{$\mathcal L_3$}}
\put(4,34){\makebox(0,0)[rc]{$\mathcal L_4$}}
\put(4,44){\makebox(0,0)[rc]{$\mathcal L_n$}}
\put(4,54){\makebox(0,0)[rc]{$\mathcal L_{n+1}$}}
\end{picture}
\caption{The lattice $L(\mathcal I)$}
\label{L(I)}
\end{center}
\end{figure}
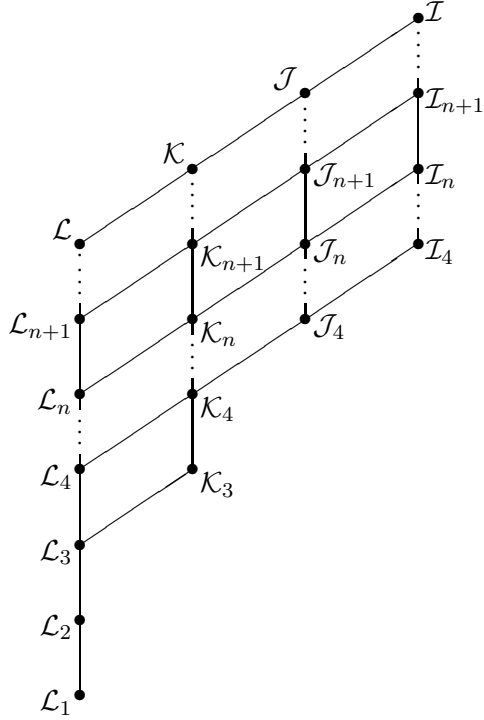

Let $\mathcal{X\subseteq I}$. We have to check that if $\mathcal{Y\subseteq X}$ and $\mathcal Z$ is an arbitrary epigroup variety then $\mathcal{(Z\vee Y)\wedge X=(Z\wedge X)\vee Y}$. For a variety $\mathcal M$ with $\mathcal{M\subseteq I}$, we denote by $\mathcal M^\ast$ the least of the varieties $\mathcal I$, $\mathcal J$, $\mathcal K$ and $\mathcal L$ that contains $\mathcal M$. Fig.~\ref{L(I)} shows that if $\mathcal M_1,\mathcal M_2\subseteq\mathcal I$ then $\mathcal M_1=\mathcal M_2$ if and only if $\deg(\mathcal M_1)=\deg(\mathcal M_2)$ and $\mathcal M_1^\ast=\mathcal M_2^\ast$. Therefore, we have to verify the following two equalities:
\begin{align}
\label{the same degree}&\mathcal{\deg\bigl((Z\vee Y)\wedge X\bigr)=\deg\bigl((Z\wedge X)\vee Y\bigr)},\\
\label{the same star}&\mathcal{\bigl((Z\vee Y)\wedge X\bigr)^\ast=\bigl((Z\wedge X)\vee Y\bigr)^\ast}\ldotp
\end{align}

\smallskip

\emph{The equality}~\eqref{the same degree}. Put $\deg(\mathcal X)=k$, $\deg(\mathcal Y)=\ell$ and $\deg(\mathcal Z)=m$. Clearly, $\ell\le k$ because $\mathcal{Y\subseteq X}$. The set of all natural numbers with the operations $\min$ and $\max$ is a distributive lattice. Now Corollaries~\ref{degree of meet} and~\ref{degree of join with nil} apply and we conclude that
\begin{align*}
\mathcal{\deg\bigl((Z\vee Y)\wedge X\bigr)}&=\min\bigl\{\max\{m,\ell\},k\bigr\}=\max\bigl\{\min\{m,k\},\min\{\ell,k\}\bigr\}\\
&=\max\bigl\{\min\{m,k\},\ell\bigr\}=\mathcal{\deg\bigl((Z\wedge X)\vee Y\bigr)}\ldotp
\end{align*}
The equality~\eqref{the same degree} is proved.

\smallskip

\emph{The equality}~\eqref{the same star}. Clearly, this equality is equivalent to the following claim: if $u$ is one of the words $x^3$, $x^2y$ and $x^2$ then the variety $\mathcal{(Z\vee Y)\wedge X}$ satisfies the identity $u=0$ if and only if the variety $\mathcal{(Z\wedge X)\vee Y}$ does so. It suffices to verify that $u=0$ holds in $\mathcal{(Z\vee Y)\wedge X}$ whenever it is so in $\mathcal{(Z\wedge X)\vee Y}$ because the opposite claim immediately follows from the evident inclusion $\mathcal{(Z\wedge X)\vee Y\subseteq(Z\vee Y)\wedge X}$. Further considerations are divided into two cases.

\smallskip

\emph{Case}~1: $u\equiv x^n$ where $n\in\{2,3\}$. Then $x^n=0$ in $\mathcal{(Z\wedge X)\vee Y}$. This means that $x^n=0$ in $\mathcal Y$ and there is a deduction of the identity $x^n=0$ from the identities of the varieties $\mathcal Z$ and $\mathcal X$. In particular, there is a word $v$ such that $v\not\equiv x^n$ and $x^n=v$ holds in either $\mathcal Z$ or $\mathcal X$. If $x^n=v$ in $\mathcal X$ then the fact that $\mathcal X$ is a nil-variety together with the claims~(i) and~(ii) of Lemma~\ref{splitting} imply that $x^n=0$ in $\mathcal X$, and moreover in $\mathcal{(Z\vee Y)\wedge X}$. Let now $x^n=v$ in $\mathcal Z$. The case when $v$ is a semigroup word may be considered by the same arguments as in the Case~1 in the `only if' part of the proof of Theorem~2 in~\cite{Vernikov-Volkov-06}. Let now $v$ is a non-semi\-group word. Then, in view of Lemma~\ref{identity for nil}, the identity $v=0$ holds in the variety $\mathcal X$, and therefore in $\mathcal Y$. Recall that $x^n=0$ in $\mathcal Y$. Therefore, the identity $x^n=v$ holds in $\mathcal Y$. Since this identity holds in $\mathcal Z$ as well, we obtain that it holds in $\mathcal{Y\vee Z}$. We see that the sequence $x^n$, $v$, 0 is a deduction of the identity $x^n=0$ from the identities of the varieties $\mathcal{Y\vee Z}$ and $\mathcal X$, whence $x^n=0$ holds in $\mathcal{(Y\vee Z)\wedge X}$.

\smallskip

\emph{Case}~2: $u\equiv x^2y$. We have to check that if the variety $\mathcal{(Z\wedge X)\vee Y}$ satisfies the identity~\eqref{xxy=0} then the variety $\mathcal{(Z\vee Y)\wedge X}$ also satisfies this identity. Put
$$W=\{x^2y,xyx,yx^2,y^2x,yxy,xy^2\}\ldotp$$
The variety $\mathcal{(Z\vee Y)\wedge X}$ is commutative. Therefore, it suffices to verify that this variety satisfies an identity $w=0$ for some word $w\in W$. By the hypothesis the variety $\mathcal{(Z\wedge X)\vee Y}$ satisfies the identity~\eqref{xxy=0}. This means that this identity holds in $\mathcal Y$ and there is a deduction of the identity from identities of the varieties $\mathcal X$ and $\mathcal Z$. Let $x^2y\equiv u_0,u_1,\dots,u_n,0$ be an arbitrary such deduction. The case when $u_n\in W$ may be considered by the same way as in the `only if' part of the proof of Theorem~2 in~\cite{Vernikov-Volkov-06}.

Let now $u_n\notin W$. Since $u_0\in W$, there is an index $i>0$ such that $u_i\notin W$ while $u_{i-1}\in W$. The identity $u_{i-1}=u_i$ holds in one of the varieties $\mathcal Z$ and $\mathcal X$. If $u_{i-1}=u_i$ holds in $\mathcal X$ then $\mathcal X$ satisfies the identity $u_{i-1}=0$ (this follows from~\cite[Lemma~2.5]{Vernikov-Volkov-06} whenever $u_i$ is a semigroup word and from Lemma~\ref{identity for nil} otherwise). Therefore, $u_{i-1}=0$ holds in $\mathcal{(Z\vee Y)\wedge X}$. Since $u_{i-1}\in W$, we are done.

Finally, suppose that $u_{i-1}=u_i$ holds in $\mathcal Z$. If $u_i$ is a semigroup word then we may complete the proof by the same arguments as in the `only if' part of the proof of Theorem~2 in~\cite{Vernikov-Volkov-06}. Suppose now that $u_i$ is not a semigroup word. Lemma~\ref{identity for nil} implies then that the variety $\mathcal Y$ satisfies the identity $u_i=0$. Hence the identity $u_{i-1}=u_i$ holds in $\mathcal Y$, and therefore the variety $\mathcal{Y\vee Z}$ satisfies this identity. Applying Lemma~\ref{identity for nil} again, we conclude that $u_i=0$ holds in $\mathcal X$. Whence, the sequence $u_{i-1}$, $u_i$, 0 is a deduction of the identity $u_{i-1}=0$ from the identities of the varieties $\mathcal{Y\vee Z}$ and $\mathcal X$. Thus $u_{i-1}=0$ holds in $\mathcal{(Y\vee Z)\wedge X}$, and we are done.

The equality~\eqref{the same star} is proved. Thus, we have proved Proposition~\ref{umod suf}.
\end{proof}

\emph{Proof of Theorem}~\ref{upper-modular}. \emph{Necessity.} Let $\mathcal V$ be a strongly permutative upper-modular epigroup variety. In view of Corollary~\ref{str permut decomposition}, $\mathcal{V=\mathcal G\vee C}_m\vee\mathcal N$ where $\mathcal G$ is an abelian group variety, $m\ge0$ and $\mathcal N$ is a nilvariety. Lemma~\ref{umod str permut is commut} and Proposition~\ref{umod nec} imply respectively that $\mathcal N$ is commutative and satisfies the identity~\eqref{xxy=xyy}. The variety $\mathcal C_m$ contains a nilsubvariety $\var\{x^m=0,\,xy=yx\}$. Clearly, this variety does not satisfy the identity~\eqref{xxy=xyy} whenever $m\ge3$. Now Proposition~\ref{umod nec} applies again and we conclude that $m\le2$. If the variety $\mathcal N$ satisfies the identity~\eqref{xxy=0} then the claim~(ii) of Theorem~\ref{upper-modular} holds. Suppose now that the identity~\eqref{xxy=0} fails in $\mathcal N$. By~\cite[Lemma~7]{Volkov-89} this implies that $\mathcal N$ contains the variety $\mathcal J$. We need to verify that $\mathcal{G=T}$ and $m\le1$. Arguing by contradiction, suppose that either $\mathcal{G\ne T}$ or $m\ge2$. Then $\mathcal V$ contains a variety of the form $\mathcal{X\vee J}$ where $\mathcal X$ is either a non-trivial abelian group variety or the variety $\mathcal C_2$. It is well known and may be easily checked that the variety $\mathcal C_2$ is generated by the epigroup 
$$C_2=\langle e,a\mid e^2=e,\,ea=ae=a,\,a^2=0\rangle=\{e,a,0\}$$
and $e$ is the unit of $C_2$. Thus, $\mathcal M$ is generated by an epigroup with unit in any case. Suppose that $\mathcal X$ satisfies the identity~\eqref{xxy=xyy}. Substituting~1 for $y$ in this identity, we have that $x^2=x$ holds in $\mathcal X$. But this identity is false both in a non-trivial group variety and in the variety $\mathcal C_2$. As it is verified in the proof of~\cite[Lemma~8]{Volkov-89}, this implies that~\eqref{xxy=xyy} is false in any nil-semi\-group in $\mathcal{X\vee J}$. But this contradicts Proposition~\ref{umod nec}.

\smallskip

\emph{Sufficiency.} If $\mathcal V$ satisfies the claim~(i) of Theorem~\ref{upper-modular} then $\mathcal V$ is upper-modular by Proposition~\ref{umod suf} and Corollary~\ref{join with SL or ZM or SL+ZM}. Suppose now that $\mathcal V$ satisfies the claim~(ii). In other words, $\mathcal{V=G\vee C}_m\vee\mathcal N$ where $\mathcal G$ is an abelian group variety, $0\le m\le2$, and $\mathcal N$ satisfies the identities $xy=yx$ and~\eqref{xxy=0}. Let $\mathcal{Y\subseteq V}$ and $\mathcal Z$ an arbitrary epigroup variety. We aim to verify that
$$\mathcal{(Z\vee Y)\wedge V=(Z\wedge V)\vee Y}\ldotp$$

As we have already mentioned in the proof of Lemma~\ref{monoid decomposition}, the variety $\mathcal C_m$ is generated by the $(m+1)$-element combinatorial cyclic monoid $C_m$ and the set $X=\{m\in\mathbb N\mid C_m\in\mathcal X\}$ has the greatest element. For any $m\ge0$, let $c_m$ be a generator of $C_m$. Put $C=\mathop\prod\limits_{m\in X}C_m$. Then the semigroup $C$ is not an epigroup because no power of the element $(\dots,c_m,\dots)_{m\in X}$ belongs to a subgroup of $C$. Thus, the set $X$ has the greatest element. We denote this element by $m$ and put $C\mathcal{(X)=C}_m$. It is clear that the varieties $\mathcal{(Z\vee Y)\wedge V}$ and $\mathcal{(Z\wedge V)\vee Y}$ are commutative. In view of Corollary~\ref{str permut decomposition}, it suffices to verify the following three equalities:
\begin{align}
\label{the same Gr-part}
\Gr\bigl(\mathcal{(Z\vee Y)\wedge V}\bigr)&=\Gr\bigl(\mathcal{(Z\wedge V)\vee Y}\bigr),\\
\label{the same C-part}
C\bigl(\mathcal{(Z\vee Y)\wedge V}\bigr)&=C\bigl(\mathcal{(Z\wedge V)\vee Y}\bigr),\\
\label{the same Nil-part}
\Nil\bigl(\mathcal{(Z\vee Y)\wedge V}\bigr)&=\Nil\bigl(\mathcal{(Z\wedge V)\vee Y}\bigr)\ldotp
\end{align}

\smallskip

\emph{The equality}~\eqref{the same Gr-part}. If $\mathcal G$ is a periodic group variety then we denote by $\exp(G)$ the \emph{exponent} of $\mathcal G$, that is the least number $n$ such that $\mathcal G$ satisfies the identity $x=x^{n+1}$. For a non-periodic group variety $\mathcal G$, we put $\exp(\mathcal G)=\infty$. Put $\mathcal G_1=\Gr\bigl(\mathcal{(Z\vee Y)\wedge V}\bigr)$ and $\mathcal G_2=\Gr\bigl(\mathcal{(Z\wedge V)\vee Y}\bigr)$. Since $\mathcal G_1,\mathcal G_2\subseteq\mathcal V$, we have that $\mathcal G_1$ and $\mathcal G_2$ are abelian group varieties. To prove the equality~\eqref{the same Gr-part}, it suffices to verify that $\exp(\mathcal G_1)=\exp(\mathcal G_2)$. This claim is verified by the same arguments as the analogous claim in the proof of Theorem~1.2 in~\cite{Vernikov-08b}.

\smallskip

\emph{The equality}~\eqref{the same C-part}. Here and below we need the following easy remark. It is evident that if $m\ge3$ then the identity~\eqref{xxy=0} fails in the variety $\Nil(\mathcal C_m)=\var\{x^m=0,\,xy=yx\}$. This means that each part of the equality~\eqref{the same C-part} coincides with the variety $\mathcal C_m$ for some $0\le m\le2$. Then we may complete the proof of equality~\eqref{the same C-part} by the same arguments as in the proof of the equality~(4.2) in~\cite{Vernikov-08b}.

\smallskip

\emph{The equality}~\eqref{the same Nil-part}. The varieties $\mathcal G$, $\mathcal C_2$ and $\mathcal N$ satisfy the identity $x^2y=\,\overline{\overline x}\,^2y$. By Lemma~\ref{identity for nil} the variety $\Nil(\mathcal V)$ satisfies the identity~\eqref{xxy=0}. Fig.~\ref{L(I)} shows that it suffices to check the following two claims: first, the varieties $\mathcal{(Z\vee Y)\wedge V}$ and $\mathcal{(Z\wedge V)\vee Y}$ have the same degree, and second, the variety $\Nil\bigl(\mathcal{(Z\vee Y)\wedge V}\bigr)$ satisfies the identity
\begin{equation}
\label{xx=0}
x^2=0
\end{equation}
if and only if the variety $\Nil\bigl(\mathcal{(Z\wedge V)\vee Y)}\bigr)$ satisfies this identity. The former claim may be verified by the same way as in the proof of the equality~(4.3) in~\cite{Vernikov-08b} with references to Proposition~\ref{degree}, Corollary~\ref{degree of join with nil} and Corollary~\ref{degree of join with cr} of the present article rather than Proposition~2.11, Lemma~2.13 and Lemma~2.12 of~\cite{Vernikov-08b} respectively. 

It remains to verify that the variety $\Nil\bigl(\mathcal{(Z\vee Y)\wedge V}\bigr)$ satisfies the identity~\eqref{xx=0} whenever $\Nil\bigl(\mathcal{(Z\wedge V)\vee Y}\bigr)$ does so (because the opposite claim is evident). Suppose that the identity~\eqref{xx=0} holds in $\Nil\bigl(\mathcal{(Z\wedge V)\vee Y}\bigr)$. Corollary~\ref{str permut decomposition} imply that the variety $\mathcal{(Z\wedge V)\vee Y}$ is the join of some group variety, the variety $\mathcal C_m$ for some $m\ge0$ and the variety $\Nil\bigl(\mathcal{(Z\wedge V)\vee Y}\bigr)$. Here $m\le2$ because $\mathcal{(Z\wedge V)\vee Y\subseteq V}$. This implies that the variety $\mathcal{(Z\wedge V)\vee Y}$ satisfies the identity $x^2=\,\overline{\overline{x^2}}$. In particular, this identity holds in both the varieties $\mathcal Y$ and $\mathcal{Z\wedge V}$. Therefore, there is a sequence of words $u_0,u_1,\dots,u_k$ such that $u_0\equiv x^2$, $u_k\equiv\,\overline{\overline{x^2}}$ and, for each $i=0,1,\dots,k-1$, the identity $u_i=u_{i+1}$ holds in one of the varieties $\mathcal Z$ or $\mathcal V$. We may assume that $u_i\not\equiv u_{i+1}$ for each $i=0,1,\dots,k-1$. Arguments from the proof of the equality~(4.3) in~\cite{Vernikov-08b} show that it suffices to check that the identity~\eqref{xx=0} holds in one of the varieties $\Nil(\mathcal Z)$ or $\Nil(\mathcal V)$. This fact follows from Lemma~\ref{splitting} whenever $u_1$ is a semigroup word and from Lemma~\ref{identity for nil} otherwise.

We complete the proof of Theorem~\ref{upper-modular}.\qed

\medskip

\begin{corollary}
\label{upper-modular str permut in SEM and EPI}
A periodic strongly permutative semigroup variety $\mathcal V$ is an upper-modular element of the lattice $\mathbf{SEM}$ if and only if $\mathcal V$ is an upper-modular element of the lattice $\mathbf{EPI}$.
\end{corollary}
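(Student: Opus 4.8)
The plan is to deduce the Corollary by comparing the description of strongly permutative upper-modular elements of $\mathbf{EPI}$ given by Theorem~\ref{upper-modular} with the description of strongly permutative upper-modular elements of $\mathbf{SEM}$ established in \cite{Vernikov-08b,Vernikov-08a}. As a preliminary remark (cf.\ Subsection~\ref{introduction epigroups}), a periodic epigroup variety and the periodic semigroup variety with the same identities are literally one and the same object; in particular the property of being strongly permutative is the same whether $\mathcal V$ is viewed as a semigroup or as an epigroup variety, and the subvariety lattice $L(\mathcal V)$ does not depend on the ambient lattice. Hence the only thing that can differ between $\mathbf{SEM}$ and $\mathbf{EPI}$ is the behaviour of $\mathcal V$ under joins and meets with non-periodic varieties, which is exactly what the two classification theorems control.

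First I would prove the implication ``upper-modular in $\mathbf{SEM}$ $\Rightarrow$ upper-modular in $\mathbf{EPI}$''. If the periodic strongly permutative variety $\mathcal V$ is an upper-modular element of $\mathbf{SEM}$, then by the classification of upper-modular elements of $\mathbf{SEM}$ in the strongly permutative case (\cite[Theorem~1.2]{Vernikov-08b} together with the extension to strongly permutative varieties noted in \cite{Vernikov-08a}), the variety $\mathcal V$ has one of the two forms: either $\mathcal{V=M\vee N}$ with $\mathcal M\in\{\mathcal T,\mathcal{SL}\}$ and $\mathcal N$ a commutative nilvariety satisfying~\eqref{xxy=xyy}, or $\mathcal{V=G\vee C}_m\vee\mathcal N$ with $\mathcal G$ a periodic Abelian group variety, $0\le m\le2$, and $\mathcal N$ a commutative nilvariety satisfying~\eqref{xxy=0}. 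In either case $\mathcal V$ satisfies the hypothesis of clause~(i) or~(ii) of Theorem~\ref{upper-modular} (a periodic Abelian group variety is in particular an Abelian group variety), and so the sufficiency part of Theorem~\ref{upper-modular} — Proposition~\ref{umod suf} together with Corollary~\ref{join with SL or ZM or SL+ZM} in case~(i), and the direct argument of Section~\ref{upper-modular proof} in case~(ii) — shows that $\mathcal V$ is an upper-modular element of $\mathbf{EPI}$.

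For the converse, suppose $\mathcal V$ is a periodic strongly permutative upper-modular element of $\mathbf{EPI}$. The necessity part of Theorem~\ref{upper-modular} then forces $\mathcal V$ into one of the forms~(i) or~(ii). Since $\mathcal V$ is periodic, the Abelian group variety $\mathcal G$ occurring in~(ii) is a subvariety of $\mathcal V$, hence periodic. Consequently $\mathcal V$ falls under the classification of \cite{Vernikov-08b,Vernikov-08a}, and therefore $\mathcal V$ is an upper-modular element of $\mathbf{SEM}$. Combining the two implications yields the Corollary.

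The main (and essentially the only) point requiring care is the exact matching of the two classifications: one must check that the periodic strongly permutative epigroup varieties satisfying clause~(i) or~(ii) of Theorem~\ref{upper-modular} are precisely the strongly permutative upper-modular elements of $\mathbf{SEM}$. I expect this to be routine bookkeeping with \cite[Theorems~1.1 and~1.2]{Vernikov-08b} and the remark in \cite{Vernikov-08a}; the key observations are that ``periodic'' is exactly the extra hypothesis turning the Abelian group parameter of Theorem~\ref{upper-modular} into the periodic Abelian group parameter of the semigroup classification, and that every strongly permutative variety is proper, so that \cite[Theorem~1.1]{Vernikov-08b} leaves no non-periodic candidate.
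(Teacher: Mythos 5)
Your proposal is correct and takes essentially the same route as the paper: both directions are proved by matching the classification of Theorem~\ref{upper-modular} against the semigroup classification of upper-modular elements of $\mathbf{SEM}$ from~\cite{Vernikov-08b}, with the periodicity of the Abelian group factor being the only point needing adjustment. The sole cosmetic difference is that for the direction from $\mathbf{SEM}$ to $\mathbf{EPI}$ you invoke the strongly permutative extension noted in~\cite{Vernikov-08a} directly, whereas the paper re-derives that step from the decomposition $\mathcal{V=G\vee C}_m\vee\mathcal N$ (via~\cite{Head-68} and~\cite{Volkov-89}) together with \cite[Theorems~1.1 and~1.2]{Vernikov-08b}; this does not affect correctness.
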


\begin{proof}
\emph{Necessity.} Let $\mathcal V$ be a periodic strongly permutative semigroup variety and $\mathcal V$ is an upper-modular element of \textbf{SEM}. It follows from results of~\cite{Head-68} and the proof of Proposition~1 in~\cite{Volkov-89} that $\mathcal{V=G\vee C}_m\vee\mathcal N$ for some Abelian periodic group variety $\mathcal V$, some $m\ge0$ and some nilvariety $\mathcal N$. By~\cite[Theorem~1.1]{Vernikov-08b}, if $\mathcal X$ is a proper semigroup variety that is an upper-modular element of \textbf{SEM} and $\mathcal Y$ is a nilsubvariety of $\mathcal X$ then $\mathcal Y$ is commutative. Thus $\mathcal N$ is commutative. This implies that the variety $\mathcal V$ is commutative too. Now we may apply~\cite[Theorem~1.2]{Vernikov-08b} and conclude that $\mathcal V$ satisfies one of the claims~(i) or~(ii) of Theorem~\ref{upper-modular}. Therefore $\mathcal V$ is an upper-modular variety.

\smallskip

\emph{Sufficiency.} Let $\mathcal V$ be a periodic strongly permutative semigroup variety and $\mathcal V$ is an upper-modular variety. By Theorem~\ref{upper-modular}, either $\mathcal V$ satisfies the claim~(i) of this theorem or $\mathcal V$ satisfies the claim~(ii) of this theorem and the variety $\mathcal G$ from this claim is periodic. In both these cases $\mathcal V$ is an upper-modular element of \textbf{SEM} by~\cite[Theorem~1.2]{Vernikov-08b}.
\end{proof}

Theorem~\ref{upper-modular} readily implies the following

\begin{corollary}
\label{umod has distr lat}
If a strongly permutative epigroup variety $\mathcal V$ is an upper-modular element of the lattice $\mathbf{EPI}$ then the lattice $L(\mathcal V)$ is distributive.{\sloppy

}
\end{corollary}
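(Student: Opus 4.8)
The plan is to run $\mathcal V$ through Theorem~\ref{upper-modular} and, in each of the two cases it produces, to realise $L(\mathcal V)$ as an interval of the subvariety lattice of a suitable ``master'' variety $\mathcal U\supseteq\mathcal V$ whose subvariety lattice is already known, or can be computed, to be distributive. Since every interval of a distributive lattice is distributive, this will suffice.

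Suppose first that $\mathcal V$ falls under item~(i) of Theorem~\ref{upper-modular}, so that $\mathcal{V=M\vee N}$ with $\mathcal M\in\{\mathcal T,\mathcal{SL}\}$ and $\mathcal N$ a commutative nilvariety satisfying~\eqref{xxy=xyy}. As recorded in the proof of Proposition~\ref{umod suf}, such an $\mathcal N$ also satisfies $x^2yz=0$, hence $\mathcal{N\subseteq I}$ and $\mathcal{V\subseteq SL\vee I}$; it therefore suffices to prove that $L(\mathcal{SL\vee I})$ is distributive. The lattice $L(\mathcal I)$ has already been determined in the proof of Proposition~\ref{umod suf} (see Fig.~\ref{L(I)}): a subvariety $\mathcal X$ of $\mathcal I$ is uniquely determined by the pair formed by $\deg(\mathcal X)$ and the least of the four varieties $\mathcal L$, $\mathcal K$, $\mathcal J$, $\mathcal I$ containing $\mathcal X$. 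Using Corollaries~\ref{degree of meet} and~\ref{degree of join with nil} (all relevant varieties being nilvarieties) one checks that this correspondence is a lattice embedding of $L(\mathcal I)$ into the direct product of the chain of natural numbers with a greatest element adjoined and the four-element chain $\mathcal{L\subset K\subset J\subset I}$; both factors are distributive, hence so is $L(\mathcal I)$. Finally, $\mathcal{SL}$ is a neutral element of $\mathbf{EPI}$ (Proposition~\ref{SL is neutral}), hence a neutral element, and an atom, of the sublattice $L(\mathcal{SL\vee I})$; consequently $\mathcal W\mapsto(\mathcal{W\wedge SL},\Nil\mathcal W)$ embeds $L(\mathcal{SL\vee I})$ into the distributive lattice $\{\mathcal T,\mathcal{SL}\}\times L(\mathcal I)$.

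Suppose now that $\mathcal V$ falls under item~(ii), so that $\mathcal{V=G\vee C}_m\vee\mathcal N$ with $\mathcal G$ an Abelian group variety, $0\le m\le2$, and $\mathcal N$ a commutative variety satisfying~\eqref{xxy=0}. Then $\mathcal N$ is a nilvariety (the identity~\eqref{xxy=0} forces $x^3=0$) satisfying $x^2y=xyx=yx^2=0$, so $\mathcal{N\subseteq Q}$; together with $\mathcal C_m\subseteq\mathcal C_2$ and $\mathcal{G\subseteq AG}$ this gives $\mathcal{V\subseteq AG\vee C}_2\vee\mathcal Q$. By Proposition~\ref{direct product}, $L(\mathcal{AG\vee C}_2\vee\mathcal Q)\cong L(\mathcal{AG})\times L(\mathcal C_2\vee\mathcal Q)$, so it remains to see that both factors are distributive. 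The first is the classical distributive lattice of varieties of Abelian groups. For the second I would give an explicit description of $L(\mathcal C_2\vee\mathcal Q)$ in the manner of Fig.~\ref{L(I)}: by Lemma~\ref{within AG+C_2+Q} every member of it is $\mathcal C_k\vee\mathcal N'$ with $k\le2$ and $\mathcal{N'\subseteq Q}$, while Lemma~\ref{what =0 in Q} severely restricts the identities available inside $\mathcal Q$, so that a subvariety is pinned down by a ``completely regular'' parameter, a degree, and its $0$-reduced identities; one then embeds $L(\mathcal C_2\vee\mathcal Q)$ into a product of a short chain and a distributive lattice of nilvarieties.

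The routine content is the verification that the maps above genuinely preserve joins and meets, which rests on Corollaries~\ref{degree of meet}--\ref{degree of join with cr}, the codistributivity of $\mathcal{SL}$ (Proposition~\ref{SL is neutral}), and the behaviour of $\Nil$ under joins inside these varieties. The main obstacle is the detailed analysis of $L(\mathcal C_2\vee\mathcal Q)$ in case~(ii), since $\mathcal Q$ is neither commutative nor of finite degree and its subvariety lattice must be disentangled by hand before it is visibly a sublattice of a product of distributive lattices. Should this prove awkward, one can exploit the fact that $\mathcal N$ is commutative in case~(ii): then $\mathcal N\subseteq\mathcal K$, where $\mathcal K=\var\{x^2y=0,\,xy=yx\}$, so $\mathcal V\subseteq\mathcal{AG\vee C}_2\vee\mathcal K$, and the nil part now ranges only over the distributive lattice $L(\mathcal K)$ occupying the left part of Fig.~\ref{L(I)}, which makes the relevant interval of $L(\mathcal C_2\vee\mathcal Q)$ considerably easier to treat directly.
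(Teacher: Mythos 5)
Your reduction via Theorem~\ref{upper-modular} and the use of Proposition~\ref{direct product} are fine, but the main route you propose for case~(ii) breaks down at the crucial point: the lattice $L(\mathcal C_2\vee\mathcal Q)$ is \emph{not} distributive, so it cannot be embedded into a product of a short chain and a distributive lattice, and Lemma~\ref{what =0 in Q} does not pin a subvariety of $\mathcal Q$ down by a completely regular parameter, a degree and its $0$-reduced identities. What that lemma leaves completely untouched are the permutative identities between linear words, and these already destroy distributivity inside $L(\mathcal Q)$. Concretely, let $\mathcal A_1$, $\mathcal A_2$, $\mathcal A_3$ be the subvarieties of $\mathcal Q_4=\var\{x^2y=xyx=yx^2=x_1x_2x_3x_4=0\}$ obtained by adding the identities $xyz=yxz$, $xyz=xzy$ and $xyz=zyx$ respectively. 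Within $\mathcal Q_4$ every substitution into such an identity other than an injective renaming of letters produces words equal to~$0$, so the only identities each $\mathcal A_i$ satisfies beyond those of $\mathcal Q_4$ are the letter-renamings of its defining permutational identity. Since the three transpositions of $\{1,2,3\}$ pairwise generate the whole symmetric group and pairwise intersect trivially, one gets $\mathcal A_i\vee\mathcal A_j=\mathcal Q_4$ and $\mathcal A_i\wedge\mathcal A_j=\var\{x^2y=xyx=yx^2=x_1x_2x_3x_4=0,\ xyz=yxz=xzy\}$ (the fully permutable variety) for all $i\ne j$; these five varieties form a copy of the diamond $M_3$ inside $L(\mathcal Q)\subseteq L(\mathcal C_2\vee\mathcal Q)$. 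Note also that $\mathcal A_1$ and $\mathcal A_2$ have the same degree, the same group part and the same $0$-reduced identities, yet are distinct, which refutes the ``pinned down'' claim directly. So the obstacle you flag as merely awkward is in fact insurmountable as stated.

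The repair is exactly your fallback, and it is essentially what the paper does: in case~(ii) the nil part $\mathcal N$ is commutative and satisfies~\eqref{xxy=0}, so after splitting off the group factor by Proposition~\ref{direct product} the second factor to control is $L(\mathcal C_2\vee\mathcal N)$, a \emph{commutative} periodic (hence semigroup) variety, and the paper gets its distributivity by quoting Volkov's description of commutative semigroup varieties with distributive subvariety lattices~\cite{Volkov-91}. The paper also disposes of case~(i) in one line by the same citation, since there $\mathcal V$ is periodic and commutative, so no analysis of $L(\mathcal{SL}\vee\mathcal I)$ is needed. If you want your case~(i) to be self-contained, be aware that the map $\mathcal W\mapsto(\mathcal{W\wedge SL},\Nil\mathcal W)$ is a lattice embedding only after you prove two non-automatic facts: that every subvariety of $\mathcal{SL}\vee\mathcal I$ is the join of its two components, and that $\Nil$ commutes with joins inside $\mathcal{SL}\vee\mathcal I$; both are true but require arguments of the same kind as Lemma~\ref{within AG+C_2+Q} and Proposition~\ref{direct product}, not just the corollaries on degrees you list.
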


\begin{proof}
If $\mathcal V$ satisfies the claim~(i) of Theorem~\ref{upper-modular} then it is periodic, whence it may be considered as a semigroup variety. In this case, it suffices to take into account a description of commutative semigroup varieties with distributive subvariety lattice obtained in~\cite{Volkov-91}. Suppose now that $\mathcal V$ satisfies the claim~(ii) of Theorem~\ref{upper-modular}. Then $\mathcal{V\subseteq AG\vee C}_2\vee\mathcal N$ where $\mathcal N$ satisfies the commutative law and the identity~\eqref{xxy=0}. In view of Proposition~\ref{direct product}, $L(\mathcal V)$ is embeddable into the direct product of the lattices $L(\mathcal{AG})$ and $L(\mathcal C_2\vee\mathcal N)$. The former lattice is generally known to be distributive. Finally, the variety $\mathcal C_2\vee\mathcal N$ is periodic, whence it may be considered as a semigroup variety. To complete the proof, it remains to note that the lattice $L(\mathcal C_2\vee\mathcal N)$ is distributive by the mentioned result of~\cite{Volkov-91}.
\end{proof}

\section{Codistributive varieties}
\label{codistributive proof}

Here we verify Theorem~\ref{codistributive}.

\smallskip

\emph{Necessity.} Here and in Section~\ref{neutral and costandard proof} we need the following statement that may be verified by repeating literally arguments from the first paragraph of the proof of Theorem~1.1 in~\cite{Vernikov-11}.

\begin{lemma}
\label{codistr without P and P*}
Let an epigroup variety $\mathcal V$ be a codistributive element of the lattice $\mathbf{EPI}$. If $\mathcal V$ does not contain the varieties $\mathcal P$ and $\overleftarrow{\mathcal P}$ then $\mathcal V$ is a variety of degree $\le2$.\qed
\end{lemma}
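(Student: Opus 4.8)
The plan is to argue by contradiction: assume $\deg(\mathcal V)>2$ and derive an absurdity from codistributivity of $\mathcal V$ together with the known fact that $\deg\bigl(\mathcal P\vee\overleftarrow{\mathcal P}\bigr)=3$. By the equivalence of conditions~1) and~2) in Proposition~\ref{degree}, $\deg(\mathcal V)>2$ means precisely that $\mathcal V$ contains the nilvariety $\mathcal E=\var\{x^2=xyz=0,\ xy=yx\}$. Two facts about $\mathcal P$ and $\overleftarrow{\mathcal P}$ would be used. First, $\deg(\mathcal P)=\deg\bigl(\overleftarrow{\mathcal P}\bigr)=2$ while $\deg\bigl(\mathcal P\vee\overleftarrow{\mathcal P}\bigr)=3$ (this follows from Lemma~\ref{word problem}(iii), its dual and Proposition~\ref{degree}, as already remarked after Corollary~\ref{degree of join with nil}); in particular $\mathcal E\subseteq\mathcal P\vee\overleftarrow{\mathcal P}$ by Proposition~\ref{degree} again. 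Second --- and this is the technical heart --- the join of an arbitrary proper subvariety of $\mathcal P$ with an arbitrary proper subvariety of $\overleftarrow{\mathcal P}$ is a variety of degree $\le2$; in other words, discarding anything from either factor destroys the degree-$3$ behaviour of $\mathcal P\vee\overleftarrow{\mathcal P}$. Since $\mathcal P$ and $\overleftarrow{\mathcal P}$ are periodic --- they satisfy $x^2=x^3$, an instance of their defining identities obtained by substituting $x$ for $y$ --- this second statement is a purely semigroup-theoretic assertion about subvarieties of $\mathcal P$ and $\overleftarrow{\mathcal P}$, and I would import it verbatim from the first paragraph of the proof of Theorem~1.1 in~\cite{Vernikov-11}: one uses Lemma~\ref{word problem}(iii) and its mirror to see that a proper subvariety $\mathcal A\subsetneq\mathcal P$ satisfies a nontrivial identity $u=v$ that $\mathcal P$ does not (one with $c(u)\ne c(v)$, or with $c(u)=c(v)$ but the last letters of $u$ and $v$ treated differently), and then, combining this with the symmetric information about the proper subvariety of $\overleftarrow{\mathcal P}$ and feeding it into condition~3) of Proposition~\ref{degree}, one exhibits a common instance of~\eqref{equ-fin-deg} with $n=2$ --- equivalently, a nontrivial identity failing in $\mathcal E$ --- satisfied by the join.

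With these facts in hand the lemma follows quickly. Applying the codistributive law to $\mathcal V$ with the pair $\mathcal P$, $\overleftarrow{\mathcal P}$ gives
$$\mathcal V\wedge\bigl(\mathcal P\vee\overleftarrow{\mathcal P}\bigr)=(\mathcal V\wedge\mathcal P)\vee\bigl(\mathcal V\wedge\overleftarrow{\mathcal P}\bigr).$$
The left-hand side contains $\mathcal E$, because $\mathcal E\subseteq\mathcal V$ and $\mathcal E\subseteq\mathcal P\vee\overleftarrow{\mathcal P}$; hence it has degree $\ge3$ by Proposition~\ref{degree}. On the other hand, since $\mathcal V$ does not contain $\mathcal P$, the variety $\mathcal V\wedge\mathcal P$ is a proper subvariety of $\mathcal P$, and similarly $\mathcal V\wedge\overleftarrow{\mathcal P}$ is a proper subvariety of $\overleftarrow{\mathcal P}$; so the right-hand side has degree $\le2$ by the second fact above. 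This contradiction shows that $\deg(\mathcal V)\le2$.

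The main obstacle is exactly the ``technical heart'' isolated above: verifying that the join of a proper subvariety of $\mathcal P$ with a proper subvariety of $\overleftarrow{\mathcal P}$ already drops to degree $\le2$. This rests on the full strength of the word-problem description of $\mathcal P$ in Lemma~\ref{word problem}(iii) (and its dual) and on a somewhat tedious case analysis --- content preserved or not, last letter simple or multiple --- which is why I would lift it from~\cite{Vernikov-11} rather than reproduce it; the only adaptation needed is the observation, made above, that $\mathcal P$ and $\overleftarrow{\mathcal P}$ are periodic, so no genuinely ``epigroup'' phenomena intervene and the semigroup argument transfers without change.
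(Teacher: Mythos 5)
Your proposal is correct and takes essentially the same route as the paper, whose own ``proof'' of this lemma is just a pointer to the first paragraph of the proof of Theorem~1.1 in \cite{Vernikov-11}: apply codistributivity of $\mathcal V$ to the pair $\mathcal P$, $\overleftarrow{\mathcal P}$, note that $\var\{x^2=xyz=0,\,xy=yx\}\subseteq\mathcal V\wedge\bigl(\mathcal P\vee\overleftarrow{\mathcal P}\bigr)$ when $\deg(\mathcal V)>2$, and contrast this with the fact that $(\mathcal V\wedge\mathcal P)\vee\bigl(\mathcal V\wedge\overleftarrow{\mathcal P}\bigr)$ has degree $\le2$. The ``technical heart'' you import is indeed sound (every proper subvariety of $\mathcal P$, and dually of $\overleftarrow{\mathcal P}$, is contained in $\mathcal{SL\vee ZM}$, which has degree~$2$), so the contradiction you derive is valid.
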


Let now $\mathcal V$ be a strongly permutative codistributive variety. It is evident that $\mathcal V$ is upper-modular. Theorem~\ref{upper-modular} implies that the variety $\mathcal V$ is commutative. Hence $\mathcal P,\overleftarrow{\mathcal P}\nsubseteq\mathcal V$. Lemma~\ref{codistr without P and P*} implies that $\mathcal V$ is a variety of degree $\le2$. It remains to refer to Corollary~\ref{str permut decomposition} and the observation that the variety $\mathcal C_m$ has a degree $>2$ whenever $m\ge2$.

\smallskip

\emph{Sufficiency.} In view of Corollary~\ref{join with SL or ZM or SL+ZM}, it suffices to verify that an abelian group variety $\mathcal G$ is codistributive. Let $\mathcal Y$ and $\mathcal Z$ be arbitrary epigroup varieties. Every epigroup variety is either periodic or contains the variety $\mathcal{AG}$. Suppose that at least one of the varieties $\mathcal Y$ and $\mathcal Z$, say $\mathcal Y$, contains $\mathcal{AG}$. Then $\mathcal{Y\vee Z\supseteq Y\supseteq AG\supseteq G}$ and therefore,
$$\mathcal{G\wedge(Y\vee Z)=G=G\vee(G\wedge Z)=(G\wedge Y)\vee(G\wedge Z)}\ldotp$$
Hence we may assume that the varieties $\mathcal Y$ and $\mathcal Z$ are periodic. Now we may complete the proof by the same arguments as in the proof of the implication c)\,$\longrightarrow$\,a) of Theorem~1.2 in~\cite{Vernikov-11}.\qed

\medskip

Comparing Theorem~1.2 in~\cite{Vernikov-11} with Theorem~\ref{codistributive}, we obtain the following

\begin{corollary}
\label{codistributive str permut in SEM and EPI}
A periodic strongly permutative semigroup variety $\mathcal V$ is a codistributive element of the lattice $\mathbf{SEM}$ if and only if $\mathcal V$ is a codistributive element of the lattice $\mathbf{EPI}$.\qed
\end{corollary}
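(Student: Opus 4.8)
The plan is to derive this corollary not by a direct lattice-theoretic manipulation but by juxtaposing the two classification results that are now available: Theorem~1.2 of~\cite{Vernikov-11} on the semigroup side and Theorem~\ref{codistributive} above on the epigroup side. First it is worth stressing why there is anything to prove: codistributivity of $\mathcal V$ in $\mathbf{SEM}$ asks the identity $x\wedge(y\vee z)=(x\wedge y)\vee(x\wedge z)$ to hold for all \emph{semigroup} varieties $y,z$, while codistributivity in $\mathbf{EPI}$ asks it for all \emph{epigroup} varieties $y,z$; since $\mathbf{SEM}$ and $\mathbf{EPI}$ are genuinely different lattices (their only common part being the sublattice of periodic varieties), the two conditions are a priori incomparable, and the corollary asserts that for periodic strongly permutative $\mathcal V$ they coincide.

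First I would recall Theorem~1.2 of~\cite{Vernikov-11}, adapted to the present terminology: a strongly permutative semigroup variety $\mathcal V$ is a codistributive element of $\mathbf{SEM}$ if and only if $\mathcal{V=G\vee X}$ where $\mathcal G$ is an Abelian group variety and $\mathcal X$ is one of $\mathcal T$, $\mathcal{SL}$, $\mathcal{ZM}$ or $\mathcal{SL\vee ZM}$. When $\mathcal V$ is periodic the group variety here is periodic as well, since one may take $\mathcal G=\Gr(\mathcal V)$, a subvariety of the periodic variety $\mathcal V$; conversely every variety of the displayed shape with $\mathcal G$ periodic is itself periodic, because each of $\mathcal T$, $\mathcal{SL}$, $\mathcal{ZM}$, $\mathcal{SL\vee ZM}$ is. Thus, for a periodic strongly permutative semigroup variety $\mathcal V$, codistributivity in $\mathbf{SEM}$ is equivalent to $\mathcal V$ having the form $\mathcal{G\vee X}$ with $\mathcal G$ a periodic Abelian group variety and $\mathcal X\in\{\mathcal T,\mathcal{SL},\mathcal{ZM},\mathcal{SL\vee ZM}\}$.

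Next I would invoke Theorem~\ref{codistributive}, which characterises codistributivity in $\mathbf{EPI}$ for strongly permutative epigroup varieties by exactly the same form $\mathcal{V=G\vee X}$ with $\mathcal G$ an Abelian group variety and $\mathcal X$ among the same four varieties. Using that a periodic semigroup variety is the same thing as a periodic epigroup variety (Subsection~\ref{introduction epigroups}), together with the periodicity remark of the previous paragraph, codistributivity of a periodic strongly permutative $\mathcal V$ in $\mathbf{EPI}$ is equivalent to precisely the same condition: $\mathcal{V=G\vee X}$ with $\mathcal G$ a periodic Abelian group variety and $\mathcal X\in\{\mathcal T,\mathcal{SL},\mathcal{ZM},\mathcal{SL\vee ZM}\}$. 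Comparing the two equivalences gives the corollary.

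I do not anticipate a genuine obstacle; the only points that need a line of care are checking that Theorem~1.2 of~\cite{Vernikov-11} is stated for arbitrary (not merely periodic) strongly permutative semigroup varieties, so that it applies verbatim in the periodic case, and confirming that the identification of periodic semigroup varieties with periodic epigroup varieties is compatible with the decompositions $\mathcal{V=G\vee X}$ — which is immediate, since all of $\mathcal T$, $\mathcal{SL}$, $\mathcal{ZM}$, $\mathcal{SL\vee ZM}$ and every periodic group variety are simultaneously semigroup and epigroup varieties, and the join of periodic varieties is periodic, hence computed identically in $\mathbf{SEM}$ and in $\mathbf{EPI}$.
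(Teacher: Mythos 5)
Your proposal is correct and coincides with the paper's own argument: the corollary is obtained there precisely by comparing Theorem~1.2 of~\cite{Vernikov-11} (the classification of codistributive strongly permutative varieties in $\mathbf{SEM}$) with Theorem~\ref{codistributive}, exactly as you do. The extra remarks you add about periodicity of the group part and the identification of periodic semigroup and epigroup varieties are the implicit bookkeeping behind the paper's one-line derivation.
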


\section{Modular varieties}
\label{modular proof}

Here we are going to prove Theorems~\ref{modular nec}--\ref{modular commut}. We need several auxiliary facts.

\begin{proposition}
\label{modular is periodic}
If an epigroup variety $\mathcal V$ is a modular element of the lattice $\mathbf{EPI}$ then $\mathcal V$ is periodic.
\end{proposition}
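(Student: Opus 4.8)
The plan is to argue by contradiction: suppose $\mathcal V$ is modular but not periodic. Since every epigroup variety is either periodic or contains $\mathcal{AG}$, non-periodicity gives $\mathcal{AG}\subseteq\mathcal V$. The idea is to exploit modularity with a carefully chosen pair of varieties $\mathcal Y\subseteq\mathcal Z$ inside $\mathcal V$ together with some clever "small" variety, so that the modular law $(\mathcal W\vee\mathcal Y)\wedge\mathcal Z=(\mathcal W\wedge\mathcal Z)\vee\mathcal Y$ forces a contradiction. The natural candidates for $\mathcal Y$ and $\mathcal Z$ are built from $\mathcal{AG}$ (or a subvariety of it) and $\mathcal{ZM}$: for instance, take $\mathcal Y=\mathcal{AG}$, $\mathcal Z=\mathcal{AG}\vee\mathcal{ZM}$, and choose as the third variety some nilvariety or combinatorial variety $\mathcal W$ that interacts badly with the group part.

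First I would recall the known semigroup analog (Proposition in \cite{Vernikov-07a}, reproved in \cite{Shaprynskii-12a}) that a modular in \textbf{SEM} variety is periodic, and check which parts of that argument survive the passage to epigroups. The semigroup proof typically uses that $\mathcal{AG}\vee\mathcal{ZM}$ sits in a configuration where modularity with a suitable group variety $\mathcal G$ would separate "$\mathcal G$-part" from "$\mathcal{ZM}$-part" in an impossible way. Concretely, one picks a proper subvariety $\mathcal G_p=\var\{x^{p+1}=x,\,xy=yx\}$ of $\mathcal{AG}$ for a prime $p$, forms $\mathcal Y=\mathcal G_p$, $\mathcal Z=\mathcal{AG}$, and uses a third variety $\mathcal W$ consisting of combinatorial epigroups (e.g.\ $\mathcal C_2$ or $\mathcal{SL}\vee\mathcal{ZM}$ or a $0$-reduced nilvariety present in $\mathcal V$) to force $(\mathcal W\vee\mathcal G_p)\wedge\mathcal{AG}=(\mathcal W\wedge\mathcal{AG})\vee\mathcal G_p=\mathcal{T}\vee\mathcal G_p=\mathcal G_p$, while a direct computation of the left-hand side produces a group element of exponent not dividing $p$ — a contradiction. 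The key technical input is an identity-theoretic description of which identities hold in joins $\mathcal W\vee\mathcal G_p$, for which I would lean on Lemma~\ref{word problem}, Lemmas~\ref{identity for cr}--\ref{identity for nil}, and the degree corollaries (Corollaries~\ref{degree of meet}--\ref{degree of join with cr}); the pseudoinversion operation is handled via the identities $\overline{\overline x}=x$ on completely regular varieties and $\overline x=0$ on nilvarieties, so the unary operation does not add essential difficulty here.

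A second, possibly cleaner route is to use Corollary~\ref{join with SL or ZM or SL+ZM}: since $\mathcal{SL}$, $\mathcal{ZM}$, $\mathcal{SL\vee ZM}$ are neutral atoms, $\mathcal V$ is modular iff $\mathcal V\vee\mathcal{ZM}$ is. One could then assume $\mathcal{ZM}\subseteq\mathcal V$ from the outset, which makes the "bad configuration" easier to set up because $\mathcal{ZM}$ is already available inside $\mathcal V$ to play the role of $\mathcal W$ (or part of it). With $\mathcal{AG},\mathcal{ZM}\subseteq\mathcal V$, I would take $\mathcal Y$ a proper nontrivial subvariety of $\mathcal{AG}$, $\mathcal Z=\mathcal{AG}$, and $\mathcal W=\mathcal{ZM}$, so that modularity reads $(\mathcal{ZM}\vee\mathcal Y)\wedge\mathcal{AG}=(\mathcal{ZM}\wedge\mathcal{AG})\vee\mathcal Y=\mathcal Y$; then I would show the left-hand side strictly exceeds $\mathcal Y$ by producing an epigroup in $(\mathcal{ZM}\vee\mathcal Y)\wedge\mathcal{AG}$ that is not in $\mathcal Y$ — e.g.\ by verifying that some group identity distinguishing $\mathcal{AG}$ from $\mathcal Y$ fails in $\mathcal{ZM}\vee\mathcal Y$, using that in $\mathcal{ZM}\vee\mathcal Y$ the $0$-reduced words behave like in $\mathcal{ZM}$ while the linear words behave like in $\mathcal Y$.

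The main obstacle will be the explicit identity-basis bookkeeping for the join $\mathcal{ZM}\vee\mathcal Y$ (or $\mathcal W\vee\mathcal G_p$): one must argue precisely that a given group identity that fails in $\mathcal{AG}\setminus\mathcal Y$ also fails in $\mathcal{ZM}\vee\mathcal Y$, which requires knowing that the only identities of $\mathcal{ZM}\vee\mathcal Y$ are, roughly, the common consequences of the $0$-reduced identities of $\mathcal{ZM}$ and the (linear) identities of $\mathcal Y$. I expect to establish this by a deduction-sequence argument analogous to the distributivity proofs for $\mathcal{SL}$ and $\mathcal{ZM}$ in Section~\ref{SL and ZM are neutral}: any deduction of a "long" identity from identities of $\mathcal{ZM}$ and $\mathcal Y$ can be normalized so that either it stays among letters (handled by $\mathcal{ZM}$) or it respects content (handled by $\mathcal Y$ via Lemma~\ref{word problem}), so a genuinely group-theoretic identity cannot be deduced unless it already holds in $\mathcal Y$. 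Once that normalization is in hand, the contradiction with modularity is immediate, and the periodicity of $\mathcal V$ follows.
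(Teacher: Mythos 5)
There is a genuine gap, and it lies in how you invoke the hypothesis. In every configuration you propose, the element sitting in the ``modular position'' of the law $(\mathcal W\vee\mathcal Y)\wedge\mathcal Z=(\mathcal W\wedge\mathcal Z)\vee\mathcal Y$ is not $\mathcal V$ but your auxiliary variety $\mathcal W$ ($\mathcal{ZM}$, $\mathcal C_2$, $\mathcal{SL\vee ZM}$, or a $0$-reduced nilvariety), so the assumption that $\mathcal V$ is a modular element of $\mathbf{EPI}$ is never used. Worse, those auxiliary varieties really are modular elements of $\mathbf{EPI}$ (Propositions~\ref{SL is neutral} and~\ref{ZM is neutral}, Theorem~\ref{modular suf}), so no contradiction can possibly come out of such an instance: for example, $(\mathcal{ZM}\vee\mathcal G_p)\wedge\mathcal{AG}=\mathcal G_p$, because the identity $x^{p+1}y=xy$ holds both in $\mathcal{ZM}$ (both sides equal $0$) and in $\mathcal G_p$, hence in the join, and it forces every group in the meet to have exponent dividing $p$; the group ``of exponent not dividing $p$'' you hope to produce does not exist. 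If, on the other hand, you put $\mathcal V$ itself in the modular position, then your choices $\mathcal Y\le\mathcal Z$ with $\mathcal Z\subseteq\mathcal{AG}\subseteq\mathcal V$ make the law trivially true (both sides equal $\mathcal Z$). The missing idea is that to contradict modularity of $\mathcal V$ one must test it against varieties \emph{not} contained in $\mathcal V$, chosen so that non-periodicity of $\mathcal V$ collapses the relevant joins and meets.

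That is exactly what the paper does: since $\mathcal V$ is an epigroup variety, $x^n=x^nx^\omega$ holds in $\mathcal V$ for some $n$, and one takes $\mathcal N_1=\var\{x_1x_2\cdots x_{n+3}=0\}$ and $\mathcal N_2=\var\{x_1x_2\cdots x_{n+3}=0,\ x^{n+1}y=x^ny^2\}$, with $\mathcal N_2\subset\mathcal N_1$. Any nontrivial identity of $\mathcal V\vee\mathcal N_2$ other than $x^{n+1}y=x^ny^2$ has both sides of length at least $n+3$ (and $x^{n+1}y=x^ny^2$ is excluded because it implies periodicity, while $\mathcal V$ is assumed non-periodic), so $\mathcal V\vee\mathcal N_1=\mathcal V\vee\mathcal N_2$; on the other side, $\mathcal V\wedge\mathcal N_1$ is a nilvariety satisfying $x^n=x^nx^\omega$, hence $x^n=0$ by Lemma~\ref{splitting}(ii), hence $x^{n+1}y=0=x^ny^2$, so $\mathcal V\wedge\mathcal N_1=\mathcal V\wedge\mathcal N_2$. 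Since $\mathcal N_1\ne\mathcal N_2$, these five varieties form a pentagon $N_5$ witnessing that the modular law fails at $\mathcal V$, the desired contradiction. Note also that appealing to the semigroup analogue cannot replace this argument here: the proposition concerns precisely those varieties that are not yet known to be periodic, so they cannot be treated as semigroup varieties, and the paper argues directly in $\mathbf{EPI}$.
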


\begin{proof}
Let $\mathcal V$ be a modular epigroup variety. Suppose that $\mathcal V$ is non-periodic. Being an epigroup variety, $\mathcal V$ satisfies the identity $x^n=x^nx^\omega$ for some natural $n$. Consider varieties
$$\mathcal N_1=\var\{x_1x_2\dots x_{n+3}=0\}\text{ and }\mathcal N_2=\var\{x_1x_2\dots x_{n+3}=0,\,x^{n+1}y=x^ny^2\}\ldotp$$
To prove that $\mathcal V$ is non-modular, we are going to check that the varieties $\mathcal V$, $\mathcal N_1$, $\mathcal N_2$, $\mathcal{V\vee N}_1$ and $\mathcal{V\wedge N}_1$ form the 5-element non-modular sublattice $N_5$. Note that $\mathcal N_2\subseteq\mathcal N_1$. Whence, to achieve our aim, it suffices to verify the equalities $\mathcal{V\vee N}_1=\mathcal{V\vee N}_2$ and $\mathcal{V\wedge N}_1=\mathcal{V\wedge N}_2$.

The inclusion $\mathcal{V\vee N}_2\subseteq\mathcal{V\vee N}_1$ is evident. It is evident also that a non-trivial identity $u=v$ holds in $\mathcal N_2$ if and only if either $\ell(u),\ell(v)\ge n+3$ or $u=v$ coincides with the identity $x^{n+1}y=x^ny^2$. Let $u=v$ be a non-trivial identity that is satisfied by the variety $\mathcal{V\vee N}_2$. Substituting $y^2$ for $y$ in the identity $x^{n+1}y=x^ny^2$, we have $x^{n+1}y^2=x^ny^4$ that implies $x^{n+3}=x^{n+4}$. Therefore, a variety satisfying $x^{n+1}y=x^ny^2$ is periodic. Since the identity $u=v$ holds in a non-periodic variety $\mathcal V$, it differs from the identity $x^{n+1}y=x^ny^2$. Therefore, $\ell(u),\ell(v)\ge n+3$. This implies that $u=v$ holds in $\mathcal N_1$ and therefore, in $\mathcal{V\vee N}_1$. Thus, $\mathcal{V\vee N}_1\subseteq\mathcal{V\vee N}_2$. The equality $\mathcal{V\vee N}_1=\mathcal{V\vee N}_2$ is proved.

The inclusion $\mathcal{V\wedge N}_2\subseteq\mathcal{V\wedge N}_1$ is evident. The variety $\mathcal{V\wedge N}_1$ is a nilvariety and is contained in $\mathcal V$. Since $\mathcal V$ satisfies $x^n=x^nx^\omega$, Lemma~\ref{splitting}(ii) implies that $x^n=0$ holds in $\mathcal{V\wedge N}_1$. Therefore, $x^{n+1}y=0=x^ny^2$ in $\mathcal{V\wedge N}_1$. We see that $\mathcal{V\wedge N}_1\subseteq\mathcal N_2$, whence $\mathcal{V\wedge N}_1\subseteq\mathcal{V\wedge N}_2$. The equality $\mathcal{V\wedge N}_1=\mathcal{V\wedge N}_2$ is proved as well.
\end{proof}

We denote by \textbf{PER} the lattice of all periodic semigroup varieties.

\begin{lemma}
\label{non-substitutive}
Let $\mathcal V$ be a nilvariety that is a modular element of the lattice $\mathbf{EPI}$. If $\mathcal V$ satisfies a non-substitutive identity $u=v$ then it satisfies also the identity $u=0$.
\end{lemma}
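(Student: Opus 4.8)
The strategy is to reduce $u=v$ to an essentially combinatorial statement by routine nilvariety manipulations, and then to contradict the modular law by a pentagon-type argument placing $\mathcal V$ into a copy of $N_5$ in $\mathbf{EPI}$ built from nilvarieties associated with the words $u$ and $v$.

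\emph{Reductions.} If $u$ is not a semigroup word, then $\mathcal V$ satisfies $\overline x=0$ by Lemma~\ref{identity for nil}, so $\mathcal V$ satisfies $u=0$; if $v$ is not a semigroup word, then $v=0$, whence $u=v=0$ in $\mathcal V$. So we may assume $u$ and $v$ are semigroup words. If $c(u)\ne c(v)$, Lemma~\ref{splitting}(i) gives $u=0$ in $\mathcal V$; hence assume $c(u)=c(v)$, so that the hypothesis that $u=v$ is non-substitutive says exactly that no permutation of the letters carries $u$ to $v$. Finally, if $v$ contains a substitution instance of $u$ as a factor — in particular whenever $v\equiv\alpha u\beta$ with $\alpha\beta$ non-empty (the situation of Lemma~\ref{splitting}(ii)), or more generally when a renamed copy of $u$ occurs as a proper factor of $v$ — then, rewriting that occurrence by the corresponding instance of $v$ via $u=v$ and pulling the substitution back, one reaches an identity of the form $w=\gamma w\delta$ with $w$ a renamed copy of $u$ and $\gamma\delta$ non-empty; Lemma~\ref{splitting}(ii) then yields $w=0$, hence $u=0$, in $\mathcal V$ (using, where the occurring substitution genuinely expands a letter, that $\mathcal V$ is a nilvariety and, by Proposition~\ref{modular is periodic}, periodic, so already satisfies $x^k=0$ for some $k$). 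Symmetrically if $u$ contains an instance of $v$. So we may assume that neither of $u,v$ contains an instance of the other, and, towards a contradiction, that $\mathcal V$ does not satisfy $u=0$ (and hence not $v=0$).

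\emph{The pentagon.} I would now look for two nilvarieties $\mathcal N_2\subsetneq\mathcal N_1$, with $\mathcal N_2$ obtained from $\mathcal N_1$ by imposing the identity $u=0$, such that (b)~$\mathcal V\wedge\mathcal N_1=\mathcal V\wedge\mathcal N_2$ (which holds as soon as the identities of $\mathcal N_1$ together with $u=v$ force $u=0$) and (c)~$\mathcal V\vee\mathcal N_1=\mathcal V\vee\mathcal N_2$ (i.e.\ every identity holding in both $\mathcal V$ and $\mathcal N_2$ already holds in $\mathcal N_1$). With $\mathcal N_1,\mathcal N_2$ moreover chosen incomparable to $\mathcal V$ — this will follow from $\mathcal V$ not satisfying $u=0$ together with a suitable largeness of $\mathcal N_1$ — the five varieties $\mathcal V$, $\mathcal N_1$, $\mathcal N_2$, $\mathcal V\vee\mathcal N_1$, $\mathcal V\wedge\mathcal N_1$ then form a copy of $N_5$ in which $\mathcal V$ is the non-modular element, contradicting the hypothesis. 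This is the mechanism used in the proof of Proposition~\ref{modular is periodic}, with the periodicity-forcing identity there replaced by the demand that the word $u$ vanish.

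\emph{The main obstacle.} The delicate point is to choose $\mathcal N_1$ so that (b) and (c) hold simultaneously. The naive choice $\mathcal N_1=\var\{v=0\}$, $\mathcal N_2=\var\{u=0,v=0\}$ makes (b) trivial but fails (c): the very identity $u=v$ holds in $\mathcal V$ and in $\mathcal N_2$ but not in $\mathcal N_1$. Hence $\mathcal N_1$ must already satisfy (enough consequences of) $u=v$ while still not satisfying $u=0$, and $\mathcal N_2$ be obtained from it by adjoining $u=0$; keeping (b) alive under this extra demand forces one to tailor $\mathcal N_1$ to the precise way in which $u=v$ fails to be substitutive — $u$ and $v$ of different lengths, of the same length but with different occurrence multiplicities of some letter, or of the same multiplicities but with an essentially different arrangement of a repeated letter — and to verify (c) by analysing, via the word problem for $0$-reduced and substitutive varieties together with Lemma~\ref{splitting} and the standing assumption that neither of $u,v$ is an instance of the other, exactly which identities such a nilvariety $\mathcal V$ can satisfy. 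In the case of unequal lengths one gains extra traction by collapsing $c(u)=c(v)$ to a single letter, which turns $u=v$ into $x^m=x^{m'}$ with $m\ne m'$ and hence, in a nilvariety, into $x^{\min\{m,m'\}}=0$. I expect this case analysis, rather than any single conceptual step, to be the bulk of the work, and one should watch for borderline situations in which $\mathcal V$ is so small (such as the greatest nilsubvariety of $\var\{u=v\}$) that the would-be pentagon degenerates and $\mathcal V$ must be treated directly.
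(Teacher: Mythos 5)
There is a genuine gap. Your reductions (to semigroup words, to $c(u)=c(v)$, to neither word containing an instance of the other) are reasonable, and the pentagon mechanism you describe is indeed the right way to exploit failure of modularity; but the entire content of the lemma lies in actually exhibiting the pair $\mathcal N_2\subsetneq\mathcal N_1$ with $\mathcal{V\wedge N}_1=\mathcal{V\wedge N}_2$ and $\mathcal{V\vee N}_1=\mathcal{V\vee N}_2$, and this is exactly what you leave unresolved. You correctly observe that the naive choice $\mathcal N_1=\var\{v=0\}$, $\mathcal N_2=\var\{u=0,\,v=0\}$ fails the join condition, and then you only state that $\mathcal N_1$ must be ``tailored to the precise way in which $u=v$ fails to be substitutive'' and that you ``expect this case analysis to be the bulk of the work.'' That bulk is never carried out, and it cannot be waved through: controlling the join $\mathcal{V\vee N}_1$ requires knowing which identities an arbitrary nilvariety satisfying $u=v$ but not $u=0$ can have, which is a delicate syntactic analysis (this is precisely the hard part of the corresponding semigroup result). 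In addition, your intermediate reduction step (rewriting an occurrence of a renamed copy of $u$ inside $v$ and ``pulling the substitution back'' to get $w=\gamma w\delta$) is only sketched and its appeal to $x^k=0$ is not justified in detail.

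For comparison, the paper does not construct a pentagon at all at this point: it first notes (via Lemma~\ref{identity for nil}) that one may assume $u=v$ is a semigroup identity, then uses Proposition~\ref{modular is periodic} to conclude that $\mathcal V$ is periodic and hence a semigroup variety that is a modular element of the lattice of periodic semigroup varieties (an ideal of $\mathbf{EPI}$), and finally invokes the known semigroup fact \cite[Proposition~2.2]{Vernikov-07a}, checking that all auxiliary varieties occurring in that proof are periodic so the argument transfers. So the combinatorial construction you defer is exactly the imported content; without either importing it or carrying it out explicitly, your argument does not prove the lemma.
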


\begin{proof}
If the identity $u=v$ is not a semigroup one then Lemma~\ref{identity for nil} is applied with the desirable conclusion. So, we may assume that $u=v$ is a semigroup identity. Note that the variety $\mathcal V$ is periodic, whence it may be considered as a semigroup variety. Clearly, $\mathcal V$ is a modular element of the lattice \textbf{PER}. It is verified in~\cite[Proposition~2.2]{Vernikov-07a} that if a semigroup variety is modular in the lattice \textbf{SEM} then it has the property we verify. All varieties that appear in the proof of this claim are periodic. Therefore, the desirable conclusion is true for modular elements of the lattice \textbf{PER}, and we are done.
\end{proof}

The formulation of the following statement and its proof are closely related with the formulation and proof of Lemma~3.1 of the article~\cite{Shaprynskii-12a}. But we need slightly modify some terminology from this article. Lemma~3.1 of~\cite{Shaprynskii-12a} deals with the notions of equivalent and non-stable pairs of (semigroup) words defined in~\cite{Shaprynskii-12a}. Here we need some modification of the first notion and do not require the second one at all. So, we call semigroup words $u$ and $v$ \emph{equivalent} if $u\equiv\xi(v)$ for some automorphism $\xi$ on $F$. Clearly, if words $u$ and $v$ are equivalent semigroup words and $c(u)=c(v)$ then $u=v$ is a substitutive identity. 

\begin{lemma}
\label{identities for modular}
Let $\mathcal V$ be an epigroup variety that is a modular element of the lattice $\mathbf{EPI}$ and let $u$, $v$, $s$ and $t$ be pairwise non-equivalent words of the same length depending on the same letters. If the variety $\mathcal V$ satisfies the identities $u=v$ and $s=t$ then it satisfies also the identity $u=s$.
\end{lemma}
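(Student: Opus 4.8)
The plan is to adapt the argument of~\cite[Lemma~3.1]{Shaprynskii-12a} to the epigroup setting. First I would reduce to a purely semigroup situation: by Proposition~\ref{modular is periodic} the variety $\mathcal V$ is periodic, so whenever a deduction in what follows produces a non-semigroup word, Lemma~\ref{identity for nil} lets us replace it by $0$ inside any nilsubvariety that occurs; in particular we may assume throughout that the words we manipulate are semigroup words. Write $n=\ell(u)=\ell(v)=\ell(s)=\ell(t)$ and $L=c(u)=\dots=c(t)$. The basic combinatorial fact I would isolate is that an endomorphism of $F$ which maps a semigroup word of length $n$ to a word of length $n$ acts on $L$ as a permutation of variables; hence the classes $[u],[v],[s],[t]$ of words equivalent to $u,v,s,t$ are pairwise disjoint, and an identity $p=q$ with $\ell(p)=\ell(q)=n$ holds in a nilvariety defined by $0$-reduced identities together with $u=v$ (respectively $s=t$) only if $\{p,q\}$ lies in the fully invariant congruence generated by the renamings of $(u,v)$ (respectively of $(s,t)$).

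Using this I would introduce the nilvariety $\mathcal N$ given by the identities $\{x_1x_2\cdots x_{n+1}=0\}\cup\{w=0\mid \ell(w)=n,\ w\notin[u]\cup[v]\cup[s]\cup[t]\}$, so that among words of length $n$ exactly the members of $[u]\cup[v]\cup[s]\cup[t]$ survive in $\mathcal N$ and no non-trivial identity holds among them. Inside $L(\mathcal N)$ consider the three subvarieties $\mathcal N_1,\mathcal N_2,\mathcal N_3$ obtained by additionally imposing $\{u=v,s=t\}$, $\{u=s,v=t\}$ and $\{u=t,v=s\}$ respectively, together with their common lower bound $\mathcal N_0$, in which all of $[u]\cup[v]\cup[s]\cup[t]$ is collapsed to a single class. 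Disjointness of the four classes yields that $\mathcal N_1,\mathcal N_2,\mathcal N_3$ are pairwise incomparable with $\mathcal N_i\wedge\mathcal N_j=\mathcal N_0$ and $\mathcal N_i\vee\mathcal N_j=\mathcal N$ for $i\ne j$. Since $\mathcal V$ satisfies $u=v$ and $s=t$, we get $\mathcal V\wedge\mathcal N\subseteq\mathcal N_1$; and, after disposing of the degenerate situations in which $\mathcal V$ already forces one of $u,v,s,t$ to equal $0$ or to equal a word outside these classes --- handled by Lemma~\ref{splitting} and, where a nilsubvariety is involved, by Lemma~\ref{non-substitutive} --- it suffices to prove that $\mathcal V\wedge\mathcal N\subseteq\mathcal N_0$, which is precisely the assertion $\mathcal V\wedge\mathcal N\models u=s$ and hence yields $\mathcal V\models u=s$.

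The final and hardest step is to extract $\mathcal V\wedge\mathcal N\subseteq\mathcal N_0$ from the modularity of $\mathcal V$. Here I would apply the modular law $(\mathcal V\vee\mathcal Y)\wedge\mathcal Z=(\mathcal V\wedge\mathcal Z)\vee\mathcal Y$ to a carefully chosen pair $\mathcal Y\le\mathcal Z$ picked from the lattice generated by $\mathcal V$, $\mathcal N$ and the varieties $\mathcal N_0,\dots,\mathcal N_3$ --- roughly, a pair which differs by exactly the identity $u=s$ in such a way that $\mathcal V$ must absorb that difference in one of the two ways permitted by modularity --- and compute the joins and meets involved by means of the congruence description from the first paragraph. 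Identifying the correct test pair and carrying out these join/meet computations is the technical core of the proof, and is where the hypothesis that $u,v,s,t$ are pairwise non-equivalent is used decisively; this part follows closely the corresponding computation in~\cite[Lemma~3.1]{Shaprynskii-12a}, the only genuinely new ingredient being the systematic use of Lemma~\ref{identity for nil} to neutralise non-semigroup words whenever they intervene in a deduction.
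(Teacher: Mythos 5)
Your opening move coincides with the paper's: by Proposition~\ref{modular is periodic} the variety $\mathcal V$ is periodic, and the rest should come from the argument of \cite[Lemma~3.1]{Shaprynskii-12a}. But where the paper stops there --- it observes that $\mathcal V$, being periodic, is a modular element of the lattice of periodic semigroup varieties, and that the proof of \cite[Lemma~3.1]{Shaprynskii-12a} produces, whenever $u=s$ fails in $\mathcal V$, \emph{nilvarieties} $\mathcal U\subseteq\mathcal W$ with $(\mathcal V\vee\mathcal U)\wedge\mathcal W\ne(\mathcal V\wedge\mathcal W)\vee\mathcal U$, an immediate contradiction inside the periodic world --- you instead try to rebuild the construction, and the rebuilt scaffolding is where the genuine problems lie. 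First, your variety $\mathcal N$ does not have the property you ascribe to it. Under the hypotheses of the lemma none of $u,v,s,t$ can be linear (otherwise all four would be equivalent), so $u$ has a repeated letter; let $w$ be obtained from $u$ by replacing one occurrence of that letter by a fresh letter $z$. Then $\ell(w)=n$ and $w$ depends on more letters than any member of $[u]\cup[v]\cup[s]\cup[t]$, so $w=0$ is one of your defining identities of $\mathcal N$, and substituting the repeated letter for $z$ yields $u=0$ in $\mathcal N$. Thus none of the four classes ``survives'' in $\mathcal N$, and the claimed configuration $\mathcal N_0,\mathcal N_1,\mathcal N_2,\mathcal N_3,\mathcal N$ collapses. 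The slip comes from your combinatorial claim that a length-preserving endomorphism acts as a permutation of the letters: it acts as a letter-to-letter map, which need not be injective.

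Second, even granting a corrected $\mathcal N$, the inference ``$\mathcal V\wedge\mathcal N\models u=s$, hence $\mathcal V\models u=s$'' is not valid as stated: a deduction of $u=s$ from identities of $\mathcal V$ and $\mathcal N$ may pass through the zero class of $\mathcal N$, in which case all you obtain is that $\mathcal V$ identifies $u$ and $s$ with certain words that are $0$ in $\mathcal N$, and bridging that gap requires exactly the kind of word-by-word analysis you have not supplied (your appeal to Lemma~\ref{non-substitutive} does not apply, since $\mathcal V$ is not assumed to be a nilvariety at this stage, and Theorem~\ref{modular nec} is not yet available --- this lemma is used in its proof). Third, and most importantly, the decisive step --- the choice of the test pair $\mathcal Y\le\mathcal Z$ and the join/meet computation that exploits modularity and pairwise non-equivalence --- is precisely what you leave unspecified, deferring it to \cite[Lemma~3.1]{Shaprynskii-12a}. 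Since that computation is the entire content of the lemma, the proposal as written has a genuine gap; note that the paper's transfer argument makes the reconstruction unnecessary, because the counterexample varieties in the cited proof are already periodic (indeed nil), so the failure of the modular law takes place in a sublattice of \textbf{EPI} where modularity of $\mathcal V$ applies directly.
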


\begin{proof}
In view of Proposition~\ref{modular is periodic}, the variety $\mathcal V$ is periodic. Whence, it may be considered as a semigroup variety. Clearly, $\mathcal V$ is a modular element of the lattice \textbf{PER}. The proof of~\cite[Lemma~3.1]{Shaprynskii-12a} readily implies that if $u$, $v$, $s$ and $t$ are pairwise non-equivalent words of the same length depending on the same letters, $\mathcal V$ satisfies the identities $u=v$ and $s=t$ and $\mathcal V$ does not satisfy the identity $u=s$ then there are periodic varieties (in actual fact, even nilvarieties) $\mathcal U$ and $\mathcal W$ such that $\mathcal{U\subseteq W}$ but $\mathcal{(V\vee U)\wedge W\ne(V\wedge W)\vee U}$. This contradicts the claim that $\mathcal V$ is a modular element of the lattice \textbf{PER}.
\end{proof}

\begin{proof}[Proof of Theorem~\emph{\ref{modular nec}}]
Let $\mathcal V$ be a modular epigroup variety. According to Proposition~\ref{modular is periodic}, the variety $\mathcal V$ is periodic. It follows immediately from~\cite[Proposition~3.3]{Shaprynskii-12a} that if a periodic semigroup variety is a modular element of the lattice \textbf{SEM} then it is the join of one of the varieties $\mathcal T$ or $\mathcal {SL}$ and a nilvariety. Repeating literally arguments from the proof of this statement with references to Proposition~\ref{decomposition} and Lemma~\ref{monoid decomposition} of the present work rather than Lemma~2.6 of the article~\cite{Shaprynskii-12a} and to Lemma~\ref{identities for modular} of the present work rather than Lemma~3.1 of the article~\cite{Shaprynskii-12a}, we obtain that the variety $\mathcal V$ has the same property. Thus, $\mathcal{V=M\vee N}$ where $\mathcal M$ is one of the varieties $\mathcal T$ or $\mathcal {SL}$ and $\mathcal N$ is a nilvariety. It remains to verify that if $\mathcal N$ satisfies a non-substitutive identity $u=v$ then $\mathcal N$ satisfies also the identity $u=0$. If the identity $u=v$ is not a semigroup one then Lemma~\ref{identity for nil} is applied with the conclusion that $\mathcal N$ satisfies the identity $u=0$. So, we may assume that $u=v$ is a semigroup identity. Note that the variety $\mathcal N$ is periodic, whence it may be considered as a semigroup variety. In this situation the desirable conclusion directly follows from~\cite[Proposition~2.2]{Vernikov-07a}. Theorem~\ref{modular nec} is proved.
\end{proof}

Let \textbf{USEM} denotes the lattice of all varieties of unary semigroups. The following lemma will be helpful here and in Section~\ref{lower-modular proof}.

\begin{proposition}
\label{modular and lower-modular suf in USEM}
An arbitrary $0$-reduced epigroup variety is a modular and lower-modular element of the lattice $\mathbf{USEM}$.
\end{proposition}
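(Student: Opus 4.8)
The plan is to treat the two laws separately, reducing each to a single inclusion. For modularity one has $\mathcal Y\subseteq\mathcal Z$, and since $(\mathcal V\wedge\mathcal Z)\vee\mathcal Y\subseteq(\mathcal V\vee\mathcal Y)\wedge\mathcal Z$ in every lattice, it suffices to prove the reverse inclusion; dually, for lower-modularity one has $\mathcal V\subseteq\mathcal Y$ and only $(\mathcal Z\vee\mathcal V)\wedge\mathcal Y\subseteq(\mathcal Z\wedge\mathcal Y)\vee\mathcal V$ must be checked. I would argue throughout with identities: an identity holds in a join of two varieties precisely when it holds in each of them, and in a meet precisely when it is deducible from the identities of the two varieties taken together. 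Writing $\mathcal V=\var\{w_i=0\}$, let $Z$ denote the set of all words equal to $0$ in $\mathcal V$. Since the $w_i$ are semigroup words, $Z$ is a fully invariant ideal of $F$, and a word lies in $Z$ if and only if its outermost multiplicative factorisation has a factor that is a substitution instance of some $w_i$; in particular, if $u\in Z$ and $\mathcal V$ satisfies $u=v$ then $v\in Z$, so no application of a $\mathcal V$-identity can alter membership in $Z$.

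The core of both proofs is a rerouting of deductions controlled by $Z$, and it works cleanly once one knows that no identity of $\mathcal V$ joins two distinct words both lying outside $Z$ (equivalently, every non-trivial identity of $\mathcal V$ has both sides in $Z$). Granting this, the modular inclusion runs as follows. Suppose $u=v$ holds in $\mathcal Y$ and in $\mathcal V\wedge\mathcal Z$, and fix a deduction $u\equiv t_0,t_1,\dots,t_n\equiv v$ from the identities of $\mathcal V$ and $\mathcal Z$. If no $\mathcal V$-step occurs then $u=v$ already holds in $\mathcal Z$, hence in the meet. Otherwise let $t_a$ and $t_b$ be the first and last members of the chain lying in $Z$; the initial and final passages then consist of $\mathcal Z$-steps only, so $u=t_a$ and $t_b=v$ hold in $\mathcal Z$ and, because $\mathcal Y\subseteq\mathcal Z$, also in $\mathcal Y$. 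As $t_a,t_b\in Z$ we have $t_a=0=t_b$ in $\mathcal V$, while $t_a=u=v=t_b$ gives $t_a=t_b$ in $\mathcal Y$; hence $t_a=t_b$ holds in $\mathcal V\vee\mathcal Y$, and the three steps $u\to t_a$, $t_a\to t_b$, $t_b\to v$ witness $u=v$ in $(\mathcal V\vee\mathcal Y)\wedge\mathcal Z$. For lower-modularity the bookkeeping is dual: now $\mathcal V\subseteq\mathcal Y$ makes every $\mathcal Y$-step valid in $\mathcal V$, so membership in $Z$ changes only at $\mathcal Z$-steps, and $u=v$ in $\mathcal V$ places both endpoints in $Z$; one then replaces each maximal excursion of the chain outside $Z$ by a single step $t_a=t_b$, which holds in $\mathcal Z$ because the excursion is a deduction over $\mathcal Z\wedge\mathcal Y$ and in $\mathcal V$ because its endpoints lie in $Z$, so it holds in $\mathcal Z\vee\mathcal V$; the remaining steps, joining words inside $Z$, are already of an admissible kind.

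In the pure semigroup setting this property — that every non-trivial identity of $\mathcal V$ has both sides in $Z$ — is exactly the one exploited in the prototypes of Vernikov--Volkov and Shaprynskii--Vernikov, and there the two arguments are complete. The main obstacle, and the only genuinely new feature in $\mathbf{USEM}$, is that the free unary operation produces non-trivial identities of $\mathcal V$ between words outside $Z$: from $w_i=0$ and $w_j=0$ one obtains $\overline{w_i}=\overline{w_j}$, so a deduction may contain $\mathcal V$-steps that merely rewrite a subterm $\overline c$ to $\overline{c'}$ with $c,c'\in Z$, and such steps invalidate the claim that the passages above are purely over $\mathcal Z$. To overcome this I would induct on the maximal depth of nesting of the unary operation among the words involved. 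Treating each maximal unary-headed subterm as an opaque letter turns the outermost multiplicative layer into a $0$-reduced semigroup variety over an enlarged alphabet, for which the property does hold and the reroutings of the previous paragraph apply verbatim; the identifications $\overline c=\overline{c'}$ of opaque letters that this requires strip, after removal of the outer operation, to relations $c=c'$ one nesting level lower, which are supplied by the induction hypothesis. I expect this inductive control of the unary subterms to be the delicate part of the proof, the remainder being a faithful transcription of the two semigroup arguments.
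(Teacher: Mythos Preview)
Your central worry in the third paragraph is unfounded, and once it is removed the proof becomes much shorter than you expect. The variety $\mathcal V$ under consideration is a $0$-reduced \emph{epigroup} variety, hence a nilvariety, and by Lemma~\ref{identity for nil} the identity $\overline x=0$ holds in it. Consequently every word of $F$ that contains the unary operation anywhere is equal to $0$ in $\mathcal V$ and therefore lies in your set $Z$. In particular $\overline{w_i}\in Z$ for every $i$; there are no non-trivial identities of $\mathcal V$ between words outside $Z$, and the key property you grant in the second paragraph---that every non-trivial identity of $\mathcal V$ has both sides in $Z$---is simply true. Your characterisation of $Z$ via ``outermost multiplicative factorisation'' is incorrect precisely because it ignores this: $Z$ is the union of your ideal with the set of all non-semigroup words. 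No induction on nesting depth is needed.

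With that correction your rerouting argument from paragraph two goes through. One small slip in the lower-modular half: saying the excursion is ``a deduction over $\mathcal Z\wedge\mathcal Y$'' does not yield $t_a=t_b$ in $\mathcal Z$. The point is rather that every $\mathcal Y$-step between two words outside $Z$ is a $\mathcal V$-step and hence trivial (by the key property), so the excursion is actually a deduction over $\mathcal Z$ alone; then $t_a=t_b$ holds in $\mathcal Z$ as required.

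The paper's proof is different and much shorter. It observes directly that the fully invariant congruence $\nu$ on $F$ corresponding to $\mathcal V$ has exactly one non-singleton class (the zero class---this is your key property rephrased), then passes to the lattice $\Eq(F)$ of all equivalence relations on $F$, in which the lattice of fully invariant congruences embeds, and invokes the known fact that an equivalence relation with a single non-singleton class is both modular and upper-modular in $\Eq(X)$ (citing Je\v{z}ek for modularity and Vernikov--Volkov for upper-modularity). Dualising via the antiisomorphism to $\mathbf{USEM}$ gives the statement. Your deduction-rerouting is essentially an explicit reproof of those cited lattice facts specialised to this situation; it is correct but not needed.
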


\begin{proof}
Let $\mathcal N$ be a 0-reduced epigroup variety, while $\nu$ a fully invariant congruence on $F$ corresponding to $\mathcal N$. Then $\nu$ has exactly one non-singleton class (this class includes words that are equal to~0 in $\mathcal N$ and only them). The lattice \textbf{USEM} is antiisomorphic to the lattice of all fully invariant congruences on $F$. Further, the latter lattice is embeddable in the lattice $\Eq(F)$ of all equivalence relations on $F$. If an equivalence relation $\rho$ on a set $X$ has exactly one non-singleton class then $\rho$ is both modular and upper-modular element in the lattice $\Eq(X)$ (this claim is verified in~\cite[Proposition~2.2]{Jezek-81} for modular elements and in~\cite[Proposition~3]{Vernikov-Volkov-88} for upper-modular ones). Thus, $\nu$ is a modular and upper-modular element in the lattice $\Eq(F)$, and moreover in the lattice of all fully invariant congruences on $F$. Since the notion of a modular element of a lattice is self-dual, we are done.
\end{proof}

The lattice \textbf{EPI} is a sublattice in \textbf{USEM}. Whence, Proposition~\ref{modular and lower-modular suf in USEM} immediately implies Theorem~\ref{modular suf}.\qed

\medskip

\begin{proof}[Proof of Theorem~\emph{\ref{modular commut}}]
\emph{Necessity.} Let $\mathcal V$ be a commutative modular epigroup variety. By Theorem~\ref{modular nec} $\mathcal{V=M\vee N}$ where $\mathcal M$ is one of the varieties $\mathcal T$ or $\mathcal{SL}$ and $\mathcal N$ is a nilvariety. Corollary~\ref{join with SL or ZM or SL+ZM} implies that the variety $\mathcal N$ is modular. Since every commutative variety satisfies the identity $x^2y=yx^2$, Lemma~\ref{non-substitutive} implies that the identity~\eqref{xxy=0} holds in $\mathcal N$.

\smallskip

\emph{Sufficiency.} In view of Corollary~\ref{join with SL or ZM or SL+ZM}, it suffices to verify that a commutative epigroup variety satisfying the identity~\eqref{xxy=0} is modular. This fact may be verified by the same arguments as in the proof of the `if' part of Theorem~1 in~\cite{Vernikov-Volkov-06}. Theorem~\ref{modular commut} is proved.
\end{proof}

Theorems~\ref{modular commut} and~\ref{upper-modular} evidently imply

\begin{corollary}
\label{modular implies upper-modular}
If a commutative epigroup variety is a modular element of the lattice $\mathbf{EPI}$ then it is an upper-modular element of this lattice.\qed
\end{corollary}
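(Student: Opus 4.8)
The plan is simply to match the explicit descriptions furnished by Theorems~\ref{modular commut} and~\ref{upper-modular}; the corollary is a bookkeeping comparison of two classification results. First I would record the elementary observation that the commutative law $xy=yx$ is itself a strongly permutative identity, since it has the form $x_1x_2=x_{1\pi}x_{2\pi}$ with $1\pi=2\ne1$ and $2\pi=1\ne2$. Consequently every commutative epigroup variety is strongly permutative and therefore lies within the scope of Theorem~\ref{upper-modular}.

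Now let $\mathcal V$ be a commutative modular epigroup variety. By Theorem~\ref{modular commut} we may write $\mathcal{V=M\vee N}$ where $\mathcal M$ is one of $\mathcal T$ or $\mathcal{SL}$ and $\mathcal N$ is a nilvariety satisfying the commutative law and the identity~\eqref{xxy=0}. Recalling the conventions $\mathcal C_0=\mathcal T$ and $\mathcal C_1=\mathcal{SL}$ introduced just before Theorem~\ref{upper-modular}, we have $\mathcal M=\mathcal C_m$ for some $m\in\{0,1\}$. Taking $\mathcal G=\mathcal T$ (a degenerate Abelian group variety), we obtain $\mathcal{V=G\vee C}_m\vee\mathcal N$ with $0\le m\le2$ and $\mathcal N$ a variety satisfying the commutative law and~\eqref{xxy=0}. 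This is precisely clause~(ii) of Theorem~\ref{upper-modular}, and since $\mathcal V$ is strongly permutative that theorem yields that $\mathcal V$ is an upper-modular element of $\mathbf{EPI}$, as required.

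I do not foresee any genuine obstacle: the argument is purely a juxtaposition of the two theorems' statements. The only points meriting a moment's care are noticing that the trivial variety $\mathcal T$ is admissible as ``an Abelian group variety'' in clause~(ii) of Theorem~\ref{upper-modular}, and that the commutative hypothesis is indeed subsumed under the strongly permutative hypothesis required to invoke that theorem.
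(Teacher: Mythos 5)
Your argument is correct and is exactly the comparison the paper has in mind: Corollary~\ref{modular implies upper-modular} is stated there as an evident consequence of Theorems~\ref{modular commut} and~\ref{upper-modular}, and your write-up simply makes the matching explicit (commutativity is a strongly permutative identity, and the decomposition $\mathcal{M\vee N}$ with $\mathcal M\in\{\mathcal T,\mathcal{SL}\}=\{\mathcal C_0,\mathcal C_1\}$ and $\mathcal G=\mathcal T$ realizes clause~(ii)). One could equally note that clause~(i) applies directly, since in a commutative nilvariety the identity~\eqref{xxy=0} forces $x^2y=0=xy^2$, but this changes nothing essential.
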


Theorem~3.1 in~\cite{Vernikov-07a} and Theorem~\ref{modular commut} show that the following is true.

\begin{corollary}
\label{modular commut in SEM and EPI}
A periodic commutative semigroup variety is a modular element of the lattice $\mathbf{SEM}$ if and only if $\mathcal V$ is a modular element of the lattice $\mathbf{EPI}$.\qed
\end{corollary}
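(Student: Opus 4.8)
The plan is to deduce the corollary by placing two existing classification theorems side by side. On the semigroup side, commutative modular elements of $\mathbf{SEM}$ are completely described by Theorem~3.1 of~\cite{Vernikov-07a}; on the epigroup side, commutative modular elements of $\mathbf{EPI}$ are described by Theorem~\ref{modular commut} of the present article. The whole argument rests on the remark made in Subsection~\ref{introduction epigroups}: on a periodic epigroup pseudoinversion is expressible through multiplication, so a periodic commutative semigroup variety and the associated periodic commutative epigroup variety are one and the same object. Thus, for the variety $\mathcal V$ in the statement, both phrases ``$\mathcal V$ is a modular element of $\mathbf{SEM}$'' and ``$\mathcal V$ is a modular element of $\mathbf{EPI}$'' make sense and refer to $\mathcal V$ itself.

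First I would recall the exact content of Theorem~3.1 of~\cite{Vernikov-07a}: a commutative semigroup variety is a modular element of $\mathbf{SEM}$ if and only if it has the form $\mathcal{M\vee N}$, where $\mathcal M$ is one of $\mathcal T$ or $\mathcal{SL}$ and $\mathcal N$ is a commutative nilvariety satisfying~\eqref{xxy=0}; note that every such variety is automatically periodic, so for our periodic $\mathcal V$ this gives an unconditional criterion. Next I would quote Theorem~\ref{modular commut}, which says that a commutative epigroup variety is a modular element of $\mathbf{EPI}$ precisely when it has that very same form $\mathcal{M\vee N}$ with $\mathcal M\in\{\mathcal T,\mathcal{SL}\}$ and $\mathcal N$ a commutative nilvariety satisfying~\eqref{xxy=0}. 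Since the two criteria are the verbatim same condition on $\mathcal V$, each of the two modularity assertions is equivalent to that condition, hence to the other; this is the desired equivalence.

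I do not expect a real obstacle here: no new lattice computation is needed, and the only point requiring attention is a careful reading of Theorem~3.1 of~\cite{Vernikov-07a} to confirm that its right-hand side really coincides --- the nilpart being commutative and satisfying $x^2y=0$, with no stray group varieties --- with the right-hand side of Theorem~\ref{modular commut}. Once both statements are written out this match is evident, and the corollary follows at once.
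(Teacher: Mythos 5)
Your proposal is correct and coincides with the paper's own justification: the corollary is stated there as an immediate consequence of placing Theorem~3.1 of~\cite{Vernikov-07a} alongside Theorem~\ref{modular commut}, using that periodic epigroup varieties may be identified with periodic semigroup varieties. No further argument is needed.
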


\section{Lower-modular varieties}
\label{lower-modular proof}

Here we prove Theorem~\ref{lower-modular}. To achieve this aim, we need the following

\begin{proposition}
\label{lower-modular is periodic}
If an epigroup variety $\mathcal V$ is a lower-modular element of the lattice $\mathbf{EPI}$ then $\mathcal V$ is periodic.
\end{proposition}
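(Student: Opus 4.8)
The plan is to establish the contrapositive: a non-periodic epigroup variety $\mathcal V$ is not a lower-modular element of $\mathbf{EPI}$. The general strategy is to exhibit varieties $\mathcal Y\supseteq\mathcal V$ and $\mathcal Z$ violating the lower-modular law. Note, however, the contrast with Proposition~\ref{modular is periodic}: in a pentagon $N_5$ the only position at which lower-modularity can fail is the lower end of the two-element chain, so here $\mathcal V$ must play that role rather than the ``doubly irreducible'' one. Concretely, I would look for a proper supervariety $\mathcal W\supsetneq\mathcal V$ and a variety $\mathcal Z$ with
$\mathcal V\vee\mathcal Z=\mathcal W\vee\mathcal Z$, $\mathcal V\wedge\mathcal Z=\mathcal W\wedge\mathcal Z$, $\mathcal Z\nsubseteq\mathcal W$ and $\mathcal V\nsubseteq\mathcal Z$; then, taking $\mathcal Y=\mathcal W$, we get $(\mathcal Z\wedge\mathcal W)\vee\mathcal V=(\mathcal V\wedge\mathcal Z)\vee\mathcal V=\mathcal V$ and $(\mathcal Z\vee\mathcal V)\wedge\mathcal W=(\mathcal W\vee\mathcal Z)\wedge\mathcal W=\mathcal W$, and these differ since $\mathcal V\subsetneq\mathcal W$.

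For the bookkeeping I would first record that $\mathcal V\supseteq\mathcal{AG}$ (every epigroup variety is periodic or contains $\mathcal{AG}$) and that $\mathcal V$ satisfies an identity $x^n=x^nx^\omega$ for some natural $n$ (as in the proof of Proposition~\ref{modular is periodic}), so that $\Nil(\mathcal V)$ satisfies the identity $x^n=0$ by Lemma~\ref{splitting}(ii). I would also use two routine general facts: $\Gr(\mathcal X\vee\mathcal N)=\Gr(\mathcal X)$ whenever $\mathcal N$ is a nilvariety, and $\mathcal Y\wedge\mathcal G=\Gr(\mathcal Y)\wedge\mathcal G$ whenever $\mathcal G$ is a group variety (the latter because a common object is a group lying in $\mathcal Y$). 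Finally, by Corollary~\ref{join with SL or ZM or SL+ZM} we may, if convenient, replace $\mathcal V$ by $\mathcal V\vee\mathcal{SL}\vee\mathcal{ZM}$.

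The case in which $\Nil(\mathcal V)$ is \emph{not} $0$-reduced can be handled directly by the mechanism already invoked in the proof of the implication c)\,$\longrightarrow$\,d) of Theorem~\ref{neutral and costandard}: by \cite[Theorem~3.1]{Volkov-05} there is a periodic group variety $\mathcal G$ with $\mathcal A:=\Nil\bigl(\mathcal G\vee\Nil(\mathcal V)\bigr)\supsetneq\Nil(\mathcal V)$, and then necessarily $\mathcal G\nsubseteq\mathcal V$ (otherwise $\mathcal G\vee\Nil(\mathcal V)\subseteq\mathcal V$ would force $\mathcal A\subseteq\Nil(\mathcal V)$). Put $\mathcal W=\mathcal V\vee\mathcal A$ and $\mathcal Z=\mathcal G$. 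Since $\mathcal A$ is a nilvariety not contained in $\Nil(\mathcal V)$, we have $\mathcal V\subsetneq\mathcal W$; since $\mathcal A\subseteq\mathcal G\vee\Nil(\mathcal V)\subseteq\mathcal G\vee\mathcal V$, we have $\mathcal W\vee\mathcal Z=\mathcal V\vee\mathcal G=\mathcal V\vee\mathcal Z$; by the two general facts above $\mathcal W\wedge\mathcal G=\Gr(\mathcal W)\wedge\mathcal G=\Gr(\mathcal V)\wedge\mathcal G=\mathcal V\wedge\mathcal G$; and $\Gr(\mathcal W)=\Gr(\mathcal V)$ together with $\mathcal G\nsubseteq\mathcal V$ give $\mathcal G\nsubseteq\mathcal W$, while $\mathcal V\nsubseteq\mathcal G$ because $\Nil(\mathcal V)\ne\mathcal T$. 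This is the required pentagon.

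The remaining case — $\Nil(\mathcal V)$ is $0$-reduced but $\mathcal V$ is non-periodic — is, I expect, the main obstacle; the variety $\mathcal{AG}$ itself already falls here, so the $\Nil$-growth trick is unavailable. Here the pentagon has to be manufactured from the non-periodic \emph{group} part of $\mathcal V$ (exploiting that the lattice of group varieties is far from modular) or from the degree-jump phenomenon ($\deg(\mathcal P\vee\overleftarrow{\mathcal P})=3$ while $\deg(\mathcal P)=\deg(\overleftarrow{\mathcal P})=2$), following the line of the semigroup prototype in \cite{Shaprynskii-Vernikov-10}; the delicate point, as always in this kind of argument, is to collapse a join and a meet simultaneously while keeping $\mathcal W$ a \emph{proper} supervariety of $\mathcal V$.
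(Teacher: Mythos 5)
Your reduction to a pentagon is reasonable in principle, and the case where $\Nil(\mathcal V)$ is not $0$-reduced is essentially worked out. But the proposal stops exactly where the real content of the proposition lies: the case where $\Nil(\mathcal V)$ is $0$-reduced and $\mathcal V$ is non-periodic is only described as ``the main obstacle,'' with a hope that a pentagon can be manufactured from the non-periodic group part or from a degree jump. No such construction is given, and it is precisely this case --- already for $\mathcal V=\mathcal{AG}$, or $\mathcal{AG\vee N}$ with $\mathcal N$ a $0$-reduced nilvariety --- that must be excluded; nothing in your setup distinguishes these varieties from genuinely lower-modular ones, so as it stands this is not a proof. A secondary caveat: in your first case you invoke the `only if' part of Volkov's Theorem~3.1, which is a statement about the lattice $\mathbf{SEM}$; in the paper it is used only after periodicity is already available, so the nilvariety in question is periodic and joins in $\mathbf{SEM}$ and $\mathbf{EPI}$ agree. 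In your situation $\Nil(\mathcal V)$ may be a non-periodic epigroup nilvariety (for instance, the variety of all commutative nil-semigroups), and the transfer of that result to this setting is not justified.

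For comparison, the paper's proof needs no case distinction and no pentagon hunting: it is a uniform syntactic argument. One takes the least $n$ with $x^n=\overline{\overline{x^n}}$ in $\mathcal V$, puts $\mathcal Y=\var\{x^ny^3x=\overline{\overline{x^n}}\,y^3x\}\supseteq\mathcal V$ and $\mathcal Z=\var\{x^ny^2x=x^ny^3x\}$, and applies the lower-modular law to this pair. Using the identity $\overline{\overline x}\,=\,\overline{\overline x}\cdot\overline x\,x$, the variety $\mathcal{Z\wedge Y}$ is seen to satisfy $x^ny^2x=\overline{\overline{x^n}}\,y^2x$, hence so does $\mathcal{(Z\vee V)\wedge Y}$; analysing a deduction of this identity (no non-trivial identity of the form $x^ny^2x=w$ holds in $\mathcal Y$, and in $\mathcal Z$ the only possibility is $w\equiv x^ny^3x$), one concludes that $x^ny^2x=x^ny^3x$ holds in $\mathcal V$, whence $x^{n+3}=x^{n+4}$ and $\mathcal V$ is periodic. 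Some argument of this uniform kind is what your missing case appears to require.
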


\begin{proof}
If $S$ is an epigroup and $x\in S$ then $\overline{\overline x}\,=\,\overline{\overline x}\,x^\omega=\,\overline{\overline x}\cdot\overline x\,x$. Thus, every epigroup satisfies the identity
\begin{equation}
\label{x**=x**x*x}
\overline{\overline x}\,=\,\overline{\overline x}\cdot\overline x\,x\ldotp
\end{equation}

It is known (see~\cite{Shevrin-94} or~\cite{Shevrin-05}, for instance) that there is a natural number $n$ such that the $n$th power of any element $x$ in an arbitrary member $S$ of $\mathcal V$ is a group element. Therefore, $\mathcal V$ satisfies the identity $x^n=\,\overline{\overline{x^n}}$. Let $n$ be the least number with such a property. Put
$$\mathcal Y =\var\{x^ny^3x=\,\overline{\overline{x^n}}\,y^3x\}\quad\text{and}\quad\mathcal Z=\var\{x^ny^2x=x^ny^3x\}\ldotp$$
Clearly, $\mathcal{V\subseteq Y}$. Since $\mathcal V$ is lower-modular, the equality
$$\mathcal{(Z\vee V)\wedge Y=(Z\wedge Y)\vee V}$$
holds. Note that
\begin{align*}
x^ny^2x&=x^ny^3x&&\text{in the variety }\mathcal Z\\
&=\,\overline{\overline{x^n}}\,y^3x&&\text{in the variety }\mathcal Y\\
&=\,\overline{\overline{x^n}}\cdot\overline{x^n}\,x^ny^3x&&\text{in the variety }\mathcal{Z\wedge Y}\text{ by \eqref{x**=x**x*x}}\\
&=\,\overline{\overline{x^n}}\cdot\overline{x^n}\,x^ny^2x&&\text{in the variety }\mathcal Z\\
&=\,\overline{\overline{x^n}}\,y^2x&&\text{in the variety }\mathcal{Z\wedge Y}\text{ by \eqref{x**=x**x*x}}\ldotp
\end{align*}
We see that $\mathcal{Z\wedge Y}$ satisfies the identity $x^ny^2x=\,\overline{\overline{x^n}}\,y^2x$. Therefore, the identity $x^ny^2x=\,\overline{\overline{x^n}}\,y^2x$ holds in $\mathcal{(Z\wedge Y)\vee V}$ and moreover, in $\mathcal{(Z\vee V)\wedge Y}$. This means that there is a deduction of this identity from identities of the varieties $\mathcal{Z\vee V}$ and $\mathcal Y$. In particular, one of the varieties $\mathcal{Z\vee V}$ or $\mathcal Y$ satisfies a non-trivial identity of the form $x^ny^2x=w$ for some word $w$. It is evident that any identity of such a kind is false in $\mathcal Y$. Whence, it holds in $\mathcal{Z\vee V}$. It is obvious that the words $x^ny^2x$ and $x^ny^3x$ do not contain images of each other relatively to endomorphisms of the unary semigroup $F$. Hence the identity $x^ny^2x=x^ny^3x$ does not imply any identity of the form $x^ny^2x=w$ where $w$ differs from the words $x^ny^2x$ and $x^ny^3x$. In particular, all identities of this form are false in $\mathcal Z$. Therefore, the variety $\mathcal{Z\vee V}$ satisfies the identity $x^ny^2x=x^ny^3x$. In particular, this identity holds in $\mathcal V$. Thus, $\mathcal V$ satisfies the identity $x^{n+3}=x^{n+4}$, whence it is periodic. 
\end{proof}

Repeating literally arguments from the proof of Lemma~\ref{identities for modular} but referring to Proposition~\ref{lower-modular is periodic} rather than Proposition~\ref{modular is periodic}, we obtain the following

\begin{lemma}
\label{identities for lower-modular}
Let $\mathcal V$ be an epigroup variety that is a lower-modular element of the lattice $\mathbf{EPI}$ and let $u$, $v$, $s$ and $t$ be pairwise non-equivalent words of the same length depending on the same letters. If the variety $\mathcal V$ satisfies the identities $u=v$ and $s=t$ then it satisfies also the identity $u=s$.\qed
\end{lemma}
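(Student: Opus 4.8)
The plan is to reduce the statement to the periodic (hence semigroup-variety) case and then quote the word-combinatorial machinery already developed for $\mathbf{SEM}$, exactly paralleling the treatment of the modular analogue in Lemma~\ref{identities for modular}.

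First I would apply Proposition~\ref{lower-modular is periodic} to conclude that $\mathcal V$ is periodic. Consequently $\mathcal V$ may be regarded as a variety of semigroups; and since the periodic epigroup varieties form a sublattice of $\mathbf{EPI}$ that is identified with the lattice $\mathbf{PER}$ of all periodic semigroup varieties, the variety $\mathcal V$ is a lower-modular element of $\mathbf{PER}$ (shrinking the class of admissible $\mathcal Y$, $\mathcal Z$ can only weaken the defining condition, so lower-modularity in $\mathbf{EPI}$ transfers down to $\mathbf{PER}$). Next I would argue by contradiction: assume $\mathcal V$ does not satisfy $u=s$. Since $u$, $v$, $s$, $t$ are pairwise non-equivalent semigroup words of the same length depending on the same letters and $\mathcal V$ satisfies $u=v$ and $s=t$, the construction inside the proof of~\cite[Lemma~3.1]{Shaprynskii-12a} manufactures two nilvarieties $\mathcal U\subseteq\mathcal W$ witnessing a failure of the relevant distributive-type identity; all varieties produced are nilvarieties, hence periodic, so this contradicts the lower-modularity of $\mathcal V$ in $\mathbf{PER}$. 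Hence $\mathcal V\models u=s$.

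The main obstacle is the last step: one must check that the construction of~\cite{Shaprynskii-12a}, which is set up to obstruct \emph{modularity}, also yields a configuration obstructing \emph{lower-modularity} — that is, that the two witnessing nilvarieties can be chosen so that the failing instance has the form $(\mathcal Z\wedge\mathcal Y)\vee\mathcal V\ne(\mathcal Z\vee\mathcal V)\wedge\mathcal Y$ with $\mathcal V\subseteq\mathcal Y$ (as, e.g., in the use of lower-modularity already made in the proof of Proposition~\ref{lower-modular is periodic}). Since Shaprynskii's argument is arranged so as to treat modular and lower-modular elements in parallel, and every variety it builds is a nilvariety, the passage is routine once periodicity of $\mathcal V$ is secured; thus Proposition~\ref{lower-modular is periodic} is the only new ingredient needed relative to Lemma~\ref{identities for modular}, and the proof is obtained by repeating that earlier proof verbatim with Proposition~\ref{lower-modular is periodic} in place of Proposition~\ref{modular is periodic}.
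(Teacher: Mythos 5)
Your proposal is correct and matches the paper's own argument: the paper proves this lemma exactly by repeating the proof of Lemma~\ref{identities for modular} verbatim, replacing Proposition~\ref{modular is periodic} by Proposition~\ref{lower-modular is periodic}, i.e.\ securing periodicity, viewing $\mathcal V$ as a lower-modular element of the lattice of periodic varieties, and invoking the construction in the proof of Lemma~3.1 of~\cite{Shaprynskii-12a} (which treats modular and lower-modular elements in parallel and produces only nilvarieties) to obtain the contradiction. Your flagged ``obstacle'' is resolved just as you suggest, so no new ingredient beyond Proposition~\ref{lower-modular is periodic} is needed.
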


\begin{proof}[Proof of Theorem~\emph{\ref{lower-modular}}. Sufficiency]
Proposition~\ref{modular and lower-modular suf in USEM} immediately implies that a 0-reduced epigroup variety is lower-modular. It remains to refer to Corollary~\ref{join with SL or ZM or SL+ZM}.

\smallskip 

\emph{Necessity.} In view of Proposition~\ref{lower-modular is periodic}, the variety $\mathcal V$ is periodic. Now we may complete the proof repeating literally arguments from the proof of Proposition~3.3 in~\cite{Shaprynskii-12a} but referring to Proposition~\ref{decomposition} rather than Lemma~2.6 in~\cite{Shaprynskii-12a}, and to Lemma~\ref{identities for lower-modular} rather than Lemma~3.1 in~\cite{Shaprynskii-12a}.
\end{proof}

Comparing Theorems~\ref{modular suf} and~\ref{lower-modular}, we obtain the following

\begin{corollary}
\label{lower-modular implies modular}
If an epigroup variety is a lower-modular element of the lattice $\mathbf{EPI}$ then it is a modular element of this lattice.\qed
\end{corollary}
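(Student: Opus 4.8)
The plan is to read the result off from the classification of lower-modular varieties in Theorem~\ref{lower-modular}, the sufficient condition for modularity in Theorem~\ref{modular suf}, and the reduction Corollary~\ref{join with SL or ZM or SL+ZM}. Suppose $\mathcal V$ is a lower-modular element of $\mathbf{EPI}$. Theorem~\ref{lower-modular} gives $\mathcal{V=M\vee N}$ with $\mathcal M\in\{\mathcal T,\mathcal{SL}\}$ and $\mathcal N$ a $0$-reduced variety; the goal is then to conclude that $\mathcal{M\vee N}$ is a modular element.

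First I would check that $\mathcal N$ is periodic, hence may be treated as a semigroup variety: if $\mathcal N$ satisfies a non-trivial $0$-reduced identity $w=0$, then identifying all letters occurring in $w$ turns it into an identity $x^k=0$ (equivalently $x^{k+1}=x^k$) with $k=\ell(w)\ge1$, and if $\mathcal N=\mathcal T$ there is nothing to do. Thus $\mathcal N$ is a $0$-reduced semigroup variety, so Theorem~\ref{modular suf} yields that $\mathcal N$ is a modular element of $\mathbf{EPI}$. Next, recalling from Subsection~\ref{preliminaries special} that modularity of a lattice element is exactly the property of being an $I$-element for the non-trivial lattice identity
\[
\bigl(x\vee(y\wedge z)\bigr)\wedge z=(x\wedge z)\vee(y\wedge z),
\]
I would finish by cases: if $\mathcal M=\mathcal T$ then $\mathcal{V=N}$ is already modular; if $\mathcal M=\mathcal{SL}$ then Corollary~\ref{join with SL or ZM or SL+ZM}, applied with $\mathcal W=\mathcal{SL}$ and this identity $I$, shows that $\mathcal{V=N\vee SL}$ is an $I$-element of $\mathbf{EPI}$, i.e.\ a modular element.

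The argument is short, and I do not expect a genuine obstacle; the only points requiring a line of justification are the two bookkeeping observations that a $0$-reduced epigroup variety is in fact a $0$-reduced semigroup variety (so that Theorem~\ref{modular suf} applies verbatim) and that the modular-element property is captured by a non-trivial lattice identity (so that Corollary~\ref{join with SL or ZM or SL+ZM} applies). Both are routine.
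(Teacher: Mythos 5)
Your argument is correct and is essentially the paper's own proof, which simply combines Theorem~\ref{lower-modular} (the decomposition $\mathcal{V=M\vee N}$ with $\mathcal N$ $0$-reduced) with Theorem~\ref{modular suf} and the standard reduction of Corollary~\ref{join with SL or ZM or SL+ZM}. Your periodicity detour is harmless but not needed: Proposition~\ref{modular and lower-modular suf in USEM} already covers arbitrary $0$-reduced epigroup varieties, so Theorem~\ref{modular suf} applies to $\mathcal N$ directly.
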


By the way, we note that neither of the five other possible interrelations between properties of being a modular variety, a lower-modular variety or an upper-modular variety holds. For instance:
\begin{itemize}
\item the variety $\var\{x^2=0,\,xy=yx\}$ is modular by Theorem~\ref{modular commut} but not lower-modular by Theorem~\ref{lower-modular};
\item the variety $\var\{xyz=0\}$ is modular and lower-modular by Theorems~\ref{modular suf} and~\ref{lower-modular} respectively but not upper-modular by Theorem~\ref{upper-modular};
\item an arbitrary Abelian periodic group variety is upper-modular by Theorem~\ref{upper-modular} but neither modular nor lower-modular by Theorems~\ref{modular nec} and~\ref{lower-modular} respectively.
\end{itemize}

Theorem~1.1 in~\cite{Shaprynskii-Vernikov-10} and Theorem~\ref{lower-modular} show that the following is true.

\begin{corollary}
\label{lower-modular in SEM and EPI}
A periodic semigroup variety $\mathcal V$ is a lower-modular element of the lattice $\mathbf{SEM}$ if and only if $\mathcal V$ is a lower-modular element of the lattice $\mathbf{EPI}$.\qed
\end{corollary}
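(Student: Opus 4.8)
The plan is to obtain the corollary by directly matching the description of lower-modular elements of \textbf{SEM} in~\cite[Theorem~1.1]{Shaprynskii-Vernikov-10} against that of lower-modular elements of \textbf{EPI} in Theorem~\ref{lower-modular}. The former asserts that a semigroup variety is lower-modular in \textbf{SEM} if and only if it equals $\mathcal{M\vee N}$ with $\mathcal M\in\{\mathcal T,\mathcal{SL}\}$ and $\mathcal N$ a $0$-reduced semigroup variety, while Theorem~\ref{lower-modular} gives the syntactically identical description of lower-modular elements of \textbf{EPI}. A non-periodic semigroup variety is not an epigroup variety at all, and every lower-modular element of either lattice is in any case periodic --- by Proposition~\ref{lower-modular is periodic} on the epigroup side and by the quoted description on the semigroup side --- so one may restrict throughout to a periodic variety $\mathcal V$, which is simultaneously a semigroup variety and an epigroup variety under the standard identification of periodic semigroup varieties with periodic epigroup varieties. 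Since that identification preserves joins with $\mathcal T$ and $\mathcal{SL}$ in both directions, the whole statement reduces to the single claim that \emph{a periodic variety is $0$-reduced as a semigroup variety if and only if it is $0$-reduced as an epigroup variety}.

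Proving this claim is the only step that requires real work, and it is where I expect the (minor) difficulty to lie. One direction is immediate: the semigroup words $w$ defining a $0$-reduced semigroup variety via identities $w=0$ are in particular words of the free unary semigroup, so the same identities exhibit it as a $0$-reduced epigroup variety. For the converse, let $\mathcal N=\var\{w_i=0\mid i\in I\}$ be a periodic $0$-reduced epigroup variety. Every $0$-reduced epigroup variety is a nilvariety --- in a monogenic subepigroup every element is a power of the generator, so each identity $w_i=0$ forces every element to be nilpotent --- and a periodic nilvariety satisfies $x^m=0$ for some fixed $m$; hence by Lemma~\ref{identity for nil} the variety $\mathcal N$ satisfies $\overline x\,=0$, and then $\overline x\,=x^m$ by Lemma~\ref{identity for comb}. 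Therefore every defining identity $w_i=0$ in which the operation of pseudoinversion actually occurs follows from $\overline x\,=0$ and may be dropped, so $\mathcal N$ is defined in \textbf{EPI} by $x^m=0$ together with those $w_i=0$ whose left-hand side is a semigroup word. The $0$-reduced semigroup variety presented by this same list of identities, when viewed as an epigroup variety, again satisfies $\overline x\,=0$ (because it satisfies $x^m=0$) and hence coincides with $\mathcal N$; this proves the claim.

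With the claim established the corollary is formal. If $\mathcal V$ is periodic and lower-modular in \textbf{SEM}, write $\mathcal{V=M\vee N}$ with $\mathcal N$ a $0$-reduced semigroup variety as in~\cite[Theorem~1.1]{Shaprynskii-Vernikov-10}; the claim turns $\mathcal N$ into a $0$-reduced epigroup variety, so Theorem~\ref{lower-modular} makes $\mathcal V$ lower-modular in \textbf{EPI}. Conversely, if $\mathcal V$ is periodic and lower-modular in \textbf{EPI}, then $\mathcal{V=M\vee N}$ with $\mathcal N$ a $0$-reduced epigroup variety by Theorem~\ref{lower-modular}; here $\mathcal N\subseteq\mathcal V$ is periodic, so the claim turns $\mathcal N$ into a $0$-reduced semigroup variety and~\cite[Theorem~1.1]{Shaprynskii-Vernikov-10} makes $\mathcal V$ lower-modular in \textbf{SEM}. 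The main obstacle is thus purely the bookkeeping inside the claim --- in particular verifying that trading epigroup $0$-reduced defining identities for semigroup ones does not alter the variety --- and Lemmas~\ref{identity for nil} and~\ref{identity for comb} are precisely what makes that trade legitimate.
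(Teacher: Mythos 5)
Your proposal is correct and takes essentially the same route as the paper, whose proof of this corollary is exactly the comparison of the description of lower-modular elements of $\mathbf{SEM}$ in~\cite[Theorem~1.1]{Shaprynskii-Vernikov-10} with Theorem~\ref{lower-modular} under the standard identification of periodic semigroup varieties with periodic epigroup varieties. The additional bookkeeping you carry out --- checking that a periodic variety is $0$-reduced as a semigroup variety if and only if it is $0$-reduced as an epigroup variety, via Lemmas~\ref{identity for nil} and~\ref{identity for comb} --- is left implicit in the paper but is sound.
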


\section{An application to definable varieties}
\label{definafle}

Here we discuss an interesting application of Theorems~\ref{neutral and costandard} and~\ref{lower-modular}. A subset $A$ of a lattice $\langle L;\vee,\wedge\rangle$ is called \emph{definable in} $L$ if there exists a first-order formula $\Phi(x)$ with one free variable $x$ in the language of lattice operations $\vee$ and $\wedge$ which \emph{defines $A$ in} $L$. This means that, for an element $a\in L$, the sentence $\Phi(a)$ is true if and only if $a\in A$. A set $X$ of semigroup [epigroup] varieties is said to be \emph{definable} if it is definable in \textbf{SEM} [respectively in \textbf{EPI}]. In this situation we will say that the corresponding first-order formula \emph{defines} the set $X$.

A number of deep results about definable varieties and sets of varieties of semigroups have been obtained in~\cite{Jezek-McKenzie-93} by Je\v{z}ek and McKenzie. In particular, it was proved there that the set of all 0-reduced varieties is definable. But the article \cite{Jezek-McKenzie-93} contains no explicit first-order formula that define this set of varieties. The simple first-order formula that define the class of all 0-reduced semigroup varieties was found in~\cite[Theorem~3.3]{Vernikov-12} or~\cite[Subsection~3.5]{Vernikov-15}. Here we are going to verify that the same formula defines the set of 0-reduced varieties in the lattice \textbf{EPI}.

Theorem~\ref{lower-modular} shows that an epigroup variety is 0-reduced if and only if it is lower-modular and does not contain the variety $\mathcal{SL}$. Obviously, the set of all lower-modular varieties is definable. It remains to define the variety $\mathcal{SL}$. Evidently, the lattices \textbf{SEM} and \textbf{EPI} have the same set of atoms. Theorem~\ref{neutral and costandard} together with the well-known description of atoms of the lattice \textbf{SEM} (see~\cite[Section~1]{Shevrin-Vernikov-Volkov-09}, for instance) imply that the lattice \textbf{EPI} contains exactly two neutral atoms, namely the varieties $\mathcal{SL}$ and $\mathcal{ZM}$. Recall that a semigroup variety $\mathcal V$ is called a \emph{chain} if the lattice $L(\mathcal V)$ is a chain. Clearly, every chain variety is periodic, whence it may be considered as epigroup variety. It is well known that the variety $\mathcal{ZM}$ is properly contained in some chain variety, while the variety $\mathcal{SL}$ is not~\cite{Sukhanov-82}. Combining the mentioned observations, we see that the class of all 0-reduced varieties may be defined as the class \textbf K of epigroup varieties with the following properties:
\begin{itemize}
\item[(i)]every member of \textbf K is a lower-modular variety;
\item[(ii)]if $\mathcal V\in\mathbf K$ and $\mathcal V$ contains some neutral atom $\mathcal A$ then $\mathcal A$ is properly contained in some chain variety.
\end{itemize}
It is evident that properties~(i) and~(ii) may be written by simple first-order formulas with one free variable. We prove the following

\begin{proposition}
\label{0-reduced is definable}
The class of all $0$-reduced epigroup varieties is definable in the lattice $\mathbf{EPI}$.\qed
\end{proposition}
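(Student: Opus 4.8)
The plan is to verify that the class $\mathbf K$ singled out by conditions~(i) and~(ii) above is exactly the class of all $0$-reduced epigroup varieties, and then to observe that both conditions are expressible by a first-order formula in the language of the lattice operations $\vee$ and $\wedge$. The main ingredients are Theorem~\ref{lower-modular} (a lower-modular variety has the form $\mathcal{M\vee N}$ with $\mathcal M\in\{\mathcal T,\mathcal{SL}\}$ and $\mathcal N$ $0$-reduced), Theorem~\ref{neutral and costandard} together with the known description of the atoms of $\mathbf{EPI}$ (which coincide with the atoms of $\mathbf{SEM}$), so that $\mathcal{SL}$ and $\mathcal{ZM}$ are precisely the neutral atoms of $\mathbf{EPI}$, and Sukhanov's result~\cite{Sukhanov-82} that $\mathcal{ZM}$ lies properly inside some chain variety while $\mathcal{SL}$ does not. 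One also uses the easy remarks that every $0$-reduced variety is a nilvariety (a $0$-reduced identity of length $n$ yields $x^n=0$) and that every chain variety is periodic, hence a semigroup variety whose subvariety lattice is the same in $\mathbf{SEM}$ and in $\mathbf{EPI}$.

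First I would check the two inclusions between $\mathbf K$ and the class of $0$-reduced varieties. If $\mathcal V$ is $0$-reduced, then $\mathcal V=\mathcal T\vee\mathcal V$ is lower-modular by Theorem~\ref{lower-modular}, so~(i) holds; and, being a nilvariety, $\mathcal V$ cannot contain the non-nil atom $\mathcal{SL}$, so any neutral atom lying in $\mathcal V$ equals $\mathcal{ZM}$, which is properly contained in a chain variety, giving~(ii). Conversely, let $\mathcal V\in\mathbf K$. By~(i) and Theorem~\ref{lower-modular} we may write $\mathcal{V=M\vee N}$ with $\mathcal M\in\{\mathcal T,\mathcal{SL}\}$ and $\mathcal N$ a $0$-reduced variety. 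If $\mathcal M=\mathcal{SL}$, then $\mathcal V$ contains the neutral atom $\mathcal{SL}$, so~(ii) forces $\mathcal{SL}$ to lie properly inside some chain variety, contradicting Sukhanov's result; hence $\mathcal M=\mathcal T$ and $\mathcal V=\mathcal N$ is $0$-reduced.

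It remains to express~(i) and~(ii) by first-order formulas with a single free variable $x$. Lower-modularity of $x$ is the condition $\forall y\,\forall z\,\bigl(x\wedge y=x\to(z\wedge y)\vee x=(z\vee x)\wedge y\bigr)$, obtained by dualizing the definition of an upper-modular element. For~(ii): the trivial variety is the least element of $\mathbf{EPI}$, hence definable, so ``$a$ is an atom'' ($a$ is not the least element and nothing lies strictly between them) is first-order, and ``$a$ is neutral'' is first-order directly from the definition of a neutral element; ``$c$ is a chain variety'' is the condition that any two elements below $c$ are comparable; and ``$a$ is properly contained in a chain variety'' says that there is a chain variety $c$ with $a\wedge c=a$ and $a\ne c$. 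Condition~(ii) is then the universal statement: for every $a$, if $a$ is a neutral atom with $a\wedge x=a$, then $a$ is properly contained in some chain variety. Since each of these pieces is first-order, $\mathbf K$ is definable in $\mathbf{EPI}$, which proves the proposition.

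There is no genuinely hard step here; the substance is carried entirely by Theorems~\ref{neutral and costandard} and~\ref{lower-modular} and by Sukhanov's theorem. The only points that require a little care are the observation that $0$-reduced varieties are nilvarieties (this is what allows condition~(ii) to separate the two neutral atoms $\mathcal{SL}$ and $\mathcal{ZM}$) and the routine verification that ``atom'', ``neutral'' and ``chain variety'' are genuinely first-order properties over $\{\vee,\wedge\}$.
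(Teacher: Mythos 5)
Your argument is correct and is essentially the paper's own: the paper likewise characterizes the $0$-reduced varieties as the lower-modular ones not containing $\mathcal{SL}$ (via Theorem~\ref{lower-modular}), identifies $\mathcal{SL}$ and $\mathcal{ZM}$ as the only neutral atoms of $\mathbf{EPI}$ (via Theorem~\ref{neutral and costandard}), and separates them by Sukhanov's chain-variety criterion, exactly as you do with conditions~(i) and~(ii). Your explicit first-order formulas and the remarks that $0$-reduced varieties are nilvarieties and that chain varieties are periodic just spell out details the paper treats as evident.
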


\section{Neutral and costandard varieties}
\label{neutral and costandard proof}

Here we prove Theorem~\ref{neutral and costandard}. The proof will be given by the following scheme:

\smallskip

\begin{center}
\unitlength=1mm
\begin{picture}(40,20)
\gasset{AHnb=1,AHLength=2,linewidth=.2}
\drawline(3,12)(17,20)
\drawline(3,8)(17,0)
\drawline(23,20)(37,12)
\drawline(23,0)(37,8)
\drawline(37,10)(3,10)
\put(0,10){\makebox(0,0)[cc]{a)}}
\put(20,20){\makebox(0,0)[cc]{b)}}
\put(20,0){\makebox(0,0)[cc]{c)}}
\put(40,10){\makebox(0,0)[cc]{d).}}
\end{picture}
\end{center}

\smallskip

\noindent The implications a)\,$\longrightarrow$\,b) and a)\,$\longrightarrow$\,c) are evident, while the implication d)\,$\longrightarrow$\,a) immediately follows from Propositions~\ref{SL is neutral} and~\ref{ZM is neutral}, and the well-known fact that the set of all neutral elements of a lattice $L$ forms a sublattice in $L$ (see~\cite[Theorem~259]{Gratzer-11}). It remains to verify the implications b)\,$\longrightarrow$\,d) and c)\,$\longrightarrow$\,d).

Suppose that $\mathcal V$ is either costandard or simultaneously lower-modular and upper-modular. We need to check that $\mathcal V$ coincides with one of the varieties $\mathcal T$, $\mathcal{SL}$, $\mathcal{ZM}$ or $\mathcal{SL\vee ZM}$. In any case, $\mathcal V$ is modular (this is evident whenever $\mathcal V$ is costandard, and follows from Corollary~\ref{lower-modular implies modular} whenever $\mathcal V$ is lower-modular and upper-modular). Then we may apply Theorem~\ref{modular nec} and conclude that $\mathcal{V=M\vee N}$ where $\mathcal M$ is one of the varieties $\mathcal T$ or $\mathcal{SL}$ and $\mathcal N$ is a nilvariety. It remains to verify that $\mathcal N$ is one of the varieties $\mathcal T$ or $\mathcal{ZM}$. By Lemma~\ref{SL and ZM are atoms} we have to check that $\mathcal{N\subseteq ZM}$. In other words, we need to verify that $\mathcal N$ is a variety of degree $\le2$. In view of Corollary~\ref{join with SL or ZM or SL+ZM}, $\mathcal N$ is either costandard or simultaneously lower-modular and upper-modular.

Suppose at first that $\mathcal N$ is costandard. Then it is codistributive. Now Lemma~\ref{codistr without P and P*} applies with the desirable conclusion. We have proved the implication b)\,$\longrightarrow$\,d).

It remains to consider the case when $\mathcal N$ is lower-modular and upper-modular. The variety $\mathcal N$ is periodic, whence it may be considered as a semigroup variety. Therefore, $\mathcal N$ is a lower-modular and upper-modular element of the lattice \textbf{PER}.

It follows from~\cite[Theorem~1]{Vernikov-07b} that if a nilvariety of semigroups is lower-modular in \textbf{SEM} then it is 0-reduced. All varieties that appear in the proof of this fact are periodic. So, a nilvariety is 0-reduced whenever it is a lower-modular element in \textbf{PER}. In particular, the variety $\mathcal N$ is 0-reduced.

Suppose that $\mathcal N$ is a variety of degree $>2$. It follows from~\cite[Theorem~1]{Vernikov-08c} that if a proper semigroup variety of degree $>2$ is upper-modular in \textbf{SEM} then it is commutative. All varieties that appear in the proof of this fact are periodic again. So, a variety of degree $>2$ is commutative whenever it is an upper-modular element in \textbf{PER}. In particular, the variety $\mathcal N$ is commutative.

Thus, $\mathcal N$ is a 0-reduced and commutative nilvariety. Therefore, $\mathcal{N\subseteq ZM}$. This completes the proof of the implication c)\,$\longrightarrow$\,d), and hence the proof of Theorem~\ref{neutral and costandard} as a whole.\qed

\medskip

Theorem~\ref{neutral and costandard} immediately implies the following

\begin{corollary}
\label{costandard implies standard}
If an epigroup variety is a costandard element of the lattice $\mathbf{EPI}$ then it is a standard element of this lattice.\qed
\end{corollary}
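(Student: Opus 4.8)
The plan is to read this off immediately from Theorem~\ref{neutral and costandard}, which does all the real work. Suppose $\mathcal V$ is a costandard element of $\mathbf{EPI}$. By the equivalence of statements a) and b) in Theorem~\ref{neutral and costandard}, $\mathcal V$ is then a \emph{neutral} element of $\mathbf{EPI}$. Now I would invoke the purely lattice-theoretic fact recalled in Subsection~\ref{introduction semigroups} (and in~\cite[Theorem~253]{Gratzer-11}): every neutral element of an arbitrary lattice is standard. Applying this to $\mathbf{EPI}$ yields that $\mathcal V$ is a standard element of $\mathbf{EPI}$, as required.

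An essentially equivalent route, which avoids even the general lattice fact, goes through the explicit list: by the equivalence of b) and d) in Theorem~\ref{neutral and costandard}, a costandard variety $\mathcal V$ coincides with one of $\mathcal T$, $\mathcal{SL}$, $\mathcal{ZM}$ or $\mathcal{SL\vee ZM}$. Since $\mathcal T=\mathcal Q_1$, $\mathcal{ZM}=\mathcal R_2$ and $\mathcal{SL\vee ZM}=\mathcal{SL}\vee\mathcal R_2$, each of these four varieties has the form $\mathcal{M\vee N}$ with $\mathcal M\in\{\mathcal T,\mathcal{SL}\}$ and $\mathcal N\in\{\mathcal Q_1,\mathcal R_2\}$; hence Theorem~\ref{distributive and standard} (the equivalence of b) and c)) shows directly that $\mathcal V$ is standard.

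I do not anticipate any genuine obstacle here: the substantive content is entirely contained in Theorems~\ref{neutral and costandard} and~\ref{distributive and standard}, and this corollary merely records one consequence of them, so the whole proof is a couple of lines along either of the two routes above.
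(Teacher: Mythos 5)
Your first route is exactly the paper's argument: the corollary is stated there as an immediate consequence of Theorem~\ref{neutral and costandard}, since costandard is equivalent to neutral and a neutral element of any lattice is standard (the general fact recalled in Subsection~\ref{introduction semigroups}). Your alternative route through the explicit list and Theorem~\ref{distributive and standard} (using $\mathcal T=\mathcal Q_1$ and $\mathcal{ZM}=\mathcal R_2$) is also valid and creates no circularity, but it is not what the paper does and is not needed.
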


Comparing Theorem~1.3 in~\cite{Vernikov-11} with Theorem~\ref{neutral and costandard}, we obtain the following

\begin{corollary}
\label{neutral and costandard in SEM and EPI}
For a periodic semigroup variety $\mathcal V$, the following are equivalent:
\begin{itemize}
\item[\textup{a)}]$\mathcal V$ is a costandard element of the lattice $\mathbf{EPI}$;
\item[\textup{b)}]$\mathcal V$ is a neutral element of the lattice $\mathbf{EPI}$;
\item[\textup{c)}]$\mathcal V$ is a costandard element of the lattice $\mathbf{SEM}$;
\item[\textup{d)}]$\mathcal V$ is a neutral element of the lattice $\mathbf{SEM}$.\qed
\end{itemize}
\end{corollary}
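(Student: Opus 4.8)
The plan is to deduce the four-fold equivalence by matching two existing classifications against one common list of varieties. First I would recall from Subsection~\ref{introduction epigroups} that a periodic semigroup variety and the periodic epigroup variety it determines may be identified; in particular the given $\mathcal V$ is at the same time a periodic semigroup variety and a periodic epigroup variety, so each of the statements a)--d) is meaningful. The point worth emphasizing is that being a costandard (or neutral) element is a property relative to the ambient lattice, so a priori the \textbf{EPI}-properties a),\,b) and the \textbf{SEM}-properties c),\,d) are unrelated; the content of the corollary is that the two ambient lattices happen to single out the same varieties.

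Next I would apply Theorem~\ref{neutral and costandard} to the epigroup variety $\mathcal V$. It yields that each of a) and b) holds if and only if $\mathcal V$ coincides with one of $\mathcal T$, $\mathcal{SL}$, $\mathcal{ZM}$ or $\mathcal{SL\vee ZM}$; in particular a)\,$\Longleftrightarrow$\,b), and both are equivalent to membership in this explicit four-element list.

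Then I would invoke Theorem~1.3 of~\cite{Vernikov-11} (which rests on the determination of neutral in \textbf{SEM} varieties in~\cite{Volkov-05}): a semigroup variety is costandard in \textbf{SEM} if and only if it is neutral in \textbf{SEM}, and this happens exactly when the variety is one of $\mathcal T$, $\mathcal{SL}$, $\mathcal{ZM}$, $\mathcal{SL\vee ZM}$ or the variety of all semigroups. Since our $\mathcal V$ is periodic, the last possibility is excluded, so each of c) and d) holds if and only if $\mathcal V$ is one of $\mathcal T$, $\mathcal{SL}$, $\mathcal{ZM}$ or $\mathcal{SL\vee ZM}$. Comparing with the previous paragraph, statements a), b), c), d) are all equivalent to membership in the same four-element list, hence equivalent to one another.

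I do not expect a genuine obstacle here: once the two cited theorems are in hand the argument is a direct comparison. The only thing requiring a small amount of care is verifying that the classification extracted from Theorem~1.3 of~\cite{Vernikov-11} is indeed exactly the list $\mathcal T$, $\mathcal{SL}$, $\mathcal{ZM}$, $\mathcal{SL\vee ZM}$, $\mathbf{SEM}$, and that the hypothesis `periodic' removes precisely the top element $\mathbf{SEM}$ while keeping the other four (all of which are periodic). Everything else is immediate.
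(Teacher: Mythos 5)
Your argument is exactly the paper's: the corollary is obtained by comparing Theorem~\ref{neutral and costandard} (which identifies the costandard = neutral elements of $\mathbf{EPI}$ as $\mathcal T$, $\mathcal{SL}$, $\mathcal{ZM}$, $\mathcal{SL\vee ZM}$) with Theorem~1.3 of~\cite{Vernikov-11} for $\mathbf{SEM}$, periodicity ruling out the variety of all semigroups. The proposal is correct and takes essentially the same route, just spelled out in more detail than the paper's one-line derivation.
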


\section{Distributive and standard varieties}
\label{distributive and standard proof}

Here we prove Theorem~\ref{distributive and standard}. The implication b)\,$\longrightarrow$\,a) of this theorem is evident, whence it suffices to verify the implications a)\,$\longrightarrow$\,c), c)\,$\longrightarrow$\,a) and a)\,$\longrightarrow$\,b).

\smallskip

a)\,$\longrightarrow$\,b) Suppose that an epigroup variety $\mathcal V$ is distributive. Hence it is lower-modular. Corollary~\ref{lower-modular implies modular} implies that $\mathcal V$ is modular. Now Lemma~\ref{distr+mod=stand} implies with the conclusion that $\mathcal V$ is standard.

\smallskip

a)\,$\longrightarrow$\,c) We denote by $F_m$ the free unary semigroup over the alphabet $\{x_1,x_2,\dots,x_m\}$ and by $S_m$ the symmetric group on the set $\{1,2,\dots,m\}$. If $\sigma\in S_m$ and $u\in F_m$ then $\sigma(u)$ denotes the image of the word $u$ under the automorphism of the unary semigroup $F_m$ induced by the action of the permutation $\sigma$ on indexes of letters.The following statement easily follows from the proof of Lemma~5.1 of the paper~\cite{Vernikov-Shaprynskii-10}.

\begin{lemma}
\label{two semigroup words}
Let $u$ and $v$ be semigroup words from $F_m$ such that the identity $u=v$ is non-balanced and none of the words $u$ and $v$ contains the image of other one over some endomorphism of the free unary semigroup, and let $\sigma$ be a permutation from $S_m$. If the variety $\mathcal X=\var\{u=v\}$ satisfies a non-trivial identity of the form $\sigma(u)=w$ where $w$ is a semigroup word then $w\equiv\sigma(v)$. \qed
\end{lemma}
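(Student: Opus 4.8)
The plan is to peel off the permutation $\sigma$, then to show that $\mathcal X=\var\{u=v\}$ is periodic (so the question reduces to one about semigroup varieties), and finally to invoke the analysis of equational deductions carried out in the proof of~\cite[Lemma~5.1]{Vernikov-Shaprynskii-10}. First I would dispose of $\sigma$. Since $\sigma$ acts on $F_m$ as an automorphism, the identity $u=v$ is equivalent to $\sigma(u)=\sigma(v)$, so $\var\{u=v\}=\var\{\sigma(u)=\sigma(v)\}$; and an automorphism of $F_m$ preserves word length and the factor relation and carries endomorphisms of $F$ to endomorphisms of $F$, so the pair $\sigma(u),\sigma(v)$ satisfies exactly the same hypotheses as $u,v$. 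Hence, replacing $(u,v,\sigma)$ by $(\sigma(u),\sigma(v),\mathrm{id})$, it suffices to prove that if $\mathcal X=\var\{u=v\}$ satisfies a non-trivial identity $u=w$ with $w$ a semigroup word, then $w\equiv v$.

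Next I would check that $\mathcal X$ is periodic. Choose a letter occurring $a$ times in one of $u,v$ and $b$ times in the other with $a<b$, put $K=\ell(u)$ and $L=\ell(v)$, and substitute this letter by $z^{c}$ and every other letter by $z$; this turns $u=v$ into $z^{K+(c-1)a}=z^{L+(c-1)b}$, whose two sides coincide only when $K-L=(c-1)(b-a)$. Since $b-a\ne 2(b-a)$, at least one of $c=2$ and $c=3$ produces a non-trivial identity $z^{p}=z^{q}$ holding in $\mathcal X$. Therefore every epigroup in $\mathcal X$ consists of periodic elements, so $\mathcal X$ is a periodic variety and hence coincides with the semigroup variety given by the identity $u=v$. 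Consequently a semigroup identity holds in $\mathcal X$ if and only if it holds in that semigroup variety, and the problem now lives entirely inside $\mathbf{SEM}$.

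It remains to prove this reduced semigroup statement by the argument of~\cite[Lemma~5.1]{Vernikov-Shaprynskii-10}. Take a shortest deduction $u\equiv w_0,w_1,\dots,w_n\equiv w$ of $u=w$ from the single identity $u=v$; since $w\not\equiv u$ it is non-empty, and each step replaces an occurrence of a factor $\theta(u)$ by $\theta(v)$, or conversely, for some endomorphism $\theta$ of $F$. The first step cannot replace a factor $\theta(v)$ inside $u$, since $u$ contains no endomorphic image of $v$; so it replaces a factor $\theta(u)$ of $u$ by $\theta(v)$, and as an endomorphism of $F$ never shortens a word this factor must be the whole of $u$, with $\theta$ acting on $c(u)$ as a renaming fixing $u$ — hence fixing every letter of $c(u)$ — so that $w_1$ is the image of $v$ under an endomorphism fixing $c(u)$. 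Pursuing this along the whole deduction, where the hypothesis that neither side of $u=v$ contains an endomorphic image of the other sharply restricts each step (in particular one obtains $c(u)=c(v)$, and from $v$ one can only pass back to $u$), one concludes that the only semigroup word $\{u=v\}$-equivalent to $u$ apart from $u$ itself is $v$, i.e.\ $w\equiv v$. I expect the combinatorial bookkeeping in this last stage — tracking the intermediate words and showing the deduction never leaves the pair $\{u,v\}$ — to be the part requiring care, and this is precisely what is carried out in~\cite{Vernikov-Shaprynskii-10}.
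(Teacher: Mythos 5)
Your two preliminary reductions are fine and are a reasonable supplement to the paper, which itself gives no argument here beyond the reference to the proof of Lemma~5.1 of \cite{Vernikov-Shaprynskii-10}: eliminating $\sigma$ via the automorphism of $F_m$ it induces is harmless, and the periodicity argument (substituting $z^c$ with $c=2,3$ into a non-balanced identity) is correct and genuinely needed, since it is what identifies the semigroup identities of the epigroup variety $\var\{u=v\}$ with the semigroup-equational consequences of $u=v$ and so licenses the deduction analysis. The one-step analysis is also right as far as it goes: since an endomorphism does not shorten words, a rewrite applied to $u$ must replace the whole of $u$ by $\theta(v)$, where $\theta$ fixes every letter of $c(u)$.

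The gap is in the final stage, and it is not mere bookkeeping that can be waved off to \cite{Vernikov-Shaprynskii-10}. What the one-step analysis gives is $w_1\equiv\theta(v)$ with $\theta$ fixing $c(u)$ pointwise but completely unconstrained on $c(v)\setminus c(u)$, and your parenthetical claim that the stated hypotheses force $c(u)=c(v)$ is false. Concretely, take $u\equiv x^2$ and $v\equiv abcba$: the identity $u=v$ is non-balanced, $u$ is too short to contain an image of $v$, and $v$ contains no image of $u$ because it has no square factor; yet $\var\{u=v\}$ satisfies the non-trivial identity $x^2=a^2bcba^2$ (substitute $a^2$ for $a$), whose right-hand side is a semigroup word different from $v$. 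So without a hypothesis relating the contents of $u$ and $v$ -- in effect $c(u)=c(v)$, which is exactly what makes ``from $u$ one can only pass to $v$, and from $v$ only back to $u$'' work in both directions, and which belongs to the setting of Lemma~5.1 of \cite{Vernikov-Shaprynskii-10} -- the induction along the deduction does not close up, and the statement in the form you are arguing for actually fails. Thus the part you defer to the cited paper is precisely the part where the missing content condition is used; to repair the proof you must import that condition (and then your outline does go through, since $\theta(v)\equiv v$ and, symmetrically, $\theta'(u)\equiv u$ at every step), rather than assert that it follows from the hypotheses as stated.
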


Note that if the identity $u=v$ is balanced then the class $K_{\{u=v\}}$ is not a variety by Lemma~\ref{epigroup variety}. Thus, the restriction to the identity $u=v$ to be non-balanced in Lemma~\ref{two semigroup words} is natural.

The following assertion is an `epigroup analog' of Lemma~5.2 of the article~\cite{Vernikov-Shaprynskii-10}.

\begin{lemma}
\label{if u=0 then v=0}
Let $\mathcal V$ be an epigroup variety that is a distributive element of the lattice $\mathbf{EPI}$ and let $u$, $v$ be semigroup words such that $c(u)=c(v)$ and one of the following holds:
\begin{itemize}
\item[\textup{(i)}] none of the words $u$ and $v$ contains the image of other one over some endomorphism of the free unary semigroup;
\item[\textup{(ii)}] $\ell(u)=\ell(v)$.
\end{itemize}
If the variety $\mathcal V$ satisfies the identity $u=0$ then it satisfies the identity $v=0$.
\end{lemma}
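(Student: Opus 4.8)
The plan is to reduce the assertion to the corresponding fact for semigroup varieties, \cite[Lemma~5.2]{Vernikov-Shaprynskii-10}, by first observing that $\mathcal V$ is a nilvariety and then reproducing the semigroup argument with Lemma~\ref{two semigroup words} of the present paper playing the role of \cite[Lemma~5.1]{Vernikov-Shaprynskii-10}. That $\mathcal V$ is a nilvariety is seen as follows: $u=0$ stands for the pair of identities $uz=zu=u$ with $z\notin c(u)$, and substituting one and the same letter $x$ for all letters of $uz$ shows that in every member of $\mathcal V$ the element $x^{\ell(u)}$ is absorbing; comparing two such elements one gets that they all coincide and equal a zero, so $\mathcal V$ satisfies $x^{\ell(u)}=0$ and is a nilvariety. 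In particular $\mathcal V$ is periodic and may be regarded as a semigroup variety, while still being a distributive---hence, by the implication a)$\,\longrightarrow\,$b) above, standard and modular---element of the lattice $\mathbf{EPI}$.

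Next I would reduce the case~(ii) to the case~(i). Assume $\ell(u)=\ell(v)$ and, say, $v=\varphi(u)$ for an endomorphism $\varphi$ of the free unary semigroup. Since $\ell(\varphi(u))=\ell(u)$ and $\varphi$ sends no letter to the empty word, $\varphi(x)$ is a single letter for each $x\in c(u)$; since $c(\varphi(u))=c(v)=c(u)$, the restriction of $\varphi$ to $c(u)$ is a permutation $\sigma$, so $v=\sigma(u)$, and then $v=0$ follows from $u=0$ by the renaming of letters given by $\sigma$. The symmetric situation, when $u$ is an endomorphic image of $v$, is handled likewise. In the remaining part of the case~(ii) neither of the words $u$ and $v$ is an endomorphic image of the other, so we are under the hypotheses of the case~(i); hence it suffices to treat the case~(i).

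In the case~(i) I would follow the proof of \cite[Lemma~5.2]{Vernikov-Shaprynskii-10} step by step. That proof combines the distributivity of $\mathcal V$ with $0$-reduced test varieties and, when the identity $u=v$ is non-balanced and hence $\var\{u=v\}$ is an epigroup variety by Lemma~\ref{epigroup variety}, with the variety $\var\{u=v\}$; the key appeal there to \cite[Lemma~5.1]{Vernikov-Shaprynskii-10}---that a semigroup word equal to $\sigma(u)$ in $\var\{u=v\}$ must coincide with $\sigma(v)$---is replaced by Lemma~\ref{two semigroup words} of the present paper. The auxiliary facts of Lemma~\ref{splitting} are used to discard the semigroup identities that change the content or split off a proper factor, and Lemma~\ref{identity for nil} disposes of non-semigroup words that may occur.

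The point requiring the most care is, I expect, the subcase of~(i) in which the identity $u=v$ is balanced: then, by Lemma~\ref{epigroup variety}, $\var\{u=v\}$ is not an epigroup variety and cannot serve as a test variety, so it must be replaced by a suitable non-balanced surrogate, after which one must re-verify that Lemma~\ref{two semigroup words} still applies. More generally, throughout the adaptation one has to make sure that each class borrowed from \cite{Vernikov-Shaprynskii-10} is a genuine epigroup variety and that the unary operation produces no identity consequences beyond those already controlled by Lemma~\ref{two semigroup words}; once this is verified, the lattice-theoretic part of the argument of \cite{Vernikov-Shaprynskii-10} carries over verbatim.
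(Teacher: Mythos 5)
Your preliminary observations are sound: a variety satisfying $u=0$ is indeed a nilvariety, and your reduction of case~(ii) to case~(i) is correct and even a little cleaner than the paper's treatment (the paper simply repeats item~(ii) of the proof of \cite[Lemma~5.2]{Vernikov-Shaprynskii-10}): since $\ell(u)=\ell(v)$ and $c(u)=c(v)$, an endomorphic image of one word contained in the other must coincide with the whole word, the endomorphism must act as a permutation of $c(u)$, and then $v=0$ follows from $u=0$ by renaming letters; otherwise the hypothesis of~(i) holds. You are also right that periodicity does not by itself allow a direct appeal to \cite[Lemma~5.2]{Vernikov-Shaprynskii-10}, because distributivity is only assumed in $\mathbf{EPI}$.

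The genuine gap is in case~(i), at exactly the point you flag but do not resolve: the balanced subcase. There the test variety $\var\{u=v\}$ of the semigroup proof is unavailable (Lemma~\ref{epigroup variety}), and saying that it ``must be replaced by a suitable non-balanced surrogate'' after which Lemma~\ref{two semigroup words} ``still applies'' is not a proof --- producing that surrogate and running the distributivity argument with it is the entire new content of the lemma in the epigroup setting. The paper does it as follows: one first excludes $m=1$ (there $u$ and $v$ are powers of one letter, contradicting~(i)), chooses a non-trivial permutation $\alpha\in S_m$ and puts $w\equiv\alpha(u)$, so that $u=0$ gives $w=0$ and hence $u=w$ in $\mathcal V$; then, taking a fresh letter $x\notin c(u)$, the identities $xu=v$ and $v=xw$ are non-balanced, so $\mathcal Y=\var\{xu=v\}$ and $\mathcal Z=\var\{v=xw\}$ are epigroup varieties by Lemma~\ref{epigroup variety}. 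Distributivity of $\mathcal V$ gives $\mathcal V\vee(\mathcal Y\wedge\mathcal Z)=(\mathcal V\vee\mathcal Y)\wedge(\mathcal V\vee\mathcal Z)$; the left-hand side satisfies $xu=xw$, so a deduction of this identity from identities of $\mathcal V\vee\mathcal Y$ and $\mathcal V\vee\mathcal Z$ produces a non-trivial identity $xu=w_1$ holding in $\mathcal Y$ or in $\mathcal Z$, and Lemma~\ref{two semigroup words} (with the trivial permutation in the first case, and with a permutation restricting to $\alpha^{-1}$ on $c(u)$ and fixing $x$ in the second) pins down $w_1\equiv v$ or $w_1\equiv\sigma(v)$; since $xu=w_1$ also holds in $\mathcal V$, where $xu=0$, one concludes $v=0$. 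Without this construction (or an equivalent one), together with the check that the hypotheses of Lemma~\ref{two semigroup words} are met for the pairs $xu,v$ and $v,xw$, your proposal reduces the lemma to an unproved assertion rather than establishing it.
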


\begin{proof}
We may assume that $u,v\in F_m$ for some natural $m$. Suppose that $\mathcal V$ satisfies the identity $u=0$.

\smallskip

(i) If $m=1$ then $u\equiv x^q$ and $v\equiv x^r$ for some $q$ and $r$. Then one of the words $u$ and $v$ contains an image of another one that contradicts the hypothesis. Therefore, $m >1$, whence the group $S_m$ contains a non-trivial permutation $\alpha$. Put $w\equiv\alpha(u)$. Then $\mathcal V$ satisfies the identity $w=0$ that implies $u=w$. If the identity $u=v$ is non-balanced then we may complete the proof repeating literally arguments from the corresponding part of the proof of item~(i) in Lemma~5.2 of the paper~\cite{Vernikov-Shaprynskii-10}. Suppose now that the identity $u=v$ is balanced. Let $x$ be a letter with $x\notin c(u)$. Then the identities $xu=v$ and $v=xw$ are non-balanced. According to Lemma~\ref{epigroup variety}, we may consider the varieties $\mathcal Y=\var\{xu=v\}$ and $\mathcal Z=\var\{v=xw\}$. Since the variety $\mathcal V$ is distributive, $\mathcal {V \vee (Y \wedge Z)=(V \vee Y) \wedge (V \vee Z)}$. Clearly, the variety $\mathcal{V\vee(Y\wedge Z)}$ satisfies the identity $xu=xw$. Therefore, this identity holds in the variety $\mathcal{(V\vee Y)\wedge(V\vee Z)}$. Then there exists a deduction of the identity $xu=xw$ from identities of the varieties $\mathcal{V\vee Y}$ and $\mathcal{V \vee Z}$. In particular, one of these varieties satisfies a non-trivial identity of the form $xu=w_1$.

Suppose at first that $xu=w_1$ holds in $\mathcal{V\vee Y}$. Then it holds in $\mathcal Y$. Now Lemma~\ref{two semigroup words} with the trivial permutation $\sigma$ from $S_m$ applies, and we conclude that $w_1\equiv v$. Since $xu=w_1$ in $\mathcal V$, we have that $\mathcal V$ satisfies the identity $v=xu$ that implies $v=0$. It remains to consider the case when the identity $xu=w_1$ holds in $\mathcal{V\vee Z}$. In particular, it holds in $\mathcal Z$. Let $\sigma$ be a permutation of the alphabet such that the restriction of $\sigma$ on the set $c(u)$ coincides with $\alpha^{-1}$ and $\sigma(x)\equiv x$. Now we apply Lemma~\ref{two semigroup words} with such permutation $\sigma$. Then we obtain that $w_1\equiv\sigma(v)$. But the identity $xu=w_1$ holds in $\mathcal V$. Hence $\mathcal V$ satisfies the identity $xu=\sigma(v)$ that implies $\sigma(v)=0$ and $v=0$.

\smallskip

(ii) Here we may repeat literally arguments from the proof of item (ii) in~\cite[כוללא~5.2]{Vernikov-Shaprynskii-10}.
\end{proof}

Now we are well prepared to complete the proof of the implication a)\,$\longrightarrow$\,c) of Theorem~\ref{distributive and standard}. Let $\mathcal V$ be a distributive epigroup variety. Then $\mathcal V$ is a lower-modular variety. In view of Theorem~\ref{lower-modular}, $\mathcal{V=M\vee N}$ where $\mathcal M$ is one of the varieties $\mathcal T$ or $\mathcal{SL}$ and $\mathcal N$ is a 0-reduced variety. Corollary~\ref{join with SL or ZM or SL+ZM} implies that the variety $\mathcal N$ is distributive. Being 0-reduced, this variety satisfies the identity $u=0$ for some word $u$. We may assume that this identity fails in the class of all nil-semi\-groups. Lemma~\ref{identity for nil} implies that $u$ is a semigroup word. We may assume that $c(u)=\{x,y\}$. Indeed, if $u$ depends on one letter then we may substitute $xy$ to this letter, and if $u$ depends on two or more letters then we may substitute $x$ to one of these letters and $y$ to all other of them. Now we may repeat literally the corresponding part of the proof of Proposition~3.2 of the article~\cite{Vernikov-Shaprynskii-10} with referring to Lemmas~\ref{two semigroup words} and~\ref{if u=0 then v=0} of the present article rather than Lemmas~5.1 and~5.2 of the paper~\cite{Vernikov-Shaprynskii-10} respectively. As a result, we obtain that $\mathcal N$ satisfies all identities of the form $v=0$ with $c(v)=c(u)$ and $\ell(v)\ge3$. In particular, $\mathcal N$ satisfies the identities $x^2y=xyx=yx^2=0$. We see that $\mathcal N$ is a 0-reduced subvariety of the variety $\mathcal Q$. If $\mathcal{N=Q}$ then we are done. Let now $\mathcal{N\subset Q}$. Then $\mathcal N$ is given in $\mathcal Q$ by some set of 0-reduced identities. By Lemma~\ref{what =0 in Q}, $\mathcal N$ is given within $\mathcal Q$ either by the identity $x^2=0$ or by the identity $x_1x_2\cdots x_n=0$ for some $n$ or by these two identities together. Thus $\mathcal N$ is one of the varieties $\mathcal Q$, $\mathcal Q_n$, $\mathcal R$ or $\mathcal R_n$. The implication a)\,$\longrightarrow$\,c) is proved.

\smallskip

c)\,$\longrightarrow$\,a) Let $\mathcal{V=M\vee N}$ where $\mathcal M$ is one of the varieties $\mathcal T$ or $\mathcal {SL}$ and $\mathcal N$ is one of the varieties $\mathcal Q$, $\mathcal Q_n$, $\mathcal R$ or $\mathcal R_n$. In view of Corollary~\ref{join with SL or ZM or SL+ZM}, it suffices to prove that the variety $\mathcal N$ is standard. In other words, we may assume that $\mathcal V$ is one of the varieties $\mathcal Q$, $\mathcal Q_n$, $\mathcal R$ or $\mathcal R_n$. In particular, $\mathcal V$ is periodic, whence it may be considered as a semigroup variety.

Let $\mathcal Y$ and $\mathcal Z$ be arbitrary epigroup varieties. We need to verify that
$$\mathcal{V\vee(Y\wedge Z)=(V\vee Y)\wedge(V\vee Z)}\ldotp$$
It suffices to check that $\mathcal{(V\vee Y)\wedge(V\vee Z)\subseteq V\vee(Y\wedge Z)}$ because the contrary inclusion is evident. In other words, we have to prove that an arbitrary identity holds in $\mathcal{(V\vee Y)\wedge(V\vee Z)}$ whenever it holds in $\mathcal{V\vee(Y\wedge Z)}$. In view of~\cite[Lemma~2.2]{Vernikov-Shaprynskii-10} we may suppose that $\mathcal{Y,Z\supseteq SL}$.

Let $u=v$ be an identity that holds in $\mathcal{V\vee(Y\wedge Z)}$. Then it holds in $\mathcal V$ and there exists a deduction of this identity from identities of the varieties $\mathcal Y$ and $\mathcal Z$. Let
\begin{equation}
\label{deduction}
u \equiv w_0 \longrightarrow w_1 \longrightarrow \cdots \longrightarrow w_k \equiv v
\end{equation}
be the shortest such deduction. Since $\mathcal{Y,Z\supseteq\mathcal SL}$, Lemma~\ref{word problem}(i) implies that $c(w_0)=c(w_1)=\cdots=c(w_k)$.

We use an induction by $k$. The induction base is evident because if $k=1$ then the identity $u=v$ holds in one of the varieties $\mathcal{V\vee Y}$ or $\mathcal{V\vee Z}$, whence it holds in the intersection of these varieties. Let now $k>1$. The identity $u=v$ holds in the variety $\mathcal V$. Since $\mathcal V$ is 0-reduced, $u=v=0$ holds in $\mathcal V$. If $w_i=0$ in $\mathcal V$ for some $0<i<k$ then our considerations may be completed by repeating literally arguments from the corresponding part of the proof of Proposition~3.3 of the paper~\cite{Vernikov-Shaprynskii-10}. In view of Lemma~\ref{what =0 in Q}, we may assume that each of the words $w_1,w_2,\dots,w_{k-1}$ either is a linear word or coincides with the word $x^2$.

Suppose that $w_i\equiv x^2$ for some $0<i<k$. The choice of the deduction~\eqref{deduction} guarantees that the words $w_0,w_1,\dots,w_k$ are pairwise different. Since $c(w_0)=c(w_1)=\cdots=c(w_k)=\{x\}$, we have that each of the varieties $\mathcal Y$ and $\mathcal Z$ satisfies a non-trivial identity of the form $x^m=x^n$. Therefore, the varieties $\mathcal Y$ and $\mathcal Z$ are periodic. Hence they may be considered as semigroup varieties. This permits to complete our considerations by repeating literally arguments from the corresponding part of the proof of Proposition~3.3 of the paper~\cite{Vernikov-Shaprynskii-10}.

It remains to consider the case when the words $w_1,w_2,\dots,w_{k-1}$ are linear. We may assume that the words $u$ and $v$ are non-linear (as usual, this claim may be verified by the same arguments as in the proof of~\cite[Proposition~3.3]{Vernikov-Shaprynskii-10}). Besides that, we will assume without loss of generality that the identity $u=w_1$ holds in $\mathcal Y$ and therefore, $w_1=w_2$ holds in $\mathcal Z$. Further considerations are divided into three cases.

\smallskip

\emph{Case}~1: $k=2$. Here the deduction~\eqref{deduction} has the form $u\rightarrow w_1\rightarrow v$, $u=w_1$ in $\mathcal Y$ and $w_1=v$ in $\mathcal Z$. Since the words $u$ and $v$ are non-linear, the varieties $\mathcal Y$ and $\mathcal Z$ are periodic. Hence they may be considered as semigroup varieties. This permits to complete a consideration of this case by repeating literally arguments from the Case~1 in the proof of Proposition~3.3 of the paper~\cite{Vernikov-Shaprynskii-10}.

\smallskip

\emph{Case}~2: $k=3$. Here the deduction~\eqref{deduction} has the form $u\rightarrow w_1\rightarrow w_2\rightarrow v$ and the identities $u=w_1$ and $w_2=v$ hold in $\mathcal Y$. Since the words $u$ and $v$ are non-linear, the variety $\mathcal Y$ is periodic. Hence it may be considered as a semigroup variety. This variety satisfies the identity $x_1x_2\cdots x_m=u$ and $\ell(u)>m$. Therefore, $\mathcal Y$ is a variety of degree $\le m$ (see~\cite[Lemma~1]{Sapir-Sukhanov-81}). According to~\cite[Proposition~2.11]{Vernikov-08b}, $\mathcal Y$ satisfies an identity of the form
$$x_1x_2\cdots x_m=x_1x_2\cdots x_{i-1}( x_i \cdots x_j)^tx_{j+1}\cdots x_m$$
for some $t>1$ and $0\le i\le j\le m$. This permits to complete a consideration of this case by repeating literally arguments from the Case~2 in the proof of Proposition~3.3 of the paper~\cite{Vernikov-Shaprynskii-10}.

\smallskip

\emph{Case}~3: $k>3$. This case may be considered by repeating literally arguments from the Case~3 in the proof of Proposition~3.3 of the paper~\cite{Vernikov-Shaprynskii-10}.

\smallskip

We complete the proof of the implication c)\,$\longrightarrow$\,a) and of Theorem~\ref{distributive and standard} as a whole.\qed

\medskip

Comparing Theorem~1.1 in~\cite{Vernikov-Shaprynskii-10} with Theorem~\ref{distributive and standard}, we obtain the following

\begin{corollary}
\label{distributive and standard in SEM and EPI}
For a periodic semigroup variety $\mathcal V$, the following are equivalent:
\begin{itemize}
\item[\textup{a)}]$\mathcal V$ is a distributive element of the lattice $\mathbf{EPI}$;
\item[\textup{b)}]$\mathcal V$ is a standard element of the lattice $\mathbf{EPI}$;
\item[\textup{c)}]$\mathcal V$ is a distributive element of the lattice $\mathbf{SEM}$;
\item[\textup{d)}]$\mathcal V$ is a standard element of the lattice $\mathbf{SEM}$.\qed
\end{itemize}
\end{corollary}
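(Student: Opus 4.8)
The plan is to obtain Corollary~\ref{distributive and standard in SEM and EPI} by a direct comparison of Theorem~\ref{distributive and standard} with the classification of distributive (equivalently, standard) elements of $\mathbf{SEM}$ obtained in~\cite[Theorem~1.1]{Vernikov-Shaprynskii-10}; no new lattice-theoretic work is needed.

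First I would note that the periodic semigroup variety $\mathcal V$ is in particular an epigroup variety (periodic semigroup varieties may be identified with periodic epigroup varieties, as explained in Subsection~\ref{introduction epigroups}), so Theorem~\ref{distributive and standard} applies to it verbatim and yields the equivalence a)\,$\Longleftrightarrow$\,b) together with the explicit description: each of a), b) is equivalent to $\mathcal{V = M \vee N}$ where $\mathcal M \in \{\mathcal T, \mathcal{SL}\}$ and $\mathcal N \in \{\mathcal Q, \mathcal Q_n, \mathcal R, \mathcal R_n\}$. On the other side, \cite[Theorem~1.1]{Vernikov-Shaprynskii-10} gives the equivalence c)\,$\Longleftrightarrow$\,d) and the very same explicit description of $\mathcal V$ as a join $\mathcal{M \vee N}$, with $\mathcal M$ and $\mathcal N$ ranging over the same two lists (there the varieties $\mathcal Q$, $\mathcal Q_n$, $\mathcal R$, $\mathcal R_n$, $\mathcal{SL}$ are read as semigroup varieties, but they are the same periodic varieties).

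Then the proof reduces to checking that these two descriptions single out the same class of varieties, i.e.\ that a)\,$\Longleftrightarrow$\,c). For this I would observe that all varieties occurring in the decompositions are periodic: each of $\mathcal Q$, $\mathcal Q_n$, $\mathcal R$, $\mathcal R_n$ is $0$-reduced and hence a nilvariety satisfying $x^3 = x^4$, while $\mathcal{SL}$ satisfies $x = x^2$; consequently every join $\mathcal{M \vee N}$ that arises satisfies $x^3 = x^4$ and is periodic. Therefore such a join coincides whether it is formed in $\mathbf{SEM}$ or in $\mathbf{EPI}$ (the least semigroup variety and the least epigroup variety containing $\mathcal M$ and $\mathcal N$ are one and the same periodic variety). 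Hence the two lists of varieties produced by Theorem~\ref{distributive and standard} and by \cite[Theorem~1.1]{Vernikov-Shaprynskii-10} coincide, which gives a)\,$\Longleftrightarrow$\,c) and completes the proof of the four-way equivalence.

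The argument is pure bookkeeping, so there is no genuine obstacle; the single point that deserves an explicit line is the claim that joins of the periodic varieties in question do not depend on the ambient lattice, which is exactly the remark about the identity $x^3 = x^4$ above. The earlier ``$\mathbf{SEM}$ versus $\mathbf{EPI}$'' corollaries of the paper (Corollaries~\ref{upper-modular str permut in SEM and EPI}, \ref{codistributive str permut in SEM and EPI}, \ref{modular commut in SEM and EPI}, \ref{neutral and costandard in SEM and EPI} and~\ref{lower-modular in SEM and EPI}) rest on the same kind of observation.
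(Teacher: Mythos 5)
Your proposal is correct and matches the paper's own (very brief) argument: the corollary is obtained precisely by comparing the classification in Theorem~\ref{distributive and standard} with the identical classification of distributive (equivalently standard) elements of $\mathbf{SEM}$ in~\cite[Theorem~1.1]{Vernikov-Shaprynskii-10}. Your extra remark that the relevant joins are periodic and hence independent of the ambient lattice is exactly the implicit bookkeeping the paper leaves to the reader.
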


\section{Open questions}
\label{questions}

It is verified in~\cite[Theorem~1.1]{Vernikov-11} that if a proper semigroup variety $\mathcal V$ is codistributive in \textbf{SEM} then the square of any member of $\mathcal V$ is completely regular. We do not know, whether the epigroup analog of this fact is true.

\begin{question}
\label{codistr nec?}
Is it true that if an epigroup variety $\mathcal V$ is a codistributive element of the lattice $\mathbf{EPI}$ then the square of any member of $\mathcal V$ is completely regular\textup? 
\end{question}

As we have already mentioned in Section~\ref{upper-modular proof}, it is proved in~\cite[Theorem~1.1]{Vernikov-08b} that if $\mathcal V$ is a proper upper-modular in \textbf{SEM} semigroup variety then every nilsubvariety of $\mathcal V$ is commutative and satisfies the identity~\eqref{xxy=xyy}. Proposition~\ref{umod nec} gives a partial epigroup analog of this assertion. We do not know, whether the full analog is true.

\begin{question}
\label{umod nil is commut?}
Suppose that an epigroup variety $\mathcal V$ is an upper-modular element of the lattice $\mathbf{EPI}$ and let $\mathcal N$ be a nilsubvariety of $\mathcal V$. Is it true that the variety $\mathcal N$
\begin{itemize}
\item[\textup{a)}] is commutative;
\item[\textup{b)}] satisfies the identity~\eqref{xxy=xyy}\textup?
\end{itemize} 
\end{question}

Proposition~\ref{umod nec} shows that the affirmative answer to Question~\ref{umod nil is commut?}a) would immediately implies the same answer to Question~\ref{umod nil is commut?}b).

Further, it is verified in~\cite[Theorem~1]{Vernikov-08c} that every proper upper-modular in \textbf{SEM} variety is either commutative or has a degree $\le2$. We do not know, whether the epigroup analog of this alternative is valid. We formulate the corresponding question together with its weaker version.

\begin{question}
\label{umod alternative?}
Suppose that an epigroup variety $\mathcal V$ is an upper-modular element of the lattice $\mathbf{EPI}$. Is it true that the variety $\mathcal V$
\begin{itemize}
\item[\textup{a)}] either is commutative or has a degree $\le2$;
\item[\textup{b)}] either is permutative or has a finite degree\textup?
\end{itemize}
\end{question}

The affirmative answer to Question~\ref{umod alternative?}a) together with Theorem~\ref{upper-modular} would immediately imply a complete description of upper-modular epigroup varieties of degree $>2$.

If $\mathcal V$ is a proper upper-modular in \textbf{SEM} variety then the lattice $L(\mathcal V)$ is modular~\cite[Corolary~2]{Vernikov-08c} and every subvariety of $\mathcal V$ is upper-modular in \textbf{SEM}~\cite[Corolary~3]{Vernikov-08c}. This makes a natural the following

\begin{question}
\label{umod mod&hered?}
Suppose that an epigroup variety $\mathcal V$ is an upper-modular element of the lattice $\mathbf{EPI}$. Is it true that
\begin{itemize}
\item[\textup{a)}] the lattice $L(\mathcal V)$ is modular;
\item[\textup{b)}] every subvariety of $\mathcal V$ is an upper-modular element of the lattice $\mathbf{EPI}$\textup?
\end{itemize}
\end{question}

We note that the affirmative answer to Question~\ref{umod mod&hered?}a) would immediately implies the same answer to Question~\ref{umod mod&hered?}b) (see~\cite[Lemma~2.1]{Vernikov-08b}).

\end{document}